\documentclass[11pt]{article}
\textwidth = 16 cm \textheight = 21 cm \oddsidemargin = 0 cm
\evensidemargin = 0 cm \topmargin = 0 cm
\hoffset = -.125 cm \voffset = 0 cm
\parskip = 2.5 mm
\usepackage[utf8x]{inputenc}
\usepackage{microtype}
\usepackage[T1]{fontenc}
\usepackage{ae,aecompl}
\usepackage{times}
\usepackage{mathrsfs}  
\usepackage{verbatim,amsmath,amsthm,amsfonts,amssymb,latexsym,graphicx,mathtools,extpfeil,color} 
\usepackage{epstopdf,pinlabel}
\epstopdfsetup{suffix=}
\usepackage[all]{xy}
\usepackage{graphicx}
\usepackage{caption}
\usepackage{subcaption}
\usepackage{tikz}
\DeclareMathAlphabet{\mathpzc}{OT1}{pzc}{m}{it}
\pagestyle{plain}

\usepackage[colorlinks,pagebackref,hypertexnames=false]{hyperref} \usepackage[alphabetic,backrefs,msc-links]{amsrefs}

\usepackage{aliascnt}
\numberwithin{equation}{section}

\newcommand{\bbL}{{\mathbb L}}
\newcommand{\bbH}{{\mathbb H}}


\newcommand{\rD}{{\rm D}}




\newcommand{\bB}{{\bf B}}
\newcommand{\bC}{{\bf C}}

\newcommand{\bJ}{{\bf J}}

\newcommand{\bN}{{\bf N}}

\newcommand{\bP}{{\bf P}}
\newcommand{\bQ}{{\bf Q}}
\newcommand{\bR}{{\bf R}}

\newcommand{\bX}{{\bf X}}

\newcommand{\bZ}{{\bf Z}}



\newcommand{\sL}{\mathscr{L}}


\newcommand{\fc}{{\mathfrak c}}

\newcommand{\fp}{{\mathfrak p}}
\newcommand{\fq}{{\mathfrak q}}

\newcommand{\fD}{{\mathfrak D}}

\newcommand{\fL}{{\mathfrak L}}

\newcommand{\fP}{{\mathfrak P}}
\newcommand{\fQ}{{\mathfrak Q}}
\newcommand{\fR}{{\mathfrak R}}
\newcommand{\fS}{{\mathfrak S}}



\newcommand{\N}{\bN}
\newcommand{\Z}{\bZ}

\newcommand{\R}{\bR}
\newcommand{\C}{\bC}


\newcommand{\SO}{{\rm SO}}

\newcommand{\SU}{{\rm SU}}


\DeclareMathOperator{\ad}{ad}

\DeclareMathOperator{\tr}{tr}

\renewcommand{\div}{\operatorname{div}}

\newcommand{\dvol}{{\rm dvol}}

\renewcommand{\epsilon}{\varepsilon}

\def\({\mathopen{}\left(}
\def\){\right)\mathclose{}}
\def\<{\mathopen{}\left<}
\def\>{\right>\mathclose{}}


\usepackage{multicol, color}

\definecolor{gold}{rgb}{0.85,.66,0}
\definecolor{cherry}{rgb}{0.9,.1,.2}
\definecolor{burgundy}{rgb}{0.8,.2,.2}
\definecolor{orangered}{rgb}{0.85,.3,0}
\definecolor{orange}{rgb}{0.85,.4,0}
\definecolor{olive}{rgb}{.45,.4,0}
\definecolor{lime}{rgb}{.6,.9,0}
\definecolor{green}{rgb}{.2,.7,0}
\definecolor{grey}{rgb}{.4,.4,.2}
\definecolor{brown}{rgb}{.4,.3,.1}


\def\makeautorefname#1#2{\AtBeginDocument{\expandafter\def\csname#1autorefname\endcsname{#2}}}

\newcommand{\mynewtheorem}[2]{
  \newaliascnt{#1}{equation}          
  \newtheorem{#1}[#1]{#2}
  \aliascntresetthe{#1}
  \makeautorefname{#1}{#2}
}
\mynewtheorem{theorem}{Theorem}
\mynewtheorem{prop}{Proposition}
\mynewtheorem{cor}{Corollary}
\mynewtheorem{construction}{Construction}
\mynewtheorem{lemma}{Lemma}
\mynewtheorem{conjecture}{Conjecture}

\numberwithin{substep}{step}
\makeautorefname{step}{Step}
\makeautorefname{substep}{Step}

\numberwithin{subcase}{case}
\makeautorefname{case}{Case}
\makeautorefname{subcase}{case}

\theoremstyle{remark}
\mynewtheorem{remark}{Remark}

\theoremstyle{definition}
\mynewtheorem{definition}{Definition}
\mynewtheorem{example}{Example}
\mynewtheorem{exercise}{Exercise}
\mynewtheorem{convention}{Convention}
\newtheorem*{convention*}{Convention}
\newtheorem*{conventions*}{Conventions}
\mynewtheorem{question}{Question}
        
\makeautorefname{chapter}{Chapter}
\makeautorefname{section}{Section}
\makeautorefname{subsection}{Section}
\makeautorefname{subsubsection}{Section}


\theoremstyle{introthm}
\newtheorem{introthm}{Theorem}

\title{Lagrangians, SO(3)-instantons and Mixed Equation}
\author{\bf \sc \large Aliakbar Daemi\thanks{The work of AD was supported by NSF Grant DMS-1812033 and NSF FRG Grant DMS-1952762.} \hspace{1cm} Kenji Fukaya\thanks{The work of KF was supported by the Simons Foundation through its Homological Mirror Symmetry Collaboration grant.} \hspace{1cm} Maksim Lipyanskiy}
\date{}
\begin{document}
\maketitle

\begin{abstract}
	The {\it mixed equation}, defined as a combination of the anti-self-duality equation in gauge theory and Cauchy-Riemann equation in symplectic geometry, is studied. In particular, regularity and Fredholm properties are established for the solutions of this equation, and it 
	is shown that the moduli spaces of solutions to the mixed equation satisfy a compactness property which combines Uhlenbeck and Gormov compactness theorems. The results of this paper are used in a sequel to study the Atiyah-Floer conjecture.
\end{abstract}

{
  \hypersetup{linkcolor=black}
  \tableofcontents
}
\newpage
\section{Introduction}

The {\it Cauchy-Riemann equation} and the  {\it anti-self-duality} equation provide two important geometric partial differential equations. For any Riemann surface $S$ and an almost complex manifold $M$, we may define the CR equation on the space of maps from $S$ to $M$. In the case that the target manifold $M$ is a symplectic manifold, the moduli of solutions of this equation admits a nice compactification known as {\it stable map compactification}. Such moduli spaces have been the essential ingredient in the development of various important tools in symplectic topology. For instance, Lagrangian Floer homology, which is a homology group associated to a pair of Lagrangians in a symplectic manifold $M$, is defined using the solutions of the CR equation for the space of maps from the strip $S=[-1,1]\times \R$ with Lagrangian boundary condition \cite{Fl:LFH,Oh:LFH,FOOO:HF1,FOOO:HF2}. Given a vector bundle $V$ over a Riemannian 4-manifold $X$, the ASD equation can be defined on the space of connections on the vector bundle $V$. The moduli of solutions to this equation play a key role in the definition of {\it Donaldson invariants} \cite{Don:Polynomial,DK} and {\it instanton Floer homology} \cite{floer:inst1,Don:YM-Floer} which are respectively powerful invariants of 4- and 3-manifolds.

Atiyah-Floer conjecture states that instanton Floer homology and Lagrangian Floer homology are related to each other (see \cite{At:AT-Fl,floer:inst1}). More specifically, the instanton Floer homology of a 3-manifold is isomorphic to Lagrangian Floer homology of appropriate Lagrangians in the space $M$ of flat connections on a vector bundle over a Riemann surface. One motivation for this conjecture is due to a relation between the ASD and CR equations. In fact, the CR equation with the target space $M$ can be regarded as an adiabatic limit of the ASD equation (see \cite{At:AT-Fl}). This observation was used in the remarkable work \cite{DS:At-Fl} to prove an instance of the Atiyah-Floer conjecture for 3-manifolds which are mapping tori. In this paper and its companion, we follow a different approach toward the Atiyah-Floer conjecture. We study another geometric PDE, called the {\it mixed equation}, which is defined by combining the CR and ASD equations in the third author's unpublished work \cite{Max:GU-comp}. In the sequel, we use the results of the current paper on the analytical properties of the mixed equation to prove the generalization of \cite{DS:At-Fl} for admissible bundles on arbitrary 3-manifolds.

\subsection*{Mixed Equation}
Suppose $X$ is a 4-manifold with boundary $\gamma \times \Sigma$ where $\Sigma$ is a possibly disconnected closed Riemann surface and $\gamma$ is an oriented connected 1-manifold. Thus, $\gamma$ is diffeomorphic to either $S^1$ or $\R$. Suppose $V$ is an $\SO(3)$-bundle over $X$. For each connected component $\Sigma_0$ of $\Sigma$, we require that the restriction of $V$ to $\gamma \times \Sigma_0$ is the pull-back of the non-trivial $\SO(3)$-bundle over $\Sigma_0$. In particular, the restriction of $V$ to $\gamma\times \Sigma$ is the pull-back of an $\SO(3)$-bundle $F$ on $\Sigma$. 
We fix a Riemannian metric on $X$ such that the restriction of the metric to a collar neighborhood of the boundary is given by
\begin{equation} \label{metric-near-bdry}
  ds^2+d\theta^2+g_\Sigma,
\end{equation}
for a fixed metric $g_\Sigma$ on $\Sigma$. Here we identify a collar neighborhood of the boundary of $X$ with $(-1,0] \times \gamma \times \Sigma$, and $s$, $\theta$ are respectively the coordinates on $(-1,0]$, $\gamma$.

Suppose $\mathcal A(\Sigma,F)$ denotes the space of connections on $F$. This space is an affine Banach space after Banach completion, and the automorphisms of $F$ acts on it by taking pullback. The moduli space $\mathcal M(\Sigma,F)$ is the quotient of flat connections in $\mathcal A(\Sigma,F)$ by the action of {\it determinant one} automorphisms of $F$. The Hodge operator $*_2$, defined using the conformal structure of $\Sigma$, acts on the space of $1$-forms and it gives rise to complex structures on $\mathcal A(\Sigma,F)$ and $\mathcal M(\Sigma,F)$. We denote the latter complex structure on $\mathcal M(\Sigma,F)$ by $J_*$. Define 
\[
  \mathcal L(\Sigma,F)=\{(\alpha,[\beta])\in \mathcal A(\Sigma,F)\times \mathcal M(\Sigma,F) \mid \text{$\alpha$ is flat and represents the class $[\beta]$}\}.
\]
The spaces $\mathcal A(\Sigma,F)$ and $\mathcal M(\Sigma,F)$ admit symplectic forms $\Omega$ and $\omega_{\rm fl}$, and $\mathcal L(\Sigma,F)$ defines a Lagrangian correspondence from $\mathcal A(\Sigma,F)$ to $\mathcal M(\Sigma,F)$. Motivated by this, $\mathcal L(\Sigma,F)$ is called the {\it matching Lagrangian correspondence}.

Suppose $S$ is a compact oriented Riemann surface whose boundary is
\begin{equation} \label{bdry-S}
  \partial S=\eta_1 \cup \dots \cup \eta_k \cup -\gamma,
\end{equation}
where $\eta_i$ is a connected $1$-manifold and $-\gamma$ denotes $\gamma$ with the reverse orientation. Throughout the paper, we use a similar notation to indicate reversing orientation on a manifold. For each boundary component $\eta_i$ of $S$, we fix a Lagrangian submanifold $L_i$ of the moduli space of flat connections $\mathcal M(\Sigma,F)$. We write $\mathbb L$ for the collection $(L_1,\dots,L_s,\mathcal L(\Sigma,F))$.

Following \cite{Max:GU-comp}, the mixed equation is associated to any quintuple of the form
\begin{equation}\label{matching-quintuple}
  (X,V,S,\mathcal M(\Sigma,F),\mathbb L).
\end{equation}
A pair of a connection $A$ on the bundle $V$ and a map $u:S\to \mathcal M(\Sigma,F)$ is a solution of the mixed equation if it satisfies the equations
\begin{equation} \label{mixed}
	\left\{
		\begin{array}{ll}
		F^+(A)=0,\\
		\overline \partial_{J_*} u=0, \\		
		\end{array}
	\right.
\end{equation}
and the {\it boundary} and {\it matching} conditions
\begin{equation} \label{config-space}
	\left\{
		\begin{array}{ll}
		u(x)\in L_i& x\in \eta_i,\\
		(A|_{\{x\}\times \Sigma},u(x))\in \mathcal L(\Sigma,F) & x\in \gamma.\\		
		\end{array}
	\right.
\end{equation}
The term $F^+(A)$ in \eqref{mixed} is the self-dual part of the curvature $F_A$ of the connection $A$. That is to say, the first equation requires that $A$ satisfies the ASD equation, which is also known as the instanton equation. The holomorphic curve equation $\overline \partial_{J_*} u=0$ in \eqref{mixed} is defined using the conformal structure on $S$ and the complex structure $J_*$ on $\mathcal M(\Sigma,F)$. More generally, we may define the mixed equation when $\mathcal M(\Sigma,F)$ is replaced by an arbitrary symplectic manifold $(M,\omega)$ with a compatible almost complex structure $J$, and $\mathcal L(\Sigma, F)$ is replaced by a {\it canonical Lagrangian correspondence} $\mathcal L$ from $\mathcal A(\Sigma,F)$ to $M$. We then call $ (X,V,S,M,\mathbb L)$ a {\it quintuple}, where $\mathbb L$ is the data of the canonical Lagrangian correspondence $\mathcal L$ and the Lagrangians $L_i\subset M$ associated to the boundary component $\eta_i$ of $S$. A quintuple of the special form in \eqref{matching-quintuple} is called a {\it matching quintuple}. See Section \ref{symp-lag-cor} for more details.

\subsection*{Regularity}

The solutions of the mixed equation enjoy {\it regularity} properties similar to those of the ASD equation and the Cauchy-Riemann equation. That is to say, if $(A,u)$ is a solution of the mixed equation satisfying some initial regularity, then $(A,u)$ is $C^\infty$ smooth. The precise statement of regularity requires some care because the mixed equation is invariant with respect to automorphisms of the $\SO(3)$-bundle $V$, and we may obtain a non-smooth solution by pulling back $A$ using a non-smooth automorphism of $V$. To avoid this issue, we assume that the connection $A$ of the mixed pair $(A,u)$ is in {\it Coulomb gauge} with respect to a smooth connection $A_0$ on $V$, which means that it satisfies 
\begin{equation}\label{Coulomb-gauge}
	  d_{A_0}^*(A-A_0)=0,\,\hspace{2cm}*(A-A_0)\vert_{\partial X}=0.
\end{equation}
Moreover, since regularity is a local phenomenon, we assume that $(A,u)$ is a solution to the mixed equaion associated to the quintuple 
\begin{equation}\label{Q(r)-quintuple-intro}
  \fQ(r):=(D_-(r)\times \Sigma,D_-(r)\times F,D_+(r),M,\mathcal L).
\end{equation}
Let $\bbH_+$ and $\bbH_-$ denote the half planes $s\geq 0$ and $s\leq 0$ in the $(s,\theta)$ plane. Then $D_+(r)\subset \bbH_+$, $D_-(r)\subset \bbH_-$ in \eqref{Q(r)-quintuple-intro} are respectively the open subspaces $B_{r}(0)\cap \bbH_+$, $B_{r}(0)\cap \bbH_-$ with $B_{r}(z)$ being the ball of radius $r$ centered at the point $z\in \R^2$. For the statement of our regularity result, we may work with an arbitrary symplectic manifold $M$ and a canonical Lagrangian correspondence $\mathcal L$ from $\mathcal A(\Sigma,F)$ to $M$.

\begin{introthm}\label{regularity-thm}
	Suppose $p>2$ and $(A,u)$ is an $L^p_1$ solution of the mixed equation associated to $\fQ(r)$. Suppose $A$ satisfies \eqref{Coulomb-gauge} with respect to a smooth connection $A_0$.
	Then $(A,u)$ is smooth. 
\end{introthm}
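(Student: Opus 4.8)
The plan is to run a standard elliptic bootstrap, treating three regions separately: the interior of the four-manifold $D_-(r)\times\Sigma$, the interior of the two-manifold $D_+(r)$, and a collar neighborhood of the matching interface $\gamma\times\Sigma$. Since $D_\pm(r)=B_r(0)\cap\bbH_\pm$ are half-disks whose only boundary face is the diameter $\gamma$ (the open circular arcs are not part of the domains), these regions cover $\fQ(r)$ with no corners present, so it suffices to prove smoothness on each. The first two regions are handled by classical regularity theory; all of the genuinely new work is at the interface, where the four-dimensional anti-self-duality equation and the two-dimensional holomorphic-curve equation are coupled through the nonlocal matching condition in \eqref{config-space}.

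Away from $\gamma\times\Sigma$ the pair decouples. On the four-manifold side, $a:=A-A_0$ satisfies the first-order elliptic system $d^+_{A_0}a+(a\wedge a)^+=-F^+(A_0)$, $d^*_{A_0}a=0$ (the second equation being the Coulomb condition \eqref{Coulomb-gauge}), with smooth coefficients because $A_0$ is smooth; starting from $a\in L^p_1$ with $p>2$ on a four-manifold, interior elliptic estimates together with the Sobolev multiplication estimate for the quadratic term $a\wedge a$ let one iterate to $a\in C^\infty$ on the interior, exactly as in the standard regularity theory for instantons in Coulomb gauge \cite{DK}. On the two-manifold side, $u$ solves $\partial_s u+J_*(u)\partial_\theta u=0$; since $p>2$ the map $u\in L^p_1$ is H\"older continuous, so writing the equation in the form $\overline\partial u=f$ with $f\in L^p$ depending on $u$ and $\partial u$ gives $u\in L^p_2\hookrightarrow C^{1,\alpha}$, and the usual bootstrap yields $u\in C^\infty$ in the interior \cite{FOOO:HF1}.

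For the interface, work in the collar coordinates $(s,\theta,y)\in(-\varepsilon,0]\times\gamma\times\Sigma$ in which the metric is $ds^2+d\theta^2+g_\Sigma$, and write $A=A_0+a$. The matching condition in \eqref{config-space} forces, for each $x\in\gamma$, the slice connection $A|_{\{x\}\times\Sigma}$ to be flat with $u(x)$ its class in $\mathcal M(\Sigma,F)$, because $\mathcal L$ consists exactly of such pairs; thus the interface condition is a Lagrangian matching between the path $\theta\mapsto A|_{\{(0,\theta)\}\times\Sigma}$ of connections on $\Sigma$ and the path $\theta\mapsto u(0,\theta)$ in $M$. The key point is that, since $\mathcal L$ is a canonical Lagrangian correspondence, this condition together with the boundary part $*a|_{\partial X}=0$ of \eqref{Coulomb-gauge} is an elliptic boundary condition for the coupled linearized operator $\big(a\mapsto(d^+_{A_0}a,d^*_{A_0}a)\big)\oplus\overline\partial_{J_*}$ along $\{s=0\}$ — the relevant Lopatinskii--Shapiro condition holds precisely because $\mathcal L$ is Lagrangian. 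Granting the associated boundary elliptic estimate (established using a local chart for $\mathcal L$, in which the matching condition acquires nonlinear terms that are controlled by Sobolev multiplication), one bootstraps as before: at each stage the trace theorem transfers the interface data between the four- and two-dimensional sides, with the $1/p$-derivative loss of the restriction map absorbed into the boundary term, while the quadratic term $a\wedge a$ and the $J_*(u)$-dependence are handled as in the interior. Iterating from $L^p_1$ produces $(A,u)\in C^\infty$ up to $\gamma\times\Sigma$, which with the interior cases proves the theorem.

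The main obstacle is the interface step: showing rigorously that the nonlocal Lagrangian matching condition \eqref{config-space} is an elliptic boundary condition for the coupled system, and carefully tracking how restriction of a connection on the four-manifold to its $\Sigma$-slices interacts with the Sobolev scale. The usual device of doubling the domain across the boundary is unavailable, since the two sides of the interface have different dimensions — a four-dimensional gauge theory glued to a two-dimensional sigma model along a locus that fibers $\gamma\times\Sigma\to\gamma$ — so one must instead work through local charts for $\mathcal L$ and the corresponding elliptic estimates, which is also where most of the analytic input of the surrounding sections is used. A secondary technical point is choosing local gauges near $\gamma\times\Sigma$ that remain compatible with the given global Coulomb condition \eqref{Coulomb-gauge} while being adapted to the bootstrap.
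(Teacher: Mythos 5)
Your decomposition into interior regions plus a collar of the interface, and your identification of the interface as the only genuinely new issue, match the structure of the problem correctly; but the proposal has a real gap exactly there. The step you phrase as ``granting the associated boundary elliptic estimate'' is the entire content of the theorem, and the route you sketch for it --- verifying a Lopatinskii--Shapiro condition for the coupled linearization of $(d^+_{A_0},d^*_{A_0})\oplus\overline\partial_{J_*}$ along $\{s=0\}$ --- does not apply off the shelf. The matching condition in \eqref{config-space} is not a classical (pointwise, finite-rank) boundary condition: at each $x\in\gamma$ it constrains the full slice restriction $A|_{\{x\}\times\Sigma}$, which for an $L^p_1$ solution is only an $L^p$ connection required to be \emph{weakly} flat, and it couples this infinite-dimensional object to $u(x)$ through the Banach submanifold $\mathcal L\subset\mathcal A^p(\Sigma,F)\times M$. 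Standard elliptic boundary-value theory (and the trace-theorem bookkeeping you invoke to ``absorb the $1/p$-derivative loss'') does not handle such a nonlocal, infinite-dimensional Lagrangian matching between domains of different dimensions, so simply asserting ellipticity ``because $\mathcal L$ is Lagrangian'' leaves the key estimate unproven.

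What actually closes this gap in the paper is a specific device you would need to supply or replace: in the collar, with the product metric and the Coulomb gauge \eqref{Coulomb-gauge}, one first improves the $ds$- and $d\theta$-components $\phi,\psi$ of $A-A_0$ and the $\Sigma$-derivatives of the slice component $a$ by scalar-type arguments (Lemmas \ref{elliptic-reg}, \ref{elliptic-reg-k=0}, \ref{weak-Dirac-lemma}), and then packages the remaining data into the single map $\fp(s,\theta)=(a(-s,\theta),u(s,\theta))$ with values in $L^p(\Sigma,\Lambda^1\otimes F)\times M$. The ASD equation in the $(s,\theta)$-directions together with $\overline\partial_{J_*}u=0$ says precisely that $\fp$ satisfies a Cauchy--Riemann equation for the complex structure $\bJ=(-*_2,J)$ with inhomogeneous term controlled by the previous steps, and the matching condition becomes a genuine totally real boundary condition along $U_\partial$, flattened by the chart of Lemma \ref{chart-complex-str}. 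Regularity then comes from the Banach-space-valued CR theory of Appendix \ref{Banach-space-valued} (Propositions \ref{Ban-val-reg-k=0}, \ref{Ban-val-reg}), iterated first through the exponents $q_{i+1}=2q_i/(4-q_i)$ to pass from $p<4$ to $p>4$ and only then gaining derivatives --- a scheme your single ``iterate as before'' does not capture. Two smaller inaccuracies: the theorem concerns an arbitrary canonical Lagrangian correspondence $\mathcal L$ into an arbitrary $(M,\omega)$, not only the matching correspondence into $\mathcal M(\Sigma,F)$ as your interface discussion assumes; and the interior four-dimensional bootstrap for an $L^p_1$ connection with $2<p<4$ already needs the same exponent-raising trick before the quadratic term behaves, so even that part is not quite ``standard'' as stated.
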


In the case that $(A,u)$ is initially only in $L^p_1$, then we can guarantee that $A|_{\{x\}\times \Sigma}$ is an $L^p$ connection for any $x\in \gamma$. Thus, we need to take $L ^p$ completion of $\mathcal A(\Sigma,F)$ to make sense of the second condition in \eqref{config-space}. This in turn implies that, we are forced to define the the space of flat connections in $\mathcal A(\Sigma,F)$ in the weak sense as in \cite{W:Ban-elliptic}. 

Theorem \ref{regularity-thm} can be used to prove regularity for solutions $(A,u)$ of the mixed equation for more general quintuples. By picking an appropriate smooth connection $A_0$ which is close enough to $A$ in the $L^p_1$ norm, we may assume that $A$ is in the Coulomb gauge with respect to $A_0$ after applying a gauge transformation of the bundle $V$. Then Theorem \ref{regularity-thm} can be used to prove regularity of $A$ and $u$ in a neighborhood of the boundary components $\gamma\times \Sigma$ of $X$ and $\gamma$ of $S$. Then standard regularity of the solutions of ASD equation and holomorphic curve equation can be employed to show interior regularity of $A$ and $u$.

There is a sequential version of Theorem \ref{regularity-thm} which shall be useful for our purposes.
\begin{introthm}\label{regularity-sequential}
	Suppose $p>2$ and $\{(A_i,u_i)\}$ is a sequence of $L^p_1$ solutions of the mixed equation associated to $\fQ(r)$ which is $L^p_1$-convergent to $(A,u)$. 
	Suppose $A_0$ is a smooth connection on $D_-(r)\times F$ and $A_i$ is in Coulomb gauge with respect to $A_0$.
	Then $(A_i,u_i)$ is $C^\infty$ convergent to $(A,u)$.
\end{introthm}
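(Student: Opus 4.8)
The plan is to upgrade the (non-sequential) Theorem~\ref{regularity-thm} to a statement about convergence by running essentially the same bootstrapping scheme, but tracking estimates \emph{uniformly} in the sequence index $i$. More precisely, I would revisit each step of the proof of Theorem~\ref{regularity-thm} and observe that every estimate used there is of the form ``higher Sobolev norm of the solution is controlled by a polynomial in lower Sobolev norms of the solution, with constants depending only on $\fQ(r)$, $p$, $A_0$, and the geometry.'' Since $\{(A_i,u_i)\}$ is $L^p_1$-convergent to $(A,u)$, the sequence is $L^p_1$-bounded; feeding this into the a~priori estimates yields, inductively, $L^p_k$-bounds on $(A_i,u_i)$ on every slightly smaller domain $\fQ(r')$ with $r'<r$, for every $k$. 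One subtlety is that $(A,u)$ itself is already smooth by Theorem~\ref{regularity-thm}, so the target of the convergence is not in question; what must be proven is that the convergence improves from $L^p_1$ to $C^\infty$.

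The key steps, in order, are as follows. \emph{Step 1: Uniform elliptic estimates for the difference.} Write $a_i=A_i-A$ and note that both $A_i$ and $A$ are in Coulomb gauge with respect to $A_0$, so $d_{A_0}^*(A_i-A_0)=d_{A_0}^*(A-A_0)=0$ gives $d_{A_0}^* a_i=0$, together with the boundary condition $*a_i|_{\partial}=0$. Subtract the mixed equations for $(A_i,u_i)$ and $(A,u)$ to get an equation for $(a_i, u_i-u)$ whose linear part is the elliptic operator governing the mixed equation (Coulomb-gauge-fixed ASD linearized at $A$, coupled to the linearized Cauchy--Riemann operator with the linearized matching/boundary conditions along $\gamma$), and whose nonlinear part is quadratic in $(a_i,u_i-u)$ and their first derivatives. \emph{Step 2: Base case.} Using the $L^p_1$-smallness of $(a_i,u_i-u)$ and the Sobolev multiplication $L^p_1\times L^p_1\to L^{p/2}$ (here $p>2$ is exactly what makes $L^p_1$ an algebra-like space in dimension $4$, resp.\ $2$ for $u$), absorb the quadratic terms and obtain an $L^{p/2}_{?}$ — more carefully, an $L^p$ — estimate on the right-hand side, hence (by the same elliptic estimate for the mixed operator established in the proof of Theorem~\ref{regularity-thm}, applied on a shrunken domain with cutoffs) an $L^p_2$ bound on $(a_i,u_i-u)$ that tends to $0$. \emph{Step 3: Induction.} Assuming $(a_i,u_i-u)\to 0$ in $L^p_k$ on $\fQ(r_k)$, differentiate the difference equation, use the multiplication theorems $L^p_k\times L^p_k\to L^p_k$ for $k\ge 1$ (again valid since $p>2$), the already-established smooth bounds on $(A,u)$, and the uniform elliptic estimate for the mixed operator, to conclude $(a_i,u_i-u)\to 0$ in $L^p_{k+1}$ on a slightly smaller $\fQ(r_{k+1})$. \emph{Step 4: Sobolev embedding.} Since $L^p_{k}\hookrightarrow C^{k-1}$ once $p>2$ in the relevant dimensions, $C^\infty$-convergence on every compact subset of the interior-and-boundary of $\fQ(r)$ follows; a standard diagonal/exhaustion argument over $r_k\uparrow r$ and over $k$ gives $C^\infty_{\mathrm{loc}}$ convergence as claimed.

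The main obstacle, as in the proof of Theorem~\ref{regularity-thm} itself, is the analysis \emph{at the boundary} $\gamma\times\Sigma$ of $X$ (equivalently $\gamma\subset\partial S$), where the matching condition $(A_i|_{\{x\}\times\Sigma},u_i(x))\in\mathcal L$ couples the $4$-dimensional gauge-theoretic data to the $2$-dimensional holomorphic-curve data through the (only $L^p$-regular, a~priori) restriction $A_i|_{\{x\}\times\Sigma}$. Establishing that the relevant boundary-value problem for the coupled linearized operator is elliptic with \emph{uniform} (in $i$) estimates — and that the linearization of the matching condition behaves continuously as $A_i\to A$, so that the boundary conditions for the difference $(a_i,u_i-u)$ converge — is the technical heart. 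Here I expect to reuse the Lagrangian-boundary-condition analysis and the trace/restriction estimates already set up in the proof of Theorem~\ref{regularity-thm}, the only new point being uniformity, which follows because all constants there depend only on $\fQ(r)$, $A_0$, and the $L^p_1$-bound on the sequence. Interior regularity (away from $\gamma\times\Sigma$) is comparatively routine: it reduces to the standard sequential regularity for ASD connections in Coulomb gauge (Uhlenbeck) on $D_-(r)\times\Sigma$ and for $J$-holomorphic curves with Lagrangian boundary on $D_+(r)$, both of which are classical.
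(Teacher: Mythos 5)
Your overall strategy---rerun the bootstrap from Theorem \ref{regularity-thm} on differences, with constants uniform in $i$, and finish with Sobolev embedding---is indeed how the paper proceeds (it estimates $A_i-A_j$ and the pair $\fp_i$ rather than $A_i-A$, but that is inessential). The genuine gap is in your Steps 2--3, i.e.\ in the Sobolev arithmetic that is supposed to close the induction. In dimension $4$, $L^p_1$ is \emph{not} an ``algebra-like'' space for $2<p<4$ (one needs $p>4$ for $L^p_1\cdot L^p_1\subset L^p_1$), so the claimed multiplication $L^p_k\times L^p_k\to L^p_k$ fails at $k=1$ in exactly the range $2<p<4$ covered by the hypothesis, and the quadratic terms cannot simply be ``absorbed'' to jump from $L^p_1$-convergence to $L^p_2$-convergence. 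This is precisely why the paper's regularity scheme does not gain a derivative at fixed $p$: it first raises the integrability exponent through the iteration $q_{i+1}=2q_i/(4-q_i)$ of \eqref{def-q-i}, using that the quadratic term is controlled in $L^{p_1}_1$ with $p_1=\frac{4p}{8-p}$ (the analogues \eqref{quad-bound-ij}, \eqref{phi-reg-ij}, \eqref{psi-reg-ij}, \eqref{slice-a-reg-ii} for differences), and only after reaching $q>4$ does it start gaining derivatives---and even then the Cauchy--Riemann/matching step yields $L^{p/2}_2$, not $L^p_2$, because the slice data only live in a Banach space where Propositions \ref{Ban-val-reg} and \ref{Ban-val-reg-k=0} apply. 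As written, your base case and induction do not close for $2<p<4$.

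A second, related overstatement is your appeal to ``the elliptic estimate for the mixed operator established in the proof of Theorem \ref{regularity-thm}'' for the coupled linearization with Lagrangian matching conditions. No such single $L^p$ elliptic boundary-value estimate is set up there (the coupled linear theory appears only in Section \ref{Fredholm-section}, in an $L^2$ setting on cylinder quintuples). The sequential argument instead has to be run component-wise, exactly as in Subsection \ref{regularity-thm-sequential-sub}: Lemma \ref{elliptic-reg} applied to $\rho(\phi_i-\phi_j)$ and $\rho(\psi_i-\psi_j)$ via the identities \eqref{1-identity-A12-ij}--\eqref{ASD-psi-ij}, Lemma \ref{weak-Dirac-lemma} for $\nabla_\Sigma(a_i-a_j)$, and then the chart of Lemma \ref{chart-complex-str} together with Proposition \ref{Ban-val-reg-k=0} for the Banach-space-valued maps $\fp_i$ (this also handles the point you gloss over, namely that ``$u_i-u$'' only makes sense after placing the maps in a common chart, which requires the $C^0$ convergence coming from $p>2$). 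If you replace your Steps 2--3 by this two-stage, component-wise iteration---which is what your opening paragraph promises to do anyway---the proof goes through and coincides with the paper's.
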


\subsection*{Compactness}

Solutions of the mixed equation for the matching quintuple satisfies a compactness property which generalizes the Uhlenbeck compactness for the solutions of ASD equation \cite{Uh:com,Uh:remov-sing} and the Gromov compactness for holomorphic curves in the symplectic manifold $\mathcal M(\Sigma,F)$ \cite{Gro:comp}.

\begin{introthm}\label{GU-comp}
	There is a constant $\hbar$ such that the following holds. Suppose $\{(A_i,u_i)\}$ 
	is a sequence of smooth solutions of the mixed equation for a matching quintuple $\fq$ as in \eqref{matching-quintuple} such that 
	\begin{equation}\label{energy-bound}
	  |\!|F_{A_i}|\!|^2_{L^2(X)}+|\!|du_i|\!|_{L^2(S)}^2\leq \kappa
	\end{equation}
	for a fixed constant $\kappa$. Then there are
	\vspace{-5pt}
	\begin{itemize}
		\item[(i)]  a subsequence $\{(A_i^\pi,u_i^\pi)\}$ of $\{(A_i,u_i)\}$,
		\item[(ii)] a solution of the mixed equation $(A_0,u_0)$ for the quintuple $\fq$,
		\item[(iii)] finite sets $\sigma_-\subset {\rm int}(X)$, $\sigma_\partial \subset \gamma$ and 
		$\sigma_+\subset S\setminus \gamma$,
	\end{itemize}
	\vspace{-5pt}
	such that the following holds.
	\vspace{-5pt}
	\begin{itemize}
		\item[(i)] The pair $(A_0,u_0)$ satisfies the energy bound 
		\[|\!|F_{A_0}|\!|^2_{L^2(X)}+|\!|d u_0|\!|_{L^2(S)}^2\leq \kappa-\hbar.\] 
		\item[(ii)] $u_i^\pi$ is $C^\infty$-convergent to $u_0$ on any compact subspace of $S\setminus (\sigma_+\cup \sigma_\partial)$.
		\item[(iii)] There are gauge transformations $g_i^\pi$ defined over $X\setminus (\sigma_\partial\times \Sigma\cup \sigma_-)$ such that $(g_i^\pi)^*A_i^\pi$ is $C^\infty$ convergent to
		$A_0$ on any compact subspace of  $X\setminus (\sigma_\partial\times \Sigma\cup \sigma_-)$.
	\end{itemize}
\end{introthm}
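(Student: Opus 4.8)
The plan is to combine Uhlenbeck's compactness method for the anti-self-duality equation on $X$ with Gromov's compactness method for the $J$-holomorphic map $u$ on $S$, coupled along $\gamma$ through the matching condition. The argument has the three usual parts: (a) an $\epsilon$-regularity (energy quantization) lemma; (b) extraction of a subsequence converging in $C^\infty$ on compact subsets of the complement of a finite bubbling set, together with Uhlenbeck's patching of local gauges into a global one; and (c) a removable-singularity theorem that completes the limit to a solution for $\fq$ on all of $X$ and $S$. The constant $\hbar$ is the minimum of three positive ``bubble thresholds'': the Yang--Mills energy of a nontrivial finite-energy ASD instanton on $\R^4$; the least symplectic area of a nonconstant $J$-holomorphic sphere in $M$, or of a nonconstant $J$-holomorphic disc with boundary on one of the $L_i$; and the least energy of a nonconstant finite-energy solution of a model mixed equation along the seam (an instanton on a half-space and a $J$-holomorphic map on a half-plane, coupled by the matching condition). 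Positivity of the first two is classical; positivity of the third follows from the regularity theory of Theorems~\ref{regularity-thm} and \ref{regularity-sequential} together with a monotonicity/removable-singularity argument for such model solutions.

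First I would prove the $\epsilon$-regularity lemma: there is $\epsilon_0\in(0,\hbar)$ so that whenever $(A_i,u_i)$ solves the mixed equation on a ball and $\|F_{A_i}\|_{L^2}^2+\|du_i\|_{L^2}^2$ over that ball is below $\epsilon_0$, then on the concentric half-ball, after a gauge transformation, $(A_i,u_i)$ satisfies $C^\infty$ bounds depending only on $\epsilon_0$ and the fixed geometry \eqref{metric-near-bdry}. In ${\rm int}(X)$ this is Uhlenbeck's $\epsilon$-regularity for the ASD equation; in ${\rm int}(S)$ it is the mean-value inequality and elliptic bootstrap for $J$-holomorphic maps into the compact (hence bounded-geometry) manifold $M$; near an arc $\eta_i$ it is the Lagrangian-boundary version of the latter. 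The genuinely new case is near $\gamma$ --- equivalently near $\gamma\times\Sigma\subset\partial X$ and $\gamma\subset\partial S$ --- where the two equations are coupled: here one uses the small-energy hypothesis to put $A_i$ in Coulomb gauge \eqref{Coulomb-gauge} relative to a fixed smooth connection and to derive uniform $L^p_1$ bounds on $(A_i,u_i)$, and then Theorem~\ref{regularity-sequential} upgrades any $L^p_1$-convergent subsequence to a $C^\infty$-convergent one, reducing this case to the machinery already established. In this step one uses that, by the matching condition in \eqref{config-space}, $A_i$ restricts to a \emph{flat} connection on each boundary slice $\{x\}\times\Sigma$, which both makes the near-boundary estimates for the connection tractable and forces the boundary concentration set to be a union of such slices.

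Next I would carry out the global extraction. By the uniform bound \eqref{energy-bound} and a Vitali covering argument, the set $\sigma$ of points at which at least $\epsilon_0$ of energy concentrates in every neighborhood is finite, with at most $\kappa/\epsilon_0$ points; decompose it as $\sigma=\sigma_-\cup\sigma_+\cup\sigma_\partial$, where $\sigma_-\subset{\rm int}(X)$ collects the interior concentration points of $A_i$, $\sigma_+\subset S\setminus\gamma$ the concentration points of $u_i$ away from $\gamma$, and $\sigma_\partial\subset\gamma$ the concentration points of $u_i$ on $\gamma$ together with the projections to $\gamma$ of the boundary concentration points of $A_i$. On the complements $X\setminus(\sigma_\partial\times\Sigma\cup\sigma_-)$ and $S\setminus(\sigma_+\cup\sigma_\partial)$, cover by balls of energy $<\epsilon_0$ and apply the $\epsilon$-regularity lemma; a diagonal subsequence $\{(A_i^\pi,u_i^\pi)\}$ then has $u_i^\pi\to u_0$ in $C^\infty$ on compact subsets of $S\setminus(\sigma_+\cup\sigma_\partial)$, and $A_i^\pi$ converging locally modulo gauge on $X\setminus(\sigma_\partial\times\Sigma\cup\sigma_-)$. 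Uhlenbeck's patching argument then assembles the local gauge transformations into a single sequence $g_i^\pi$ over that complement with $(g_i^\pi)^*A_i^\pi\to A_0$ in $C^\infty$ on compact subsets; near $\gamma\times\Sigma$ one fixes the gauge first (compatibly with the Coulomb normalization above) and propagates it outward, so that the limiting connection and map still satisfy the matching and boundary conditions \eqref{config-space}. Letting $i\to\infty$ in \eqref{mixed} shows that $(A_0,u_0)$ solves the mixed equation away from $\sigma$.

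Finally I would prove that $(A_0,u_0)$ extends smoothly across $\sigma$ and that the energy drops by $\hbar$. Smooth extension at a point of $\sigma_-$ is Uhlenbeck's removable-singularity theorem for finite-energy ASD connections; at a point of $\sigma_+$ it is the removable-singularity theorem for finite-energy $J$-holomorphic maps, with the Lagrangian version at points of $\partial S$; at a point of $\sigma_\partial$ it is a \emph{mixed} removable-singularity theorem, which I would prove by combining the previous two with the interior regularity of Theorem~\ref{regularity-thm} on shrinking punctured half-balls of the form appearing in $\fQ(r)$. For the energy drop, at each $x\in\sigma$ one rescales the sequence near $x$ at the rate at which energy concentrates; a standard bubbling analysis produces a nonconstant finite-energy solution of one of the three model equations above, hence of energy at least $\hbar$. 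Since the Yang--Mills energy of an ASD connection and the Dirichlet energy of a $J$-holomorphic map are each pinned down by (relative) topological data, energy is conserved exactly in the bubbling process, so at least $\hbar$ units are carried into the bubbles and $\|F_{A_0}\|_{L^2(X)}^2+\|du_0\|_{L^2(S)}^2\le\kappa-\hbar$ in the presence of bubbling, which is the case of interest. The principal obstacle throughout is the analysis localized at $\gamma$: the $\epsilon$-regularity and removable-singularity statements there, and the classification of the admissible bubbles, since $\gamma$ is the only locus where the gauge-theoretic and symplectic halves of the equation genuinely interact --- through the matching Lagrangian correspondence --- which is exactly the interaction that Theorems~\ref{regularity-thm} and \ref{regularity-sequential} were designed to control.
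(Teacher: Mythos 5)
Your skeleton ($\epsilon$-regularity, finite concentration set, local convergence plus Uhlenbeck patching, removable singularities, energy drop) is the same as the paper's, but the proposal leaves unproved exactly the two steps that are genuinely new at the seam $\gamma$, and they do not follow from the results you cite. First, the $\epsilon$-regularity/energy quantization at the matching line: you assert that small mixed energy near $\gamma$ lets you "derive uniform $L^p_1$ bounds on $(A_i,u_i)$" after Coulomb gauge, and elsewhere that positivity of the seam-bubble threshold "follows from the regularity theory of Theorems \ref{regularity-thm} and \ref{regularity-sequential} together with a monotonicity/removable-singularity argument." But those regularity theorems take $L^p_1$ control as a hypothesis; producing it from an $L^2$ energy bound near $\gamma$ is precisely the content of Theorem \ref{en-quant} and Lemma \ref{L21-energy}, whose proof needs the Weitzenb\"ock estimates of Proposition \ref{lap-estimates} and, crucially, the normal-derivative estimate $\partial_s e_{A,u}\leq C e_{A,u}^{3/2}$ along $\gamma$ (Proposition \ref{normal-est}), which is where the coupling of $F_A$ and $du$ through the matching condition is actually controlled (by extending the matching gauge transformation into a collar and estimating a cubic Chern--Simons-type term). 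Without some such mean-value inequality at the seam, your constant $\hbar$ (the minimal energy of a nonconstant seam bubble) is not known to be positive, so the argument as written is circular at its starting point.

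Second, removability of singularities at points of $\sigma_\partial$: you propose to obtain it "by combining" interior ASD removal, holomorphic removal, and Theorem \ref{regularity-thm} on punctured half-balls. That combination does not yield a proof: neither classical removal theorem applies across the seam, and Theorem \ref{regularity-thm} only upgrades solutions that are already $L^p_1$ up to the boundary of the half-disc. What is actually needed is the decay estimate $\mathfrak E_r(A,u)\leq Cr^{2\beta}$ for finite-energy solutions on the punctured half-disc (Proposition \ref{decayenergy}), which the paper derives from a differential inequality for the Chern--Simons functional of a connection assembled on the closed $3$-manifold $Y_r$ from $A|_{S^1_{r,-}}$, a lift of $u|_{S^1_{r,+}}$, and an interpolating gauge transformation (Lemmas \ref{lem527} and \ref{lem535}); this isoperimetric-type inequality adapted to the matching condition is the technical heart of Theorem \ref{remove-sing-thm-intro} and hence of the compactness theorem, and your proposal offers no substitute for it. A smaller point: since you take $\epsilon_0<\hbar$, concentration at a point of $\sigma$ only gives an immediate loss of $\epsilon_0$, so your energy drop of $\hbar$ relies on the rescaling/bubble classification at seam points and on no energy loss in necks, which again presuppose the missing quantization and removal results; in the paper the drop is immediate because the concentration set is defined by at least $\hbar$ of limiting measure in every ball.
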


An important ingredient in the proof Theorem \ref{GU-comp} is an a priori estimate in Subsection \ref{energy-quant=subsec} which asserts that if we have a solution $(A,u)$ of the mixed equation satisfying the $L^2$ bound in \eqref{energy-bound} for a constant $\kappa$ less than $\hbar$, then for an appropriate choice of $p$, the $L^p_1$ norm of $(A,u)$ can be controlled. Another important input for Theorem \ref{GU-comp} is a {\it removability of singularity} result in Subsection \ref{remove-sing}, which is the analogue of corresponding result for the solutions of the ASD and CR equations.

\begin{introthm}\label{remove-sing-thm-intro}
	Let $(A,u)$ be a solution of the mixed equation for the quintuple 
	\[
	  ((D_-(r)\setminus\{0\})\times \Sigma,(D_-(r)\setminus\{0\})\times F,D_+(r)\setminus\{0\},\mathcal M(\Sigma,F),\mathcal L(\Sigma,F))
	\]
	such that 
	\[
	  |\!|F_{A}|\!|^2_{L^2(X)}+|\!|du|\!|_{L^2(S)}^2<\infty.
	\]
	Then the followings hold.
	\vspace{-5pt}
	\begin{itemize} 
		\item[(i)] There exists a gauge transformation $g$ over $(D_-(r)\setminus\{0\})\times \Sigma$ such that $g^*A$ extends to a smooth connection 
		$\widetilde A$ on $D_-(r)\times \Sigma$.
		\item[(ii)] $u$ can be extended to a smooth map $\widetilde u: D_+(r)\to  \mathcal M(\Sigma,F)$.
	\end{itemize}
	In particular, $(\widetilde A,\widetilde u)$ is a solution of the mixed solution associated to the quintuple 
	\[
	  (D_-(r)\times \Sigma,D_-(r)\times F,D_+(r),\mathcal M(\Sigma,F),\mathcal L(\Sigma,F)).
	\]
\end{introthm}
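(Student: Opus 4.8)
The plan is to combine the removable-singularity theorems for anti-self-dual connections and for pseudoholomorphic curves with the regularity statement of Theorem \ref{regularity-thm}, exploiting the fact that the puncture $0$ lies on the common boundary $\gamma$ of $D_-(r)$ and $D_+(r)$. For $\rho\le r$ write $B_\rho=\{|z|<\rho\}$ and set
\[
  E(\rho)=\|F_A\|_{L^2((B_\rho\cap\bbH_-)\times\Sigma)}^2+\|du\|_{L^2(B_\rho\cap\bbH_+)}^2 .
\]
Since the total energy is finite, there is $\rho_0\le r$ with $E(\rho_0)<\hbar$, so, for a suitable $p>2$, the a priori estimate of Subsection \ref{energy-quant=subsec} applies on every dyadic half-annulus $\{\delta/2<|z|<2\delta\}$ contained in $B_{\rho_0}$: after the rescaling $z\mapsto z/\delta$ and a choice of Coulomb gauge it yields $L^p_1$ bounds on $(A,u)$ over the rescaled half-annulus, uniformly in $\delta$. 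In particular, by Theorem \ref{regularity-thm}, $(A,u)$ is already smooth on the punctured domain.

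The heart of the matter is a quantitative energy-decay estimate: there are $\rho_1\le\rho_0$ and $\alpha\in(0,1)$ with $E(\rho)\le C(\rho/\rho_1)^{2\alpha}E(\rho_1)$ for all $\rho\le\rho_1$. I would prove this by neck analysis. The conformal substitution $w=-\log z$ turns the punctured neighbourhood of $0$ into a half-infinite cylindrical end, on which the mixed equation becomes a flow equation in the variable $\tau=\Re w$, carrying on the two boundary rays that descend from $\gamma$ the matching constraint attached to $\mathcal L(\Sigma,F)$. Finite energy forces $(A,u)$ to converge, as $\tau\to\infty$, to a $\tau$-independent limit; modulo gauge transformations the possible limits form a smooth finite-dimensional family governed by $\mathcal M(\Sigma,F)$ through $\mathcal L(\Sigma,F)$, and --- since $\mathcal M(\Sigma,F)$ is a closed symplectic manifold and the matching correspondence is smooth --- the linearization of the equation transverse to this family has a spectral gap. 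Together with the isoperimetric-type inequalities bounding the anti-self-dual and holomorphic parts of the energy inside $B_\rho$ by quantities on $\partial B_\rho$, this spectral gap feeds the standard differential inequality for the tail energy $\int_{\tau'\ge\tau}(\cdots)$ and produces exponential decay in $\tau$, hence the asserted polynomial decay of $E(\rho)$. I expect this to be the main obstacle: it requires a careful set-up of the cross-sectional operator on the link of the singular locus, a treatment of the two rays of $\gamma$ along which the matching condition is imposed, and uniformity in the $\Sigma$-variable, and essentially all of the analytic content of the theorem is concentrated here. (A more hands-on, essentially equivalent route iterates the a priori estimate of Subsection \ref{energy-quant=subsec} across dyadic half-annuli, comparing the energies on consecutive annuli after showing that on each one $(A,u)$ is $L^p_1$-close, near $\gamma\times\Sigma$, to a flat model in which the $\Sigma$-slices of $A$ are flat and slowly varying and $u$ is nearly constant.)

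Granted the decay $E(\rho)=O(\rho^{2\alpha})$, the extension is extracted as follows. For $u$: the decay together with the rescaled $L^p_1$ bound gives $\diam u(\partial B_\rho\cap\bbH_+)=O(\rho^{\alpha})$, so $u$ extends to a continuous --- indeed $L^p_1$ --- map $\widetilde u$ on $D_+(r)$. For $A$: transporting the decay to Coulomb gauges on the half-annuli $\{\delta/2<|z|<2\delta\}\times\Sigma$ and patching them, one constructs a gauge transformation $g$ over $(D_-(r)\setminus\{0\})\times\Sigma$ and a smooth reference connection $A_0$ for which $g^*A-A_0$ lies in $L^p_1$ up to $\{0\}\times\Sigma$; the patching is unobstructed precisely because $0$ is a boundary puncture, so $D_-(r)\setminus\{0\}$ is simply connected and the singular locus $\{0\}\times\Sigma$ is linked by no loop, ruling out the holonomy obstruction to removability. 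After a further gauge fixing we may assume $g^*A$ satisfies the Coulomb condition \eqref{Coulomb-gauge} relative to $A_0$. Then $(g^*A,\widetilde u)$ is an $L^p_1$ solution of the mixed equation for the unpunctured quintuple --- a quintuple of the form $\fQ(r)$, the matching condition holding at $0$ by continuity of the $\Sigma$-restrictions --- so Theorem \ref{regularity-thm} applies and shows it is smooth. Hence $g^*A$ extends to a smooth connection $\widetilde A$, the map $\widetilde u$ is smooth, and $(\widetilde A,\widetilde u)$ solves the mixed equation for the unpunctured quintuple, which is the assertion.
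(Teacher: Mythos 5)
Your overall skeleton is the same as the paper's: first a polynomial decay estimate for the mixed energy on punctured half-discs, then an extension of $u$ and of $A$ in a suitable gauge to $L^p_1$ data on the unpunctured domain, and finally an appeal to the regularity theorem to upgrade to smoothness and to recover the matching condition at $0$ by continuity. The last two steps in your proposal are essentially the paper's (the paper gets the good gauge from \cite[Theorem 5.3 (ii)]{Weh:Lag-bdry-ana2} rather than by re-patching Coulomb gauges, but that is a cosmetic difference). The problem is the first step, which you yourself identify as carrying ``essentially all of the analytic content'': it is not proved, and the sketch you give has real gaps. Finite energy on the cylindrical end does \emph{not} by itself force convergence to a $\tau$-independent limit --- in general it only gives subsequential convergence, and full convergence is usually a \emph{consequence} of the decay estimate (or of a {\L}ojasiewicz-type argument), so as stated your argument is circular. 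Likewise, the asserted spectral gap of the cross-sectional linearized operator --- an operator on the link $S^1_{r,-}\times\Sigma\cup S^1_{r,+}$ coupling a gauge-theoretic piece to a finite-dimensional piece through the matching correspondence $\mathcal L(\Sigma,F)$, modulo gauge and modulo the manifold of translation-invariant solutions --- is a substantial Morse--Bott-type claim that does not follow from compactness of $\mathcal M(\Sigma,F)$ and smoothness of $\mathcal L(\Sigma,F)$; setting it up and proving it would require an analysis comparable to Section \ref{Fredholm-section}, and none of it is supplied.

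For contrast, the paper proves Proposition \ref{decayenergy} without any asymptotic limits or spectral gaps, by a direct monotonicity argument through the Chern--Simons functional: one builds from $A|_{S^1_{r,-}}$, from a lift of $u|_{S^1_{r,+}}$, and from a gauge-extension lemma (Lemma \ref{lem526}, where the matching condition enters) a connection $B_r$ on the closed $3$-manifold $Y_r$, and shows both that $\vert CS_{\beta_0^r}(B_r)\vert \le K\, r\,\frac{d}{dr}\mathfrak E_r(A,u)$ (Lemma \ref{lem527}, the isoperimetric-type inequality you allude to) and that $CS_{\beta_0^r}(B_r)=\mathfrak E_r(A,u)$ (Lemma \ref{lem535}, using the $8\pi^2\Z$-ambiguity \eqref{form538} and smallness of the energy to fix the constant). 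These two facts give $\frac{d}{dr}\bigl(r^{-2\beta}\mathfrak E_r\bigr)\ge 0$ with $\beta=1/2K$, hence the decay. Your neck-analysis route might be made to work, but as written the decisive estimate is missing, so the proposal does not yet constitute a proof.
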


\subsection*{Fredholm theory}

The moduli spaces of the solutions of the mixed equation generically are expected to be finite dimensional smooth manifolds once appropriate decay conditions are prescribed on the non-compact ends of $X$ and $S$. The routine approach to achieve this is to establish a Fredholm theory for the linearization of the mixed equation. Fredholm theory of the linearized operator can be turned into a local problem by a cut and paste method. Given the local nature of this property, we focus on the special case of the mixed equation associated to a {\it cylinder quintuple}
\begin{equation}\label{mixed-cylinder}
	\fc_I:=(Y\times I,E\times I,[0,1]\times I,M,\{\mathcal L,L\}).
\end{equation}
where $I$ is an open interval in $\R$, $Y$ is a compact Riemannian $3$-manifold with boundary $\Sigma$, $M$ is a symplectic manifold, $\mathcal L$ is a canonical Lagrangian correspondence from $\mathcal A(\Sigma,F)$ to $M$ and $L$ is a Lagrangian in $M$. The assumption on the topological types of the bundles imply that $\Sigma$ has even number of connected components. The Riemannian metric on $Y$ induces the product metric on $Y\times I$. We also fix a family of compatible almost complex structures $\{J_{s,\theta}\}_{(s,\theta)\in [0,1]\times I}$ on $M$. The variable $\theta$ denotes the coordinate on the interval $I$ and $s$ denotes the coordinate on the factor $[0,1]$ of the region $[0,1]\times I$. We also orient $Y\times I$ using the volume form $\dvol_X=\dvol_Y\wedge d\theta$. Using the metric and the orientation on $Y\times I$, we define the first equation in \eqref{mixed}, and the second part of the mixed equation is given by the CR equation defined with respect to domain dependent almost complex structures $J_{s,\theta}$.

Given a smooth mixed pair $(A,u)$ associated to $\fc_I$, we may form an operator $\mathcal D_{(A,u)}$ which is called the {\it mixed operator}. If $(A,u)$ is the solution of the mixed equation, then the local behavior of the moduli of solutions to the mixed equation around $(A,u)$ is governed by the mixed operator $\mathcal D_{(A,u)}$. For any integer $k\geq 1$, the linearization operator can be regarded as a bounded linear map with the domain $E^k_{(A,u)}(I)$ consisting of pairs $(\zeta,\nu)$ where 
\begin{equation}\label{dom-tuple}
  \zeta\in L^2_k(Y\times I,\Lambda^1\otimes E),\hspace{1cm}\nu\in L^2_k([0,1]\times I,u^*TM),
\end{equation}
such that 
\begin{equation}\label{bdry-conditions}
  *\zeta|_{\Sigma\times I}=0,\hspace{1cm} (\zeta\vert_{\Sigma\times \{\theta\}},\nu(0,\theta))\in T\mathcal L,\hspace{1cm}\nu(1,\theta)\in TL.
\end{equation}
To be a bit more detailed, the middle condition, called the matching condition, asserts that $(\zeta\vert_{\Sigma\times \theta},\nu(0,\theta))$ belongs to the tangent space of $\mathcal L$ at the points $(A|_{\Sigma\times \{\theta\}},u(0,\theta))$ for any $\theta$. (See Section \ref{Fredholm-section} for an elaboration on this condition, especially in the case that $k=1$.) Similarly, the last condition, called the boundary condition, implies that for any $\theta$, the vector $\nu(1,\theta)$ is tangent to the Lagrangian $L$ at $u(1,\theta)$. The target of $\mathcal D_{(A,u)}$ consists of triples $(\mu,\xi,z)$ such that
\begin{equation}\label{triple-tar}
  \mu\in L^2_{k-1}(Y\times I,\Lambda^+\otimes E),\hspace{1cm}\xi\in L^2_{k-1}(Y\times I,E),\hspace{1cm}z\in L^2_{k-1}([0,1]\times I,u^*TM).
\end{equation}
The map $\mathcal D_{(A,u)}$ is a degree one differential operator and an explicit formula for this operator is given in Section \ref{Fredholm-section}. This operator is defined by linearizing the mixed equation and then including a component that is related to the first equation in \eqref{Coulomb-gauge}.

We can also consider the formal adjoint $\mathcal D_{(A,u)}^*$ of $\mathcal D_{(A,u)}$. The domain of $\mathcal D_{(A,u)}^*$, denoted by $K^k_{(A,u)}$, consists of triples $(\mu,\xi,z)$ as in \eqref{triple-tar} where $k-1$ is replaced with $k$, and the following additional conditions hold. Since $Y\times I$ is equipped with the product metric, the self-dual form $\mu$ has the form $\frac{1}{2}(d\theta\wedge b-*_3b)$ where $b$ is a section of the pullback of $T^*Y\otimes E$ to $Y\times I$. We have the following additional requirements on $(\mu,\xi,z)$:
\begin{equation}\label{bdry-conditions}
	*b|_{\Sigma\times I}=0,\hspace{1cm} (b\vert_{\Sigma\times \{\theta\}},z(0,\theta))\in T\mathcal L,\hspace{1cm}z(1,\theta)\in TL.
\end{equation}
The target of the adjoint operator $\mathcal D_{(A,u)}^*$ consists of tuples as in \eqref{dom-tuple}, where $k$ is replaced with $k-1$.  By definition, $\mathcal D_{(A,u)}^*$ is the unique operator which satisfies
\begin{equation}\label{adjoint}
   \langle \mathcal D_{(A,u)}^*(\mu,\xi,z),(\zeta,\nu)\rangle_{L^2}=\langle (\mu,\xi,z),\mathcal D_{(A,u)}(\zeta,\nu)\rangle_{L^2},
\end{equation}
for any $(\mu,\xi,z)\in K^k_{(A,u)}$ and any smooth $(\zeta,\nu)$ where $\zeta$ is compactly supported in the interior of $Y\times I$ and $\nu$ is compactly supported in the interior of $[0,1]\times I$. As it is explained in more details in Section \ref{Fredholm-section}, $\mathcal D_{(A,u)}^*$ essentially has the same form as $\mathcal D_{(A,u)}$.

\begin{introthm}\label{Fred-mixed-op}
	For any open interval $J$ that its closure is a compact subset of $I$ the following holds. 
	\vspace{-5pt}
	\begin{itemize}
	\item[(i)]Suppose $(\zeta,\nu)\in E^1_{(A,u)}(I)$ and $\mathcal D_{(A,u)}(\zeta,\nu)$ is in $L^2_{k-1}$. Then $(\zeta,\nu)\in E^k_{(A,u)}(J)$. Moreover, there is a constant $C$, independent of $(\zeta,\nu)$, such that
	\begin{equation}
		|\!|(\zeta,\nu)|\!|_{L^2_{k}(J)}\leq C\left(|\!|\mathcal D_{(A,u)}(\zeta,\nu)|\!|_{L^2_{k-1}(I)}+|\!|(\zeta,\nu)|\!|_{L^2(I)}\right).
	\end{equation}
	Similarly, suppose $(\mu,\xi,z)\in K^1_{(A,u)}(I)$ and $\mathcal D_{(A,u)}^*(\mu,\xi,z)$ is in $L^2_{k-1}$. Then $(\mu,\xi,z)\in K^k_{(A,u)}(J)$. Moreover, there is a constant $C$, independent of $(\mu,\xi,z)$, such that 
	\begin{equation}
		|\!|(\mu,\xi,z)|\!|_{L^2_{k}(J)}\leq C\left(|\!|\mathcal D_{(A,u)}^*(\mu,\xi,z)|\!|_{L^2_{k-1}(I)}+|\!|(\mu,\xi,z)|\!|_{L^2(I)}\right).
	\end{equation}	
	\item[(ii)] Suppose $(\mu,\xi,z)$ is as in \eqref{triple-tar} for $k=1$, and there is a constant $\kappa$ such that
	\[
	   \left\vert \langle (\mu,\xi,z),\mathcal D_{(A,u)}(\zeta,\nu)\rangle\right\vert\leq \kappa |\!|(\zeta,\nu)|\!|_{L^2(I)}
	\]
	for any smooth $(\zeta,\nu)$ in $E^1_{(A,u)}(I)$ with compact support. Then $(\mu,\xi,z)\in K^1_{(A,u)}(J)$. Moreover, there is a constant $C$, independent of $(\mu,\xi,z)$, such that
	\begin{equation}
		|\!|(\mu,\xi,z)|\!|_{L^2_{1}(J)}\leq C\left(|\!|\mathcal D_{(A,u)}^*(\mu,\xi,z)|\!|_{L^2(I)}+|\!|(\mu,\xi,z)|\!|_{L^2(I)}\right).
	\end{equation}	
	Similarly, suppose $(\zeta,\nu)$ is as in \eqref{dom-tuple} for $k=0$, and there is a constant $\kappa$ such that
	\[
	   \left\vert \langle (\zeta,\nu),\mathcal D_{(A,u)}^*(\mu,\xi,z)\rangle\right\vert\leq \kappa |\!|(\mu,\xi,z)|\!|_{L^2(I)}
	\]
	for any smooth $(\mu,\xi,z)$ in $K^1_{(A,u)}(I)$ with compact support. Then $(\zeta,\nu)\in E^1_{(A,u)}(J)$. Moreover, there is a constant $C$, independent of $(\zeta,\nu)$, such that
	\begin{equation}
		|\!|(\zeta,\nu)|\!|_{L^2_{1}(J)}\leq C\left(|\!|\mathcal D_{(A,u)}(\zeta,\nu)|\!|_{L^2(I)}+|\!|(\zeta,\nu)|\!|_{L^2(I)}\right).
	\end{equation}	
	\end{itemize}
\end{introthm}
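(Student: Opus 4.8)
The plan is to prove the estimates and regularity statements by localizing on the four-manifold $Y\times I$ and the strip $[0,1]\times I$, reducing to three model boundary value problems, and patching. Since all the asserted bounds are over $J$ with $\overline{J}$ compact in $I$, I would fix a cutoff $\chi(\theta)$ supported in $I$ with $\chi\equiv 1$ on $J$; the commutator of $\mathcal D_{(A,u)}$ (or $\mathcal D_{(A,u)}^{*}$) with $\chi$ is zeroth order with smooth bounded coefficients, so it only contributes to the $L^2(I)$ term on the right. Next I would cover a neighborhood of $\supp\chi$ in $Y\times I$ by interior charts and collar charts around $\Sigma\times I$, and in $[0,1]\times I$ by interior charts, charts around $\{1\}\times I$, and charts around $\{0\}\times I$, and use a subordinate partition of unity to reduce to local estimates on charts of these three types. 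By the explicit formula for $\mathcal D_{(A,u)}$ from Section \ref{Fredholm-section}, on each chart this operator agrees, modulo a zeroth order term with smooth bounded coefficients, with a model first order operator.

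On interior charts $\mathcal D_{(A,u)}$ decouples: in the interior of $Y\times I$ it is, to leading order, the rolled-up ASD-plus-Coulomb operator $\zeta\mapsto(d_{A_0}^{\,*}\zeta,\,d_A^{+}\zeta)$, which is first order elliptic, and in the interior of $(0,1)\times I$ it is the Cauchy--Riemann operator $\nu\mapsto\partial_s\nu+J_{s,\theta}\partial_\theta\nu$ on $u^{*}TM$, again first order elliptic. The $L^2_k$-from-$L^2_{k-1}$ bounds and the regularity in part (i) then come from the standard interior elliptic estimate plus a bootstrap, and the weak-to-strong statements in part (ii) from the Friedrichs mollifier / interior difference quotient argument, which shows that a distributional solution whose pairing with compactly supported test sections is $L^2$-bounded lies in $L^2_1$ on interior subdomains, with the stated bound. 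On charts around the Lagrangian boundary $\{1\}\times I$ only the Cauchy--Riemann part is present, with the totally real condition $\nu(1,\theta)\in T_{u(1,\theta)}L$; choosing coordinates on $M$ near $u(1,\cdot)$ that straighten $L$ to a linear subspace and doubling the half-disc across $\{1\}\times I$ reduces this to an interior $\bar\partial$-type problem on a full disc, so it is covered by the previous case.

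The main case is a chart around the matching boundary. Using the collar $(-1,0]\times\Sigma\times I\subset Y\times I$ with the product metric \eqref{metric-near-bdry} and placing $A$ in a standard gauge near the boundary (possible after reducing to the Coulomb condition \eqref{Coulomb-gauge}), the four-dimensional part of $\mathcal D_{(A,u)}$, Coulomb term included, becomes $\partial_s+\mathcal K$ where $s$ is the collar coordinate and $\mathcal K$ is a self-adjoint first order elliptic operator on the boundary three-manifold $\Sigma\times I$, assembled from the linearized flatness and gauge-fixing operators along $\Sigma$ and from $\partial_\theta$; meanwhile in matching coordinates $(s,\theta)$ on $[0,1]\times I$ the Cauchy--Riemann part is $\partial_s+J_{s,\theta}\partial_\theta$ on $u^{*}TM$. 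The conditions $*\zeta|_{\Sigma\times I}=0$ and $(\zeta|_{\Sigma\times\{\theta\}},\nu(0,\theta))\in T\mathcal L$ couple these two half-space problems along their common boundary. Because $\mathcal L$ is a canonical Lagrangian correspondence, these conditions cut out a subspace of the boundary data that is Lagrangian for the symplectic pairing determined by the principal symbols of $\partial_s+\mathcal K$ and $\partial_s+J_{s,\theta}\partial_\theta$ --- concretely, after the standard gauge reduction it is the graph of a symplectic identification of the harmonic part of the gauge-theoretic boundary data with $T_{u(0,\theta)}M$, the exact directions being free and the coexact directions vanishing. Such a boundary condition is elliptic (the pertinent Calder\'on/APS-type projection differs from it by a compact operator), so I would get the model estimate $\norm{(\zeta,\nu)}_{L^2_k}\le C\bigl(\norm{\mathcal D_{(A,u)}(\zeta,\nu)}_{L^2_{k-1}}+\norm{(\zeta,\nu)}_{L^2}\bigr)$ on the chart by the tangential difference quotient argument: difference quotients along $\Sigma\times I$ preserve the boundary condition and control all tangential derivatives, after which the equation recovers the normal derivative $\partial_s$. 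Running the same argument at the $L^2\to L^2_1$ level for a weak solution gives the version in part (ii).

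Finally, $\mathcal D_{(A,u)}^{*}$ has the same structural form as $\mathcal D_{(A,u)}$, and, as the integration by parts \eqref{adjoint} shows, the conditions defining $K^k_{(A,u)}$ are the annihilators of those defining $E^k_{(A,u)}$ under the symplectic pairings involved; since the annihilator of a Lagrangian is again Lagrangian, the three model analyses apply verbatim to $\mathcal D_{(A,u)}^{*}$. Patching the local estimates by the partition of unity and absorbing cutoff commutators into the $L^2(I)$ terms yields the global bounds over $J$, and the regularity claims in part (i) follow by induction on $k$. I expect the real obstacle to be the matching boundary analysis --- pinning down the near-boundary gauge and coordinates in which the coupled linearization becomes a model first order operator, and checking that the conditions $*\zeta|_{\Sigma\times I}=0$ and $(\zeta|_{\Sigma\times\{\theta\}},\nu(0,\theta))\in T\mathcal L$ really define an elliptic boundary condition for it. This is where the Lagrangian property of $\mathcal L$, together with the compatibility of the symplectic forms $\Omega$ on $\mathcal A(\Sigma,F)$ and $\omega$ on $M$ with the $L^2$ geometry of the linearized operator, has to be used in an essential way.
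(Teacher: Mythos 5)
There is a genuine gap, and it sits exactly where you predicted the difficulty would be: the matching boundary. Your plan treats the condition $(\zeta\vert_{\Sigma\times\{\theta\}},\nu(0,\theta))\in T\mathcal L$ as an elliptic (Calder\'on/APS-comparable) boundary condition for a coupled first order system and proposes to prove the estimates by tangential difference quotients, ``since difference quotients along $\Sigma\times I$ preserve the boundary condition.'' They do not. The subspace $T\mathcal L$ at $(A\vert_{\Sigma\times\{\theta\}},u(0,\theta))$ is of the form $V_z\oplus{\rm image}(d_{\alpha_\theta})$ inside $L^2(\Sigma,\Lambda^1\otimes F)\oplus T_{u(0,\theta)}M$, i.e.\ it is defined through the $d_{\alpha_\theta}$-Hodge decomposition of the boundary slice; flowing $\zeta$ along a vector field on $\Sigma$ does not preserve $d_{\alpha_\theta}$-exactness or harmonicity, and translating in $\theta$ changes the subspace itself because $\alpha_\theta$, $u(0,\theta)$ and $V_\theta$ vary. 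Moreover this condition is nonlocal on $\Sigma$ and couples boundary data of a four-dimensional problem with boundary data of a two-dimensional one, so it is not a Lopatinski--Shapiro-type condition to which standard boundary elliptic regularity or a doubling/reflection trick applies; the assertion that the relevant Calder\'on projection differs from it by a compact operator is precisely the substance of the theorem and is nowhere argued in your sketch. The same issue undermines part (ii): obtaining $L^2_1$ regularity up to the matching boundary from the weak pairing hypothesis requires testing against carefully chosen sections adapted to the Hodge splitting, not a mollifier/difference-quotient argument.

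For comparison, the paper does not attempt a boundary-value-problem localization at all. It rewrites the mixed operator as $\frac{d}{d\theta}-\fD_{(B_\theta,S_\theta)}$ (Theorem \ref{Fredholm-flex}), where the slice operator acts on the fixed Hilbert space $\mathcal H$ with a $\theta$-dependent domain $\mathcal W_\theta$ encoding the matching and Lagrangian conditions. The analytic core is then: (a) slice-wise self-adjointness and elliptic estimates for $\fD_{(B_\theta,S_\theta)}$ with this domain (Lemmas \ref{3D-Fred-k=1} and \ref{3D-Fred-k=higher}), proved in steps using special test sections, Lemma \ref{weak-Dirac-lemma}, and the fact that exact directions $({\rm image}(d_{\alpha_\theta}),0)$ lie in $\mathcal L$ --- note Remark \ref{subsapce-W}: even there, Lie derivatives of a solution do \emph{not} satisfy the boundary conditions, which is why the weak formulation against a restricted class of test sections is used; (b) explicit isomorphisms $Q_\theta$ trivializing the varying domains with quantitative control (Lemma \ref{cons-Phi-alpha}, Proposition \ref{W-change-domain}); and (c) the abstract regularity theorem of \cite{SW:I-bdry} for operators $\frac{d}{d\theta}-D_\theta$ with varying domains, followed by an induction in $k$. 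If you want to salvage your route, you would have to supply, at the matching boundary, an argument playing the role of (a) and (b); as written, the chart-by-chart ellipticity claim and the difference-quotient step are the missing (and partly false) links.
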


Although Theorem \ref{Fred-mixed-op} does not explicitly assert Fredholmness of any mixed operator, it is the key ingredient to show that mixed operators are Fredholm in various contexts. For instance, it is straightforward to use this theorem to show that the mixed operator is Fredholm if $X$ and $S$ are compact. (The definition of the mixed operator for cylinder quintuples adapts to more general quintuples in the obvious way.) In the sequel paper, we use Theorem \ref{Fred-mixed-op} to obtain Fredholmness of the mixed operator in a case that $X$ and $S$ are non-compact but appropriate decay conditions are fixed on the non-compact ends.  

\subsection*{Outline and Conventions}

The precise definition of a canonical Lagrangian correspondence from $\mathcal A(\Sigma,F)$ to a symplectic manifold is given in Section \ref{symp-lag-cor}. We also review some technical results about such Lagrangians and the special case of the matching Lagrangian correspondence. The proof of the regularity and compactness results are respectively given in Sections \ref{regularity} and \ref{compactness-sec}. Our treatment here is essentially the same as the third author's unpublished work \cite{Max:GU-comp} with some minor modifications, most of them in exposition. Section \ref{Fredholm-section} of the paper is devoted to the proof of Theorem \ref{Fred-mixed-op} on Fredholm property of the mixed equation. In Appendices \ref{elliptic-reg-sec} and \ref{Banach-space-valued}, we collect some mostly standard analytical results, which are used throughout the paper.

The mixed equation has two predecessors in the existing literature. This equation is closely related to the ASD equation with Lagrangian boundary conditions introduced and developed in \cite{Weh:Lag-bdry-ana,Weh:Lag-bdry-ana2,SW:I-bdry}. In fact, the method of the current paper is inspired by these works and our treatment owes a great deal on these works on the analytical aspects of the ASD equation with Lagrangian boundary condition. An older relative of the mixed equation is introduced in \cite{Fuk:ASD-deg} by the second author, which is defined using the ASD equation with respect to a special degenerate metric. In fact, the mixed equation can be regarded as a limiting version of such equations. Although compactness and removability of singularity are already established for such equations \cite{Fuk:ASD-deg}, the Fredholm property seems to be a technically more difficult problem.

Throughout the paper, we use the following conventions to denote $\SO(3)$-bundles and connections on them unless otherwise stated. For any closed oriented 2-manifold $\Sigma$, there is a unique (up to isomorphism) $\SO(3)$-bundle on $\Sigma$, whose restriction to each connected component of $\Sigma$ is not trivializable. This bundle is denoted by $F$. Connections on this bundle are denoted by greek letters such as $\alpha$ and $\beta$. We write $E$ for a typical $\SO(3)$-bundle on a 3-manifold $Y$. A typical connection on this bundle is denoted by $B$. Finally, an $\SO(3)$-bundle on a 4-manifold is denoted by $V$, and a typical notation for a connection on $V$ is $A$.

The Euclidean space $\R^3$ with the standard cross product defines a Lie algebra, which is equivariant with respect to the standard $\SO(3)$ action. This $\SO(3)$-Lie algebra is isomorphic to $\mathfrak{so}(3)$, linear space of skew-adjoint endomorphisms of $\R^3$, and $\mathfrak{su}(2)$, the linear space of trace free skew-Hermitian endomorphisms of $\C^2$. Conjugation defines the $\SO(3)$ action on $\mathfrak{so}(3)$ and $\mathfrak{su}(2)$. Throughout this paper, we use this isomorphism to identify an $\SO(3)$ vector bundle $V$ with the bundle of skew adjoint endomorphisms of $V$. In particular, the curvature of a connection on $V$ can be regarded as a 2-form with values in $V$. 

Let $\tr:\R^3\times \R^3\to \R$ be the bi-linear form given by $-\frac{1}{2}$ of the standard inner product. Using the identification with $\mathfrak{su}(2)$, this bi-linear form can be identified with $\tr:\mathfrak{su}(2) \times \mathfrak{su}(2)\to \R$ which maps a pair of a skew-Hermitian matrices $A$ and $B$ to $\tr(AB)$. The bi-linear form $\tr$ induces a bi-liner form on sections of any $\SO(3)$-vector bundle $V$, which is denoted by the same notation. If $\alpha$ and $\beta$ are two general $k$-forms on a Riemannian manifold $M$ with values in an $\SO(3)$ vector bundle $V$, we use
\begin{equation}\label{inner-prod-diff-forms}
  \langle \alpha,\beta\rangle :=-\int_M \tr(\alpha\wedge *_M\beta)
\end{equation}
to define their inner products, where $*_M$ is the Hodge $*$-operator on $M$. 

\newpage
{\it Acknowledgements.} The third author would like to thank Tomasz Mrowka, Dusa McDuff and Dennis Sullivan for useful conversations. The authors are grateful to the Simons Center for Geometry and Physics for providing a stimulus environment where part of this paper and the sequel one were being completed at different stages. The second author is thankful to Washington University in St. Louis for an opportunity to visit the first author to discuss some aspects of the present paper.

\section{Symplectic manifolds and canonical Lagrangian correspondences} \label{symp-lag-cor}

The space of all connections on $F$ is an affine space modeled on $\Omega^1(\Sigma,F)$, the space of 1-forms with values in $F$. This space admits a symplectic form given by
\[
  \hspace{3cm}\Omega(a,b)=-\int_\Sigma \tr(a \wedge b),\hspace{1cm} \text{ for } a,b \in \Omega^1(\Sigma,F) .
\]
For $p>2$, let $\mathcal A^p(\Sigma, F)$ denote the completion of this affine space with respect to the $L^p$ norm. The symplectic form $\Omega$ clearly extends to $\mathcal A^p(\Sigma, F)$.
There is also an action of a Banach Lie group $\mathcal G^p_1(\Sigma,F)$ on $\mathcal A^p(\Sigma, F)$. The Lie group $\SO(3)$ acts on $\SU(2)$ by the conjugation action $\ad$, and this action determines a fiber bundle on $\Sigma$ given an
\begin{equation}\label{Auto-fib-bdle}
	{\rm Fr}(F)\times_{\ad}\SU(2).
\end{equation}
where ${\rm Fr}(F)$ denotes the framed bundle of $F$. Then $\mathcal G^p_1(\Sigma,F)$ is the space of sections $g$ of this bundle such that $\nabla_{\alpha_0}g$ has a finite $L^p$ norm where $\nabla_{\alpha_0}$ is defined using a smooth connection $\alpha_0$ on $F$. Any element of $\mathcal G^p_1(\Sigma,F)$ is continuous and pulling back connections with respect to the elements of $\mathcal G^p_1(\Sigma,F)$ gives rise to an action of $\mathcal G^p_1(\Sigma,F)$ on $\mathcal A^p(\Sigma, F)$. The symplectic form $\Omega$ is invariant with respect to this action.

The curvature of an element of $\mathcal A^p(\Sigma, F)$ is not necessarily well-defined. However, we can define the subspace $\mathcal A_{\rm fl}(\Sigma, F)$ of connections in $\mathcal A^p(\Sigma, F)$ which are weakly flat (see \cite{W:Ban-elliptic}). First fix a smooth flat connection $\alpha_0$. For $a\in L^p(\Sigma, \Lambda^1\otimes F)$, the $L^p$-connection $\alpha_0+a$ is an element of $\mathcal A_{\rm fl}(\Sigma, F)$, if 
\[
  \int_\Sigma \tr(a \wedge(d_{\alpha_0}\psi-\psi a))=0
\]
holds for any smooth section $\psi$ of the bundle $F$. This space is invariant with respect to the action of $\mathcal G^p_1(\Sigma,F)$ and determines a Banach submanifold of $\mathcal A^p(\Sigma,F)$. Any element of this space belongs to the orbit of a smooth flat connection (see \cite{W:Ban-elliptic}). We may form a neighborhood in $\mathcal A^p(\Sigma,F)$ of a smooth connection $\alpha \in \mathcal A_{\rm fl}(\Sigma, F)$ by taking connections of the form
\begin{equation}\label{nbhd-alpha}
  g^*(\alpha'+*d_{\alpha'}\zeta)
\end{equation}
where $\alpha'$ is a smooth flat connection on $\Sigma$ such that it satisfies the Coulomb gauge fixing condition $d_\alpha^*(\alpha'-\alpha)=0$, $|\alpha-\alpha'|<\epsilon$, $g\in \mathcal G^p_1(\Sigma,F)$ with $\vert\!\vert\nabla_{\alpha}g\vert\!\vert_{L^p}<\epsilon$ and $\zeta\in L^p_1(\Sigma, F)$ with $\vert\!\vert\nabla_{\alpha}\zeta\vert\!\vert_{L^p_1}<\epsilon$. The subspace $\zeta=0$ of this open set describes the intersection with $\mathcal A_{\rm fl}(\Sigma,F)$. The Hodge $*$-operator on $\Sigma$, denoted by $*_2$, induces a $\mathcal G^p_1(\Sigma,F)$-invariant complex structure on $\mathcal A^p(\Sigma,F)$. This complex structure is compatible with $\Omega$ and the induced metric on $\mathcal A^p(\Sigma,F)$ is the standard one.

The quotient $\mathcal A_{\rm fl}(\Sigma, F)/\mathcal G^p_1(\Sigma,F)$ can be identified with the moduli space of flat connections $\mathcal M(\Sigma,F)$. The symplectic form $\Omega$ on $\mathcal A^p(\Sigma,F)$ gives rise to the standard symplectic structure $\omega_{\rm fl}$ on $\mathcal M(\Sigma,F)$. The tangent space of $\mathcal M(\Sigma,F)$ to the class of a flat connection $\alpha$ can be identified with
\begin{equation}
	\mathcal H^1(\Sigma;\alpha)=\{a\in \Omega^1(\Sigma,F) \mid d_\alpha a=0,\,d_\alpha^*a=0 \}.
\end{equation}
The complex structure $*_2$ on $\mathcal A^p(\Sigma,F)$ induces a $\omega_{\rm fl}$-compatible complex structure $J_*$ on $\mathcal H^1(\Sigma;\alpha)$.

\begin{definition} \label{can-lag-cor-def}
        Suppose $(M,\omega)$ is a symplectic manifold. A Banach submanifold 
        $\mathcal L\subset \mathcal A^p(\Sigma,F) \times M$ 
        is called a {\it canonical Lagrangian correspondence} from $\mathcal A^p(\Sigma,F)$ to $M$ 
        if it satisfies the following properties:
        \begin{enumerate}
        		\item[(i)] $\mathcal L$ is invariant with respect to the action of the gauge group $\mathcal G^p_1(\Sigma,F)$ 
        		on $\mathcal A^p(\Sigma,F) \times M$.
	        	\item[(ii)] The first component of any element of $\mathcal L$ belongs to $\mathcal A_{\rm fl}(\Sigma,F)$.
        		\item[(iii)] $\mathcal L$ is isotropic with respect to $(-\Omega)\oplus\omega$, i.e., 
        			if $a$ and $b$ are two tangent vectors to 
	        		$\mathcal L$, then $\((-\Omega)\oplus\omega\)(a,b)=0$.
        		\item [(iv)] $\mathcal L$ is co-isotropic with respect to $(-\Omega)\oplus\omega$, i.e., if $a$ is a tangent vector to 
		        	$\mathcal A^p(\Sigma,F) \times M$ at a point $(\alpha,x)$ and $\((-\Omega)\oplus\omega\)(a,b)=0$
        			for any $b\in T_{(\alpha,x)}\mathcal L$, then $a$ is tangent to $\mathcal L$.		       
        \end{enumerate}
\end{definition}

There is a correspondence between canonical Lagrangian correspondences from $\mathcal A^p(\Sigma,F)$ to $M$ and Lagrangians in the (finite dimensional) symplectic manifold $\mathcal M(\Sigma,F)\times M$ equipped with the symplectic form $(-\omega_{\rm fl})\times \omega$. Given any canonical Lagrangian correspondences from $\mathcal A^p(\Sigma,F)$ to $M$, we may form a subspace of $\mathcal M(\Sigma,F)\times M$ by taking the quotient $\mathcal L/\mathcal G^p_1(\Sigma,F)$. This subspace is in fact a Lagrangian in $\mathcal M(\Sigma,F)\times M$. This follows from the following standard lemma on Hodge decomposition associated to twisted Laplace operators.
\begin{lemma}\label{Hodge-decom}
	Suppose $k\geq 0$, $q>1$ and $\alpha$ is a smooth flat connection on $F$. Then we have the following splitting of $L^2_k(\Sigma,\Lambda^1\otimes F)$ into 
	a sum of closed subspaces:
	\begin{equation}\label{Hodge-splitting}
	  L^q_k(\Sigma,\Lambda^1\otimes F)=\mathcal H^1(\Sigma;\alpha)
	  \oplus {\rm image} (d_\alpha) \oplus {\rm image} (*d_\alpha),
	\end{equation}
	where ${\rm image} (d_\alpha)$ and ${\rm image} (*d_\alpha)$ are the images of the operators
	\[
	  d_\alpha:L^q_{k+1}(\Sigma,F)\to L^q_k(\Sigma,\Lambda^1 \otimes F),\hspace{1cm}
	  *d_\alpha:L^q_{k+1}(\Sigma,F)\to L^q_k(\Sigma,\Lambda^1 \otimes F).
	\]
\end{lemma}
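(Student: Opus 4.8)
The plan is to reduce the statement to the classical Hodge decomposition for the scalar twisted Laplacian $\Delta_\alpha = d_\alpha^* d_\alpha + d_\alpha d_\alpha^*$ acting on sections of $\Lambda^1 \otimes F$, and then to upgrade from the $L^2$ setting to the $L^q_k$ setting by elliptic regularity. First I would observe that since $\alpha$ is smooth and flat, $d_\alpha d_\alpha = 0$, so $d_\alpha$, $d_\alpha^*$ and the Hodge star $*_2$ interact exactly as in the untwisted de Rham complex: in particular $*_2 d_\alpha$ maps $\Lambda^0 \otimes F$ to $\Lambda^1 \otimes F$, and $(*_2 d_\alpha)^* = d_\alpha^* *_2^{-1} = - d_\alpha^{\phantom{*}}$ up to sign on a surface. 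The operator $\Delta_\alpha$ on $\Lambda^1 \otimes F$ is an elliptic, formally self-adjoint, second-order operator with smooth coefficients, and its kernel on $L^2$ is precisely $\mathcal H^1(\Sigma;\alpha)$, a finite-dimensional space of smooth forms (this is where flatness and compactness of $\Sigma$ enter). Standard elliptic theory then gives the $L^2$ orthogonal splitting $L^2(\Sigma,\Lambda^1\otimes F) = \mathcal H^1(\Sigma;\alpha) \oplus \im(d_\alpha) \oplus \im(*_2 d_\alpha)$, with the last two summands orthogonal because $\langle d_\alpha \phi, *_2 d_\alpha \psi\rangle = \pm\langle \phi, d_\alpha^* *_2 d_\alpha \psi\rangle$ and $d_\alpha^* *_2 d_\alpha = \pm *_2 d_\alpha d_\alpha = 0$ by flatness.

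Next I would promote this to $L^q_k$ for $q > 1$, $k \geq 0$. The key input is the $L^q$ elliptic estimate for $\Delta_\alpha$: for $a \in L^q_k$ with $\Delta_\alpha a \in L^q_k$ one has $a \in L^q_{k+2}$ with the corresponding estimate (Calderón–Zygmund theory on a closed manifold; this is precisely the kind of statement collected in the appendix on elliptic regularity cited in the paper). Given $a \in L^q_k$, decompose $a$ in $L^2$ as $a = h + d_\alpha \xi + *_2 d_\alpha \eta$; I would then show each piece lies in $L^q_k$. The harmonic part $h$ is smooth, hence in every $L^q_k$. For the exact parts, write $a - h = \Delta_\alpha G(a-h)$ where $G$ is the Green operator; since $a - h \in L^q_k$, elliptic regularity gives $G(a-h) \in L^q_{k+2}$, and then $d_\alpha \xi = d_\alpha d_\alpha^* G(a-h) \in L^q_{k+1} \subset L^q_k$ and similarly $*_2 d_\alpha \eta = *_2 d_\alpha d_\alpha^* *_2^{-1} \cdots$ — more precisely $d_\alpha d_\alpha^* G(a-h)$ and $d_\alpha^* d_\alpha G(a-h)$ are the two non-harmonic components, and $d_\alpha^* d_\alpha G(a-h) = *_2 d_\alpha(\pm *_2 d_\alpha G(a-h))$ exhibits it in $\im(*_2 d_\alpha)$ with the potential in $L^q_{k+1}$. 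Thus the $L^2$ splitting restricts to a (still direct, still algebraically orthogonal where it was) decomposition of $L^q_k$, and closedness of the three summands in the $L^q_k$ topology follows from the closed range of $\Delta_\alpha$ together with the elliptic estimate, or equivalently by exhibiting each summand as the kernel of a bounded projection built from $G$, $d_\alpha$, $d_\alpha^*$.

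The main obstacle, such as it is, is purely bookkeeping: making sure the three projections $h \mapsto$ harmonic, $d_\alpha d_\alpha^* G$, $d_\alpha^* d_\alpha G$ are bounded on $L^q_k$ and that the two "exact" summands are genuinely identified with $\im(d_\alpha: L^q_{k+1}(\Sigma,F) \to L^q_k)$ and $\im(*_2 d_\alpha: L^q_{k+1}(\Sigma,F) \to L^q_k)$ rather than merely with closed subspaces containing them — this requires noting that every element of $\im(d_\alpha d_\alpha^* G)$ is of the form $d_\alpha \xi$ with $\xi = d_\alpha^* G(\cdot) \in L^q_{k+1}$, and conversely $d_\alpha \xi$ for $\xi \in L^q_{k+1}$ is automatically orthogonal to $\mathcal H^1$ and to $\im(*_2 d_\alpha)$ in $L^2$, hence is fixed by the projection. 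Uniqueness of the decomposition is immediate from the $L^2$ orthogonality once everything is known to sit inside $L^2$, which it does since $L^q_k(\Sigma) \hookrightarrow L^2(\Sigma)$ is not automatic for $q<2$ — so a small additional remark is needed there: one instead argues directly in $L^q$ using the closedness of $\im(d_\alpha)$ and $\im(*_2 d_\alpha)$ and a duality pairing with $L^{q'}$, or simply notes that all the potentials produced have enough regularity ($L^q_{k+1}$, hence $L^2$ after at worst increasing $k$ and using that the identity below is independent of how one proves it) to justify the $L^2$ manipulations. None of this is deep; the content is entirely the standard elliptic package for $\Delta_\alpha$.
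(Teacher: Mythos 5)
Your proposal is correct and takes essentially the same route as the paper, whose proof consists of the single observation that the twisted Laplacian $d_\alpha d_\alpha^*+d_\alpha^* d_\alpha$ on $\Lambda^1\otimes F$ is elliptic with (co)kernel $\mathcal H^1(\Sigma;\alpha)$; your argument simply fills in the standard details (Green operator, $L^q$ elliptic estimates, identification of the non-harmonic pieces as $d_\alpha$ and $*_2 d_\alpha$ of $L^q_{k+1}$ potentials, and the density/duality remark needed when $q<2$).
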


\begin{proof}
	This is a standard result which follows from the fact that the twisted laplacian
	\[
	  d_\alpha d_\alpha^*+d_\alpha^*d_\alpha: L^q_{k+2}(\Sigma,\Lambda^1\otimes F)\to 
	  L^q_k(\Sigma,\Lambda^1 \otimes F)
	\]
	is an elliptic operator with cokernel being $\mathcal H^1(\Sigma;\alpha)$.
\end{proof}

The splitting \eqref{Hodge-splitting} in the case that $q=p$ and $k=0$ gives a splitting of the tangent space of $\mathcal A^p(\Sigma,F)$ at smooth elements of $\mathcal A_{\rm fl}(\Sigma,F)$. The first two summands describe the tangent space to $\mathcal A_{\rm fl}(\Sigma,F)$. For any canonical Lagrangian correspondence $\mathcal L$ and any $z=(\alpha,x)\in \mathcal L$,  $T_z\mathcal L$ contains ${\rm image} (d_\alpha)$ and is $L^2$-orthogonal to ${\rm image} (*d_\alpha)$. Therefore, there is a  subspace $V_z$ of the finite dimensional symplectic vector space $\mathcal H^1(\Sigma;\alpha) \oplus T_xM$ such that
\begin{equation}\label{L-tan-decom}
	T_z\mathcal L=V_z\oplus {\rm image} (d_\alpha)\subset L^p(\Sigma,\Lambda^1\otimes F) \oplus T_xM .
\end{equation}
where the domain of $d_\alpha$ is $L^p_1(\Sigma,F)$. The definition of $\mathcal L$ is equivalent to say that $V_z$ is a Lagrangian subspace of $\mathcal H^1(\Sigma;\alpha) \oplus T_xM$. Consequently, $\mathcal L/\mathcal G^p_1(\Sigma,F)$ is a Lagrangian submanifold of $\mathcal M(\Sigma,F)\times M$. This presentation also gives a useful description for the closure of \eqref{L-tan-decom} with respect to $L^q$ norms with $1<q<p$: this closure given by the same direct sum decomposition where $d_\alpha$ should be a regarded as a map acting on $L^q_1(\Sigma,F)$. In particular, we will use this in Section \ref{Fredholm-section} in the case that $q=2$.

\begin{example} \label{matching-lag-cor}
	Let $\mathcal L(\Sigma,F)$ be the following Banach submanifold of $\mathcal A^p(\Sigma,F) \times \mathcal M(\Sigma,F)$:
	\[
	  \mathcal L(\Sigma,F)=\{(B,[B])\mid B\in \mathcal A_{\rm fl}(\Sigma,F)\}.
	\]
	This space is diffeomorphic to $\mathcal A_{\rm fl}(\Sigma,F)$ and defines a canonical 
	Lagrangian correspondence from $\mathcal A(\Sigma,F)$ to $\mathcal M(\Sigma,F)$,
	which is called the {\it matching Lagrangian correspondence}. 
	The corresponding Lagrangian in $\mathcal M(\Sigma,F)\times \mathcal M(\Sigma,F)$ is the identity Lagrangian correspondence form $\mathcal M(\Sigma,F)$ to itself.
\end{example}

Let $J$ be an almost complex structure on $M$ compatible with the symplectic form $\omega$. This induces an almost complex structure $\bJ$ on $\mathcal A^p(\Sigma,F) \times M$ which acts on $(a,v)\in  L^p(\Sigma,\Lambda^1\otimes F )\oplus T_xM$ as
\begin{equation}\label{bJ}
 \bJ(a,v)=(-*_2 a,Jv).
\end{equation}
For any $z=(\alpha,x)\in \mathcal L$, property (iii) of $\mathcal L$ implies that $T_z\mathcal L\cap \bJ(T_z\mathcal L)$ is trivial. Moreover, (iv) implies that $V_z$ and $\bJ V_z$ generate the finite dimensional symplectic space $\mathcal H^1(\Sigma;\alpha)\oplus T_xM$. In particular, we have
\[
  T_z\mathcal L\oplus\bJ(T_z\mathcal L)=L^p(\Sigma,\Lambda^1\otimes F)\oplus T_xM.
\]
The following lemma gives a suitable chart for the complex structure $\bf J$ in a neighborhood of a point in $\mathcal L$.

\begin{lemma}\label{chart-complex-str}
	Suppose $\mathcal L$ is a canonical Lagrangian correspondence from $\mathcal A^p(\Sigma,F)$ to a symplectic manifold $M$. Suppose an almost complex structure $\bf J$ on $\mathcal A^p(\Sigma,F)\times M$ is defined as in \eqref{bJ}. 
	Suppose $B_p$ is the Banach space $L^p_1(\Sigma,F)\oplus \R^{n-3\chi(\Sigma)/2}$ where $2n$ is the dimension of $M$. Then for any $z=(\alpha,x)\in \mathcal L$, there is an open neighborhood $U$ of 
	the origin of $B_p\oplus B_p$, and a diffeomorphism $\Phi_p$ from $U$ onto some open subspace
	of $\mathcal A^p(\Sigma,F)\times M$ with $\Phi_p(0)=z$ such that 
	\begin{itemize}
		\item[(i)] $\Phi_p^{-1}(\mathcal L)$ is the intersection of $0\oplus B_p$ with $U$;
		\item[(ii)] for any $x\in \mathcal L\cap {\rm im}(\Phi_p)$, the pullback of the almost complex
		structure $\bJ(x)$ is the standard complex structure 
		\[
		  (v_1,v_2) \to (-v_2,v_1);
		\]
		\item[(iii)] if $q>p$, then $\Phi_p$ maps $(B_q\oplus B_q) \cap U$ to $(\mathcal A^q(\Sigma,F)\times M)\cap \rm{image}(\Phi_p)$.
	\end{itemize}	
\end{lemma}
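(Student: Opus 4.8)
The plan is to build the chart by first producing a linear model at the point $z$, then propagating the complex structure off the "Lagrangian slice" by a flow argument, and finally checking the three listed properties. First I would set up the linear picture. By the Hodge decomposition \eqref{Hodge-splitting} (with $q=p$, $k=0$) together with the tangent-space description \eqref{L-tan-decom}, the tangent space $T_z(\mathcal A^p(\Sigma,F)\times M)$ splits $L^2$-orthogonally as $\mathcal H^1(\Sigma;\alpha)\oplus T_xM \oplus \big(\mathrm{image}(d_\alpha)\oplus \mathrm{image}(*d_\alpha)\big)$, with $T_z\mathcal L = V_z\oplus \mathrm{image}(d_\alpha)$ for a Lagrangian subspace $V_z\subset \mathcal H^1(\Sigma;\alpha)\oplus T_xM$, and with $\bJ\,\mathrm{image}(d_\alpha)=\mathrm{image}(*d_\alpha)$. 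Because $V_z$ is Lagrangian and $\bJ$-tame, $V_z\oplus\bJ V_z=\mathcal H^1(\Sigma;\alpha)\oplus T_xM$; choosing a real-linear isomorphism $\R^{n-3\chi(\Sigma)/2}\xrightarrow{\ \sim\ } V_z$ and writing $\mathrm{image}(d_\alpha)=d_\alpha(L^p_1(\Sigma,F))$ (which is closed and isomorphic as a Banach space to $L^p_1(\Sigma,F)$ modulo the finite-dimensional kernel of $d_\alpha$ — harmless, or one restricts $d_\alpha$ to the coclosed complement) gives an identification of $T_z(\mathcal A^p(\Sigma,F)\times M)$ with $B_p\oplus B_p$ under which $\bJ(z)$ becomes $(v_1,v_2)\mapsto(-v_2,v_1)$ and $T_z\mathcal L$ becomes $0\oplus B_p$.

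Next I would promote this linear chart to a nonlinear one. Fix a smooth chart $\Psi$ for $\mathcal A^p(\Sigma,F)\times M$ near $z$ built from \eqref{nbhd-alpha}: connections $g^*(\alpha'+*d_{\alpha'}\zeta)$ with $\alpha'$ the nearby Coulomb-gauge flat connection, paired with normal-coordinate charts on $M$ near $x$; this already arranges that $\Psi^{-1}(\mathcal L)$ is a linear slice (the $\zeta=0$, $V$-directions locus), giving property (i), and it manifestly intertwines the $q>p$ subspaces, giving property (iii). In this chart $\bJ$ is an honest (nonlinear) almost complex structure $\widetilde J$ on an open set of $B_p\oplus B_p$, agreeing at the origin with the standard $J_0$, and preserving the slice $0\oplus B_p$ along that slice (since $\mathcal L$ is $\bJ$-totally-real in the strong sense $T\mathcal L\oplus \bJ\,T\mathcal L=T(\mathcal A^p\times M)$, one can first arrange, by a further linear-in-each-fiber change of coordinates adapted to $\widetilde J$ restricted to points of the slice, that $\widetilde J = J_0$ at every point of $0\oplus B_p$). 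The standard move now is: straighten $\widetilde J$ to $J_0$ on a neighborhood of the slice by the "exponential of $\widetilde J$" trick — use the $\widetilde J$-geodesic-type flow, or more simply the map sending $(v_1,v_2)$ to the time-$1$ value of the path $t\mapsto \phi(t\,v_2)$ determined by integrating the vector field $\widetilde J$-conjugate to $\partial/\partial v_2$ starting from $(v_1,0)\in \mathcal L$; this is a local diffeomorphism fixing the slice pointwise, and it pulls $\widetilde J$ back to something that equals $J_0$ on all of $0\oplus B_p$ and maps $\partial_{v_2}$ to $\widetilde J\partial_{v_2}$, i.e. to $J_0$ to first order normal to the slice. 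Composing $\Psi$ with this straightening map and with the linear isomorphism of the first paragraph produces the desired $\Phi_p$.

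The remaining point — and the one I expect to be the main obstacle — is verifying property (ii) rigorously, namely that after straightening, the pulled-back $\bJ(x)$ is exactly the standard complex structure at every $x\in \mathcal L\cap \mathrm{im}(\Phi_p)$, not merely at the origin or merely to first order. The clean way is to observe that for a point $x$ of $\mathcal L$, $\bJ(x)$ is already determined on $T_x\mathcal L$ and $\bJ_x T_x\mathcal L$ is a complement, so the condition "$\Phi_p^*\bJ = J_0$ at $x$" amounts to the single statement $d\Phi_p(J_0 w)=\bJ\,d\Phi_p(w)$ for $w\in 0\oplus B_p$ tangent to the slice, together with $d\Phi_p(J_0 v)=\bJ\,d\Phi_p(v)$ for $v$ in the normal $B_p\oplus 0$ direction — and the straightening flow was designed precisely so that $d\Phi_p$ carries $B_p\oplus 0$ to $\bJ(d\Phi_p(0\oplus B_p))$ along the slice. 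So (ii) follows by construction, provided one is careful that the chart maps from \eqref{nbhd-alpha} depend smoothly on $\alpha'$ in the $L^p_1$ (Banach) sense and that the Hodge projections in \eqref{Hodge-splitting} vary continuously; these are exactly the standard facts recorded in Lemma \ref{Hodge-decom} and the discussion around \eqref{L-tan-decom}. The $q>p$ compatibility in (iii) survives all three steps because each map used — the chart \eqref{nbhd-alpha}, the linear identification, and the straightening flow — is built from the gauge action, $d_\alpha$, $*_2$ and smooth finite-dimensional data, all of which preserve the nested scale of $L^q_1$-connections.
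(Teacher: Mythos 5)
Your overall strategy --- first flatten $\mathcal L$ to the slice $0\oplus B_p$, then extend the chart in the $\bJ$-conjugate directions so that the pulled-back structure is standard at every point of the slice --- is the same as the paper's, and your ``linear-in-each-fiber'' extension is the right device: a map of the form $(v_1,v_2)\mapsto \iota(v_2)+\bJ(\iota(v_2))\,d\iota(v_1)$, with $\iota$ a parametrization of $\mathcal L$, makes the pullback of $\bJ$ standard \emph{exactly} at slice points (no flow and no ``first order'' caveat is needed), and this is precisely what the formula \eqref{varphi} in the paper implements for the gauge directions. The genuine gap is earlier, in your claim that the chart built from \eqref{nbhd-alpha}, paired with normal coordinates on $M$, ``already arranges'' that $\Psi^{-1}(\mathcal L)$ is a linear slice; your verifications of (i) and (ii) are built on this. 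It is false in general: $\mathcal L$ is gauge-invariant but not a product, and its quotient $\breve{\mathcal L}=\mathcal L/\mathcal G^p_1(\Sigma,F)$ is an essentially arbitrary Lagrangian in $\mathcal M(\Sigma,F)\times M$ --- for the matching correspondence of Example~\ref{matching-lag-cor} it is the diagonal --- so in product coordinates (Coulomb-gauge flat-connection coordinates on the first factor times normal coordinates on $M$) it appears as a curved graph, not a linear subspace. Your subsequent straightening map fixes the slice pointwise, so it cannot repair this; as written, property (i) is not established, and the argument for (ii) presupposes it.

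The missing ingredient is exactly the step the paper inserts at this point: apply the Lagrangian neighborhood theorem in the finite-dimensional symplectic manifold $\mathcal M(\Sigma,F)\times M$ to obtain a chart $\breve\Phi$ that flattens $\breve{\mathcal L}$ and is chosen so that the induced complex structure is standard at points of $\breve\Phi^{-1}(\breve{\mathcal L})$; then lift $\breve\Phi_1$ to smooth flat connections in Coulomb gauge, append the gauge directions $\exp(\zeta)$ together with their $\bJ$-conjugates $-*_2\left.\frac{d}{dt}\right|_{t=0}\exp(\zeta+t\xi)^*(\cdot)$, and conclude with the inverse function theorem. With that step in place, your reasoning for (iii) does go through, since the lifted flat connections are smooth and $-*_2$, the gauge action and the finite-dimensional data all preserve the $L^q$ scale. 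Two smaller points: in your flow description the roles of $v_1$ and $v_2$ (normal versus slice directions) are interchanged; and in identifying ${\rm image}(d_\alpha)$ with $L^p_1(\Sigma,F)$ you can drop the caveat about a kernel, because $d_\alpha$ is injective on $0$-forms --- flat connections on the nontrivial bundle $F$ are irreducible, a fact the paper uses elsewhere.
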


\begin{proof}
	We may assume that the connection $\alpha\in \mathcal A_{\rm fl}(\Sigma,F)$ is smooth. 
	Let $\breve z=([\alpha],x)$ be obtained by projecting $z$ to $\mathcal M(\Sigma,F)\times M$.
	The quotient of $\mathcal L$ by $\mathcal G^p_1(\Sigma,F)$ determines a smooth submanifold 
	$\breve {\mathcal L}$ of the finite dimensional manifold $\mathcal M(\Sigma,F)\times M$, 
	which is in fact Lagrangian with respect to the symplectic form $(-\omega_{\rm fl})\times \omega$.
	Using neighborhood theorems for Lagrangian submanifolds (of finite dimensional symplectic manifolds),
	there is a chart
	\[\breve \Phi:\breve U \to \mathcal M(\Sigma,F)\times M\]
	such that $\breve U$ is an open neighborhood of the origin in $\R^{2n-3\chi(\Sigma)}$, 
	$\breve \Phi(0)=\breve z$, $\breve \Phi^{-1}(\breve {\mathcal L})$ is the intersection of 
	$\breve U$ with 
	$\{0\}\times \R^{n-3\chi(\Sigma)/2}\subset \R^{n-3\chi(\Sigma)/2}\times \R^{n-3\chi(\Sigma)/2}$.
	The pull back of the complex structure on 
	$\mathcal M(\Sigma,F)\times M$, given as $(-*_2 a,Jv)$,
	determines a complex structure on $\breve U$, and we may pick $\breve \Phi$ such that 
	for any point in $\breve \Phi^{-1}(\breve {\mathcal L})$ this complex structure is the standard one
	\[
	  (v_1,v_2) \in \R^{n-3\chi(\Sigma)/2}\times \R^{n-3\chi(\Sigma)/2} \to (-v_2,v_1).
	\]

	The chart $(\breve \Phi,\breve U)$ can be used to define a chart 
	for $\mathcal A^p(\Sigma,F)\times \mathcal M(\Sigma,F)$.
	Let $\breve \Phi=(\breve \Phi_1,\breve \Phi_2)$ where $
	\breve \Phi_1$ and $\breve \Phi_2$ are respectively 
	maps from $\breve U$ to $\mathcal M(\Sigma,F)$ and $M$. By shrinking the open set $\breve U$, we may  
	assume that the elements in the image of $\breve \Phi_1$ are lifted to smooth elements of 
	$\mathcal A_{\rm fl}(\Sigma,F)$ which satisfy gauge fixing condition with respect to the flat connection 
	$\alpha$. With a slight abuse of notation, this lift of $\breve \Phi_1$ to a map with target 
	$\mathcal A_{\rm fl}(\Sigma,F)$ is still denoted by $\breve \Phi_1$.
	Define a map 
	\begin{equation}\label{varphi}
	  L^p_1(\Sigma,F)\times L^p_1(\Sigma,F)\times \breve U\to \mathcal A^p(\Sigma,F)\times M,
	\end{equation}
	as
	\[
	  (\zeta,\xi,v) \to \left(\exp(\zeta)^*\breve \Phi_1(v)-*_2\(\left.\frac{d}{dt}\right\vert_{t=0}\exp(\zeta+t\xi)^*(\breve \Phi_1(v))\),\breve \Phi_2(v)\right).
	\]
	By taking $U$ to be a small enough neighborhood of the origin in 
	$L^p_1(\Sigma,F)\times L^p_1(\Sigma,F)\times \breve U$ and $\Phi_p$ being the restriction of 
	\eqref{varphi}, inverse function theorem allows us to obtain the desired chart.
\end{proof}

\section{Regularity}\label{regularity}

The main goal of this section is to prove Theorems \ref{regularity-thm} and \ref{regularity-sequential} on regularity of solutions of the mixed equation. For $p>2$, suppose $(A,u)$ is an $L^p_1$ solution of the mixed equation for the quintuple $\fQ(r)$ in \eqref{Q(r)-quintuple-intro}, which we copy here again:
\begin{equation}\label{quintuple-reg}
	(X:=D_-(r)\times \Sigma,V:=D_-(r)\times F,S:=D_+(r),M,\mathcal L).
\end{equation}
Here $M$ is a symplectic manifold with a symplectic form $\omega$ and a compatible almost complex structures $J$. The space $\mathcal L$ is a canonical Lagrangian correspondence from $\mathcal A^p(\Sigma,F)$ to $M$. The mixed equation for the pair $(A,u)$ has the form 
\begin{equation} \label{mixed-Q-r}
	\left\{
		\begin{array}{ll}
		F^+(A)=0,\\
		\frac{du}{d\theta}-J(u)\frac{du}{ds}=0. \\		
		\end{array}
	\right.
\end{equation}
We write $U_\partial(r)$ for the intersection of the half discs $D_+(r)$ and $D_-(r)$. We also assume that $A$ is in Coulomb gauge with respect to a smooth connection $A_0$:
\begin{equation}\label{Coulomb-gauge-A0}
	  d_{A_0}^*(A-A_0)=0,\,\hspace{2cm}*(A-A_0)\vert_{U_\partial\times \Sigma}=0.
\end{equation}
Then a more precise statement of Theorem \ref{regularity-thm} is given as follows.
\begin{theorem}\label{regularity-thm-detailed}
	Any $(A,u)$ as above is smooth. 
\end{theorem}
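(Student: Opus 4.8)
The plan is to bootstrap the regularity of $(A,u)$ by decoupling the two equations in \eqref{mixed-Q-r} and treating them iteratively, using the matching condition on $\gamma\times\Sigma$ (here $U_\partial\times\Sigma$) as the coupling interface. The starting point is $(A,u)\in L^p_1$ with $p>2$, so that Sobolev embedding gives $A,u\in C^0$ and, crucially, the restriction $A|_{\{x\}\times\Sigma}$ makes sense as an $L^p$-connection for each $x$, so that the matching condition $(A|_{\{x\}\times\Sigma},u(x))\in\mathcal L$ is meaningful as discussed after Theorem \ref{regularity-thm}. First I would set up the chart $\Phi_p$ from Lemma \ref{chart-complex-str} near a point of $\mathcal L$; in this chart the Lagrangian correspondence becomes the linear subspace $0\oplus B_p$ and $\bJ$ becomes the standard complex structure, which reduces the matching boundary condition on $u$ (and on the $\Sigma$-restriction of $A$) to a totally real / linear boundary condition of the type handled by the elliptic estimates collected in Appendix \ref{elliptic-reg-sec}.

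The core of the argument is the following bootstrap. Given $(A,u)\in L^p_k$ on slightly shrunken domains:
\begin{itemize}
\item[(1)] Treat the holomorphic-curve equation $\partial_\theta u - J(u)\partial_s u = 0$ on $D_+(r)$ with the Lagrangian boundary conditions $u(\eta_i)\subset L_i$ and, along $U_\partial$, the boundary value $u(x,0)$ matched to $[A|_{\{x\}\times\Sigma}]$. Since the latter is controlled by $A\in L^p_k$ restricted to slices, the boundary data for $u$ lies in a Sobolev space one derivative better along the boundary than one naively expects; standard interior + boundary elliptic estimates for $\bar\partial_{J}$ with totally real boundary conditions (via the chart $\Phi_p$) then give $u\in L^p_{k+1}$ on a smaller disc.
\item[(2)] With $u$ now more regular, view $F^+(A)=0$ together with the Coulomb condition \eqref{Coulomb-gauge-A0} as an elliptic boundary value problem for $A$ on $D_-(r)\times\Sigma$: the operator $d^*_{A_0}\oplus d^+$ is elliptic, the first boundary condition $*(A-A_0)|_{U_\partial\times\Sigma}=0$ is the relative boundary condition, and the matching condition along $U_\partial\times\Sigma$ becomes, after the chart, a linear boundary condition on the $\Sigma$-component of $A$ coupled to the now-known trace of $u$. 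Elliptic regularity (again Appendix \ref{elliptic-reg-sec}, in the Banach-space-valued form of Appendix \ref{Banach-space-valued} to handle the $\Sigma$-directions) upgrades $A$ to $L^p_{k+1}$.
\end{itemize}
Iterating (1)–(2) gives $(A,u)\in L^p_{k}$ for all $k$, hence $C^\infty$ on the interior and up to the boundary, by Sobolev embedding. Interior regularity away from $U_\partial$ is the classical regularity theory for ASD connections in Coulomb gauge and for $J$-holomorphic maps, so only a neighborhood of the corner $U_\partial\times\Sigma$ requires the coupled argument.

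The main obstacle I anticipate is the loss-of-derivative bookkeeping at the matching interface: the boundary value of $u$ on $U_\partial$ is the class $[A|_{\{x\}\times\Sigma}]$, which a priori sits only in $L^p$ in the $x$-variable, one derivative worse than one would like to feed into the elliptic estimate for $u$. Resolving this requires using the ASD equation to gain regularity of $A$ in the $\Sigma$-directions \emph{first} — the flatness (weak flatness, in the sense of \cite{W:Ban-elliptic}) of $A|_{\{x\}\times\Sigma}$ is what makes the restriction well-behaved — and then trading that for tangential regularity of the boundary trace of $u$; the two equations must be advanced in lock-step, with the chart $\Phi_p$ ensuring the boundary conditions stay in a form to which the standard estimates apply. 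A secondary technical point is that the elliptic estimates for $A$ are genuinely Banach-space-valued (functions of $(s,\theta)$ valued in Sobolev spaces over $\Sigma$), so one must invoke the results of Appendix \ref{Banach-space-valued} rather than the scalar theory, and keep track of the gauge group $\mathcal G^p_1(\Sigma,F)$ acting on the fibers.
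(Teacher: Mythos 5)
The proposal contains the right ingredients (the chart of Lemma \ref{chart-complex-str}, the Banach-space-valued elliptic theory of Appendix \ref{Banach-space-valued}, the recognition that the matching condition becomes a totally real / Lagrangian boundary condition in the chart), and it correctly names the main difficulty: a loss of one derivative at the matching interface. But the proposed resolution --- decoupling the CR equation for $u$ and the ASD equation for $A$ and iterating them in alternation --- is precisely where the argument has a genuine gap, and it is also not what the paper does.

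The problem with the alternating scheme is this. Suppose $(A,u)\in L^p_k$. In your step (1) you want to improve $u$ to $L^p_{k+1}$ by solving the CR equation on $D_+$ with the matching condition on $U_\partial$. If you treat the matching condition as supplying the trace of $u$ (a Dirichlet-type condition determined by $[A|_{\{x\}\times\Sigma}]$), the trace is only in $L^p_k$ in the $\theta$-variable, which is one derivative short of what the elliptic estimate requires; improving $A$ first in step (2) does not help, because $A$'s regularity in step (2) is itself limited by $u$'s trace on $U_\partial$, and so the two estimates never catch up to each other. If instead you treat the matching condition as a totally real boundary condition (which is the right point of view), then the Lagrangian in which $u(0,\theta)$ is constrained to lie is not a fixed submanifold of $M$ but a slice of the correspondence $\mathcal L$ determined by the \emph{unknown} restriction $A|_{\{(0,\theta)\}\times\Sigma}$; one cannot regard it as given boundary data for a standalone CR problem in $u$.

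The paper sidesteps this by never separating the two fields at the interface. After first improving the components $\phi$, $\psi$ and $\nabla_\Sigma a$ of $A-A_0$ using Lemma \ref{elliptic-reg}, Lemma \ref{weak-Dirac-lemma} and the Coulomb gauge condition (steps closer to your step (2), but applied before anything is said about $u$), it forms the single map
\[
  \fp(s,\theta)=\bigl(a(-s,\theta),\,u(s,\theta)\bigr)\in L^p(\Sigma,\Lambda^1\otimes F)\times M,
\]
reflecting the $\Sigma$-component $a$ of the connection across $U_\partial$ onto the $D_+$ side, and shows (via \eqref{CR-op-Banach}) that $\fp$ satisfies a Banach-space-valued Cauchy--Riemann equation with respect to $\bJ=(-*_2,J)$ whose inhomogeneous term is controlled by the already-improved $\phi,\psi$. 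The matching condition becomes a \emph{fixed} Lagrangian boundary condition $\fp(0,\theta)\in\mathcal L$ for this combined map, to which Propositions \ref{Ban-val-reg-k=0} and \ref{Ban-val-reg} apply directly, bootstrapping $a$ and $u$ \emph{simultaneously}. This joint treatment is the essential device that resolves the loss-of-derivative issue, and it is absent from your proposal. Without it, the alternation in your steps (1)--(2) does not close.
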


The proof of Theorem \ref{regularity-thm-detailed} is performed in several steps where the regularity of $(A,u)$ is improved in each step. The proof is slightly more involved in the case that $p<4$. In this case, first we show that one can improve regularity by increasing the value of $p$. Let $\{q_i\}_{0\leq i\leq N}$ be an increasing finite sequence of real numbers such that $q_0=p$, $q_N>4$ and 
\begin{equation}\label{def-q-i}
  \hspace{2cm}q_{i+1}=\frac{2q_i}{4-q_i},\hspace{1cm}\text{for $0\leq i\leq N-1$}.
\end{equation}
We shall show that if the assumptions of Theorem \ref{regularity-thm-detailed} hold for $p=q_i$, then it also holds for $p=q_{i+1}$. 
In the case that $p>4$, we shall show that one can obtain $L^{p/2}_2$ regularity from $L^p_1$ regularity. 
In the case that $p>2$ and $k\geq 2$, a similar argument as above shows that if $(A,u)$ is in $L^p_k$, then it also belongs to $L^p_{k+1}$. Subsection \ref{regularity-thm-sub} is devoted to the proof of these claims. 

The following theorem is a more detailed version of Theorem \ref{regularity-sequential} and its proof will be discussed in Subsection \ref{regularity-thm-sequential-sub}.
\begin{theorem}\label{regularity-sequentia-detailed}
	Any $\{(A_i,u_i)\}_i$ is a sequence of smooth solutions of \eqref{mixed-Q-r} which satisfy \eqref{Coulomb-gauge-A0}. For $p>2$, suppose $(A_i,u_i)$ is $L^p_1$ convergent to $(A,u)$.
	Then $(A_i,u_i)$ is $C^\infty$ convergent to $(A,u)$.
\end{theorem}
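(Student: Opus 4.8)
The plan is to re-run, now with constants uniform in $i$, the finite bootstrapping procedure that proves Theorem~\ref{regularity-thm-detailed}, turning each gain of regularity into a gain in the mode of convergence. The first observation is that the limit $(A,u)$ is itself a \emph{smooth} solution of \eqref{mixed-Q-r} in Coulomb gauge: for $p>2$ the Sobolev multiplication $L^p_1\cdot L^p_1\hookrightarrow L^p$ on the $4$--manifold $X$ (and its easier analogue on the surface $S$, where already $L^p_1\hookrightarrow C^0$) makes $A\mapsto F^+(A)$ and $u\mapsto\overline\partial_{J_*}u$ continuous on $L^p_1$, so the equations \eqref{mixed-Q-r} survive the $L^p_1$--limit; the linear conditions \eqref{Coulomb-gauge-A0} and the weakly interpreted (hence closed) matching condition in \eqref{config-space} are likewise closed under $L^p_1$--convergence. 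Hence $(A,u)$ satisfies the hypotheses of Theorem~\ref{regularity-thm-detailed} and is smooth, so the linearization $\mathcal D_{(A,u)}$ appearing below has smooth coefficients.

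Next I would fix one elementary step of the bootstrap, say the passage from $L^{q_i}_1$ to $L^{q_{i+1}}_1$ of \eqref{def-q-i}. Put $a_i=A_i-A$, and let $v_i$ be the representative of $u_i$ relative to $u$ in the charts of Lemma~\ref{chart-complex-str} along $U_\partial(r)\times\Sigma$ and in ordinary coordinate charts elsewhere. Subtracting the mixed equations for $(A_i,u_i)$ and for $(A,u)$, and using that $a_i$ solves the linear conditions \eqref{Coulomb-gauge-A0} exactly, the pair $\delta_i=(a_i,v_i)$ satisfies
\[
  \mathcal D_{(A,u)}\delta_i=\mathfrak n_i ,
\]
where $\mathcal D_{(A,u)}$ is the Coulomb--gauge--fixed linearization at the smooth solution $(A,u)$, and each summand of $\mathfrak n_i$ is at least bilinear in $\delta_i$, the remaining factors being $\delta_i$ or the uniformly $L^p_1$--bounded $(A_i,u_i)$, $(A,u)$; near the boundary $\mathfrak n_i$ absorbs in addition the quadratic correction coming from the nonlinearity of $\cL$, while in the flattened chart $\delta_i$ is the difference of two elements of $\Phi_p^{-1}(\cL)\subset 0\oplus B_p$, hence lies in $0\oplus B_p$, which is precisely the constant--coefficient linearized matching condition. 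The interior and up--to--the--boundary elliptic regularity estimates for $\mathcal D_{(A,u)}$ with these boundary conditions---the same ones that drive the proof of Theorem~\ref{regularity-thm-detailed}, compare Theorem~\ref{Fred-mixed-op}(i) and Appendix~\ref{elliptic-reg-sec}---then supply, over fixed relatively compact subdomains of $D_\pm(r)$, an estimate
\[
  \|\delta_i\|_{L^{q_{i+1}}_1}\le C\big(\|\mathfrak n_i\|_{L^{q_{i+1}}}+\|\delta_i\|_{L^{q_{i+1}}}\big),
\]
with $C$ independent of $i$ (depending only on $A_0$, the metric, and the uniform $L^p_1$--bound on $(A_i,u_i)$).

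The theorem then follows by induction on the bootstrap step. Assume $\delta_i\to0$ in $L^{q_i}_1$. Then $\|\delta_i\|_{L^{q_{i+1}}}\to0$ by the embedding $L^{q_i}_1\hookrightarrow L^{q_{i+1}}$, and $\|\mathfrak n_i\|_{L^{q_{i+1}}}\to0$ because $q_{i+1}=2q_i/(4-q_i)$ is exactly the exponent for which H\"older together with $L^{q_i}_1\hookrightarrow L^{4q_i/(4-q_i)}$ sends a product of a factor tending to $0$ in $L^{q_i}_1$ against uniformly $L^{q_i}_1$--bounded factors to a sequence tending to $0$ in $L^{q_{i+1}}$. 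Inserting this into the displayed estimate gives $\delta_i\to0$ in $L^{q_{i+1}}_1$ on every relatively compact subdomain. Running this through the finitely many steps of \eqref{def-q-i}, and then through the steps $L^p_k\to L^p_{k+1}$ (for $p>4$ these are the same but easier, since $L^p_k\hookrightarrow C^0$), we obtain $\delta_i\to0$ locally in every $L^p_k$, hence $(A_i,u_i)\to(A,u)$ in $C^\infty$ on compact subsets by Sobolev embedding. Note that no passage to a subsequence is needed, since this argument works directly with the prescribed limit.

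The genuine difficulty, I expect, is the \emph{first} block of steps, where $p\in(2,4]$ is sub--critical. There $(A_i,u_i)$ is not uniformly $C^0$, the slices $A_i|_{\{x\}\times\Sigma}$ are only $L^p$ connections so the matching condition must be read in the weak sense of \cite{W:Ban-elliptic}, and the charts of Lemma~\ref{chart-complex-str} must be used in their $L^p$ form; ensuring that the flattening of $\cL$, the membership $\delta_i\in 0\oplus B_p$, and the bilinear estimates for $\mathfrak n_i$ are all uniform in $i$ at this low regularity is the delicate point. Once the bootstrap has reached some $q_N>4$, everything embeds in $C^0$ and the remaining steps are routine.
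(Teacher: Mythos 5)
Your outline is sound and, at its core, it is the same difference-based bootstrap over the exponent ladder \eqref{def-q-i} that the paper runs; the differences are organizational. The paper never isolates a ``linearized operator at the limit'': it works with the Cauchy differences $A_i-A_j$ in Coulomb gauge relative to the fixed smooth reference $A_0$, re-derives the identities \eqref{dA0+-q-ij}--\eqref{ASD-psi-ij} verbatim from the nonlinear proof, obtains the difference estimates \eqref{phi-reg-ij}, \eqref{psi-reg-ij}, \eqref{slice-a-reg-ii}, and then concludes the boundary-coupled step by applying the \emph{sequential} clauses of Propositions \ref{Ban-val-reg-k=0} and \ref{Ban-val-reg} to the maps $\fp_i=\Phi_p^{-1}\circ$(slice data); smoothness of the limit comes out as a by-product rather than being established first via Theorem \ref{regularity-thm-detailed}. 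Your variant (smooth the limit first, then estimate $\delta_i=(A_i-A,\,u_i\ominus u)$ with constants depending only on the smooth limit) is legitimate and buys a cleaner induction statement, at the cost of having to track the extra bilinear terms $d_A^*-d_{A_0}^*$, $(\mathcal J(\fp_i')-\mathcal J(\fp'))\partial_s\fp_i'$ and the chart corrections, which in the paper are already absorbed into the $(i,j)$ identities.

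One caveat you should repair: the uniform estimate $\|\delta_i\|_{L^{q_{i+1}}_1}\le C(\|\mathfrak n_i\|_{L^{q_{i+1}}}+\|\delta_i\|_{L^{q_{i+1}}})$ for the full coupled operator with the matching boundary condition is not quotable from the paper, and in particular Theorem \ref{Fred-mixed-op}(i) does not supply it: that theorem is an $L^2_k$-based statement for cylinder quintuples $Y\times I$ and says nothing in $L^q$, $q\neq 2$, nor about the local model $D_-(r)\times\Sigma$. The only way the paper (and, as far as its toolkit goes, you) can produce such an estimate is to unwind it into the component-wise route of Subsection \ref{regularity-thm-sub}: Lemma \ref{elliptic-reg} or \ref{elliptic-reg-k=0} for the $ds$- and $d\theta$-components (with the boundary term handled through the flatness identity on $U_\partial$), Lemma \ref{weak-Dirac-lemma} for the $\Sigma$-derivatives of the slice component, and the Banach-valued Cauchy--Riemann regularity of Lemma \ref{chart-complex-str} together with Propositions \ref{Ban-val-reg-k=0}/\ref{Ban-val-reg} for the coupled $(\partial_s,\partial_\theta)$-derivatives and the matching condition. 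Since those are exactly ``the estimates that drive the proof of Theorem \ref{regularity-thm-detailed},'' as you say, this is a fixable presentation issue rather than a gap, but as written the displayed inequality is asserted rather than available, and the reference to Theorem \ref{Fred-mixed-op} should be dropped.
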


\subsection{Proof of Theorem \ref{regularity-thm}}\label{regularity-thm-sub}

Suppose $(A,u)$ is an $L^p_1$ solution of \eqref{mixed-Q-r} associated to the quintuple $\fQ(r)$ with $2<p<4$ that satisfies \eqref{Coulomb-gauge-A0} for a smooth connection $A_0$. Suppose $A-A_0$ has the form
\[
  A-A_0=a+\phi ds+\psi d\theta
\]
with respect to the coordinate system on $D_-(r)\times \Sigma$. Since the connection $A$ satisfies the ASD equation, we have
\begin{equation}\label{dA0+-q}
	d_{A_0}^+(A-A_0)=-F(A_0)^++Q(A-A_0)
\end{equation}
where $Q(A-A_0)$ is defined to be the quadratic term $-((A-A_0)\wedge (A-A_0))^+$.  

We list some inequalities and identities here which will be used in various stages of the proof. For any $q<4$, Sobolev embedding implies that 
\begin{equation*}\label{quad-bound-pre}
	  |\!|A-A_0|\!|_{L^{\frac{4q}{4-q}}(X)} \leq C |\!|A-A_0|\!|_{L^q_1(X)}.
\end{equation*}
Since we have
\[
  \frac{4-q}{4q}+\frac{1}{q}=\frac{8-q}{4q},
\]
the H\"older inequality gives 
\begin{equation}\label{quad-bound}
	  |\!|Q(A-A_0)|\!|_{L^{\frac{4q}{8-q}}_1(X)} \leq C |\!|A-A_0|\!|_{L^q_1(X)}^2.
\end{equation}
Similarly, if we fix $q\geq 1$, then for any positive integer $k$ with $qk>4$ there is a constant $C_k$ such that
\begin{equation}\label{quad-bound-higher}
	  |\!|Q(A-A_0)|\!|_{L^{q}_k(X)} \leq C_k |\!|A-A_0|\!|_{L^q_k(X)}^2.
\end{equation}
For each $(s,\theta)\in D_-(r)$, let $\beta(s,\theta)$ (respectively, $\beta_0(s,\theta)$) denote the restriction of $A$ (respectively, $A_0$) to $\Sigma\times\{(s,\theta)\}$. In particular, we have $\beta=\beta_0+a$. Since $\beta(0,\theta)$ is flat, we have 
\begin{equation} \label{B-B0-der-flat}
  d_{\beta_0(0,\theta)}\(\beta(0,\theta)-\beta_0(0,\theta)\) =-F(\beta_0(0,\theta))-(\beta(0,\theta) -\beta_0(0,\theta) )\wedge (\beta(0,\theta) -\beta_0(0,\theta)).
  \end{equation}

We wish to use Lemma \ref{elliptic-reg} to improve regularity of the components $\phi$ and $\psi$ of $A-A_0$ over $D_-(r')$ with $r'<r$. Let $\rho:D_-(r)\to \R^{\geq 0}$ be a compactly supported function which is equal to $1$ on $D_-(r'')$ where $r'<r''<r$. As the first step, note that the second identity of the Coulomb gauge condition \eqref{Coulomb-gauge-A0} implies that for any $\xi$ in $\Gamma_c(D_-(r)\times \Sigma,V)$, the space of compactly supported smooth sections of $V$ over $X=D_-(r)\times \Sigma$, we have
\begin{align}
  \int_{X} \langle \rho (A-A_0), d_{A_0}\xi \rangle=&\int_{X} \langle d_{A_0}^*(\rho (A-A_0)), \xi\rangle \nonumber\\
  =&\int_{X}  \langle\nabla\rho\cdot (A-A_0), \xi\rangle.\label{1-identity-A12}
\end{align}
Here $\nabla\rho\cdot (A-A_0)$ is an expression which is linear in $A-A_0$ and the derivative $\nabla\rho$ of $\rho$. In particular, we observe that the $L^p_1$ norm of $\nabla\rho\cdot (A-A_0)$ is bounded by the $L^p_1$ norm of $A-A_0$.

For any $\eta$ in $\Gamma_\tau(D_-(r)\times \Sigma,V)$, the space of sections of $V$ over $D_-(r)\times \Sigma$ with vanishing restriction to $U_\partial(r)\times \Sigma$, we have
\begin{align}
	\int_{X} \langle \rho (A-A_0), d_{A_0}^*&d_{A_0} (\eta ds) \rangle
	=2\int_{X} \langle \rho (A-A_0), d_{A_0}^*d_{A_0}^+(\eta ds)\rangle+
	\int_{X}\langle \rho (A-A_0),*[F(A_0), \eta ds] \rangle \nonumber\\
	=&2\int_{X} \langle d_{A_0}^+ (\rho (A-A_0)), d_{A_0}^+(\eta ds)\rangle+2
	\int_{\Sigma\times U_\partial}\tr( \rho (A-A_0)\wedge d_{A_0}^+(\eta ds))\nonumber\\ 
	&+\int_{X}\langle \rho (A-A_0),*[F(A_0), \eta ds] \rangle \nonumber\\
	=&2\int_{X} \langle d_{A_0}^+ (\rho (A-A_0)), d_{A_0}(\eta ds)\rangle
	+\int_{X}\langle *[\rho (A-A_0), F(A_0)], \eta ds \rangle \label{2-identity-A12}
\end{align}
where in the last identity we use the assumption on $\eta$ that it vanishes on $ U_\partial \times \Sigma$ to drop the boundary term. 
The identity in \eqref{dA0+-q} and the inequality in \eqref{quad-bound} imply that 
\begin{equation}\label{dAp=A-A}
  |\!|d_{A_0}^+ (\rho (A-A_0))|\!|_{L^{p_1}_{1}(X)}\leq C (|\!|A-A_0|\!|_{L^p_1(X)}^2+1),
\end{equation}
where $p_1=\frac{4p}{8-p}$. Identities \eqref{1-identity-A12} and \eqref{2-identity-A12} and the inequality in \eqref{dAp=A-A} allow us to apply Lemma \ref{elliptic-reg} in the case that $\alpha= \rho (A-A_0)$, $k=1$, $r=p_1$ and the vector field $\sigma$ equals $\frac{\partial}{\partial s}$. This implies that 
\begin{equation} \label{phi-reg}
  |\!|\rho \phi|\!|_{L^{p_1}_{2}(X)}\leq C (|\!|A-A_0|\!|_{L_{1}^p(X)}^2+1).
\end{equation}

In order to improve the regularity of $\psi$, let $\eta\in \Gamma_\nu(D_-(r)\times \Sigma,V)$ where $\Gamma_\nu(D_-(r)\times \Sigma,V)$ is the space of sections of $V$ over $D_-(r)\times \Sigma$ with vanishing normal covariant derivate on $U_\partial \times \Sigma$ with respect to $A_0$. Then
\begin{align}\label{ASD-psi}
	\int_{X} \langle \rho (A-A_0), d_{A_0}^*d_{A_0} (\eta d\theta) \rangle
	=&2\int_{X} \langle \rho (A-A_0), d_{A_0}^*d_{A_0}^+(\eta d\theta)\rangle+
	\int_{X}\langle \rho (A-A_0),*[F(A_0), \eta d\theta] \rangle \nonumber\\
	=&\int_{X}\langle \rho (A-A_0),*[F(A_0), \eta d\theta] \rangle+2\int_{X} \langle d_{A_0}^+ (\rho (A-A_0)), d_{A_0}^+(\eta d\theta)\rangle
	\nonumber\\ 
	&+2\int_{\Sigma\times U_\partial}\tr( \rho (A-A_0)\wedge d_{A_0}^+(\eta d\theta)) \nonumber\\
	=&\int_{X}\langle *[\rho (A-A_0), F(A_0)], \eta ds \rangle+2\int_{X} \langle d_{A_0}^+ (\rho (A-A_0)), d_{A_0}(\eta d\theta)\rangle
	 \nonumber\\
	&+\int_{U_\partial}\int_{\Sigma}\tr( \rho  (\beta-\beta_0)\wedge d_{\beta_0}\eta) d\theta.
\end{align}
By the Stokes theorem and \eqref{B-B0-der-flat}, the last term can be rewritten as
\begin{equation}\label{last-term-bdry}
	-\int_{U_\partial}\int_{\Sigma}\tr( \rho  F(\beta_0)\wedge \eta) d\theta-\int_{U_\partial}\int_{\Sigma}\tr( \rho  (\beta-\beta_0)\wedge(\beta-\beta_0)\wedge \eta) d\theta.
\end{equation}
As in \eqref{quad-bound}, the quadratic term $(\beta-\beta_0)\wedge(\beta-\beta_0)$, regarded as a 2-form on the $4$-manifold $X$, satisfies
\begin{equation}\label{psi-boundary-control}
  \vert\!\vert (\beta-\beta_0)\wedge(\beta-\beta_0)\vert\!\vert_{L^{p_1}_1(X)}\leq C |\!|A-A_0|\!|_{L^p_1(X)}^2.
\end{equation}
Thus, Lemma \ref{elliptic-reg} with $\sigma=\frac{\partial}{\partial \theta}$ and the same $\alpha$, $k$ and $r$ as in the previous case, together with \eqref{dAp=A-A} and \eqref{psi-boundary-control} gives
\begin{align}\label{psi-reg}
  |\!|\rho \psi|\!|_{L^{p_1}_{2}(X)}\leq C (|\!|A-A_0|\!|^2_{L_{1}^p(X)}+1).
\end{align}
Using Sobolev embedding theorem, we may assume that \eqref{phi-reg} and \eqref{psi-reg} hold if the $L^{p_1}_2$ norm on the left hand side is replaced with $L^{q_1}_1$ where $q_1=\frac{2p}{4-p}$.

For each $(s,\theta)\in D_-(r)$, Coulomb gauge condition \eqref{Coulomb-gauge-A0} implies that 
\begin{equation}\label{dbetasa}
	|\!|d_{\beta_0}^*a|\!|_{L^{p_1}_1(\{(s,\theta)\}\times \Sigma)}= |\!|\partial_s^{A_0}\phi+\partial_\theta^{A_0}\psi|\!|_{L^{p_1}_1(\{(s,\theta)\}\times \Sigma)}.
\end{equation}
We have
\[
  d_{A_0}(A-A_0)=d_{\beta_0}a+(d_{\beta_0}\phi-\partial_s^{A_0}a)\wedge ds+(d_{\beta_0}\psi-\partial_\theta^{A_0}a)\wedge d\theta+(\partial_s^{A_0}\psi-\partial_\theta^{A_0}\phi)ds\wedge d\theta,\
\]
where $d_{\beta_0}a$ denotes the exterior derivative of $a$ in the $\Sigma$ direction. This identity can be used to show
\begin{equation}\label{dbetaa}
	|\!|d_{\beta_0}a|\!|_{L^{p_1}_1(\{(s,\theta)\}\times \Sigma)}\leq C(|\!|d_{A_0}^+(A-A_0)|\!|_{L^{p_1}_1(\{(s,\theta)\}\times \Sigma)}+
	|\!|\partial_\theta^{A_0}\phi-\partial_s^{A_0}\psi|\!|_{L^{p_1}_1(\{(s,\theta)\}\times \Sigma)}).
\end{equation}
Therefore, we can use Lemma \ref{weak-Dirac-lemma} and the inequalities \eqref{phi-reg} and \eqref{psi-reg} to show
\begin{align}\label{slice-a-reg}
  |\!|\nabla_{\Sigma}a|\!|_{L^{p_1}_1(D_-(r'')\times \Sigma)}\leq& C (|\!|A-A_0|\!|^2_{L_{1}^p(X)}+1).
\end{align}
We may again assume that the same inequality holds if the $L^{p_1}_1$ norm on the left hand side is replaced with $L^{q_1}$ norm. In particular, $a$ belongs to the Sobolev spaces $L^{p_1}_2(\Sigma,\Lambda^1\otimes L^{p_1}(D_-(r'')))$ and $L^{q_1}_1(\Sigma,\Lambda^1\otimes L^{q_1}(D_-(r'')))$.

Next, we improve regularity of $\partial_s a$, $\partial_\theta a$ and $u$. Define $\fp:D_+(r'')\to  L^p(\Sigma,\Lambda^1\otimes F) \times M$ as
\begin{equation}\label{fp}
  \fp(s,\theta)=(a(-s,\theta), u(s,\theta)).
\end{equation}
Following \eqref{bJ},  the almost complex structures $J$ on $M$ induces an almost complex structure $\bJ$ on $L^p(\Sigma,\Lambda^1\otimes F)  \times M$ given as $(-*_2,J)$. Using the assumption $p>2$ and by decreasing the value of $r$ if necessary, we may assume that $\fp$ takes values in a chart where the pullback of $\bJ$ has the special form given in Lemma \ref{chart-complex-str}. The ASD and CR equations in \eqref{mixed-Q-r} implies that 
\begin{align}
  \(\partial_\theta\fp-\bJ(\fp)\partial_s\fp\)&(s,\theta)=(\(\partial_\theta a-*_2\partial_sa\)(-s,\theta),\(\partial_\theta u-J_{s,\theta}\partial_su\)(s,\theta))\nonumber\\
  &=(\(d_{\beta_0}\psi-*_2 d_{\beta_0}\phi -[\psi+\psi_0,a]+*_2[\phi+\phi_0,a]+F_\theta^+(A_0)\) (-s,\theta),0),\label{CR-op-Banach}
\end{align}
where $\phi_0$ and $\psi_0$ are respectively the components of $A_0$ in the $s$ and $\theta$ directions (that is $A_0=\beta_0+\phi_0 d\theta+\psi_0 d\theta$) and $F_\theta^+(A_0)$ is the projection of $2F^+(A_0)$ to  the summand $F\otimes \Lambda^1(\Sigma)\wedge d\theta$ of $F\otimes \Lambda^2(X)$. Using \eqref{phi-reg}, \eqref{psi-reg} and an application of H\"older inequality analogous to \eqref{quad-bound}, we may conclude that the first entry of \eqref{CR-op-Banach} is in $L^{p_1}_1(D_-(r'')\times \Sigma)$, and hence in $L^{q_1}(D_-(r'')\times \Sigma)$ by Sobolev embedding. Therefore, $ \partial_\theta\fp-\bJ(\fp)\partial_s\fp$ is an element of 
\[
  L^{p_1}_1\(D_-(r''),L^p(\Sigma,\Lambda^1\otimes F)\)\cap L^{q_1}\(D_-(r''),L^{q_1}(\Sigma,\Lambda^1\otimes F)\).
\]
Lemma \ref{chart-complex-str} allows us to apply Proposition \ref{Ban-val-reg-k=0} and conclude that $\fp$ is in $L^{q_1}_1(D_-(r'),L^{q_1}(\Sigma,\Lambda^1\otimes F))$ and 
\begin{equation}\label{pl4}
  \vert\!\vert  \fp\vert\!\vert_{L^{q_1}_1(D_-(r'),L^{q_1}(\Sigma,\Lambda^1\otimes F))} \leq C\( 1+|\!|A-A_0|\!|_{L_{1}^p(X)}^2+|\!|d u|\!|_{L^p(S)}\).
\end{equation}
In summary, $(A,u)$ is in $L^{q_1}_1$, and in fact the $L^{q_1}_1$ norm of the restriction of $(A,u)$ to $D_-(r')\times \Sigma$ can be controlled using the inequalities in \eqref{phi-reg}, \eqref{psi-reg}, \eqref{slice-a-reg} and \eqref{pl4}. By iterating this process, we can prove a similar result where $q_1$ is replaced with $q_i$ of \eqref{def-q-i}. In particular, we can reduce the proof of the regularity to the case that $p>4$.

The rest of the proof of regularity can be addressed in a similar way. For $p>4$, we may obtain \eqref{phi-reg}, \eqref{psi-reg}, \eqref{slice-a-reg} where $p_1$ can be replaced with $p$ because we can use \eqref{quad-bound-higher} with $k=1$ in this case instead of \eqref{quad-bound}. In particular, we obtain
\begin{equation}\label{improved-reg-phi-phi-a-Sigma-step-2}
  \phi,\,\psi\in L^{p}_{2}(D_-(r')\times \Sigma,F),\hspace{1cm}a\in L^{p}_2(\Sigma,\Lambda^1\otimes L^{p}(D_-(r''))).
\end{equation}
In the last step of the above proof where we improve the regularity of $\fp$, we need to use Proposition \ref{Ban-val-reg} instead of Proposition \ref{Ban-val-reg-k=0} to conclude that $\fp$ is in $L^{p/2}_2(D_-(r'),L^{p}(\Sigma,\Lambda^1\otimes F))$. This in addition to \eqref{improved-reg-phi-phi-a-Sigma-step-2} implies that $(A,u)$ is in $L^{p/2}_2$ (see \eqref{fun-space-Ban}). Thus the proof of regularity is reduced to the show that if $(A,u)$ is in $L^p_k$ with $p>2$ and $k\geq 2$, then $(A,u)$ is in $L^p_{k+1}$. The proof of this claim follows the same strategy. Following the first three steps of the above proof, we obtain 
\begin{equation}\label{improved-reg-phi-phi-a-Sigma-step-2}
  \phi,\,\psi\in L^{p}_{k+1}(D_-(r')\times \Sigma,F),\hspace{1cm}a\in L^{p}_{k+1}(\Sigma,\Lambda^1\otimes L^{p}(D_-(r''))).
\end{equation}
In the last step of the proof, Proposition  \ref{Ban-val-reg} allows us to conclude that $\fp\in L^{p}_2(D_-(r'),L^{p}(\Sigma,\Lambda^1\otimes F))$. This complete the proof of smoothness of $(A,u)$. In each step of the proof, we can bound the given Sobolev norm of $(A,u)$ over any region $D_-(r')\times \Sigma$ with $r'<r$ using a polynomial function of $|\!|A-A_0|\!|_{L_{1}^p(X)}$ and $|\!|d u|\!|_{L^p(S)}$ where the coefficients of this polynomial depend only on $A_0$ and $r'$.

\subsection{Proof of Theorem \ref{regularity-sequential}}\label{regularity-thm-sequential-sub}

The proof of Theorem \ref{regularity-sequential} can be verified with a similar argument as in the previous section. Given a sequence $(A_i,u_i)$ as in the statement of Theorem \ref{regularity-sequentia-detailed}, let 
\[
  A_i-A_0=a_i+\phi_i ds+\psi_i d\theta.
\]
The instances of \eqref{dA0+-q} for $A_i$ and $A_j$ imply that 
\begin{equation}\label{dA0+-q-ij}
	d_{A_0}^+(A_i-A_j)=Q(A_i-A_0)-Q(A_j-A_0)
\end{equation}
As an analogue of \eqref{quad-bound} and \eqref{quad-bound-higher}, we have
\begin{equation}\label{quad-bound-ij}
  |\!|Q(A_i-A_0)-Q(A_j-A_0)|\!|_{L^{\frac{4q}{8-q}}_1(X)} \leq C |\!|A_i-A_j|\!|_{L^q_1(X)}(|\!|A_i-A_0|\!|_{L^q_1(X)}+|\!|A_j-A_0|\!|_{L^q_1(X)})
\end{equation}
for $q< 4$, and 
\begin{equation}\label{quad-bound-higher-ij}
  |\!|Q(A_i-A_j)|\!|_{L^q_k(X)} \leq C |\!|A_i-A_j|\!|_{L^q_k(X)}(|\!|A_i-A_0|\!|_{L^q_k(X)}+|\!|A_j-A_0|\!|_{L^q_k(X)})
\end{equation}
when $qk>4$ and $k$ is a positive integer. Similarly, if $\beta_i(s,t)$ denotes the restriction of $A$ $\Sigma\times\{(s,\theta)\}$, then we have 
\begin{align}
  d_{\beta_0(0,\theta)}\(\beta_i(0,\theta)-\beta_j(0,\theta)\) =&(\beta_j(0,\theta) -\beta_0(0,\theta) )\wedge (\beta_j(0,\theta) -\beta_0(0,\theta))\nonumber\\
  &-(\beta_i(0,\theta) -\beta_0(0,\theta) )\wedge (\beta_i(0,\theta) -\beta_0(0,\theta)). \label{B-B0-der-flat-ij}
  \end{align}

Now, by following the steps of the previous section and replacing \eqref{dA0+-q}, \eqref{B-B0-der-flat}, \eqref{quad-bound} and \eqref{quad-bound-higher} with the above identities and inequalities, we can inductively show that the $L^p_1$ convergence of $(A_i,u_i)$ can be improved to higher regularities. As the starting point, \eqref{1-identity-A12} implies that for any $\xi\in \Gamma_c(D_-(r)\times \Sigma,V)$ we have 
\begin{equation}
  \int_{X} \langle \rho (A_i-A_j), d_{A_0}\xi \rangle=\int_{X}  \langle\nabla\rho\cdot (A_i-A_j), \xi\rangle,\label{1-identity-A12-ij}
\end{equation} 
\eqref{2-identity-A12} implies that for any $\eta\in\Gamma_\tau(D_-(r)\times \Sigma,V)$ we have 
\begin{align}
	\int_{X} \langle \rho (A_i-A_j), d_{A_0}^*&d_{A_0} (\eta ds) \rangle\nonumber\\
	=&2\int_{X} \langle d_{A_0}^+ (\rho (A_i-A_j)), d_{A_0}(\eta ds)\rangle
	+\int_{X}\langle *[\rho (A_i-A_j), F(A_0)], \eta ds \rangle, \label{2-identity-A12-ij}
\end{align}
and \eqref{ASD-psi} and \eqref{last-term-bdry} imply that for any $\eta\in \Gamma_\nu(D_-(r)\times \Sigma,V)$ we have
\begin{align}
	\int_{X} \langle \rho (A_i-A_j), &d_{A_0}^*d_{A_0} (\eta d\theta) \rangle
	=\int_{X}\langle *[\rho (A_i-A_j), F(A_0)], \eta ds \rangle+2\int_{X} \langle d_{A_0}^+ (\rho (A_i-A_j)), d_{A_0}(\eta d\theta)\rangle
	 \nonumber\\
	&-\int_{U_\partial}\int_{\Sigma}\tr( \rho  \((\beta_i-\beta_0)\wedge(\beta_i-\beta_0)-(\beta_j-\beta_0)\wedge(\beta_j-\beta_0)\)\wedge d_{\beta_0}\eta) d\theta.\label{ASD-psi-ij}
\end{align}
From these identities we obtain
\begin{equation} \label{phi-reg-ij}
  |\!|\rho (\phi_i-\phi_j)|\!|_{L^{p_1}_{2}(X)}\leq C |\!|A_i-A_j|\!|_{L^p_1(X)}(|\!|A_i-A_0|\!|_{L^p_1(X)}+|\!|A_j-A_0|\!|_{L^p_1(X)}+1),
\end{equation}
and 
\begin{equation} \label{psi-reg-ij}
	|\!|\rho (\psi_i-\psi_j)|\!|_{L^{p_1}_{2}(X)}\leq C |\!|A_i-A_j|\!|_{L^p_1(X)}(|\!|A_i-A_0|\!|_{L^p_1(X)}+|\!|A_j-A_0|\!|_{L^p_1(X)}+1).
\end{equation}

Similar inequalities can be obtained for the terms $a_i-a_j$ and the distance between $u_i$ and $u_j$. First note that we have
\begin{equation}\label{dbetasa-ii}
	|\!|d_{\beta_0}^*(a_i-a_j)|\!|_{L^{p_1}_1(\{(s,\theta)\}\times \Sigma)}= |\!|\partial_s^{A_0}(\phi_i-\phi_j)+\partial_\theta^{A_0}(\psi_i-\psi_j)|\!|_{L^{p_1}_1(\{(s,\theta)\}\times \Sigma)}.
\end{equation}
and 
\begin{align}
	|\!|d_{\beta_0}(a_i-a_j)|\!|_{L^{p_1}_1(\{(s,\theta)\}\times \Sigma)}\leq C\left(|\!|d_{A_0}^+(\right.&A_i-A_j)|\!|_{L^{p_1}_1(\{(s,\theta)\}\times \Sigma)}+\nonumber\\
	&\left.+|\!|\partial_\theta^{A_0}(\phi_i-\phi_j)-\partial_s^{A_0}(\psi_i-\psi_j)|\!|_{L^{p_1}_1(\{(s,\theta)\}\times \Sigma)}\right).\label{dbetaa-ij}
\end{align}
as the counterparts of \eqref{dbetasa} and \eqref{dbetaa}. Thus we obtain the following inequality analogous to \eqref{slice-a-reg}:
\begin{align}\label{slice-a-reg-ii}
  |\!|\nabla_{\Sigma}(a_i-a_j)|\!|_{L^{p_1}_1(D_-(r'')\times \Sigma)}\leq& C |\!|A_i-A_j|\!|_{L^p_1(X)}(|\!|A_i-A_0|\!|_{L^p_1(X)}+|\!|A_j-A_0|\!|_{L^p_1(X)}+1).
\end{align}
We define $\fp_i$ using $a_i$ and $u_i$ as in \eqref{fp}. Since $p>2$, the maps $\fp_i$ are $C^0$ convergent to $\fp$ associated to $(A,u)$. Thus by decreasing the value of $r$ if necessary and for large enough values of $i$, the map $\fp_i$ takes values in a chart where the pullback of $\bJ$ has the special form given in Lemma \ref{chart-complex-str}. We may simplify $\partial_\theta\fp_i-\bJ(\fp_i)\partial_s\fp_i$ as in \eqref{CR-op-Banach}. In particular, a similar argument as in the previous steps can be used to show that the difference between the $L^p(\Sigma,\Lambda^1\otimes F)$-coordinates of $\partial_\theta\fp_i-\bJ(\fp_i)\partial_s\fp_i$ and $\partial_\theta\fp_j-\bJ(\fp_j)\partial_s\fp_j$ is given by an $L^{p_1}_1$ section of the bundle $\Lambda^1\otimes F$ over $D_-(r'')\times \Sigma$ whose $L^{p_1}_1$ norm is bounded by the same term as in the right hand side of \eqref{phi-reg-ij} if $C$ is chosen appropriately. By applying Proposition \ref{Ban-val-reg-k=0} to the sequence $\fp_i':=\Phi_p^{-1}\circ \fp_i$ with $\Phi_p$ being given by Lemma \ref{chart-complex-str}, we conclude that $\fp_i$ is convergent to $\fp$ as elements of $L^{q_1}_1(D_-(r'),L^{q_1}(\Sigma,\Lambda^1\otimes F)$. Combining our results we conclude that the restriction of $A_i$ (resp. $u_i$) to $D_-(r')\times \Sigma$ (resp. $D_+(r')$) for any $r'<r$ is $L^{q_1}_1$ convergent to $A$ (resp. $u$). Analogous to the proof of Theorem \ref{regularity-thm-sub}, iterating this argument allows us to show that $A_i$ (resp. $u_i$) to $D_-(r')\times \Sigma$ (resp. $D_+(r')$) for any $r'<r$ is $L^{p}_k$ convergent to $A$ (resp. $u$) for any $p$ and positive integer $k$. This completes the proof of Theorem \ref{regularity-thm-sequential-sub}.

\section{Compactness}\label{compactness-sec}
In this section, we study compactness properties of the moduli space of mixed solutions. We specialize to the case that our target symplectic manifold for the mixed equation is the moduli space $\mathcal M(\Sigma,F)$ of flat connections on $F$ and consider the quintuples of the form 
\begin{equation}\label{quintuple-comp}
  \fP(r):=(D_-(r)\times \Sigma,D_-(r)\times F,D_+(r),\mathcal M(\Sigma,F),\mathcal L(\Sigma,F)).
\end{equation}
Recall that the Lagrangian correspondence $\mathcal L(\Sigma,F)$ is given in Example \ref{matching-lag-cor}. We fix a positive real number $r_0$ and we drop $r$ from our notation if $r=r_0$ in the following. In this section $C$ is a constant depending only on $r_0$ and the metric $g$ on $\Sigma$ such that its value might increase from each line to the next line.

\subsection{Energy quantization}\label{energy-quant=subsec}

Suppose $(A,u)$ is an element of the configuration space associated to the quintuple $\fP(r_0)$. Let the {\it energy density of $u$}, denoted by $e_u:D_-\to \R^{\geq 0}$, and the {\it $2$-dimensional energy density of $A$}, denoted by $e_{A}:D_-\to \R^{\geq 0}$, be defined as
\[
  e_u(s,\theta)=|du|^2(-s,\theta),\hspace{1cm}e_A(s,\theta)=\int_{\{(s,\theta)\}\times \Sigma}|F_A|^2,
\]
where $(s,\theta)\in D_-$. The {energy density} of $(A,u)$, denoted by $e_{A,u}:D_-\to \R^{\geq 0}$ is defined as
\[
  e_{A,u}=e_A+e_u.
\]

\begin{theorem}\label{en-quant}
	There exist constants $\kappa$ and $\hbar$ such that the following holds.  
	Let $(A,u)$ be a solution of the mixed equation associated to the quintuple $\fP(r_0)$.
	Let $z\in U_{-}$ and $r$ be given such that $D_r(z):=B_r(z)\cap \bbH_- \subset B_{r_0}(0)$. Let also
	\[
	  \int_{D_r(z)}e_{A,u}\dvol\leq \hbar.
	\]
	Then we have:
	\begin{equation}
		\hspace{1cm} e_{A,u}(z) \leq \kappa \frac{ \int_{D_r(z)}e_{A,u}\dvol}{r^2}.
	\end{equation}
\end{theorem}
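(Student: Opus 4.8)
The plan is to read the statement as a two-dimensional $\epsilon$-regularity (mean-value) estimate for the nonnegative function $e_{A,u}$ on the disc $D_-$, and to prove it in the classical way: a Bochner-type differential inequality $\Delta e_{A,u}\ge -(\text{potential})\cdot e_{A,u}$ combined with the standard mean-value inequality for nonnegative weak subsolutions, the smallness of $\hbar$ being used precisely to turn the genuinely nonlinear Bochner inequality into such a linear one. Throughout, $\Delta$ is taken with the sign convention for which harmonic functions satisfy $\Delta f=0$ and subsolutions $\Delta f\ge 0$.

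First I would record the differential inequality. For the connection part, note that $F^{+}(A)=0$ together with the Bianchi identity gives $d_{A}^{*}F_{A}=-\ast d_{A}\ast F_{A}=\ast d_{A}F_{A}=0$, so $F_{A}$ is $d_{A}$-harmonic on the $4$-manifold $D_-\times\Sigma$. The Weitzenb\"ock formula, together with the fact that for the product metric $ds^{2}+d\theta^{2}+g_\Sigma$ near the boundary the ambient Riemann tensor is just that of $g_\Sigma$ (hence bounded), yields the pointwise inequality $\Delta\bigl(|F_{A}|^{2}\bigr)\ge -C\bigl(|F_{A}|^{2}+|F_{A}|^{3}\bigr)$ on $D_-\times\Sigma$. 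Integrating over the closed fibre $\Sigma$ makes the fibre Laplacian disappear, leaving $\Delta e_{A}\ge -C\,e_{A}-C\!\int_{\Sigma}|F_{A}|^{3}$ on $D_-$. For the map part, $u$ is $J_{*}$-holomorphic into the (compact) symplectic manifold $\mathcal M(\Sigma,F)$ with its bounded curvature, so the classical Bochner identity for $J$-holomorphic curves from the flat domain $D_+$ gives $\Delta e_{u}\ge -C\,e_{u}^{2}$ (under $s\mapsto -s$, an inequality on $D_-$). Adding the two, $\Delta e_{A,u}\ge -C\,e_{A,u}-C\!\int_{\Sigma}|F_A|^{3}-C\,e_u^{2}$.

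To pass from this to a linear inequality I would use the smallness of $\hbar$ via an a priori bound: after shrinking $\hbar$, the hypothesis $\int_{D_r(z)}e_{A,u}\,\dvol\le\hbar$ forces a \emph{uniform} bound on $|du|$ and on the fibrewise quantities $|F_{A}|\big|_{\Sigma\times\{(s,\theta)\}}$ over a slightly smaller region, with constants depending only on $r_0$ and $g$. Concretely, put $A$ in Coulomb gauge with respect to a smooth background connection as in \eqref{Coulomb-gauge-A0} and run, starting from the small $L^{2}$ energy, the same elliptic bootstrap as in Section~\ref{regularity} --- using Lemma~\ref{elliptic-reg}, Lemma~\ref{weak-Dirac-lemma}, the Banach-space-valued estimates of Appendix~\ref{Banach-space-valued}, and the Cauchy--Riemann equation for $u$ --- to obtain higher-order elliptic control of $A-A_0$ and of $u$ on interior balls; choosing $p$ large and using Sobolev embedding on the surface $\Sigma$ then upgrades this to the desired fibrewise $L^\infty$-bound for $F_{A}$ and $C^{0}$-bound for $du$. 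Feeding this in, $\int_{\Sigma}|F_A|^3\le C_0\,e_A$ and $e_u^2\le C_0\,e_u$, so $\Delta e_{A,u}\ge -C' e_{A,u}$ on $D_-$ with $C'$ depending only on $r_0,g$. The standard mean-value inequality for a nonnegative subsolution of $\Delta f+C'f\ge0$ on a ball (or, when $z$ lies near the flat edge $U_\partial$, on a half-ball) then gives $f(z)\le \kappa\,r^{-2}\!\int_{D_{r}(z)}f\,\dvol$ with $\kappa$ depending only on $C'r_0^{2}$, i.e. only on $r_0$ and $g$; this is the claim. Near $U_\partial$ the half-ball version is justified by reflecting across $U_\partial$: the matching condition $(A|_{\Sigma\times\{0,\theta\}},u(0,\theta))\in\mathcal L(\Sigma,F)$ forces the slice $A|_{\Sigma\times\{0,\theta\}}$ to be flat, which is exactly what kills the boundary terms in the integration by parts behind the Weitzenb\"ock step and in the mean-value estimate.

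I expect the main obstacle to be the a priori bound controlling $\int_{\Sigma}|F_{A}|^{3}$, equivalently the uniform fibrewise bound on $F_{A}$ extracted from the merely $L^{2}$-small energy: this is the one genuinely analytic step, combining Uhlenbeck-type gauge fixing, the interior elliptic bootstrap of Section~\ref{regularity}, and Wehrheim-type boundary estimates for the matching condition at $U_\partial$; the Bochner inequality and the mean-value step are then routine. An alternative that avoids this bound is to run a rescaling (Heinz) argument directly on the nonlinear inequality and extract in the limit a non-constant finite-energy object --- a $J_{*}$-holomorphic plane or half-plane in $\mathcal M(\Sigma,F)$, an ASD instanton on $\mathbb R^{4}$ or $\mathbb R^{3}\times\mathbb R_{\ge0}$, or a flat connection on $\mathbb R^{2}\times\Sigma$ after the adiabatic blow-up of the fibre --- whose energy is bounded below by a universal gap, so that choosing $\hbar$ below that gap produces a contradiction; but the bubble analysis in the adiabatic regime is itself delicate, so the differential-inequality route above seems preferable.
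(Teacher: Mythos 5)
Your overall shape --- a Bochner-type inequality for $e_{A,u}$ followed by a mean-value estimate, with the smallness of $\hbar$ used to linearize --- is the right one, and the differential inequality you write down is essentially Proposition~\ref{lap-estimates} of the paper. However, two steps of the proposal contain genuine gaps. The claim that from the mere smallness of $\int_{D_r(z)} e_{A,u}$ one can ``run the elliptic bootstrap of Section~\ref{regularity}'' and extract a \emph{uniform pointwise} bound on $|du|$ and on $|F_A|$ restricted to the slices $\{(s,\theta)\}\times\Sigma$ is circular: this pointwise control \emph{is} the $\epsilon$-regularity that Theorem~\ref{en-quant} is designed to establish. The arguments of Section~\ref{regularity} require $(A,u)\in L^p_1$ with $p>2$ as a starting point, which is strictly more than an $L^2$ energy bound provides; in four dimensions the critical Sobolev embedding $L^2_1\hookrightarrow L^4$ prevents any direct bootstrap from $L^2_1$-level gauge-fixed data. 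The paper resolves this by an architecture you relegate to an ``alternative'': it first proves the weaker Proposition~\ref{en-quant-weak}, valid under the \emph{additional} pointwise hypothesis $e_{A,u}(w)\le\hbar' r^{-2}$ (using Lemma~\ref{L21-energy} for $L^2_1$ control, Green's-function inversion, and a careful accounting of boundary terms), and then removes that hypothesis by the elementary maximum-point lemma at the end of Subsection~\ref{energy-quant=subsec}, a Heinz-type rescaling argument. That two-stage structure is not a convenience; there is no direct route from small $L^2$ energy to the $L^\infty$ bounds you posit.

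Your treatment of the boundary $U_\partial$ is also incorrect. The matching condition is a Lagrangian boundary condition coupling $A$ and $u$ through $\mathcal L(\Sigma,F)\subset\mathcal A(\Sigma,F)\times\mathcal M(\Sigma,F)$; it is not a reflection symmetry, and the boundary terms in the Weitzenb\"ock and mean-value integrations by parts do not vanish. Instead, Proposition~\ref{normal-est} shows that $\partial_s e_{A,u}(0,\theta)\le C\,e_{A,u}(0,\theta)^{3/2}$. The mechanism is a cancellation, not a vanishing: in suitable gauges the two individual normal derivatives equal $\mp 2\int_\Sigma\tr(\partial_\theta\beta\wedge\partial_\theta\partial_\theta\beta)$ and $\pm 2\int_\Sigma\tr(\partial_\theta\beta'\wedge\partial_\theta\partial_\theta\beta')$, and by connecting the gauge-equivalent boundary values $\beta(0,\theta)$ and $\beta'(0,\theta)$ through a path of flat connections on $[-1,1]\times\Sigma$ their difference collapses to a cubic remainder. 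This cubic boundary estimate is then what allows Lemma~\ref{L21-energy} and the Green's-function step of Proposition~\ref{en-quant-weak} to close, and it is the genuinely new input relative to interior $\epsilon$-regularity. An appeal to reflection symmetry skips exactly the analysis that the theorem requires.
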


The following proposition allows us to obtain interior regularity for the the energy densities of a solution $(A,u)$ of the mixed solution associated to the quintuple $\fP(r_0)$. 
\begin{prop}\label{lap-estimates}
	A solution $(A,u)$ of the mixed equation associated to the quintuple $\fP(r_0)$ satisfies the inequalities
	\begin{equation}\label{lap-pure-energies}
	  \Delta_4(|F_A|^2)\leq C(|F_A|^2+|F_A|^3), \hspace{2cm}\Delta_2(e_u)\leq C(e_u+e_u^2),
	\end{equation}
	\begin{equation}\label{lap-mixed-energy}
	  \Delta_2(e_{A,u})\leq C(e_{A,u}+e_{A,u}^2+e_{A,u}^{\frac{1}{2}}f_A),
	\end{equation}	
	where $\Delta_2$ is the Laplacian on $D_-$, $\Delta_4$ is the Laplace-Beltrami operator associated to $D_-\times \Sigma$, and $f_A:D_-\to \R^{\geq 0}$ is given by
	\begin{equation}\label{fA}
	  f_A(z)=(\int_{\{z\} \times \Sigma}|F_A|^4\,\dvol)^{\frac{1}{2}}.
	\end{equation}
\end{prop}
\begin{proof}
	Let $X$ be a Riemannian 4-manifold and $E$ be a vector bundle over $X$.
	Let $A$ be a unitary connection on $E$ and $\phi$ be a $2$-form with values in $E$. Then the Weitzenb\"ock
	formula states that
	\begin{equation} \label{Weitzenbock}
	  \nabla_A^*\nabla_A \phi-(d_Ad_A^*+d_A^*d_A) \phi=Q_1(R_X,\phi)+Q_2(F_A,\phi),
	\end{equation}
	where $Q_1(R_X,\phi)$ (respectively, $Q_2(F_A,\phi)$) denotes a point-wise smooth bi-linear form of the Riemannian curvature $R_X$
	(respectively, the curvature of the connection $A$) and $\phi$. (See \cite[Theorem 3.2]{BL:SI-YM} for more details.)
	In particular, if we apply this identity to the case that $\phi$ is equal to the curvature $F_A$ of an ASD connection $A$, then Bianchi identity implies that:
	\begin{equation} \label{Weit}
	  \nabla_A^*\nabla_A F_A=Q_1(R_X,F_A)+Q_2(F_A,F_A)
	\end{equation}
	By taking the inner product of \eqref{Weit} with $F_A$, we can conclude that:
	\begin{align}\label{energy-connection}
		\Delta_4 |F_A|^2+2|\nabla_A F_A|^2&=2\langle \nabla_A^*\nabla_A F_A, F_A\rangle \nonumber \\
		&\leq C( |F_A|^2+|F_A|^3),
	\end{align}
	This implies the first claimed inequality of \eqref{lap-pure-energies}.
	
	An analogue of \eqref{Weitzenbock} holds for 1-forms on a Riemannian manifold $X$ for appropriate choices of $Q_1$, $Q_2$ (see, for example,  \cite[Remark 6.40]{ABKL:symp}), and 
	the second inequality in \eqref{lap-pure-energies} can be also proved using this Weitzenb\"ock identity, as we explain next. 
	The differential $du$ of the holomorphic map $u$ can be 
	regarded as a 1-form on $D_+$ with values in the bundle $u^*T\mathcal M(\Sigma,F)$. 
	Let $\nabla$ denote the Levi-Civita connection associated to the metric on $\mathcal M(\Sigma,F)$ given as $\omega_{\rm fl}(\cdot , J_*\cdot)$ by the symplectic form $\omega_{\rm fl}$ and the complex structure $J_*$.
	It is useful to consider the $J_*$-linear connection on $T\mathcal M(\Sigma,F)$
	\begin{equation}\label{Jtilde}
		\widetilde \nabla(v):=\nabla(v)-\frac{1}{2}J_*(\nabla J_*)v
	\end{equation}	
	which is compatible with the metric, and its torsion has vanishing $(1,1)$-component. 
	Therefore, if we let $B$ be the pull back of this connection on $u^*T\mathcal M(\Sigma,F)$, then $d_B(du)=0$. 
	Since $u$ satisfies the Cauchy-Riemann equation,
	it is also straightforward to check that $d^*_B(du)=0$. We apply the 1-form version of \eqref{Weitzenbock} to $\phi=du$ and the 
	connection $B$ on $u^*T\mathcal M(\Sigma,F)$. As in the previous case, taking the inner product of 
	the resulting identity with $du$ implies that:
	\begin{align}\label{energy-string}
		\Delta_2 e_u+2|\nabla_B du|^2&=2\langle \nabla_B^*\nabla_B du, du\rangle \nonumber \\
		&=\langle Q_1(R_{D_+},du),du \rangle+\langle Q_2(F_B,du),du \rangle \nonumber \\
		&\leq  C(e_u+ e_u^2) 
	\end{align}
	Note that in the last inequality we use the fact that the norm of $F_B$ can be controlled by $|du|^2$.
	
	Consider the function on $D_-$ that associates to $z\in D_-$ the integral of $|F_A|^2$ over $\{z\}\times \Sigma$. Inequality \eqref{energy-connection} and Cauchy-Schwarz inequality implies that the Laplacian of this function satisfies
	\begin{align*}
	  \Delta_2 (\int_{\{z\}\times \Sigma} |F_A|^2)&=\int_{\{z\}\times \Sigma} \Delta_4|F_A|^2+\int_{\{z\}\times \Sigma}d_\Sigma (*d_\Sigma|F_A|^2)\\
	  &\leq C(\int_{\{z\}\times \Sigma} |F_A|^2+|F_A|^3)  \\
	  &\leq C\(\int_{\{z\}\times \Sigma} |F_A|^2+(\int_{\{z\}\times \Sigma} |F_A|^2)^{\frac{1}{2}}f_A(z)\).
	\end{align*}
	Note that the Stokes' theorem implies that the second integral on the right hand side of the first line vanishes. This inequality together with \eqref{energy-string} verifies the final inequality of the proposition.
\end{proof}

\begin{prop} \label{normal-est}
	For any point $z:=(0,\theta)\in U_\partial$, the normal derivative of the mixed energy density satisfies the following inequality:
	\begin{equation}\label{normal-bdry-est}
	  \partial_s e_{A,u}(z)\leq Ce_{A,u}(z)^{\frac{3}{2}}
	\end{equation}
\end{prop}
\begin{proof}
	First we pick an appropriate gauge for the connection $A$.
	Decompose the connection $A$ as follows
	\[A=\beta(s,\theta)+\phi(s,\theta) ds+\psi(s,\theta) d\theta\] 
	where $\beta(s,\theta)$ 
	is a connection on $F$ over $\Sigma$ and $\phi(s,\theta)$, $\psi(s,\theta)$ are sections of $F$.
	Fix a gauge for $A$ by firstly taking the parallel transport of a fixed frame at the point $(0,0)$ along $U_\partial$, and then extending the frames on $U_\partial$ to $D_-$ 
	by parallel transport in the $s$-direction. Therefore, $\phi$ and the restriction of $\psi$ to $U_\partial$ vanish. The ASD equation for the connection $A$ implies that $\beta$ and $\psi$ satisfy
	\begin{equation}\label{ASD-renterp}
	  *_{\Sigma}F_\beta+\partial_s\psi=0,\hspace{1cm} -\partial_\theta \beta+*_{\Sigma}\partial_s \beta+d_\beta\psi=0,
	\end{equation}
	where $\partial_s$ and $\partial_\theta$ are defined with respect to the chosen frame. 
	Since $F_\beta=0$ on $U_{\partial}$, the first equation in \eqref{ASD-renterp} implies that $\partial_s \psi$ on the matching line $U_{\partial}$ vanishes. Using this and the second equation in 
	\eqref{ASD-renterp}, we can conclude that:
	\begin{equation}\label{ASD-renterp-imp}
		 \hspace{2cm}\partial_{\theta} \beta=*_{\Sigma}\partial_s\beta,\hspace{1cm}\partial_\theta\partial_{\theta} \beta=*_{\Sigma}\partial_\theta\partial_s\beta
		 =-\partial_s\partial_s\beta
		  \hspace{1cm} \forall z\in U_{\partial}.
	\end{equation}	
	
	The curvature of the connection $A$ with respect to the above gauge has the following form:
	\[
	  F_A=F_\beta+ds\wedge \partial_s\beta+d\theta\wedge \partial_\theta \beta+d_\beta\psi\wedge d\theta+\partial_s \psi ds\wedge d\theta.
	\]
	Thus we have
	\begin{align*}
		\frac{1}{2}\partial_s e_A(0,\theta)
		&=\int_{\{(0,\theta)\}\times \Sigma}\langle \partial_s\beta,\partial_s\partial_s\beta\rangle+
		\langle \partial_\theta \beta,\partial_s\partial_\theta \beta\rangle\\
		&=-2\int_{\{(0,\theta)\}\times \Sigma}{\tr}(\partial_\theta \beta\wedge \partial_\theta\partial_\theta \beta),
	\end{align*}
	where the last identity follows from \eqref{ASD-renterp-imp}. 
	
	We follow a similar strategy to fix a representative for the map $u:D_+\to \mathcal M(\Sigma,F)$. For any $(0,\theta)\in U_\partial$, there is a unique connection $\beta'(0,\theta)$ such that 
	$\beta'(0,0)=\beta(0,0)$, $\beta'(0,\theta)$ represents the flat connection $u(0,\theta)$ and
	$d^*_{\beta'(0,\theta)}\partial_\theta \beta'(0,\theta)=0$. We extend this family of connections to $D_+$ by requiring that $\beta'(s,\theta)$ represents the flat connection $u(s,\theta)$ and 
	$d^*_{\beta'(s,\theta)}\partial_s \beta'(s,\theta)=0$. Since $u$ is a holomorphic map, for each $(s,\theta)\in D_+$, there is a section $\psi'(s,\theta)$ of $F$ such that
	\begin{equation}\label{CR-renterp}
	  -\partial_\theta \beta'+*_{\Sigma}\partial_s \beta'+d_{\beta'}\psi'=0.
	\end{equation}	
	In particular, $d_{\beta'}^*d_{\beta'}\psi'=0$ on $U_\partial$, which implies that $\psi'(0,\theta)=0$. Taking the derivative of \eqref{CR-renterp} along the $\theta$-direction on the matching line $U_\partial$
	implies that
	\begin{equation}\label{CR-renterp-imp}
	  \hspace{2cm}\partial_\theta\partial_\theta \beta'=*_{\Sigma}\partial_\theta\partial_s \beta'\hspace{1cm} \forall z\in U_{\partial}.
	\end{equation}	
	
	For $(0,\theta)\in U_\partial$, the exterior derivatives $d_{\beta'(0,\theta)}$ and $d^*_{\beta'(0,\theta)}$ act trivially on $\partial_\theta \beta'(0,\theta)$, and hence we have
	\[
	  |d u|^2(0,\theta)=2\int_{\{(0,\theta)\}\times \Sigma}|\partial_\theta \beta'(0,\theta)|^2.
	\]
	This together with $d^*_{\beta'(0,\theta)}\partial_\theta \beta'(0,\theta)=0$ gives rise to the following identity for the normal derivative of $e_u:D_-\to \R$ on $U_\partial$:
	\begin{align*}
		\frac{1}{2}\partial_s e_u(0,\theta)
		&=-2\int_{\{(0,\theta)\}\times \Sigma}\langle \partial_s\partial_\theta \beta',\partial_\theta \beta'\rangle\\
		&=2\int_{\{(0,\theta)\}\times \Sigma}{\tr}(\partial_\theta \beta'\wedge \partial_\theta\partial_\theta \beta').
	\end{align*}
	
	The matching condition on $U_\partial$ implies that 
	there is $g_\theta\in \mathcal G(\Sigma,F)$
	such that $\beta'(0,\theta)=g^*_\theta \beta(0,\theta)$ for each $\theta$. Moreover, $g_\theta$ is smooth with respect to $\theta$ and $g_0=1$.
	Let $\zeta_\theta:=g_\theta^{-1}\partial_\theta g_\theta$. Then we have
	\begin{equation}\label{B-B'-zeta}
	  \partial_\theta \beta(0,\theta)=g_\theta \partial_\theta \beta'(0,\theta)g_\theta^{-1}-g_\theta d_{\beta'(0,\theta)}\zeta_\theta g_\theta^{-1}
	\end{equation} 
	Using the extension theorem of Sobolev spaces, we may find $\widetilde \zeta_\theta\in L^2_{\frac{3}{2}}([-1,1]\times \Sigma,[-1,1]\times F)$ 
	such that
	\[
	  \widetilde \zeta_\theta\vert_{\{-1\}\times \Sigma}=0,\hspace{1cm}\widetilde \zeta_\theta\vert_{\{1\}\times \Sigma}=\zeta_\theta,
	\]
	and
	\[
	 |\!|\widetilde \zeta_\theta|\!|_{L^2_{\frac{3}{2}}([-1,1]\times \Sigma,F)}\leq 
	 C |\!|\zeta_\theta|\!|_{L^2_{1}(\Sigma,F)}.
	\]
	Define $\widetilde g_\theta\in \mathcal G([-1,1]\times \Sigma,[-1,1]\times F)$ by $\partial_\theta \widetilde g_\theta=\widetilde g_\theta \widetilde \zeta_\theta$. 
	Then $\widetilde g_\theta\vert_{\{-1\}\times \Sigma}=1$ and $\widetilde g_\theta\vert_{\{1\}\times \Sigma}=g_\theta$.

	Let  $\widetilde B_\theta$ be the connection on $[-1,1]\times \Sigma$ defined as $\widetilde g_\theta^*\beta(0,\theta)$. This connection restricts to $\beta(0,\theta)$, $\beta'(0,\theta)$ on $\{-1\}\times \Sigma$, $\{1\}\times \Sigma$. 
	Since $\widetilde B_\theta$ is flat for each $\theta$, we have:
	\begin{equation}\label{der-A-tilde}
		d_{\widetilde B_\theta}(\partial_\theta \widetilde B_\theta)=0\hspace{1cm}
		d_{\widetilde B_{\theta}}(\partial_\theta\partial_\theta \widetilde B_\theta)=
		-2\partial_\theta \widetilde B_\theta\wedge \partial_\theta \widetilde B_\theta
	\end{equation}
	Here the second identity is obtained by applying $\partial_\theta$ to the first one.
	Stokes theorem and the identities in \eqref{der-A-tilde} imply that we have the following identities for each $\theta$
	\[\int_{\{(0,\theta)\}\times \Sigma}{\tr}(\partial_\theta \beta'\wedge \partial_\theta\partial_\theta \beta')
	-\int_{\{(0,\theta)\}\times \Sigma}{\tr}(\partial_\theta \beta\wedge \partial_\theta\partial_\theta \beta)=\hspace{4cm}\]
	\vspace{-15pt}
	\begin{align*}
		\hspace{4cm}
		&=\int_{[-1,1]\times \Sigma}d{\tr}(\partial_\theta \widetilde B_\theta \wedge \partial_\theta\partial_\theta \widetilde B_\theta )\\
		&=2\int_{[-1,1]\times \Sigma}{\tr}(\partial_\theta \widetilde B_\theta 
		\wedge \partial_\theta \widetilde B_\theta \wedge \partial_\theta \widetilde B_\theta)
	\end{align*}
	Thus for any $(0,\theta)\in U_\partial$, we have:
	\[\partial_s e_{A,u}(0,\theta)\leq C |\!|\partial_\theta \widetilde B_\theta|\!|_{L^3([-1,1]\times \Sigma)}^3 \]
	
	Using the definition of $\widetilde B_\theta$ we can conclude that
	\begin{align}
		|\!|\partial_\theta \widetilde B_\theta|\!|_{L^3([-1,1]\times \Sigma)}
		&\leq C\(|\!|\partial_\theta \beta'(0,\theta)|\!|_{L^3(\Sigma)}+
		|\!|d_{\widetilde B_\theta}\widetilde \zeta_\theta|\!|_{L^3([-1,1]\times \Sigma)}\)\nonumber\\
		&\leq C\(|\!|\partial_\theta \beta'(0,\theta)|\!|_{L^2(\Sigma)}+
		|\!|d_{\widetilde B_\theta}\widetilde \zeta_\theta|\!|_{L^2_{\frac{1}{2}}([-1,1]\times \Sigma)}\)\nonumber\\
		&\leq C\(e_{u}(0,\theta)^{\frac{1}{2}}+
		|\!|\widetilde \zeta_\theta|\!|_{L^2_{\frac{3}{2}}([-1,1]\times \Sigma)}\).\label{bound-L3-A-tilde}
	\end{align}
	In addition to Sobolev embedding inequality, we use the fact that 
	$\partial_\theta \beta'(0,\theta)$ belongs to the kernel of $d_{\beta'(0,\theta)}$ and $d^*_{\beta'(0,\theta)}$
	 to obtain the second inequality.
	Our choice of $\widetilde \zeta_\theta$ allows us to conclude that its $L^2_{\frac{3}{2}}$ norm is bounded by 
	$C|\!|\zeta_\theta|\!|_{L^2_{1}(\Sigma)}$, which in turn is bounded by $C|\!|d_{\beta'(0,\theta)}\zeta_\theta|\!|_{L^2(\Sigma)}$.
	The last claim holds because the kernel of $d_{\beta'(0,\theta)}$ acting on the space of $0$-forms is trivial, and 
	$\beta'(0,\theta)$ is a representative for an element of the compact space $\mathcal M(\Sigma,F)$. Since \eqref{B-B'-zeta} implies that 
	$|\!|d_{\beta'(0,\theta)}\zeta_\theta|\!|_{L^2(\Sigma)}$ is controlled  by 
	$|\!|\partial_{\theta}\beta(0,\theta)|\!|_{L^2(\Sigma)}+|\!|\partial_{\theta}\beta'(0,\theta)|\!|_{L^2(\Sigma)}$, we conclude that
	the $L^2_{\frac{3}{2}}$ norm of $\widetilde \zeta_\theta$ is bounded by $Ce_{A,u}(0,\theta)^{\frac{1}{2}}$. Therefore, 
	this observation and \eqref{bound-L3-A-tilde} give us the 
	inequality \eqref{normal-bdry-est}.
\end{proof}

The following proposition is a weaker version of Theorem \ref{en-quant}.
\begin{prop} \label{en-quant-weak}
	There exist constants $\kappa'$ and $\hbar'$ such that the following holds. Let $(A,u)$ be a solution of the mixed equation 
	associated to the quintuple $\fP(r_0)$.
	Let $z\in D_{-}$ and $r$ be given such that $D_{r}(z)\subset B_{r_0}(0)$, and
	\begin{equation}\label{ptwise-bd}
	  \hspace{2cm} e_{A,u}(w)\leq \hbar' r^{-2} \hspace{1cm}\forall w\in D_{r}(z).
	\end{equation}
	Then we have
	\begin{equation}\label{Main-ineq-weak}
		\hspace{1cm} e_{A,u}(z) \leq \kappa' \frac{ \int_{D_{r}(z)}e_{A,u}\dvol}{r^2}.
	\end{equation}
\end{prop}

Before delving into the proof of Proposition \ref{en-quant-weak}, we show that the assumption of this proposition allows us to obtain appropriate $L^2_1$ bounds on $F_A$ and $du$:
\begin{lemma}\label{L21-energy}
	There is a constant $\hbar_0$ such that the following holds. 
	Suppose $(A,u)$ is a solution of the mixed equation 
	associated to the quintuple $\fP(r_0)$, and $z\in D_{-}$ and $r$ are given
	such that $D_{r}(z)\subset B_{r_0}(0)$ and \eqref{ptwise-bd} holds for $\hbar'=\hbar_0$. Then
	\begin{equation}\label{L21-bd-energy}
		|\!|\nabla_AF_A|\!|_{L^2(D_{\frac{r}{2}(z)}\times \Sigma)}^2+|\!|\nabla_B (d u)|\!|_{L^2(D^+_{\frac{r}{2}}(z))}^2
		\leq C \frac{ \int_{D_{r}(z)}e_{A,u}\dvol}{r^2}.
	\end{equation}	
	where $B$ is the connection introduced in the proof of Proposition \ref{lap-estimates} and $D^+_{\frac{r}{2}}(z)$ denotes the reflection of $D_{\frac{r}{2}}(z)$ with respect to $U_\partial$.
\end{lemma}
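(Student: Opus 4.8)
The estimate is of $\varepsilon$-regularity type, and the plan is to combine the pointwise hypothesis $e_{A,u}\le\hbar_0 r^{-2}$ with a cut-off/integration-by-parts argument applied to the two Weitzenb\"ock inequalities obtained inside the proof of Proposition~\ref{lap-estimates}: the identity \eqref{energy-connection}, which gives $\Delta_4|F_A|^2+2|\nabla_A F_A|^2\le C(|F_A|^2+|F_A|^3)$ on $D_r(z)\times\Sigma$, and \eqref{energy-string}, which gives $\Delta_2 e_u+2|\nabla_B du|^2\le C(e_u+e_u^2)$ on $D_r(z)$ (the densities transported to $\bbH_-$). First I would fix a cut-off $\chi=\chi(s,\theta)$ with $\chi\equiv1$ on $B_{r/2}(z)$, $\supp\chi\subset B_r(z)$ and $|\nabla\chi|\le C/r$, and set $Y:=\int_{D_r(z)\times\Sigma}\chi^2|\nabla_A F_A|^2+\int_{D^+_r(z)}\chi^2|\nabla_B du|^2$. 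Multiplying the two inequalities by $\chi^2$, integrating over $D_r(z)\times\Sigma$ and $D_r(z)$, integrating by parts and adding, one gets $2Y\le(\text{interior terms})+\int_{U_\partial\cap B_r(z)}\chi^2\,\partial_s e_{A,u}\,d\theta$: the only boundary contribution is over the matching line $U_\partial$ (the outward normal on $\bbH_-$ is $+\partial_s$), and the $F_A$- and $du$-boundary terms must be combined into $\partial_s e_{A,u}$ because only their sum is controlled. It then remains to dominate each surviving term by $Cr^{-2}\int_{D_r(z)}e_{A,u}\dvol+\tfrac14 Y$.

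The interior terms are the quadratic ones $\int_{D_r(z)}\chi^2 e_{A,u}\dvol$ and $\int_{D_r(z)}|\nabla\chi|^2 e_{A,u}\dvol$ (the latter produced, together with a $\tfrac14 Y$, by a Young inequality on the integration-by-parts remainder), both $\le Cr^{-2}\int_{D_r(z)}e_{A,u}\dvol$ once one uses $r\le Cr_0$; and the cubic term $\int_{D_r(z)\times\Sigma}\chi^2|F_A|^3\dvol$. For the cubic term I would argue slicewise: by H\"older, the Ladyzhenskaya inequality on the closed surface $\Sigma$, and Kato's inequality, $\int_\Sigma|F_A|^3\le\|F_A\|_{L^2(\Sigma)}\|F_A\|^2_{L^4(\Sigma)}\le C\,e_A\,\|F_A\|_{L^2_1(\Sigma)}\le Ce_A\bigl(e_A^{1/2}+\|\nabla_A F_A\|_{L^2(\{(s,\theta)\}\times\Sigma)}\bigr)$. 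Integrating $\chi^2$ times this over $D_r(z)$ and using $e_A\le\hbar_0 r^{-2}$, the $e_A^{3/2}$-contribution is $\le C\hbar_0^{1/2}r^{-1}\int_{D_r(z)}e_{A,u}\dvol\le Cr^{-2}\int_{D_r(z)}e_{A,u}\dvol$, while the remaining contribution, by Cauchy--Schwarz in $(s,\theta)$ followed by $e_A\le\hbar_0 r^{-2}$, is $\le\hbar_0^{1/2}r^{-1}\bigl(\int_{D_r(z)}e_{A,u}\dvol\bigr)^{1/2}Y^{1/2}\le\delta Y+C_\delta\hbar_0 r^{-2}\int_{D_r(z)}e_{A,u}\dvol$ by Young, with $\delta Y$ absorbed.

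The boundary term is where the real work is, and it is the step I would be most careful about. Proposition~\ref{normal-est} gives $\partial_s e_{A,u}\le Ce_{A,u}^{3/2}$ on $U_\partial$, so using $e_{A,u}\le\hbar_0 r^{-2}$ on $U_\partial\cap B_r(z)$ it is $\le C\hbar_0^{1/2}r^{-1}\int_{U_\partial\cap B_r(z)}\chi^2 e_{A,u}\,d\theta$; a trace inequality on the half-disc $D_r(z)$ then bounds $\int_{U_\partial\cap B_r(z)}\chi^2 e_{A,u}\,d\theta$ by $C\bigl(r^{-1}\int_{D_r(z)}e_{A,u}\dvol+\int_{D_r(z)}\chi^2|\nabla e_{A,u}|\dvol\bigr)$. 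Since $|\nabla e_A|\le 2 e_A^{1/2}\|\nabla_A F_A\|_{L^2(\{(s,\theta)\}\times\Sigma)}$ and $|\nabla e_u|\le 2 e_u^{1/2}|\nabla_B du|$ at the reflected point, Cauchy--Schwarz gives $\int_{D_r(z)}\chi^2|\nabla e_{A,u}|\dvol\le C\bigl(\int_{D_r(z)}e_{A,u}\dvol\bigr)^{1/2}Y^{1/2}$, so the whole boundary term is $\le C\hbar_0^{1/2}r^{-2}\int_{D_r(z)}e_{A,u}\dvol+C\hbar_0^{1/2}r^{-1}\bigl(\int_{D_r(z)}e_{A,u}\dvol\bigr)^{1/2}Y^{1/2}$, and a final Young inequality turns the last summand into $\delta Y+C_\delta\hbar_0 r^{-2}\int_{D_r(z)}e_{A,u}\dvol$. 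Collecting the estimates, absorbing the $\delta Y$- and $\tfrac14 Y$-pieces on the left, and shrinking $\hbar_0$ if the accumulated absorption constants require it, yields $Y\le Cr^{-2}\int_{D_r(z)}e_{A,u}\dvol$, which contains \eqref{L21-bd-energy}. The main obstacle is precisely this chaining of Proposition~\ref{normal-est} (to trade one power of $e_{A,u}$ on $U_\partial$ for the small factor $\hbar_0^{1/2}r^{-1}$), a trace estimate (to convert the one-dimensional boundary integral into a bulk $L^1$-gradient integral), and the observation that the latter is controlled by $\bigl(\int e_{A,u}\dvol\bigr)^{1/2}Y^{1/2}$ and is therefore absorbable; relatedly, the slicewise cubic term forces the use of the sharp $L^2$--$L^2_1$ Ladyzhenskaya bound on $\Sigma$ rather than an unavailable pointwise bound on $F_A$.
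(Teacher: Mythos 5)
Your argument is correct and is essentially the paper's own proof: a cut-off, the two Weitzenb\"ock inequalities \eqref{energy-connection} and \eqref{energy-string} from the proof of Proposition \ref{lap-estimates}, a single integration by parts whose only surviving boundary contribution is the combined term $\int_{U_\partial}\chi^2\,\partial_s e_{A,u}$, Proposition \ref{normal-est} together with the pointwise bound to extract the factor $\hbar_0^{1/2}r^{-1}$, a trace inequality on the half-disc to return to bulk quantities, and absorption of the $Y$-pieces. The only deviations are cosmetic and harmless: you treat the cubic term $|F_A|^3$ slicewise via H\"older--Ladyzhenskaya--Kato on $\Sigma$ plus Cauchy--Schwarz in $(s,\theta)$, where the paper uses the four-dimensional Sobolev embedding applied to $\rho_rF_A$ on $D_r(z)\times\Sigma$; you use an $L^1$-type trace estimate plus the Kato bounds $|\nabla e_A|\le 2e_A^{1/2}\Vert\nabla_AF_A\Vert_{L^2(\Sigma)}$, where the paper applies the $L^2$ trace inequality directly to $|\rho_r du|$ and $\bigl(\int_\Sigma|\rho_rF_A|^2\bigr)^{1/2}$; and you leave the $|du|^4$ term implicit, though it is handled in one line by $e_u^2\le\hbar_0 r^{-2}e_u$ exactly as in \eqref{e-u-der}.
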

\begin{proof}
	Fix a smooth function on $\rho:\C\to \R$ which is supported in $B_1(0)$ and its value on $B_{\frac{1}{2}}(0)$ is equal to
	$1$. We also define $\rho_r(w):=\rho(\vert\frac{w-z}{r}\vert)$. We have
	\begin{align}
		|\!|\nabla_A(\rho_rF_A)|\!|_{L^2(D_{r}(z)\times \Sigma)}^2
		&=\int_{D_{r}(z)\times \Sigma} \langle d\rho_r \otimes  F_A,d\rho_r\otimes F_A)\rangle+
		\langle \nabla_A (F_A),\nabla_A (\rho_r^2 F_A)\rangle\nonumber\\
		& \leq Cr^{-2} |\!|F_A|\!|_{L^2(D_{r}(z)\times \Sigma)}^2+
		\int_{D_{r}(z)\times \Sigma}\langle \nabla_A^*\nabla_A F_A, \rho_r^2 F_A\rangle\nonumber\\
		&+\int_{D^\partial_{r}(z)} \rho_r^2  \int_{\Sigma}\langle
		 (\nabla_A)_{\partial s} F_A, F_A\rangle\label{A-der-1}.
	\end{align}	
	Here $D^\partial_{r}(z)$ denotes  $D_{r}(z)\cap U_\partial$. Using the inequality in \eqref{energy-connection}, 
	the point-wise assumption \eqref{ptwise-bd}, Cauchy-Schwarz
	and Sobolev embedding theorem we have:
	\begin{align}
		\int_{D_{r}(z)\times \Sigma}\langle \nabla_A^*\nabla_A F_A, \rho_r^2 F_A\rangle&
		\leq C\int_{D_{r}(z)\times \Sigma} \rho_r^2|F_A|^2+\rho_r^2|F_A|^3\nonumber\\
		 &\leq C(|\!|F_A|\!|_{L^2(D_{r}(z)\times \Sigma)}^2+
		(\int_{D_{r}(z)\times \Sigma} |F_A|^2)^{\frac{1}{2}}(\int_{D_{r}(z)\times \Sigma} 
		|\rho_rF_A|^4)^{\frac{1}{2}})\nonumber\\		 
		&\leq C\(
		|\!|F_A|\!|_{L^2(D_{r}(z)\times \Sigma)}^2+
		\hbar_0
		|\!|\nabla_A(\rho_rF_A)|\!|_{L^2(D_{r}(z)\times \Sigma)}^{2}\)\nonumber.
	\end{align}
	Combining the above inequality and \eqref{A-der-1}, we obtain:
\begin{align}
		|\!|\nabla_A(\rho_rF_A)|\!|_{L^2(D_{r}(z)\times \Sigma)}^2
		& \leq C((r^{-2}+1) |\!|F_A|\!|_{L^2(D_{r}(z)\times \Sigma)}^2+
		\hbar_0|\!|\nabla_A(\rho_rF_A)|\!|_{L^2(D_{r}(z)\times \Sigma)}^{2}\nonumber\\
		&+\int_{D^\partial_{r}(z)} \rho_r^2  \int_{\Sigma}\langle
		 (\nabla_A)_{\partial s} F_A, F_A\rangle)\nonumber
	\end{align}
	If the constant $\hbar_0$ is small enough, then we can absorb the term containing 
	$|\!|\nabla(\rho_rF_A)|\!|_{L^2(D_{2r}(z)\times \Sigma)}^{2}$ on the right hand side and obtain
	\begin{equation}\label{e-A-der}
		|\!|\nabla_A(\rho_rF_A)|\!|_{L^2(D_r(z)\times \Sigma)}^2\leq  
		Cr^{-2} |\!|F_A|\!|_{L^2(D_{r}(z)\times \Sigma)}^2+
		\int_{D^\partial_{r}(z)} \rho_r^2  \int_{\Sigma}\langle
		 (\nabla_A)_{\partial s} F_A, F_A\rangle.
	\end{equation}
	We follow a similar strategy to bound $|\!|\nabla_B (d u)|\!|_{L^2(D^+_{\frac{r}{2}}(z))}$. 
	\begin{align}
		|\!|\nabla_B(\rho_rd u)&|\!|_{L^2(D^+_{r}(z))}^2
		=\int_{D^+_{r}(z)\times \Sigma} \langle d\rho_r \otimes d u,d\rho_r\otimes d u)\rangle+
		\langle \nabla_Bd u,\nabla_B(\rho_r^2d u)\rangle\nonumber\\
		& \leq Cr^{-2} |\!|du|\!|_{L^2(D^+_{r}(z))}^2+
		\int_{D^+_{r}(z)}\langle \nabla_B^*\nabla_B du, \rho_r^2 du\rangle-
		\int_{D^\partial_{r}(z)} \rho_r^2\langle
		 (\nabla_B)_{\partial s} du, du\rangle\nonumber\\
		&\leq C\left(r^{-2} |\!|du|\!|_{L^2(D^+_{r}(z))}^2+
		\int_{D^+_{r}(z)} \rho_r^2(|du|^2+|du|^4)\right)
		-\int_{D^\partial_{r}(z)} \rho_r^2\langle
		 (\nabla_B)_{\partial s} du, du\rangle\nonumber\\
		&\leq Cr^{-2} |\!|du|\!|_{L^2(D^+_{r}(z))}^2
		-\int_{D^\partial_{r}(z)} \rho_r^2\langle
		 (\nabla_B)_{\partial s} du, du\rangle \label{e-u-der}.
	\end{align}
	Here the second inequality is obtained using \eqref{energy-string} and 
	we use the point-wise assumption on $du$ in \eqref{ptwise-bd} to produce the last inequality.

	Proposition \ref{normal-est} asserts that for any point $(0,\theta) \in D^\partial_{r}(z)$, we have:
	\begin{align*}
	  \(\int_{\Sigma}\langle (\nabla_A)_{\partial s} F_A, F_A\rangle\)
	  -\langle(\nabla_B)_{\partial s} du, du\rangle&\leq Ce_{A,u}(0,\theta)^{\frac{3}{2}}\\
	  &\leq C\hbar_0^{\frac{1}{2}}r^{-1}e_{A,u}(0,\theta)
	\end{align*}
	Therefore, we have:
	\begin{equation} \label{bdry-term-ineq}
	  \int_{D_{r}^\partial(z)}\rho_r^2\(\int_{\Sigma}\langle (\nabla_A)_{\partial s} F_A, F_A\rangle\)
	  -\rho_r^2 \langle(\nabla_B)_{\partial s} du, du\rangle
	  \leq C\hbar_0^{\frac{1}{2}}r^{-1}\int_{D_{r}^\partial(z)}\rho_r^2e_{A,u}(0,\theta)
	\end{equation}
	
	Suppose $f:D_{r}(z)\to \R$ is a compactly supported function. Then Sobolev embedding theorem implies that:
	\[
	  |\!|f|\!|_{L^2(D^\partial_{r}(z))}
	  \leq C_0r^{\frac{1}{2}}|\!|df|\!|_{L^2(D_{r}(z))},	
	\]
	where the constant $C_0$ is independent of $r$. By applying this inequality to the functions 
	$f_+(s,\theta):=|\rho_r du|(-s,\theta)$
	and $f_-(s,\theta):=(\int_{\{(s,\theta)\}\times\Sigma} |\rho_r F_A|^{2})^{\frac{1}{2}}$, we conclude that
	\begin{equation*}
		\int_{D^\partial_{r}(z)}|\rho_r du|^2
		\leq Cr|\!|\nabla_B(\rho_rd u)|\!|_{L^2(D_{r}^+(z))}^2,
	\end{equation*}
	and
	\begin{equation*}
		\int_{D^\partial_{r}(z)\times \Sigma} |\rho_r F_A|^2
		\leq Cr|\!|\nabla_A(\rho_rF_A)|\!|_{L^2(D_r(z)\times \Sigma)}^2.
	\end{equation*}	
	In order to derive the second inequality, we use the inequality
	\begin{equation} \label{der-con-energy}
	  |d(\int_{\{(s,\theta)\}\times\Sigma} |\rho_r F_A|^{2})^{\frac{1}{2}}|\leq 
	  |(\int_{\{(s,\theta)\}\times\Sigma} |\nabla_A(\rho_r F_A)|^{2})^{\frac{1}{2}}|.
	\end{equation}
	By adding up these two inequalities and using the inequality in \eqref{bdry-term-ineq}, we conclude that
	\begin{align}
		\int_{D_{r}^\partial(z)}\rho_r^2\(\int_{\Sigma}\langle (\nabla_A)_{\partial s} F_A, F_A\rangle\)&
	  -\rho_r^2 \langle(\nabla_B)_{\partial s} du, du\rangle\nonumber\\
	  &\leq C\hbar_0^{\frac{1}{2}}(|\!|\nabla_B(\rho_rd u)|\!|_{L^2(D_{r}(z))}^2
	  +|\!|\nabla_A(\rho_rF_A)|\!|_{L^2(D_r\times \Sigma)}^2).\label{bdry-term-ineq-2}
	\end{align}
	Summing up inequalities in\eqref{e-A-der}, \eqref{e-u-der} and \eqref{bdry-term-ineq-2} gives rise to
	\begin{align*}
	  |\!|\nabla_B(\rho_rd u)|\!|_{L^2(D^+_{r}(z))}^2+|\!|\nabla_A(\rho_rF_A)&|\!|_{L^2(D_r(z)\times \Sigma)}^2
	  \leq
	  C\left(r^{-2} |\!|F_A|\!|_{L^2(D_{r}(z)\times \Sigma)}^2+r^{-2}|\!|du|\!|_{L^2(D^+_{r}(z))}^2\right.\\
	  &\left.+\hbar_0^{\frac{1}{2}}\(|\!|\nabla_B(\rho_rd u)|\!|_{L^2(D^+_{r}(z))}^2
	  +|\!|\nabla_A(\rho_rF_A)|\!|_{L^2(D_r(z)\times \Sigma)}^2)\)\right).
	\end{align*}
	Therefore, if $\hbar_0$ is small enough, we can infer the claimed inequality in \eqref{L21-bd-energy}.	
\end{proof}
\begin{proof}[Proof of Proposition \ref{en-quant-weak}]
	We present the proof in $4$ steps:
	
	{\bf Step 1:} {\it Inverting the Laplacian.}
	Let $G(w)$ be the Green function $-\frac{1}{2\pi}\ln(|w-z|)+\frac{1}{2\pi}\ln(\frac{r}{4})$ of the Laplacian $\Delta_2$. 
	Note that $G(w)$ vanishes on $\partial D_{\frac{r}{4}}(z):=\partial B_{\frac{r}{4}}(z)\cap  \bbH_-$, the boundary of $D_{\frac{r}{4}}(z)$.
	As before,  $D_{\frac{r}{4}}^\partial (z)$ denotes $B_{\frac{r}{4}}(z)\cap  U_\partial$.
	We multiply \eqref{lap-mixed-energy} in 
	Proposition \ref{lap-estimates} by $G(w)$ and integrate over $D_{\frac{r}{4}}(z)$. Green's identity implies that:
	\begin{align}
		e_{A,u}(z)&\leq C \int_{D_{\frac{r}{4}}(z)}G(w)(e_{A,u}+e_{A,u}^2+e_{A,u}^{\frac{1}{2}}f_A)+
		\int_{\partial D_{\frac{r}{4}}(z)\sqcup D^\partial_{\frac{r}{4}}(z)}e_{A,u}\partial_{\nu}G-G\partial_{\nu}e_{A,u}\label{Green-int-pre}\\
		&\leq C |\!|G|\!|_{L^2(D_{\frac{r}{4}}(z))}\((1+|\!|e_{A,u}|\!|_{L^\infty})|\!|e_{A,u}|\!|_{L^2(D_{\frac{r}{4}}(z))}+
		|\!|e_{A,u}|\!|^{\frac{1}{2}}_{L^\infty}|\!|f_A|\!|_{L^2(D_{\frac{r}{4}}(z))}\)\label{Green-int}\\
		&+C\(r^{-1}|\!|e_{A,u}|\!|_{L^1(\partial D_{\frac{r}{4}}(z))}+|\!|e_{A,u}\partial_{\nu}G|\!|_{L^1(D^\partial_{\frac{r}{4}}(z))}+|\!|G|\!|_{L^2(D^\partial_{\frac{r}{4}}(z))}
		|\!|e_{A,u}|\!|_{L^2(D^\partial_{\frac{r}{4}}(z))}|\!|e_{A,u}|\!|_{L^\infty}^{\frac{1}{2}}\)\nonumber
	\end{align}
	Recall that $f_A$ in \eqref{Green-int-pre} is given in \eqref{fA}. Here $|\!|e_{A,u}|\!|_{L^\infty}$ is the $L^{\infty}$ norm of $e_{A,u}$ over $D_{r}(z)$, which is less than $\hbar'r^{-2}$
	by assumption. In order to bound the last term in \eqref{Green-int-pre}, we use Proposition \ref{normal-est} and Cauchy-Schwarz inequality.
	A straightforward computation shows
	\[
	  |\!|G|\!|_{L^2(D_{\frac{r}{4}}(z))}\leq C r,\hspace{1cm}|\!|G|\!|_{L^2(D^\partial_{\frac{r}{4}}(z))}\leq C r^{\frac{1}{2}}.
	\]
	Thus we deduce from \eqref{Green-int} that
	\begin{align}
		e_{A,u}(z)\leq &C\left(r^{-1}|\!|e_{A,u}|\!|_{L^2(D_{\frac{r}{4}}(z))}+r^{-1}|\!|e_{A,u}|\!|_{L^1(\partial D_{\frac{r}{4}}(z))}+
		r^{-\frac{1}{2}}|\!|e_{A,u}|\!|_{L^2(D^\partial_{\frac{r}{4}}(z))}\right.\nonumber\\
		&\left.+|\!|f_A|\!|_{L^2(D_{\frac{r}{4}}(z))}+|\!|e_{A,u}\partial_{\nu}G|\!|_{L^1(D^\partial_{\frac{r}{4}}(z))}\right).\label{des-ineq}
	\end{align}
	
	{\bf Step 2:} {\it Establishing the following bounds on various Sobolev norms of $e_{A,u}$:
	\begin{equation}\label{step3-2}
		\begin{array}{c}
			\displaystyle|\!|e_{A,u}|\!|_{L^2(D_{\frac{r}{4}}(z))}\leq C\frac{\int_{D_r(z)}e_{A,u}}{r},\\
		\displaystyle|\!|e_{A,u}|\!|_{L^2(D^\partial_{\frac{r}{4}}(z))}\leq C\frac{\int_{D_r(z)}e_{A,u}}{r^{\frac{3}{2}}},\hspace{1cm}
		|\!|e_{A,u}|\!|_{L^1(\partial D_{\frac{r}{4}}(z))}\leq C\frac{\int_{D_r(z)}e_{A,u}}{r}.
		\end{array}	
	\end{equation}
	}
	
	Sobolev embedding and a straightforward change of variable imply that there is a constant $C_0$, independent of $r$, such that for any function 
	$f:D_{\frac{r}{2}}(z)\to \R$ 
	\begin{equation}\label{r-ind-sob-emb}
	  (\int_{D_{\frac{r}{4}}(z)}f^4)^{\frac{1}{2}}\leq C_0(r\int_{D_{\frac{r}{2}}(z)}|df|^2+r^{-1}\int_{D_{\frac{r}{2}}(z)}f^2).
	\end{equation}
	Applying \eqref{r-ind-sob-emb} to the functions $f(s,\theta):=|du|(-s,\theta)$
	and $g(s,\theta):=(\int_{\{(s,\theta)\}\times\Sigma} |F_A|^{2})^{\frac{1}{2}}$ implies that
	\begin{equation}\label{step3-map}
		(\int_{D^+_{\frac{r}{4}}(z)}|du|^4)^{\frac{1}{2}}\leq C(r\int_{D^+_{\frac{r}{2}}(z)}|d|du||^2+
		r^{-1}\int_{D^+_{\frac{r}{2}}(z)}|du|^2),
	\end{equation}
	\begin{equation}\label{step3-conn}
		\left(\int_{D_{\frac{r}{4}}(z)}(\int_{\{(s,\theta)\}\times\Sigma}|F_A|^2)^2\right)^{\frac{1}{2}}\leq 
		C(r\int_{D_{\frac{r}{2}}(z)\times \Sigma}|\nabla_AF_A|^2+
		r^{-1}\int_{D_{\frac{r}{2}}(z)\times \Sigma}|F_A|^2).
	\end{equation} 
	In order to obtain \eqref{step3-conn}, we also used the inequality in \eqref{der-con-energy}. 
	Using Lemma \ref{L21-energy} and Kato's inequality, we derive the first inequality in \eqref{step3-2}.
	The remaining inequalities in \eqref{step3-2} can be verified in a similar way using the following variations of 
	\eqref{r-ind-sob-emb}:
	\begin{equation*}\label{r-ind-sob-emb-2}
	  \int_{\partial D_{\frac{r}{4}}(z)}f^2\leq C_0(r\int_{D_{\frac{r}{2}}(z)}|df|^2+r^{-1}\int_{D_{\frac{r}{2}}(z)}f^2),
	\end{equation*}	
	\begin{equation*}\label{r-ind-sob-emb-3}
	  (\int_{D_{\frac{r}{4}}^{\partial}(z)}f^4)^{\frac{1}{2}}\leq C_0(r^{\frac{1}{2}}\int_{D_{\frac{r}{2}}(z)}|df|^2+
	  r^{-\frac{3}{2}}\int_{D_{\frac{r}{2}}(z)}f^2).
	\end{equation*}	

	{\bf Step 3:} {\it 
	$
	  \displaystyle |\!|f_A|\!|_{L^2(D_{\frac{r}{4}}(z))}\leq Cr^{-2}\int_{D_{r(z)}}e_{A,u}.
	$}
	
	For $r\leq r_0$, let a function $f:D_{\frac{r}{2}}(z) \times \Sigma \to \R$ be given. Then there is a constant $C_0$, independent of $r$, such that
	\begin{equation}\label{step2-map}
		(\int_{D_{\frac{r}{4}}(z)\times \Sigma}f^4)^{\frac{1}{2}}\leq C_0(\int_{D_{\frac{r}{2}}(z)\times \Sigma}|df|^2+
		r^{-2}\int_{D_{\frac{r}{2}}(z)\times \Sigma}f^2),
	\end{equation}	
	This inequality can be verified by considering $\rho_{\frac{r}{2}}\cdot f$ and applying the Sobolev embedding inequality for functions defined 
	on $D_{2r_0} \times \Sigma$. By definition, $|\!|f_A|\!|_{L^2(D_{\frac{r}{4}}(z))}$ is equal to $|\!|F_A|\!|_{L^4(D_{\frac{r}{4}}(z)\times \Sigma)}^2$.
	Thus we can employ \eqref{step2-map} for $f=|F_A|$ and Kato's inequality to obtain the following inequality:
	\begin{align*}
		|\!|f_A|\!|_{L^2(D_{\frac{r}{4}}(z))}
		&\leq C\(\int_{D_{\frac{r}{2}}(z)\times \Sigma}|\nabla_AF_A|^2+r^{-2}\int_{D_{\frac{r}{2}}(z)\times \Sigma}|F_A|^2\)\\
		&\leq C r^{-2}\int_{D_{r}(z)} e_{A,u}.		 
	\end{align*}	
	The second inequality follows from Lemma \ref{L21-energy}.	
	
	{\bf Step 4:} {\it Completing the proof.} We have appropriate bounds on all terms in \eqref{des-ineq} to obtain the desired result except the term
	$|\!|e_{A,u}\partial_{\nu}G|\!|_{L^1(D^\partial_{\frac{r}{4}}(z))}$. In the case that $z\in U_\partial$, this term vanishes. Therefore, we obtain the inequality in 
	\eqref{Main-ineq-weak} for such choices of $z$. This preliminary case, allows us to complete the proof because for a general $z$ we have:
	\[|\!|e_{A,u}\partial_{\nu}G|\!|_{L^1(D^\partial_{\frac{r}{4}}(z))}\leq C|\!|e_{A,u}|\!|_{L^{\infty}(D^\partial_{r}(z))}.\]  
\end{proof}

Theorem \ref{en-quant} is a consequence of Proposition \ref{en-quant-weak} and the following elementary lemma.
\begin{lemma}
	Suppose $X$ is a compact metric space and $f:X\to \R^{\geq 0}$ is a continuous function which 
	satisfies the following properties for 
	positive constants $\hbar'$ and $\kappa'$. For any $z\in X$ and positive real number $r$ satisfying
	\[
	  \hspace{2cm} f(w)\leq \hbar' r^{-2} \hspace{1cm}w\in D_{r}(z),
	\]
	we have
	\begin{equation}\label{ineq-assumption}
		\hspace{1cm} f(z) \leq \kappa' \frac{ \int_{D_{r}(z)}f\dvol}{r^2}.
	\end{equation}
	Then there are constants $\hbar$ and $\kappa$ such that if for 
	any $z\in X$ and a positive $r$ we have the inequality 
	\[
	  \int_{D_r(z)}f\dvol\leq \hbar,
	\]
	then:
	\[
	  \hspace{1cm} f(z) \leq \kappa \frac{ \int_{D_r(z)}f\dvol}{r^2}.
	\]
\end{lemma}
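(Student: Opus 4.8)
The plan is to prove this by a standard point-picking (rescaling) argument, of the kind used to establish $\varepsilon$-regularity statements. Fix $z\in X$ and $\rho>0$ with $\int_{D_\rho(z)}f\,\dvol\le\hbar$, where the constants $\hbar$ and $\kappa$ are to be pinned down at the end in terms of $\hbar'$ and $\kappa'$. On the compact set $\{w\in X\mid d(w,z)\le\rho/2\}$ I would maximize the continuous function
\[
  \Phi(w):=\Big(\tfrac{\rho}{2}-d(w,z)\Big)^{2} f(w),
\]
which vanishes wherever $d(w,z)=\rho/2$; let $w_0$ be a maximizer and put $c:=\Phi(w_0)$. If $c=0$ then $f$ vanishes on the ball $D_{\rho/2}(z)$ and the claim is immediate, so assume $c>0$ and set $r_0:=\tfrac{\rho}{2}-d(w_0,z)>0$ and $e_0:=f(w_0)=c/r_0^{2}>0$.

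The first step is to record the pointwise bound that maximality of $\Phi$ forces near $w_0$: if $d(w,w_0)<r_0/2$, then the triangle inequality gives $\tfrac{\rho}{2}-d(w,z)\ge r_0/2$, hence $(r_0/2)^{2}f(w)\le\Phi(w)\le c=r_0^{2}e_0$, so that $f\le 4e_0$ on $D_{r_0/2}(w_0)$; likewise $D_s(w_0)\subset D_{\rho/2}(z)\subset D_\rho(z)$ for all $s\le r_0/2$. Now I would split into two cases according to the size of $c$. If $c\le\hbar'$, then $4e_0\le\hbar'(r_0/2)^{-2}$, so the pointwise hypothesis is met on $D_{r_0/2}(w_0)$ and yields $e_0\le\kappa'(r_0/2)^{-2}\int_{D_{r_0/2}(w_0)}f\le 4\kappa' r_0^{-2}\int_{D_\rho(z)}f$; multiplying by $r_0^{2}$ and using $f(z)=(\rho/2)^{-2}\Phi(z)\le 4c/\rho^{2}$ gives $f(z)\le 16\kappa'\,\rho^{-2}\int_{D_\rho(z)}f$, which is the desired inequality with $\kappa:=16\kappa'$. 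If instead $c>\hbar'$, I would apply the hypothesis at the balanced radius $s:=\tfrac12\sqrt{\hbar'/e_0}$, which satisfies $s<r_0/2$ precisely because $e_0 r_0^{2}=c>\hbar'$; there $f\le 4e_0=\hbar' s^{-2}$, so the hypothesis gives $e_0\le\kappa' s^{-2}\int_{D_s(w_0)}f\le\kappa' s^{-2}\hbar=\tfrac{4\kappa'\hbar}{\hbar'}\,e_0$. Since $e_0>0$ this is impossible as soon as $\hbar<\hbar'/(4\kappa')$; hence, fixing for instance $\hbar:=\hbar'/(8\kappa')$ and $\kappa:=16\kappa'$, the second case never occurs and the first case always delivers the conclusion.

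I do not expect a genuine conceptual obstacle here — this is the classical rescaling lemma. The only points requiring care are the radius bookkeeping (checking, via the triangle inequality, that every ball on which the hypothesis is invoked lies inside $D_\rho(z)$) and the choice of the balanced scale $s$, which is calibrated so that the single pointwise bound $f\le 4e_0$ both licenses the application of the hypothesis and produces the contradiction; the degenerate case $c=0$ is disposed of at the outset. The half-disc shape of the sets $D_r(z)$ in the intended application is irrelevant to this argument beyond the triangle inequality for the underlying metric.
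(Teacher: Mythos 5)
Your argument is correct and is essentially the paper's own proof: both maximize a weighted density $(R-d(\cdot,z))^{2}f$ over a disc, split into two cases according to whether this maximum exceeds a fixed multiple of $\hbar'$, apply the hypothesis at a half-radius in the small case, and in the large case use the balanced radius $s\sim\sqrt{\hbar'/f(w_0)}$ together with the integral bound $\hbar$ to force a contradiction. The only cosmetic differences are that the paper maximizes over all of $D_r(z)$ and, in the small-maximum case, applies the hypothesis directly at the center $z$ with radius $r/2$, arriving at the constants $\kappa=4\kappa'$, $\hbar=\hbar'/(17\kappa')$ instead of your $16\kappa'$ and $\hbar'/(8\kappa')$; your bookkeeping (containment of the small balls in $D_\rho(z)$, the transfer $f(z)\le 4c/\rho^2$, and the degenerate case $c=0$) all checks out.
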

\begin{proof}
	We claim that $\hbar=\hbar'/(17\kappa')$ and $\kappa=4\kappa'$ satisfy the required properties. 
	To that end, we assume that $z$ and $r$ are given such that
	\begin{equation}\label{r-z0}
		\int_{D_{r}(z)}f\leq \frac{\hbar'}{17\kappa'}.
	\end{equation}
	Define $\phi: D_r(z) \to \R^{\geq 0}$ as 
	\[
	  \phi(w):=(r-|w-z|)^2f(w).
	\]
	This continuous function extends to the boundary of $D_r(z)$ by zero. 
	Therefore, $\phi$ achieves its maximum in an interior point $w_0$. First let $\phi(w_0)\leq \hbar'/4$. In this case, for
	any $w \in D_{\frac{r}{2}}(z)$, we have:
	\begin{align*}
	  f(w) &\leq f(w)\frac{(r-|w-z|)^2}{(\frac{r}{2})^2}\\
	   &\leq f(w_0)\frac{(r-|w_0-z|)^2}{(\frac{r}{2})^2}\\
	   &\leq \hbar'r^{-2}.
	\end{align*}
	Therefore, the assumption implies that
	\begin{align*}	
	  f(z) &\leq \kappa' \frac{ \int_{D_{\frac{r}{2}}(z)}f}{(\frac{r}{2})^2}\\
	  &=4 \kappa' \frac{ \int_{D_{\frac{r}{2}}(z)}f}{r^2}.
	\end{align*}

	Next, let $\phi(w_0)> \hbar'/4$ and define $s:=\sqrt{\frac{\hbar'}{16 f(w_0)}}$. Then we have
	\[
	  s<\frac{r-|w_0-z|}{2}.
	\]
	This implies that $D_s(w_0)$ is a subset of $D_r(z)$. For any $y\in D_s(w_0)$, we can also write
	\begin{align*}
	  f(y) &\leq f(w_0)\frac{(r-|w_0-z|)^2}{(r-|y-z|)^2}\\
	   &\leq f(w_0)\frac{(r-|w_0-z|)^2}{(r-(|w_0-z|+s))^2}\\
	   &\leq 4f(w_0)\\
	   &\leq \hbar' s^{-2}.
	\end{align*}
	Consequently, \eqref{ineq-assumption} implies 
	\begin{align*}
	  f(w_0) &\leq \kappa' \frac{ \int_{D_{s}(w_0)}f}{s^2}\\
	  &= \frac{16 \kappa' f(w_0)}{\hbar'}\int_{D_{s}(w_0)}f\\
	  &\leq \frac{16 \kappa' f(w_0)}{\hbar'}\int_{D_{r}(z)}f\\
  	  &\leq \frac{16}{17}f(w_0).
	\end{align*}
	The last inequality, which leads to a contradiction, follows from \eqref{r-z0}.	
\end{proof}

\subsection{Removability of singularities}\label{remove-sing}
The main goal of this subsection is to prove a removability of singularity result for the mixed solution. As before, we fix a positive real constant $r_0$. Let $(A,u)$ be a solution of the mixed equation associated to the quintuple $\fP(r_0)$ in \eqref{quintuple-comp} where the ASD connection $A$ is defined on $(D_-\setminus \{0\})\times \Sigma$ and the pseudo-holomorphic map $u$ is defined on $D_+\setminus \{0\}$. In particular, we assume that $A$ and $u$ satisfy the matching condition on $U_\partial\setminus \{0\}$. Then the 2-dimensional energy density $e_{A,u}$ is defined on $D_-\setminus \{0\}$. For any $r\leq r_0$, define
\[
  \mathfrak E_r(A,u):=\int_{D_-(r)\setminus \{0\}}e_{A,u}.
\]

\begin{theorem}
	\label{thm:removal}
	For $(A,u)$ as above, let $\mathfrak E_{r_0}(A,u)$ be finite. Then we have the followings.
	\vspace{-5pt}
	\begin{itemize} 
		\item[(i)] There exists $g\in \mathcal G((D_-\setminus \{0\})\times \Sigma,F)$ such that $g^*A$ extends to a smooth connection 
		$\widetilde A$ on $D_-\times \Sigma$.
		\item[(ii)] $u$ can be extended to a smooth map $\widetilde u: D_+ \to  \mathcal M(\Sigma,F)$.
	\end{itemize}
	In particular, $(\widetilde A,\widetilde u)$ is a solution of the mixed equation associated to $\fP(r_0)$.
\end{theorem}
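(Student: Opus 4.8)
The strategy is the standard one behind the removable-singularity theorems of Uhlenbeck (for curvature concentrating along the codimension-two locus $\{0\}\times\Sigma\subset X$) and Gromov (for the holomorphic curve $u$ near $0\in D_+$), with the energy quantization of Theorem \ref{en-quant} replacing the usual $\epsilon$-regularity input and with the matching segment $U_\partial\setminus\{0\}$ treated as an interface controlled by the canonical Lagrangian correspondence structure. \textbf{Step 1 (small energy and pointwise decay).} Since $\mathfrak E_{r_0}(A,u)<\infty$ we may shrink $r_0$ so that $\mathfrak E_{r_0}(A,u)\le\hbar$, whence $\mathfrak E_r(A,u)\to 0$ as $r\to 0$. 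For $z\in D_-\setminus\{0\}$ with $|z|<r_0/2$, the half-disc $D_{|z|/2}(z)$ avoids the puncture, lies in $B_{r_0}(0)$, and carries energy at most $\mathfrak E_{2|z|}(A,u)\le\hbar$; Theorem \ref{en-quant} therefore gives
\[
  e_{A,u}(z)\ \le\ \frac{4\kappa}{|z|^2}\,\mathfrak E_{2|z|}(A,u),
\]
so $\sup_{|z|=r}|z|^2e_{A,u}(z)=o(1)$ as $r\to 0$; in particular $|F_A|(z)$ and $|du|(z)$ are $o(1/|z|)$, and $u$ has small oscillation on small half-circles.

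\textbf{Step 2 (power-law energy decay).} The heart of the proof is to upgrade this to a rate: $\mathfrak E_r(A,u)\le Cr^{2\mu}$ for some $\mu>0$ and all small $r$. Set $g(r):=\mathfrak E_r(A,u)$, which is absolutely continuous with $g'(r)=\int_{\partial D_-(r)}e_{A,u}\,ds$ over the circular arc $\{|w|=r\}\cap\bbH_-$. Because $A$ is ASD and $u$ is holomorphic, the two parts of the energy are boundary integrals: $\int_{D_-(r)\times\Sigma}|F_A|^2$ is an integral of the Chern--Simons form over $\partial(D_-(r)\times\Sigma)$, and $\int_{D_+(r)}|du|^2=2\int_{D_+(r)}u^*\omega_{\rm fl}$ is an integral of a primitive of $\omega_{\rm fl}$ over $\partial D_+(r)$ (legitimate for $r$ small, by the oscillation bound of Step 1). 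Along the segment $U_\partial\cap B_r(0)$ the matching condition places $(A|_{\{x\}\times\Sigma},u(x))$ on $\mathcal L$, which is isotropic for $(-\Omega)\oplus\omega_{\rm fl}$, so the two boundary contributions there cancel; hence $g(r)$ is bounded by the corresponding integrals over the two circular arcs $\{|w|=r\}\cap\bbH_\pm$ only. The isoperimetric inequality in the closed symplectic manifold $\mathcal M(\Sigma,F)$ (for the $u$-arc) together with its Chern--Simons analogue on $\mathcal A_{\rm fl}(\Sigma,F)$ (for the $A$-arc, after an Uhlenbeck gauge on the dyadic annuli and using the pointwise curvature smallness of Step 1 to project $A|_{\{z\}\times\Sigma}$ onto $\mathcal A_{\rm fl}$ with controlled error) bounds these arc integrals by the squared lengths of the restricted solutions along the arcs, and Cauchy--Schwarz converts this into $g(r)\le C\,r\,g'(r)$. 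Integrating $(\log g)'\ge 1/(Cr)$ yields $g(r)\le C r^{2\mu}$ with $2\mu=1/C$.

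\textbf{Step 3 (extension and regularity).} Combining Steps 1 and 2, $e_{A,u}(z)\le C|z|^{2\mu-2}$, so $|du|(z)\le C|z|^{\mu-1}$ and $|F_A|(z)\le C|z|^{\mu-1}$. Integrating $|du|$ along radial arcs shows $u$ extends to a H\"older map $\widetilde u$ on $D_+$. For the connection, put $A$ in radial gauge on $(D_-\setminus\{0\})\times\Sigma$: the curvature bound makes the holonomy of $A$ around $\{|z|=r\}\times\{x\}$ differ from the identity by $O(r^{2\mu})$, so $A$ has a limiting flat connection along $\{0\}\times\Sigma$, and a gauge transformation $g$ interpolating to it makes $g^*A$ extend to an $L^q_1$ connection $\widetilde A$ on $D_-\times\Sigma$ for some $q\in(2,2/(1-\mu))$ (using $F_{\widetilde A}\in L^q$). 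Then $(\widetilde A,\widetilde u)$ is an $L^q_1$ solution of the mixed equation for $\fP(r_0)$; after a further gauge transformation putting $\widetilde A$ in Coulomb gauge relative to a nearby smooth connection, Theorem \ref{regularity-thm-detailed} gives smoothness near the matching locus $\{0\}\times\Sigma$, while standard interior elliptic regularity for the ASD and Cauchy--Riemann equations handles everything away from $\{0\}\times\Sigma$ (where $(A,u)$ was already smooth). Finally $(\widetilde A,\widetilde u)$ satisfies the mixed equation and the matching condition by continuity, establishing (i), (ii) and the final claim.

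\textbf{Main obstacle.} The delicate point is Step 2: making the Stokes-type identities rigorous on a punctured domain, controlling the non-flat components of $A$ along the circular arcs via an Uhlenbeck gauge, and verifying that the Chern--Simons and symplectic-area contributions over $U_\partial$ indeed cancel by isotropy of the canonical Lagrangian correspondence. This is where the specific geometry of the mixed equation enters; the rest is an adaptation of the classical Uhlenbeck and Gromov arguments, essentially as carried out in \cite{Fuk:ASD-deg} for the degenerate-metric ASD equation.
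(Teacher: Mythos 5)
Your overall skeleton --- energy quantization (Theorem \ref{en-quant}), then a power-law decay $\mathfrak E_r(A,u)\le Cr^{2\beta}$ obtained from a differential inequality $\mathfrak E_r\le K r\frac{d}{dr}\mathfrak E_r$, then extension in a suitable gauge followed by the regularity theory of Section \ref{regularity} --- is the same as the paper's. The genuine gap is in your Step 2, precisely at the point you flag as the ``main obstacle''. First, $\omega_{\rm fl}$ is not exact on $\mathcal M(\Sigma,F)$, so there is no primitive whose integral over $\partial D_+(r)$ computes $\int_{D_+(r)}u^*\omega_{\rm fl}$; the substitute is to lift $u$ along each arc $S^1_{r,+}$ to a family of flat connections and use the Chern--Simons functional, which is well defined only modulo $8\pi^2\Z$ under gauge transformations, cf.\ \eqref{form538}. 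Second, the contributions of the two sides along $U_\partial$ do not simply ``cancel by isotropy of $\mathcal L(\Sigma,F)$'': at a matching point the restriction of $A$ and the chosen lift of $u$ agree only up to a gauge transformation $g_r$, so the difference of the two boundary terms is a Chern--Simons difference under a gauge change. In the paper this is handled by extending $g_r$ over a cylinder $[0,1]\times\Sigma$ with controlled norm (Lemma \ref{lem526}, resting on \cite{HL:Liouv}), gluing the three pieces into a connection $B_r$ on the closed $3$-manifold $Y_r$, and estimating the resulting cubic Chern--Simons term (Lemma \ref{estmiate-L2-a-r} and the proof of Lemma \ref{lem527}); moreover the identity $CS_{\beta_0^r}(B_r)=\mathfrak E_r(A,u)$ (Lemma \ref{lem535}) requires an auxiliary $4$-dimensional gluing over $X_{\delta,r}$, a limit $\delta\to 0$, and the integrality-plus-smallness argument to remove the $8\pi^2\Z$ ambiguity of the inner boundary term. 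None of this is supplied by ``Stokes is legitimate for small $r$ by the oscillation bound'', so as written the key inequality $g(r)\le C\,r\,g'(r)$ is asserted rather than proved.

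A smaller but real issue is in Step 3: from $e_{A,u}(z)\le C|z|^{2\beta-2}$ you only get the slice-wise bound $\Vert F_A|_{\{z\}\times\Sigma}\Vert_{L^2(\Sigma)}\le C|z|^{\beta-1}$, not the pointwise bound $|F_A|(z)\le C|z|^{\beta-1}$ you use; a pointwise bound needs the interior Uhlenbeck estimate (Lemma \ref{4dimestmate}) and degenerates like $(\cos\phi)^{-2}r^{\beta-2}$ near the matching line (Corollary \ref{prop:5-20}). Also the singular set is the codimension-two slice $\{0\}\times\Sigma$ meeting the boundary of $D_-\times\Sigma$, so the classical point-singularity radial-gauge/holonomy argument does not apply verbatim; the paper instead invokes \cite[Theorem 5.3 (ii)]{Weh:Lag-bdry-ana2}, whose hypotheses are exactly the slice-wise $L^2$ and weighted $L^\infty$ bounds above. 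Once the $L^p_1$ extension of the connection and the H\"older extension of $u$ are in place, your final appeal to a Coulomb gauge and Theorem \ref{regularity-thm-detailed} coincides with the paper's conclusion of the argument.
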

Recall that for an $\SO(3)$-bundle $V$ over a 4-manifold $X$, $\mathcal G(X,V)$ denotes the space of smooth sections of the fiber bundle ${\rm Fr}(V)\times_{\rm ad} \SU(2)$. Without loss of generality, we may decrease the value of $r_0$ as we wish. In particular, we we may assume that 
\begin{equation}\label{bound-hbar-0}
	\mathfrak E_{r_0}(A,u)<\hbar_0
\end{equation}
where $\hbar_0$ is less than the constant $\hbar$ of Theorem \ref{en-quant}, and $r_0$ is smaller than the injectivity radius of $\Sigma$. 

We use polar coordinates on $B_{r_0}(0)=D_-\cup D_+$ throughout this subsection. Polar coordinate of a typical point is denoted by $(r,\phi)$ where $r \in (0,r_0]$ and  $\phi \in \R$, and it is related to our previous notation by the formula $(s,\theta) = (r\cos \phi, r\sin \phi)$. We also write $S_r^1$ for the set of points in $B_{r_0}(0)$ whose radial coordinate is equal to $r$. The intersection of $S_r^1$ with $D_+$ and $D_-$ are denoted by $S_{r,+}^1$ and $S_{r,-}^1$.

\subsubsection{Strategy of the proof}\label{mainprop}
The key estimate for us is the following proposition.
\begin{prop}\label{decayenergy}
	For $(A,u)$ as in the statement of Theorem \ref{thm:removal}, there exists a positive
	$\beta$ such that
	\[
	  \mathfrak E_r(A,u)  \le C r^{2\beta}.
	\]
\end{prop}
The proof of this estimate, which follows a plan similar to \cite[Sections 3 and 4]{Weh:Lag-bdry-ana2}, will be given in the next two subsections. In the remaining part of this subsection, we explain how Theorem \ref{thm:removal} can be derived from Proposition \ref{decayenergy}.

\begin{cor}\label{prop:5-20}
	For any $r\leq r_0/2$, we have:
	\begin{enumerate}
		\item[(i)]$\displaystyle \sup_{z \in S^1_{r,-}} \Vert F_{A}\vert_{\{z\} \times \Sigma}\Vert_{L^2(\Sigma)} \le C r^{\beta-1}$;
		\item[(ii)] $\Vert F_{A}\vert_{\{(r\cos\phi,r\sin \phi)\} \times \Sigma}\Vert_{L^{\infty}(\Sigma)} \le C (\cos\phi)^{-2} r^{\beta-2}$;
		\item[(iii)] $\vert du(z)\vert \le C r^{\beta-1}$ for $z\in S^1_{r,+}$.
	\end{enumerate}
\end{cor}
\begin{proof}[Proof of Proposition \ref{decayenergy} $\Rightarrow$  Corollary \ref{prop:5-20} ]
	Since $\mathfrak E_{r_0}(A,u) \le \hbar$, Theorem \ref{en-quant} and Proposition \ref{decayenergy} imply that for $z \in S^1_{r,-}$ with $2r \le r_0$, we have 
	the following sequences of inequalities
	\begin{equation}\label{I}
	  \Vert F_{A}\vert_{\{z\} \times \Sigma}\Vert^2_{L^2(\Sigma)} \le e_{A,u}(z) \le \kappa \frac{\mathfrak E_{2r}(A,u)}{4r^2}  \le C r^{2\beta-2}
	\end{equation}
	This verifies $(i)$. To prove $(ii)$, let $p\in \Sigma$ and $z = (r\cos \phi,r\sin\phi)$ with $\phi \in (\pi/2,3\pi/2)$.
	The ball of radius $s= r\vert\cos\phi\vert$ centered at $(z,p)$ is contained in $D_-(r_0) \times \Sigma$. Moreover, the $L^2$ norm of 
	the curvature of $A$ on this ball is estimated by $Cr^{\beta}$. Therefore, $(ii)$ is a consequence of Uhlenbeck's Theorem which is recalled as Lemma \ref{4dimestmate} below.
	Finally, $(iii)$ is also a consequence of Theorem \ref{en-quant} and Proposition \ref{decayenergy}:
	\begin{equation}\label{III}
	  \vert du(z)\vert \le e_{A,u}(z')^{\frac{1}{2}} \le \kappa^{\frac{1}{2}} \frac{\mathfrak E_{2r}(A,u)^{\frac{1}{2}}}{2r}  \le C r^{\beta-1}
	\end{equation}
	where $z=(s,\theta) \in S^1_{r,+}$ and $z' = (-s,\theta)$.
\end{proof}

\begin{lemma}\label{4dimestmate}{\rm (Uhlenbeck)}
	For a large enough positive integer $k$, suppose a $C^k$-compact family of metrics on the 4-dimensional ball $B_{r_0}(0)$ is given.
	There exists $\hbar > 0$ such that the following holds. Suppose $A$ is an ASD connection on $B_r(0)\subset B_{r_0}(0)$ with $\Vert F_A\Vert_{L^2(B_r(0))}\leq \hbar$.
	Then
	\[
	  \vert F_A(0)\vert< C r^{-2} \Vert F_A\Vert_{L^2(B_r(0))}.
	\]
\end{lemma}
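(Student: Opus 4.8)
The plan is to reduce the estimate to the unit ball by a rescaling argument and then to invoke Uhlenbeck's Coulomb gauge theorem followed by a standard elliptic bootstrap, keeping track of uniformity over the given $C^k$-compact family of metrics. Concretely, I would first use the dilation $\Lambda_r \colon B_1(0) \to B_r(0)$, $x \mapsto rx$: if $A$ is an ASD connection on $B_r(0)$ for a metric $g$ in the family, then $\Lambda_r^* A$ is ASD on $B_1(0)$ for the rescaled metric $g_r := r^{-2}\Lambda_r^* g$, and the $g_r$ again form a $C^k$-bounded family on $B_1(0)$ for $r \in (0,r_0]$ (shrinking $r_0$ below $1$ if necessary, so that dilations do not inflate $C^k$-norms). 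Since $\dim X = 4$, the $L^2$ norm of the curvature is conformally invariant, $\Vert F_{\Lambda_r^* A}\Vert_{L^2(B_1,g_r)} = \Vert F_A\Vert_{L^2(B_r,g)}$, while $\vert F_{\Lambda_r^* A}(0)\vert_{g_r} = r^2 \vert F_A(0)\vert_g$; hence it suffices to prove $\vert F_A(0)\vert \le C\Vert F_A\Vert_{L^2(B_1(0))}$ with a constant uniform over the family $\{g_r\}$.

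Next I would invoke Uhlenbeck's gauge-fixing theorem, valid uniformly over a $C^k$-compact family of metrics: there is $\hbar > 0$ such that whenever $\Vert F_A\Vert_{L^2(B_1(0))} < \hbar$, some gauge transformation puts $A$ into Coulomb gauge, $d^* A = 0$ on $B_1(0)$ (together with a boundary condition), with $\Vert A\Vert_{L^2_1(B_1(0))} \le C\Vert F_A\Vert_{L^2(B_1(0))}$. This is the one genuinely nontrivial ingredient, and it is precisely what the attribution to Uhlenbeck refers to.

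With $A$ in Coulomb gauge, the identity $F_A = dA + A\wedge A$ and $F^+(A) = 0$ yield the elliptic first-order system $(d^+ \oplus d^*)A = \bigl(-(A\wedge A)^+,\,0\bigr)$ with quadratic right-hand side. Since $L^2_1 \hookrightarrow L^4$ in four dimensions, $A\wedge A \in L^2$, and interior elliptic estimates on a nested family of balls $B_1 \supset B_{3/4} \supset \cdots \supset B_{1/2}$, combined with the usual Sobolev multiplication inequalities and the smallness $\Vert A\Vert_{L^2_1} \le C\hbar$ to absorb the quadratic terms, upgrade $A$ successively to $L^2_2, L^2_3, \dots$ and finally to $C^\infty$ on $B_{1/2}(0)$, with $\Vert A\Vert_{C^1(B_{1/2})} \le C\Vert A\Vert_{L^2_1(B_1)} \le C\Vert F_A\Vert_{L^2(B_1)}$. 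Then
\[
\vert F_A(0)\vert \le \vert dA(0)\vert + \vert (A\wedge A)(0)\vert \le C\Vert A\Vert_{C^1(B_{1/2})} + C\Vert A\Vert_{C^0(B_{1/2})}^2 \le C\Vert F_A\Vert_{L^2(B_1(0))},
\]
once more using smallness to absorb the quadratic term. Undoing the rescaling gives $\vert F_A(0)\vert < C r^{-2}\Vert F_A\Vert_{L^2(B_r(0))}$.

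The only substantial point is the Coulomb gauge theorem together with the fact that its constant, and those of the elliptic estimates, can be taken uniform over the compact family of rescaled metrics; granting this, the remainder is routine. Some care is also needed to verify that $\{g_r\}$ is genuinely $C^k$-bounded, which is why one restricts to $r \le r_0$ and may decrease $r_0$.
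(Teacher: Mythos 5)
Your argument is correct and is essentially the paper's own proof: the paper disposes of the lemma in one line, ``by scaling we may assume $r=1$, where it is the standard Uhlenbeck compactness theorem,'' and your rescaling computation (conformal invariance of $\Vert F_A\Vert_{L^2}$ in dimension four, the factor $r^2$ in the pointwise norm, and the $C^k$-boundedness of the rescaled metrics $g_r$) together with the Coulomb-gauge-plus-bootstrap derivation simply spells out what the paper leaves as standard. No substantive difference in approach.
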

\begin{proof}
	By scaling we may assume $r=1$, where it is the standard Uhlenbeck compactness theorem.
\end{proof}

\begin{proof}[Proof of Corollary \ref{prop:5-20} $\Rightarrow$ Theorem \ref{thm:removal}]
	Let $p$ be a real number satisfying $2<p<\frac{4}{2-\beta}$. Properties $(i)$ and $(ii)$ of Corollary \ref{prop:5-20} are the assumptions of \cite[Theorem 5.3 (ii)]{Weh:Lag-bdry-ana2}.
	Therefore, there is $g\in  \mathcal G((D_-\setminus \{0\})\times \Sigma,F)$ such that $\widetilde A:=g^*A$ extends as an $L^p_1$ connection on $D_- \times \Sigma$. 
	By continuity, $\widetilde A$ is an ASD-connection. Using $(iii)$ of Corollary \ref{prop:5-20}, we may conclude that $u$ extends as a Holder continuous function $\widetilde u:D_+\to \mathcal M(\Sigma,F)$ and $d\widetilde u$ belongs to $L^p$. 
	Since $\widetilde A$ and $\widetilde u$ are continuous, they satisfy the matching condition. Now we can appeal to our regularity results of Section
	 \ref{regularity} to complete the proof of Theorem \ref{thm:removal}
\end{proof}

\subsubsection{Energy estimate via the Chern-Simons functional}\label{csestimate}
Let $Y$ be a 3-manifold and $\beta_0$ be a flat connection on an $SO(3)$-bundle $E$ over $Y$\footnote{Here we diverge from our convention that connections on 3-manifolds are denoted by the letter $B$ because soon we will focus on the case that $Y=S^1\times \Sigma$ and $\beta_0$ is the pullback of a connection on $\Sigma$.}.  Let $B = \beta_0+ b$ be a connection on $E$ where $b$ is a section of $\Lambda^1(Y) \otimes E$. The Chern-Simons functional of $B$ is defined as
\begin{equation}
	\aligned
	{CS}_{\beta_0}(B) &:= 
	\int_{Y} \tr \left(b \wedge F_{B} - \frac{1}{3}  b\wedge b\wedge b\right).
	\endaligned
\end{equation}
Equivalently, if $A$ is an arbitrary connection on $[0,1]\times Y$ whose restrictions to $\{0\}\times Y$ and $\{1\}\times Y$ are equal to $\beta_0$ and $B$, then
\begin{equation}\label{form536}
	CS_{\beta_0}(B) = \int_{[0,1]\times Y} \tr\(F_{A} \wedge F_{A}\).
\end{equation}
A consequence of \eqref{form536}, which shall be helpful for us, is that $CS_{\beta_0}(B)=CS_{\beta_1}(B)$, if $\beta_0$ and $\beta_1$ are connected to each other by a path of flat connections. It is also a well-known fact that
\begin{equation}\label{form538}
	\frac{1}{8\pi^2}\(CS_{\beta_0} (g^* A) - CS_{\beta_0} ( A) \) \in \Z,
\end{equation}
for any $g\in \mathcal G(Y,E)$.

We will be interested in the case that $Y=S^1 \times \Sigma$, $E=S^1\times F$ and $\beta_0$ is the pull-back of a flat connection on $F$, which is also denoted by $\beta_0$. Let $B=\beta_0+b$ be a connection on $E$ such that $b=\alpha+zd\phi$ where $z$ and $\alpha$ are sections of $E$ and $\Lambda^1(\Sigma)\otimes E$ over $Y$. Then the Chern-Simons function of $B$ is given as
\begin{equation}\label{CS-S1Sigma}
  CS_{\beta_0}(B)=\int_{S^1}\int_\Sigma\tr(\partial_\phi \alpha \wedge \alpha+2F(\beta_0+\alpha)z)\wedge d\phi
\end{equation}
We shall also need an analogue of \eqref{form536} for a connection $A=\beta_0+\alpha+wdr+zd\phi$ on the 4-manifold $X=[0,1]\times [0,1]\times \Sigma$ where $r$ and $\phi$ denote the coordinates on the first and the second intervals. In this case Stokes theorem implies that
\begin{align}\label{CS-energy}
	\int_{X} \tr\(F_{A} \wedge F_{A}\)=&\int_{(\partial [0,1] )\times [0,1]\times \Sigma}
	\tr(\partial_\phi \alpha \wedge \alpha+2F(\beta_0+\alpha)z)\wedge d\phi\nonumber\\
	-&\int_{ [0,1]\times(\partial [0,1] )\times \Sigma}
	\tr(\partial_r \alpha \wedge \alpha+2F(\beta_0+\alpha)w)\wedge dr.
\end{align}

Let $(A,u)$ be as in Theorem \ref{thm:removal}. Suppose $\beta_0^r$ denotes the flat connection on $F$ given by $A\vert_{\{(0,r)\}\times \Sigma}$. For $z\in S^1_{r,+}$ with $2r < r_0$, we can use Theorem \ref{en-quant} and \eqref{bound-hbar-0} as in \eqref{III} to conclude
\begin{equation}\label{form525252}
	\vert du(z)\vert \le \kappa^{\frac{1}{2}} \frac{\mathfrak E_{2r}(A,u)^{\frac{1}{2}}}{2r}  \le \frac{1}{2}\kappa^{\frac{1}{2}} \hbar_0^{\frac{1}{2}} r^{-1}.
\end{equation}
In particular, the diameter of $u(S^1_{r,+})$ is smaller than $C\hbar_0^{1/2}$. Thus by taking $r_0$ and $\hbar_0$ small enough, we may assume that there is  $b_+^r(z) \in \Lambda^1(\Sigma) \otimes F$ for each $z\in S^1_{r,+}$ such that the following conditions hold.
\begin{enumerate}
	\item[$(u.1)$] $[\beta^r_0 + b_+^r(z)] = u(z)$.
	\item[$(u.2)$] $d_{\beta^r_0+b_+^r(z)}^* \partial_\phi b_+^r(z) = 0$.
	\item[$(u.3)$] $b_+^r(r,0) = 0$.
\end{enumerate}
Let $B_{r,+}$ be the connection on $S^1_{r,+} \times F$ defined as $\beta^r_0 + b_+^r$ (with vanishing $d\phi$ component). 

By parallel transport along $S^1_{r,-}$, we may define a connection $B_{r,-}$ on $S^1_{r,-} \times F$ which satisfies the following properties.
\begin{itemize}
	\item[$(A.1)$]  $B_{r,-}$ is gauge equivalent to the restriction of $A$ to $S^1_{r,-}$.
	\item[$(A.2)$]  For each $z\in S^1_{r,+}$ , there is $b_-^r(z)\in \Lambda^1(\Sigma) \otimes F$ such that $B_{r,-} =  \beta^r_0 + b_-^r(z)$. That is to say, $B_{r,-}$ has a vanishing $d\phi$ component.
	\item[$(A.3)$]$b_-^r(r,0) =0$.
\end{itemize}
Similar to \eqref{I}, we have:
\begin{equation}\label{form3}
	\Vert \partial_\phi b_-^r(z)\Vert_{L^2(\Sigma)}\leq 
	re_{A}(z)^{\frac{1}{2}}\leq \kappa^{\frac{1}{2}} \frac{\mathfrak E_{2r}(A,u)^{1/2}}{2}.
\end{equation}

\begin{lemma}\label{estmiate-L2-a-r}
	For any $r$, we have:
	\begin{equation}
	 	\Vert b_+^r(-r,0) \Vert	_{L^2(\Sigma)}+\Vert b_-^r(-r,0) \Vert	_{L^2(\Sigma)}\leq 
		C r\int_{\frac{\pi}{2}}^{\frac{3\pi}{2}}e_{A,u}^{\frac{1}{2}}(r,\phi)
	\end{equation}
	In particular, the left hand side of the above inequality is smaller than $C\mathfrak E_{2r}(A,u)^{1/2}$.
\end{lemma}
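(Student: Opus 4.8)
The plan is to reduce both estimates to pointwise bounds, along the arcs $S^1_{r,+}$ and $S^1_{r,-}$, for the $L^2(\Sigma)$-norms of the angular derivatives $\partial_\phi b_+^r$ and $\partial_\phi b_-^r$, and then integrate using the fundamental theorem of calculus along these arcs, starting from the points where $b_+^r$ and $b_-^r$ are normalised to vanish by $(u.3)$ and $(A.3)$. Concretely, I would show that for $z\in S^1_{r,+}$ at angle $\phi\in(-\pi/2,\pi/2)$ one has $\Vert\partial_\phi b_+^r(z)\Vert_{L^2(\Sigma)}\le r\,e_{A,u}^{1/2}(r,\pi-\phi)$, and that for $z\in S^1_{r,-}$ at angle $\phi\in(\pi/2,3\pi/2)$ one has $\Vert\partial_\phi b_-^r(z)\Vert_{L^2(\Sigma)}\le r\,e_{A,u}^{1/2}(r,\phi)$. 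Integrating the first bound over $(-\pi/2,\pi/2)$ and substituting $\phi\mapsto\pi-\phi$ (which carries $(-\pi/2,\pi/2)$ onto the angular range $(\pi/2,3\pi/2)$ of $D_-$), and integrating the second directly over $(\pi/2,3\pi/2)$, then yields $\Vert b_+^r(-r,0)\Vert_{L^2(\Sigma)}+\Vert b_-^r(-r,0)\Vert_{L^2(\Sigma)}\le Cr\int_{\pi/2}^{3\pi/2}e_{A,u}^{1/2}(r,\phi)\,d\phi$, with $C$ absorbing the bounded arc lengths.

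For the $b_+^r$ bound: by $(u.1)$ the connection $\beta_0^r+b_+^r(z)$ is flat for each $z$, so differentiating $F(\beta_0^r+b_+^r(z))=0$ in $\phi$ gives $d_{\beta_0^r+b_+^r(z)}\partial_\phi b_+^r(z)=0$; combined with the Coulomb condition $(u.2)$, $\partial_\phi b_+^r(z)$ has trivial $d_{\beta_0^r+b_+^r(z)}$-exact part, hence lies in $\mathcal H^1(\Sigma;\beta_0^r+b_+^r(z))$ and represents the tangent vector $\partial_\phi u(z)\in T_{u(z)}\mathcal M(\Sigma,F)$. Since the metric on $\mathcal M(\Sigma,F)$ is the $L^2$ metric on harmonic representatives, $\Vert\partial_\phi b_+^r(z)\Vert_{L^2(\Sigma)}=\vert\partial_\phi u(z)\vert\le r\,\vert du(z)\vert$, because $\partial_\phi$ is a coordinate vector field of length $r$ on $D_+$. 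By the reflection convention $e_u(s,\theta)=\vert du\vert^2(-s,\theta)$, the point $z$ at angle $\phi$ reflects to the point at angle $\pi-\phi$, so $\vert du(z)\vert=e_u^{1/2}(r,\pi-\phi)\le e_{A,u}^{1/2}(r,\pi-\phi)$, which is the claimed bound.

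For the $b_-^r$ bound: by $(A.1)$--$(A.2)$ the connection $B_{r,-}=\beta_0^r+b_-^r$ on $S^1_{r,-}\times\Sigma$ is gauge equivalent to $A$ and has vanishing $d\phi$-component, so $d\phi\wedge\partial_\phi b_-^r$ is exactly the $d\phi\wedge(\Sigma\text{-direction})$ part of $F_{B_{r,-}}$. In the collar the $(s,\theta)$-metric equals $dr^2+r^2\,d\phi^2$, so $\vert d\phi\vert^2=r^{-2}$, and hence $\vert F_{B_{r,-}}\vert^2\ge\vert d\phi\wedge\partial_\phi b_-^r\vert^2=r^{-2}\vert\partial_\phi b_-^r\vert^2_{g_\Sigma}$ pointwise on $\Sigma$; integrating over $\Sigma$ and using gauge invariance of $\vert F_A\vert^2$ gives $\Vert\partial_\phi b_-^r(z)\Vert^2_{L^2(\Sigma)}\le r^2\int_{\{z\}\times\Sigma}\vert F_A\vert^2=r^2 e_A(z)\le r^2 e_{A,u}(z)$. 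This proves the displayed inequality. For the final assertion, apply Cauchy--Schwarz to get $\int_{\pi/2}^{3\pi/2}e_{A,u}^{1/2}(r,\phi)\,d\phi\le(\pi\int_{\pi/2}^{3\pi/2}e_{A,u}(r,\phi)\,d\phi)^{1/2}$, and invoke the energy quantization estimate (Theorem \ref{en-quant}), available since $\mathfrak E_{r_0}(A,u)<\hbar_0<\hbar$: for $z\in S^1_{r,-}$ the half-disc $D_r(z)$ lies in $D_-(2r)$, so $e_{A,u}(z)\le\kappa\,\mathfrak E_{2r}(A,u)/r^2$, whence $r\int_{\pi/2}^{3\pi/2}e_{A,u}^{1/2}(r,\phi)\,d\phi\le\pi\kappa^{1/2}\mathfrak E_{2r}(A,u)^{1/2}$.

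The step requiring the most care is the identification in the $b_+^r$ case of $\Vert\partial_\phi b_+^r\Vert_{L^2(\Sigma)}$ with the intrinsic speed $\vert\partial_\phi u\vert$ of the curve $\phi\mapsto u(z)$ -- this is where flatness of the family, the gauge condition $(u.2)$, and the definition of the $L^2$ metric on $\mathcal M(\Sigma,F)$ all enter -- together with the correct $r^{-2}$ weighting of the $d\phi$ direction in the $b_-^r$ case; the remaining manipulations (the reflection in $e_u$, the change of variables, and Cauchy--Schwarz) are routine.
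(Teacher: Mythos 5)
Your proof is correct and takes essentially the same route as the paper: integrate $\partial_\phi b_\pm^r$ along the arcs from the normalization points $(u.3)$ and $(A.3)$, bound $\Vert\partial_\phi b_+^r\Vert_{L^2(\Sigma)}$ by $r\vert du\vert$ using flatness, the Coulomb condition $(u.2)$ and the $L^2$ metric on $\mathcal M(\Sigma,F)$, bound $\Vert\partial_\phi b_-^r\Vert_{L^2(\Sigma)}$ by $r\,e_A^{1/2}$ as in \eqref{form3}, and obtain the final claim from Theorem \ref{en-quant}. Your curvature-component argument for the $b_-^r$ estimate merely fills in the justification of \eqref{form3}, which the paper asserts by analogy with \eqref{I}.
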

\begin{proof}
	Using $(u.2)$, we conclude
	\begin{align*}
		\Vert b_+^r(-r,0)\Vert_{L^2(\Sigma)}
		&=\Vert \int_{-\frac{\pi}{2}}^{\frac{\pi}{2}} \partial_\phi b_+^r(r,\phi)d\phi\Vert_{L^2(\Sigma)}\leq 
		r\left\vert \int_{-\frac{\pi}{2}}^{\frac{\pi}{2}} \vert du \vert (r,\phi)d\phi\right\vert\leq C r\int_{\frac{\pi}{2}}^{\frac{3\pi}{2}}e_{A,u}^{\frac{1}{2}}(r,\phi).
	\end{align*}
	Similarly, \eqref{form3} implies that:
	\begin{equation*}
		\Vert b_-^r(-r,0)\Vert_{L^2(\Sigma)} \leq C r\int_{\frac{\pi}{2}}^{\frac{3\pi}{2}}e_{A,u}^{\frac{1}{2}}(r,\phi).
	\end{equation*}
	The second part of the lemma is a consequence of Theorem \ref{en-quant}.
\end{proof}

The matching condition implies that $\beta^r_0 + b_+^r(-r,0)$ is gauge equivalent to $\beta^r_0+b_-^r(-r,0)$. Namely, there exists $g_r \in \mathcal G(\Sigma,F)$ such that 
\[
  \beta^r_0 + b_+^r(-r,0) = g_r^* (\beta^r_0 + b_-^r(-r,0))
\]
The proof of the following lemma on the extension of the gauge transformation $g_r$ can be found in \cite[Lemma 3.2 (ii)]{Weh:Lag-bdry-ana2}, which is proved based on the results of \cite{HL:Liouv}.
\begin{lemma}\label{lem526}
	There exists $\tilde g_r\in \mathcal G([0,1]\times \Sigma,[0,1] \times F)$ such that: 
	\vspace{-5pt}
	\begin{itemize}
            	\item[(i)] $\tilde g_r\vert_{\{0\} \times \Sigma}$ is identity;
            	\item[(ii)] $\tilde g_r\vert_{\{1\} \times \Sigma}$ is $g_r$;
            	\item[(iii)] let $b^r_{0}$ be the 1-form
				\[(\tilde g_r)^*(\beta^r_0 + b_-^r(-r,0)) -(\beta^r_0 + b_-^r(-r,0))\] 
				on $[0,1]\times \Sigma$. 
				Then $\Vert b^r_{0} \Vert_{L^3([0,1] \times \Sigma)} \le C\Vert b_+^r(r,0)-b_-^r(r,0) \Vert_{L^2(\Sigma)}$.
	\end{itemize}
\end{lemma}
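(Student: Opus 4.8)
The statement is quoted from \cite[Lemma~3.2(ii)]{Weh:Lag-bdry-ana2} (itself built on \cite{HL:Liouv}), so in the paper one may simply cite it; here I sketch the argument one would give. The plan is to construct $\tilde g_r$ by exponentiating a straight path from the identity to $g_r$, after first controlling $g_r$ itself. Write $\alpha_\pm^r:=\beta^r_0+b_\pm^r(-r,0)$, so that $\alpha_+^r=g_r^*\alpha_-^r$. By Lemma~\ref{estmiate-L2-a-r} both $\alpha_\pm^r$ lie within $C\,\mathfrak E_{2r}(A,u)^{1/2}$ of $\beta^r_0$ in $L^2(\Sigma)$, and $\beta^r_0$ is a smooth flat connection on the nontrivial $\SO(3)$-bundle $F$, hence irreducible; so for $\hbar_0$ small $d_{\alpha_-^r}\co L^2_2(\Sigma,F)\to L^2_1(\Sigma,\Lambda^1\otimes F)$ is injective with uniformly bounded inverse on its image. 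Writing $g_r=\exp(\xi_r)$, the relation $\alpha_+^r=g_r^*\alpha_-^r$ becomes $d_{\alpha_-^r}\xi_r=(b_+^r(-r,0)-b_-^r(-r,0))+N(\xi_r)$ with $N$ quadratically small in $\xi_r$, and combining this with the Coulomb-type normalisations already imposed on the representatives (namely $(u.2)$ and the parallel-transport construction of $B_{r,-}$) and running the standard elliptic bootstrap for the gauge action near the slice $(\id,\alpha_-^r)$ — uniformly over the bounded family of connections that occur, which is exactly the content of \cite[Lemma~3.2]{Weh:Lag-bdry-ana2} and \cite{HL:Liouv} — yields a uniform constant with
\[
  \Vert \xi_r\Vert_{L^2_2(\Sigma)}\ \le\ C\,\Vert b_+^r(-r,0)-b_-^r(-r,0)\Vert_{L^2(\Sigma)} .
\]
In particular $g_r$ is $C^0$-close to $\id$ and $\xi_r$ is the (unique, smooth) small logarithm of $g_r$.

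Next, fix a smooth cutoff $\chi\co[0,1]\to[0,1]$ with $\chi\equiv0$ near $0$ and $\chi\equiv1$ near $1$, and set $\tilde g_r(t,x):=\exp\bigl(\chi(t)\,\xi_r(x)\bigr)$ on $[0,1]\times\Sigma$. Then (i) and (ii) are immediate from $\chi(0)=0$, $\chi(1)=1$. For (iii), writing $\alpha_-$ for the pullback of $\alpha_-^r$ to $[0,1]\times\Sigma$,
\[
  b^r_0=(\tilde g_r)^*\alpha_--\alpha_-=\bigl(\tilde g_r^{-1}\alpha_-\tilde g_r-\alpha_-\bigr)+\tilde g_r^{-1}\,d\tilde g_r ,
\]
and since $|\xi_r|$ is small each summand is bounded pointwise, with constants depending only on the bounded family of connections that arise, by $C\bigl(|\xi_r|+|\nabla_\Sigma\xi_r|\bigr)$, together with an extra factor $|\chi'(t)|$ in the $dt$-component. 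Integrating over $[0,1]\times\Sigma$ and using the borderline Sobolev embedding $L^2_2(\Sigma)\hookrightarrow L^3_1(\Sigma)$ on the surface $\Sigma$,
\[
  \Vert b^r_0\Vert_{L^3([0,1]\times\Sigma)}\ \le\ C\,\Vert \xi_r\Vert_{L^3_1(\Sigma)}\ \le\ C\,\Vert \xi_r\Vert_{L^2_2(\Sigma)}\ \le\ C\,\Vert b_+^r(-r,0)-b_-^r(-r,0)\Vert_{L^2(\Sigma)} ,
\]
which is the bound in (iii) (the right-hand side there being the difference of the two representatives at the point where $g_r$ is defined).

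The one genuinely nontrivial point is the $\Sigma$-regularity of $g_r$ in the first step: linearising the gauge action directly only gives $\Vert\xi_r\Vert_{L^2_1(\Sigma)}\lesssim\Vert\alpha_+^r-\alpha_-^r\Vert_{L^2(\Sigma)}$, whereas the estimate on $b^r_0$ requires $\nabla_\Sigma\xi_r\in L^3(\Sigma)$, i.e.\ a bound in $L^2_2$; gaining this one derivative while keeping the right-hand side in the $L^2$-norm of the connection difference is precisely what needs the uniform elliptic/Liouville-type input of \cite{HL:Liouv} as packaged in \cite[Lemma~3.2(ii)]{Weh:Lag-bdry-ana2}. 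Everything else — the exponential-path construction, the verification of (i), (ii), and the pointwise-to-$L^3$ estimate of $b^r_0$ — is routine, so the exponential-path argument above reduces (iii) to that one quoted estimate.
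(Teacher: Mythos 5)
Your proposal matches the paper exactly: the paper's entire proof of this lemma is the citation to \cite[Lemma 3.2(ii)]{Weh:Lag-bdry-ana2} (via \cite{HL:Liouv}), and your sketch — exponential path $\tilde g_r=\exp(\chi(t)\xi_r)$, a pointwise bound on $b^r_0$ by $|\xi_r|+|\nabla_\Sigma\xi_r|$, the Sobolev embedding $L^2_2(\Sigma)\hookrightarrow L^3_1(\Sigma)$, and then delegating the one-derivative gain $\|\xi_r\|_{L^2_2}\lesssim\|b^r_+-b^r_-\|_{L^2}$ to the cited Liouville-type result — is an accurate reconstruction of what that cited lemma packages, and correctly locates the nontrivial input. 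You have also implicitly (and correctly) read the right-hand side of (iii) as the difference of the representatives at the matching point where $g_r$ is defined, which is what the displayed formulas before the lemma use.
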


Since the connection $B_{r,0}:=\tilde g_r^*(\beta^r_0 + b_-^r(r,0))$ on $[0,1]\times \Sigma$ restricts to $\beta^r_0+b_-^r(r,0)$ and $\beta^r_0 + b_+^r(r,0)$ on $\{0\}\times \Sigma$ and $\{1\}\times \Sigma$, we can glue $B_{r,-}$, $B_{r,0}$ and $B_{r,+}$ to define a connection $B_{r}$ on the closed 3-manifold
\begin{equation}\label{spaceYr}
	Y_r:=S^1_{r,-} \times \Sigma \cup [0,1] \times \Sigma \cup S^1_{r,+} \times \Sigma.
\end{equation}
Although the connection $B_{r}$ is not smooth, it is clear from \eqref{CS-S1Sigma} that $CS_{\beta_0^r}(B_r)$ is well-defined.
\begin{figure}
\begin{center}
\begin{tikzpicture}[scale=1.5]
   \draw [red,thick,domain=-30:90] plot ({cos(\x)}, {sin(\x)});
   \draw [blue,thick,domain=90:210] plot ({cos(\x)}, {sin(\x)});
   \draw [green,thick,domain=210:330] plot ({cos(\x)}, {sin(\x)});
   \node[right] at (.8,.8) {$S^1_{r,+} \times \Sigma$}; \node[right] at (-1.9,.8) {$S^1_{r,-} \times \Sigma$};
   \node[right] at (-.5,-1.3) {$[0,1] \times \Sigma$};
\end{tikzpicture}
\end{center}
\caption{A schematic picture of the 3-manifold $Y_r$ and its decomposition in \eqref{spaceYr}.}
\end{figure}

\begin{lemma}\label{lem527}
	For any $r$, there is a constant $K$ such that:
	\[
	  \vert CS_{\beta_0^r}(B_r) \vert  \le K r  \frac{d}{d r} \mathfrak E_r(A,u).
	\]
\end{lemma}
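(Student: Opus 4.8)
The plan is to use the additivity of the Chern--Simons functional over the decomposition \eqref{spaceYr} of the closed $3$--manifold $Y_r$. Relative to the connection $\beta_0^r$, which is pulled back from a flat connection on $\Sigma$ and hence extends over all of $Y_r$, the functional $CS_{\beta_0^r}(\cdot)$ is the integral over $Y_r$ of the local Chern--Simons density of $b:=B_r-\beta_0^r$, so
\[
  CS_{\beta_0^r}(B_r)=CS_{\beta_0^r}(B_{r,+})+CS_{\beta_0^r}(B_{r,0})+CS_{\beta_0^r}(B_{r,-}),
\]
each term being the integral of that density over the corresponding piece, with no boundary contributions. I would first record, in polar coordinates, the identity $\frac{d}{dr}\mathfrak E_r(A,u)=r\int_{\pi/2}^{3\pi/2}e_{A,u}(r,\phi)\,d\phi$, together with the consequences that $\int_{S^1_{r,-}}e_A\,d\phi$ and $\int_{S^1_{r,+}}|du|^2\,d\phi$ (the latter after the reflection in $U_\partial$ defining $e_u$) are each at most $\tfrac1r\frac{d}{dr}\mathfrak E_r(A,u)$, and that Theorem \ref{en-quant} and \eqref{bound-hbar-0} give the a priori smallness $r\,\frac{d}{dr}\mathfrak E_r(A,u)=r^2\int_{\pi/2}^{3\pi/2}e_{A,u}(r,\phi)\,d\phi\le C\hbar_0$.

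For the two arc pieces I would apply the explicit formula \eqref{CS-S1Sigma}: since $B_{r,\pm}=\beta_0^r+b_\pm^r$ has vanishing $d\phi$--component, only the term $\int\!\int_\Sigma\tr(\partial_\phi b_\pm^r\wedge b_\pm^r)\,d\phi$ survives, and Cauchy--Schwarz on $\Sigma$ gives $|CS_{\beta_0^r}(B_{r,\pm})|\le\int_{\mathrm{arc}}\|\partial_\phi b_\pm^r\|_{L^2(\Sigma)}\,\|b_\pm^r\|_{L^2(\Sigma)}\,d\phi$. On the $+$ arc the gauge condition $(u.2)$ makes $\partial_\phi b_+^r$ the horizontal lift of $\partial_\phi u$, so $\|\partial_\phi b_+^r(r,\phi)\|_{L^2(\Sigma)}=|\partial_\phi u|(r,\phi)\le r\,|du|(r,\phi)$, while the normalization $(u.3)$ gives $\|b_+^r(r,\phi)\|_{L^2(\Sigma)}\le r\int_{\mathrm{arc}}|du|$; combining these with Cauchy--Schwarz in $\phi$ bounds the $+$ contribution by $Cr^2\int_{\mathrm{arc}}|du|^2\le Cr\,\frac{d}{dr}\mathfrak E_r(A,u)$. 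On the $-$ arc the same argument applies with $|du|$ replaced by $e_A^{1/2}$, using \eqref{form3} in place of the horizontal--lift bound and $(A.3)$ for the normalization, giving $|CS_{\beta_0^r}(B_{r,-})|\le Cr^2\int_{\mathrm{arc}}e_A\le Cr\,\frac{d}{dr}\mathfrak E_r(A,u)$.

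For the bridging cylinder the key observation is that $B_{r,0}=\tilde g_r^*(\beta_0^r+b_-^r)$ is the pull--back of a flat connection on $\Sigma$, hence itself flat on $[0,1]\times\Sigma$; so $F_{B_{r,0}}=0$ and $CS_{\beta_0^r}(B_{r,0})$ reduces to the cubic term $-\tfrac13\int_{[0,1]\times\Sigma}\tr(c\wedge c\wedge c)$ with $c=B_{r,0}-\beta_0^r$, which is bounded by $C\|c\|_{L^3([0,1]\times\Sigma)}^3$. Writing $c$ as the sum of the pulled--back flat $1$--form $b_-^r$ at the gluing point and the correction $b_0^r$ of Lemma \ref{lem526}, the $L^3$ norm of $c$ is controlled --- via Lemma \ref{lem526}(iii) for $b_0^r$ and, for $b_-^r$, the slice estimate $\|b_-^r\|_{L^2_1(\Sigma)}\le C\|b_-^r\|_{L^2(\Sigma)}$ for flat connections in a small neighbourhood of $\beta_0^r$ in the compact space $\mathcal M(\Sigma,F)$ followed by $L^2_1(\Sigma)\hookrightarrow L^3(\Sigma)$ --- by $C\bigl(\|b_+^r\|_{L^2(\Sigma)}+\|b_-^r\|_{L^2(\Sigma)}\bigr)$ at the gluing point. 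By Lemma \ref{estmiate-L2-a-r} this is at most $Cr\int_{\pi/2}^{3\pi/2}e_{A,u}^{1/2}(r,\phi)\,d\phi\le Cr^{1/2}\bigl(\tfrac{d}{dr}\mathfrak E_r(A,u)\bigr)^{1/2}$, so the cylinder term is $\le Cr^{3/2}\bigl(\tfrac{d}{dr}\mathfrak E_r\bigr)^{3/2}=C\bigl(r\,\tfrac{d}{dr}\mathfrak E_r\bigr)^{1/2}\cdot r\,\tfrac{d}{dr}\mathfrak E_r\le C\hbar_0^{1/2}\,r\,\tfrac{d}{dr}\mathfrak E_r(A,u)$, using the smallness noted above. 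Adding the three estimates yields the claimed inequality.

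The main obstacle I anticipate is precisely the cylinder term: one must upgrade the purely $L^2(\Sigma)$ control that the energy provides on $\beta_0^r+b_\pm^r$ at the gluing point to an $L^3([0,1]\times\Sigma)$ bound on the full connection $c$, since the cubic density is only $L^1$--controlled by $\|c\|_{L^3}^3$. The correction term is disposed of cleanly by the extension Lemma \ref{lem526}(iii) (adapted from \cite{Weh:Lag-bdry-ana2}), but the ``constant'' part $b_-^r$ requires the elliptic/slice estimate for flat connections close to $\beta_0^r$ in $\mathcal M(\Sigma,F)$; checking that this neighbourhood is genuinely small --- so that the slice estimate applies --- is where the a priori pointwise bound of Theorem \ref{en-quant} and the smallness of $\hbar_0$ enter. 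A secondary bookkeeping point is to ensure that the two arcs glue directly along $\beta_0^r$ at the matching point $\phi=\pi/2$ without inserting a further cylinder, which is arranged by the normalizations $(u.3)$ and $(A.3)$ of $b_\pm^r$.
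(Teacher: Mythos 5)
Your overall strategy coincides with the paper's: decompose $CS_{\beta_0^r}(B_r)$ over the three pieces of $Y_r$, bound the two arc contributions by $Cr^2\int_{\pi/2}^{3\pi/2}e_{A,u}(r,\phi)\,d\phi$ using the normalizations and \eqref{CS-S1Sigma}, and handle the bridging cylinder via Lemma \ref{lem526}, Lemma \ref{estmiate-L2-a-r} and the smallness coming from Theorem \ref{en-quant}. The arc estimates and the final identity $\frac{d}{dr}\mathfrak E_r(A,u)=r\int_{\pi/2}^{3\pi/2}e_{A,u}(r,\phi)\,d\phi$ are fine. The gap is exactly at the point you flagged: your treatment of the cylinder keeps the reference $\beta_0^r$, so the relative $1$-form is $c=b_0^r+\beta_-$ with $\beta_-$ the pulled-back $b_-^r$ at the matching slice, and the cross terms $\tr(b_0^r\wedge\beta_-\wedge\beta_-)$, $\tr(b_0^r\wedge b_0^r\wedge\beta_-)$ force you to control $\|\beta_-\|_{L^3(\Sigma)}$. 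The ``slice estimate'' you invoke for this, $\|b_-^r\|_{L^2_1(\Sigma)}\le C\|b_-^r\|_{L^2(\Sigma)}$ (or directly $\|b_-^r\|_{L^3}\le C\|b_-^r\|_{L^2}$) for flat connections $L^2$-close to $\beta_0^r$, is false: no gauge condition whatsoever is imposed on $b_-^r$ at that slice (it is produced by parallel transport in $\phi$), and flat connections of the form $g^*\beta_0^r$ with $g=\exp(\eta)$, $\eta$ of small amplitude but highly oscillatory or concentrated on a small ball, have $\|g^*\beta_0^r-\beta_0^r\|_{L^2(\Sigma)}$ arbitrarily small while $\|g^*\beta_0^r-\beta_0^r\|_{L^2_1}$, and in the concentrating case even the $L^3$ norm, is arbitrarily large (in two dimensions the $L^2$ norm of $g^{-1}d_{\beta_0^r}g$ is scale-invariant, so shrinking the support blows up $L^3$ without increasing $L^2$). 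So nothing in the available energy bounds (which only give $L^2(\Sigma)$ control of $b_\pm^r$ via Lemma \ref{estmiate-L2-a-r}) yields the $L^3$ control you need, and the stronger pointwise/slicewise bounds of Corollary \ref{prop:5-20} cannot be used here since they sit downstream of Lemma \ref{lem527} itself.

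The paper's bookkeeping is arranged precisely to avoid this: on the bridge it estimates the Chern--Simons density relative to the flat connection $\beta_0^r+b_-^r$ at the matching slice, so that only the correction $b_0^r$ of Lemma \ref{lem526} enters, and its $L^3([0,1]\times\Sigma)$ norm is supplied by Lemma \ref{lem526}(iii) in terms of $\|b_+^r-b_-^r\|_{L^2(\Sigma)}$; no integrability of $\beta_-$ beyond $L^2$ is ever required. To repair your argument you should either adopt this reference change and then control the resulting transgression/boundary terms on $\{0,1\}\times\Sigma$ --- these are quadratic expressions in $(b_\pm^r,\beta_-)$ and hence bounded by $\|b_+^r\|_{L^2(\Sigma)}\|b_-^r\|_{L^2(\Sigma)}\le Cr^2\int_{\pi/2}^{3\pi/2}e_{A,u}(r,\phi)\,d\phi$ using Lemma \ref{estmiate-L2-a-r}, the two reference flat connections being joined by the path of flat boundary slices --- or else prove genuine $L^3(\Sigma)$ control of $b_-^r$ at the matching slice by some independent argument; as written, the asserted slice estimate does not provide it.
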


\begin{proof}
	We firstly observe that
	\begin{align}
		\left\vert\int_{S_{r,+}^1 \times \Sigma} \tr\left( b_+^r \wedge F_{B_{r,+}}- \frac{1}{3} (b_+^r)^3\right)\right\vert 
		&= \left\vert\int_{S_{r,+}^1 \times \Sigma} \tr\left(b_+^r \wedge\partial_\phi b_+^r\right)\wedge d\phi\right\vert 
		\nonumber\\
		&\leq \int_{\Sigma} \int_{-\frac{\pi}{2}}^{\frac{\pi}{2}} \left\vert\partial_\phi b_+^r\right\vert \(\int_{\phi}^{\frac{\pi}{2}}
		\left\vert\partial_\phi b_+^r \right\vert d\psi\)d\phi\dvol_{\Sigma}\nonumber\\
		&\leq \int_{\Sigma} \(\int_{-\frac{\pi}{2}}^{\frac{\pi}{2}}
		\left\vert\partial_\phi b_+^r \right\vert d\phi\)^2\dvol_{\Sigma}\nonumber\\
		&\leq Cr^2\int_{-\frac{\pi}{2}}^{\frac{\pi}{2}}  \vert du(r,\phi)\vert^2 d\phi
		\leq Cr^2\int_{\frac{\pi}{2}}^{\frac{3\pi}{2}}  e_{A,u}(r,\phi) d\phi.\label{form528}
	\end{align}
	In a similar way, we have
	\begin{align}
		\left\vert\int_{S_{r,-}^1 \times \Sigma} \tr\left( b_-^r \wedge F_{B_{r,-}}- \frac{1}{3}(b_-^r)^3\right)\right\vert 
		&\leq Cr^2\int_{\frac{\pi}{2}}^{\frac{3\pi}{2}}  \Vert F_A(r,\phi)\Vert^2_{L^2(\Sigma)} d\phi b_-^r\nonumber\\
		&\leq Cr^2\int_{\frac{\pi}{2}}^{\frac{3\pi}{2}}  e_{A,u}(r,\phi) d\phi\label{form529}.
	\end{align}
	Finally, Lemma \ref{lem526} and flatness of $B_{r,0}$ give rise to the following estimates:
	\begin{align}
		\left\vert\int_{[0,1] \times \Sigma} \tr\left( b_0^r \wedge F_{B_{r,0}}-\right.\right.&\left.\left. \frac{1}{3} (b_0^r)^3\right)\right\vert 
		= \left\vert\int_{S_{r,+}^1 \times \Sigma} \tr(\frac{1}{3} (b_0^r)^3)\right\vert\nonumber \\
		&\leq \Vert b_+^r(r,0)-b_-^r(r,0) \Vert^3_{L^2(\Sigma)}\leq
		C r^3 (\int_{\frac{\pi}{2}}^{\frac{3\pi}{2}}e_{A,u}^{\frac{1}{2}}(r,\phi)d\phi)^3. \label{1531form-p}
	\end{align}
	For the last inequality we use Lemma \ref{estmiate-L2-a-r}. Since Theorem \ref{en-quant} implies that $r \int_{\frac{\pi}{2}}^{\frac{3\pi}{2}}e_{A,u}^{\frac{1}{2}}(r,\phi)d\phi$
	is  bounded by $C\mathfrak E_{2r}(A,u)^{1/2}$, we may assume that $r \int_0^{\pi}e_{A,u}^{\frac{1}{2}}(r,\phi)d\phi$
	is smaller than $1$ by picking $\hbar_0$ to be small enough. In particular, as a consequence of \eqref{1531form-p} and the Cauchy–Schwarz inequality we have
	\begin{align}
		\left\vert\int_{[0,1] \times \Sigma} \tr\left( b_0^r \wedge F_{B_{r,0}}- \frac{1}{3} (b_0^r)^3\right)\right\vert 
		\leq C r^2 \int_{\frac{\pi}{2}}^{\frac{3\pi}{2}}e_{A,u}(r,\phi)d\phi\label{1531form}.
	\end{align}
	The desired result follows by adding \eqref{form528},  \eqref{form529} and \eqref{1531form}.
\end{proof}

\begin{lemma}\label{lem535}
	$CS_{\beta_0^r}(B_r)  = \mathfrak E_{r}(A,u).$
\end{lemma}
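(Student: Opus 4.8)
The plan is to deduce the identity from formula~\eqref{form536} by building an explicit connection on a four--manifold out of the pair $(A,u)$ and exhausting the puncture. Fix $\epsilon\in(0,r)$. In polar coordinates, $D_-(r)\setminus D_-(\epsilon)$ and $D_+(r)\setminus D_+(\epsilon)$ are rectangles in the $(r',\phi)$--plane, and the closed three--manifolds $Y_{r'}$ for $r'\in[\epsilon,r]$ form a smooth family, so $\bigcup_{r'\in[\epsilon,r]}Y_{r'}$ is identified with $[\epsilon,r]\times Y$ for $Y\cong S^1\times\Sigma$. On $[\epsilon,r]\times Y$ I define a connection $\mathbb A$ by gluing three pieces: on the part corresponding to $(D_-(r)\setminus D_-(\epsilon))\times\Sigma$ take a gauge transform of $A$, in the gauge in which its restriction to each slice $S^1_{r',-}\times\Sigma$ equals $B_{r',-}$; on the part corresponding to $(D_+(r)\setminus D_+(\epsilon))\times\Sigma$ take a connection $\widetilde A_u$ with vanishing $D_+$--directional components whose restriction to each slice $\{z\}\times\Sigma$ is a flat representative of $u(z)$, chosen so that on $S^1_{r',+}\times\Sigma$ it equals $B_{r',+}$; and on the slab $[\epsilon,r]\times([0,1]\times\Sigma)$ take the family $r'\mapsto B_{r',0}$. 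Because of the normalizations $(A.3)$, $(u.3)$ and the matching condition, these pieces fit together continuously, the slab $[0,1]\times\Sigma$ playing exactly the role of absorbing the gauge transformation $g_r$ of Lemma~\ref{lem526}; and $\mathbb A$ restricts to $B_\epsilon$ on $\{\epsilon\}\times Y$ and to $B_r$ on $\{r\}\times Y$.

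Next I evaluate $\int_{[\epsilon,r]\times Y}\tr(F_{\mathbb A}\wedge F_{\mathbb A})$ piece by piece. On the $A$--piece, anti--self--duality gives $\tr(F_A\wedge F_A)=-\tr(F_A\wedge *F_A)=|F_A|^2\,\dvol$, so it contributes $\int_{D_-(r)\setminus D_-(\epsilon)}e_A\,\dvol$. On the $\widetilde A_u$--piece, since $\widetilde A_u$ has only $\Sigma$--form components and each slice is flat, a pointwise computation using the Cauchy--Riemann equation for $u$ and the fact that $\omega_{\rm fl}$ is induced by $\Omega$ (so that $\Omega(\partial_s\beta,\partial_\theta\beta)=\omega_{\rm fl}(\partial_s u,\partial_\theta u)$ because $\partial_s\beta,\partial_\theta\beta$ are $d_\beta$--closed) yields $\int_\Sigma\tr(F_{\widetilde A_u}\wedge F_{\widetilde A_u})=2\,\omega_{\rm fl}(\partial_s u,\partial_\theta u)\,ds\wedge d\theta=|du|^2\,ds\wedge d\theta$, so this piece contributes $\int_{D_+(r)\setminus D_+(\epsilon)}e_u\,\dvol$. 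On the slab, each slice carries $B_{r',0}$, a gauge transform of a pulled--back flat connection on $\Sigma$, hence flat; so $F_{\mathbb A}$ there has only $dr'$--components and $\tr(F_{\mathbb A}\wedge F_{\mathbb A})=0$. Adding the three contributions gives $\int_{[\epsilon,r]\times Y}\tr(F_{\mathbb A}\wedge F_{\mathbb A})=\mathfrak E_r(A,u)-\mathfrak E_\epsilon(A,u)$. Concatenating with an arbitrary connection on $[0,\epsilon]\times Y$ interpolating from the flat connection $\beta_0^\epsilon$ (pulled back to $Y$) to $B_\epsilon$, applying \eqref{form536} on $[0,\epsilon]\times Y$ and on $[0,r]\times Y$, and using that $CS$ is unchanged along the path of flat connections $\{\beta_0^{r'}\}$, we obtain
\[
  CS_{\beta_0^r}(B_r)=CS_{\beta_0^\epsilon}(B_\epsilon)+\mathfrak E_r(A,u)-\mathfrak E_\epsilon(A,u).
\]
Finally, $\mathfrak E_{r_0}(A,u)<\infty$ forces $\mathfrak E_\epsilon(A,u)\to0$, and by Lemma~\ref{lem527} the function $r'\mapsto r'\tfrac{d}{dr'}\mathfrak E_{r'}(A,u)$ is integrable against $dr'/r'$ near $0$, so there is a sequence $\epsilon_n\to0$ along which $CS_{\beta_0^{\epsilon_n}}(B_{\epsilon_n})\to0$; letting $\epsilon=\epsilon_n\to0$ gives $CS_{\beta_0^r}(B_r)=\mathfrak E_r(A,u)$.

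The delicate point is the construction in the first paragraph: arranging that $\mathbb A$ is a genuine (piecewise smooth, say $L^2_1$) connection across the three interfaces with the prescribed boundary values, which requires carefully tracking the gauge transformation putting $A\vert_{S^1_{r',-}\times\Sigma}$ into the form $B_{r',-}$, the choice of flat representatives realizing $u$ along $D_+(r)$, and the extension $\widetilde g_{r'}$ of Lemma~\ref{lem526}, all depending smoothly on $r'$ so that the slab really carries a family of flat connections. The remaining computational input — the identity $\int_\Sigma\tr(F_{\widetilde A_u}\wedge F_{\widetilde A_u})=|du|^2\,ds\wedge d\theta$ — is routine once $\widetilde A_u$ is taken with no $D_+$--directional components, using the $J_*$--holomorphicity of $u$ and the relation between $\Omega$ and $\omega_{\rm fl}$.
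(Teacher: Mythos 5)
Your proposal follows essentially the same route as the paper's proof: build a connection over the annular four--manifold swept out by the slices $Y_{r'}$, $r'\in[\epsilon,r]$, from three pieces (a gauge transform of the ASD connection, a family of flat representatives of $u$ with no base--directional components, and a flat slab absorbing the matching gauge transformation of Lemma~\ref{lem526}), identify $\int\tr(F\wedge F)$ with $\mathfrak E_r-\mathfrak E_\epsilon$ piece by piece, equate this with a difference of Chern--Simons values, and then send $\epsilon\to0$ along a sequence on which Lemma~\ref{lem527} together with $\mathfrak E_{r_0}(A,u)<\infty$ kills the inner term. Your limiting argument (integrability of $\frac{d}{dr'}\mathfrak E_{r'}$ forcing $\epsilon_n\frac{d}{dr'}\mathfrak E_{\epsilon_n}\to0$ along a sequence) is the precise form of what the paper uses.

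The one place where you assert more than is actually available is the claim that the glued connection restricts to \emph{exactly} $B_\epsilon$ on the inner boundary $Y_\epsilon$ (and exactly $B_r$ on $Y_r$). The representatives $B_{r'}$ are normalized slice by slice via base--point conditions such as $(A.3)$ and $(u.3)$, and a single gauge over the annular region (e.g.\ radial parallel transport followed by angular parallel transport, as in the paper) cannot in general reproduce these normalizations at both ends simultaneously; matching at both boundary components would require extending a prescribed boundary gauge correction over the annulus, which is not automatic. The paper's proof does not attempt this: its connection $A_{\delta,r}$ restricts to $B_r$ at the outer boundary but only to a connection $B_\delta'$ \emph{gauge equivalent} to $B_\delta$ at the inner one, and the gap is closed by \eqref{form538} --- the two Chern--Simons values differ by an element of $8\pi^2\Z$, and since both are small for small $\delta$ they must coincide. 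Your argument needs this same device (or a genuine justification of the exact matching), because Lemma~\ref{lem527} controls $CS_{\beta_0^{\epsilon_n}}(B_{\epsilon_n})$, not the Chern--Simons value of whatever your glued connection actually restricts to on $Y_{\epsilon_n}$. With that correction, the rest of your computation (ASD piece giving $e_A$, the flat slab contributing zero since its curvature is proportional to $dr'$, and the holomorphic piece giving $e_u$ via $2\,\omega_{\rm fl}(\partial_s u,\partial_\theta u)=|du|^2$, exact parts pairing trivially with closed forms) matches the paper's.
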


\begin{proof}
	For $0< \delta< r$, we define a 4-dimensional manifold
	\begin{equation}\label{Xdr}
	  X_{\delta,r} =[\delta,r]\times S^1_{r,-} \times \Sigma \cup [\delta,r]\times [0,1] \times \Sigma \cup 
	  [\delta,r]\times S^1_{r,+} \times \Sigma,	
	\end{equation}
	in the same way as in \eqref{spaceYr}. (See Figure \ref{fig:Xdr}.) In particular, this 4-manifold is diffeomorphic to $[\delta,r]\times S^1\times \Sigma$
	and can be written as the union of 3-manifolds $Y_\rho$ for $\rho\in [\delta,r]$.
	The boundary components of $X_{\delta,r}$ are identified with $Y_r$ and $Y_\delta$.
	The pull-back of $F$ on $\Sigma$ induces a bundle on $ X_{\delta,r} $, which we denote by $V_{\delta,r}$.
        \begin{figure}
        \begin{center}
        \begin{tikzpicture}[scale=1.5]
         \filldraw[fill=red, draw=red] ({cos(-30)}, {sin(-30)})--({1.5*cos(-30)}, {1.5*sin(-30)})arc (-30:90:1.5) -- ({cos(90)}, {sin(90)})arc (90:-30:1);
         \filldraw[fill=blue, draw=blue] ({cos(90)}, {sin(90)})--({1.5*cos(90)}, {1.5*sin(90)})arc (90:210:1.5) -- 
         ({cos(210)}, {sin(210)})arc (210:90:1);
          \filldraw[fill=green, draw=green] ({cos(210)}, {sin(210)})--({1.5*cos(210)}, {1.5*sin(210)})arc (210:330:1.5) -- 
          ({cos(330)}, {sin(330)})arc (330:210:1);
             \node[right] at (1.5,.7) {$[\delta,r]\times S^1_{r,+} \times \Sigma$}; 
             \node[right] at (-3.4,.7) {$[\delta,r]\times S^1_{r,-} \times \Sigma$};
	     \node[right] at (-1.0,-1.8) {$[\delta,r]\times [0,1] \times \Sigma$};
	     \node[right] at (-.2,0.7) {$Y_\delta$};
	     \node[right] at (-.2,1.7) {$Y_r$};
        \end{tikzpicture}
        \end{center}
        \caption{A schematic picture of the 4-manifold $X_{\delta,r}$ and its decomposition in \eqref{Xdr}.}
        \label{fig:Xdr}
        \end{figure}
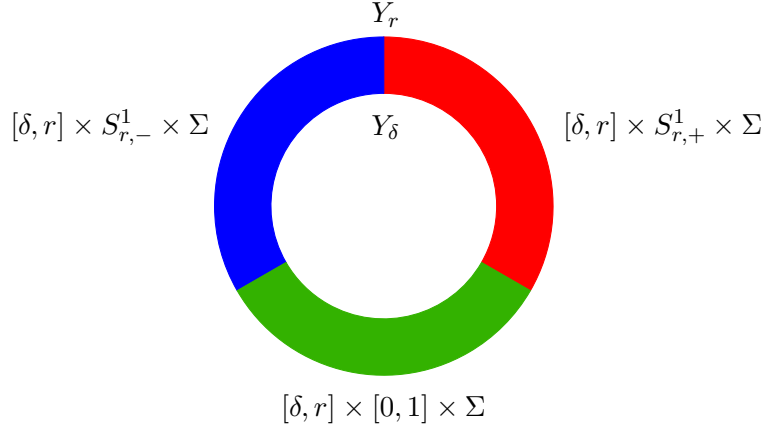

	We consider a connection $A_{\delta,r}$ on $X_{\delta,r}$ which restricts to $B_r$ on the boundary component $Y_r$ 
	and is defined in a similar way. The subspace $[\delta,r]\times S^1_{r,-} \times \Sigma$ of $X_{\delta,r}$ can be identified 
	canonically with 
	$\Sigma \times (D_-(r)\setminus D_-(\delta))$ and the restriction $A_{\delta,r,-}$ of $A_{\delta,r}$ 
	to this region is gauge equivalent to the connection $A$. To define $A_{\delta,r,-}$, we firstly fix a gauge along the ray 
	$\{(\rho,\pi/2)\}_{\delta\leq \rho\leq r}$ by parallel transport and then extend it in the angular directions by parallel transport along 
	the arcs with fixed radial coordinate. In particular, the connection $A_{\delta,r,-}$ has vanishing $d\phi$ component.
	The region $[\delta,r]\times S^1_{r,+} \times \Sigma$ in $X_{\delta,r}$ is identified with 
	$\Sigma \times (D_+(r)\setminus D_+(\delta))$, and analogous to $B_{r,+}$ we require that the restriction $A_{\delta,r,+}$ 
	of $A_{\delta,r}$
	to this region has vanishing $d\phi$ and $dr$ components, the restriction $A_{\delta,r}(z)$ of $A_{\delta,r,+}$ to $\Sigma\times \{z\}$ 
	represents $u(z)$, $d^*_{A_{\delta,r}(z)}\partial_\phi A_{\delta,r}(z)=0$ and the restriction of $A_{\delta,r,+}$ to $[\delta,r]\times \{\pi/2\}\times \Sigma$ agree with the restriction of $A_{\delta,r,-}$. 
	
	Next, we extend the above connection to the remaining region $[\delta,r]\times [0,1] \times \Sigma$ of \eqref{Xdr}.
	The restrictions of $A_{\delta,r,+}$ and $A_{\delta,r,-}$ to $[\delta,r]\times \{\pi/2\}\times \Sigma$ and 
	$[\delta,r]\times \{3\pi/2\}\times \Sigma$ are gauge equivalent to each other. We pick a gauge transformation $\widetilde g$ over 
	$[\delta,r]\times [0,1] \times \Sigma$ such that 
	\begin{itemize}
		\item[(i)] $\widetilde g(z)=1$ for $z\in [\delta,r]\times \{1\}\times \Sigma$;
		\item[(ii)]  $\widetilde g(\rho,0,x)^*A_{\delta,r,+}(\rho,\frac{\pi}{2},x)=A_{\delta,r,-}(\rho,\frac{3\pi}{2},x)$ for $(\rho,0,x)\in [\delta,r]\times \{0\}\times \Sigma$;
		\item[(ii)]  the restriction of $\widetilde g$ to $\{r\}\times [0,1] \times \Sigma$ is equal to $\widetilde g_r$ used in the definition
		of the connection $B_r$.
	\end{itemize}
	Restrict $A_{\delta,r,-}$ to $[\delta,r]\times \{0\}\times \Sigma$ and pull back this connection to 
	$[\delta,r]\times [0,1] \times \Sigma$ via the projection map. Applying the gauge transformation
	$\widetilde g$ gives rise to a connection $A_{\delta,r,0}$ which can be glued to 
	$A_{\delta,r,+}$ and $A_{\delta,r,-}$ to form the desired connection $A_{\delta,r}$ on $X_{\delta,r}$. 
	The restriction of $A_{\delta,r}$ to $Y_r$ agree with $B_r$ and its restriction to $Y_\delta$, 
	denoted by $B_\delta'$, is gauge equivalent to $B_\delta$.
	
	Although the connection $A_{\delta,r}$ is not smooth, its restriction to each of the three regions in 
	\eqref{Xdr} is smooth. We apply \eqref{CS-energy} to each of these regions and add the resulting 
	identities. Since the connection $A_{\delta,r,-}$ satisfies the ASD equation, $A_{\delta,r,0}$ is flat and
	$A_{\delta,r,+}$ represents the holomorphic map $u$, we have
	\[
	  \int_{(D_-(r)\setminus D_-(\delta))\times \Sigma} \vert F_A\vert^2+\int_{(D_+(r)\setminus D_+(\delta))\times \Sigma} \vert du\vert^2
	  = CS_{\beta_0^r}(B_r)-CS_{\beta_0^\delta}(B_\delta').
	\]
	Lemma \ref{lem527} shows that $CS_{\beta_0^\rho}(B_\rho)$ is less than a given positive real number if $\rho$ is small enough. 
	The above identity shows that 
	\begin{equation}\label{difference-CS}
	  CS_{\beta_0^\delta}(B_\delta')=CS_{\beta_0^r}(B_r)+\mathfrak E_r(A,u)-\mathfrak E_\delta(A,u).
	\end{equation}
	Therefore, if $r$ is small enough, we can guarantee that $CS_{\beta_0^\delta}(B_\delta')$ is also less than any given real number.
	Since $B_\delta'$ is gauge equivalent to $B_\delta$, \eqref{form538} implies that 
	$CS_{\beta_0^\delta}(B_\delta')=CS_{\beta_0^\delta}(B_\delta)$. Now the result follows by taking the limit of \eqref{difference-CS}
	as $\delta$ goes to zero.
\end{proof}

\begin{proof}[Proof of Proposition \ref{decayenergy}]
	Let $\beta = \frac{1}{2K}$ where $K$ is given by Lemma \ref{lem527}. Lemmas \ref{lem527} and \ref{lem535} imply
	\[
	  \frac{d}{dr}(r^{-2\beta}\mathfrak E_r(A,u))=-\frac{1}{K} r^{-2\beta-1}\mathfrak E_r(A,u) + r^{-2\beta}\frac{d}{dr} \mathfrak E_r(A,u) \ge 0.
	\]
	Consequently, we have
	\[
	  \mathfrak E_r(A,u) \le r^{2\beta} r_0^{-2\beta}\mathfrak E_{r_0}(A,u)
	\]
	for $r \le r_0$, which completes the proof of Proposition \ref{decayenergy}.
\end{proof}

\subsection{Gromov-Uhlenbeck compactness}\label{compact}

Let $(A_i,u_i)$ be a sequence of solutions of the mixed equation associated to the mixed pair in \eqref{matching-quintuple}, which we copy here again:
\begin{equation}\label{quintuple-gen-comp}
  \fP:=(X,V,S,\mathcal M(\Sigma,F),\bbL).
\end{equation}
The surface $S$ has a distinguished boundary component $U_\partial$ and as usual a Lagrangian in $\mathcal M(\Sigma,F)$ is associated to each of the remaining boundary components. We also require that there is a uniform bound $K$ on the energy of the mixed pairs $(A_i,u_i)$ given as 
\begin{equation}\label{top-energy}
  \mathfrak E(A_i,u_i)=\int_{X}|F_{A_i}|^2\dvol_X+\int_{S}|du|^2\dvol_S.
\end{equation}

\begin{lemma}\label{finite-bubble-points}
	There are finite sets of points $\sigma_- \subset {\rm int} (X)$, $\sigma_+ \subset S\setminus U_\partial$, $\sigma_\partial \subset U_\partial$ and a subsequence of $\{(A_i,u_i)\}$ such that 
	the following holds. For any point $z\in U_\partial \setminus \sigma_\partial$ there is a positive real number $r_z$ such that 
	\begin{equation}\label{energy-mixed}
	  \int_{D_{r_z}(z)\times \Sigma} |F_{A_i}|^2\dvol+ \int_{D_{r_z}^+(z)}|du|^2\dvol\leq \hbar,
	\end{equation}
	for large enough values of $i$. 
	Similarly for any $z\in {\rm int} (X)\setminus \sigma_-$, there is $r_z$ such that
	\begin{equation}\label{energy-conn}
	  \int_{B_{r_z}(z)}|F_{A_i}|^2\dvol\leq \hbar,
	\end{equation}
	if $i$ is large enough, and for any point $z\in S\setminus \(\sigma_+\cup U_\partial\)$ 
	there is a positive real number $r_z$ such that 
	\begin{equation}\label{energy-map}
	  \int_{B_{r_z}(z)}|du|^2\dvol\leq \hbar,
	\end{equation}
	if $i$ is large enough.
\end{lemma}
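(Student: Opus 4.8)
The plan is the standard ``finitely many energy concentration points'' argument underlying Uhlenbeck and Gromov compactness, run simultaneously for the curvature density on $X$ and the differential density on $S$, with the matching seam $U_\partial$ handled by combining the two contributions. First I would introduce the nonnegative finite Borel measures $\mu_i := |F_{A_i}|^2\,\dvol_X$ on $X$ and $\lambda_i := |du_i|^2\,\dvol_S$ on $S$; by \eqref{top-energy} their total masses are bounded by $K$ uniformly in $i$. Since $X$ and $S$ are second countable and locally compact, bounded sequences of measures are sequentially precompact in the vague (weak-$*$) topology, so after passing to a subsequence, which I will not relabel, I may assume $\mu_i \rightharpoonup \mu$ and $\lambda_i \rightharpoonup \lambda$ with $\mu(X),\lambda(S)\le K$. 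The proof will use only two soft facts: (a) $\limsup_i \nu_i(C)\le \nu(C)$ for every compact $C$ whenever $\nu_i\rightharpoonup\nu$ (Urysohn's lemma plus the definition of vague convergence), and (b) $\nu(\overline{N_r})\downarrow \nu\big(\bigcap_r \overline{N_r}\big)$ as $r\downarrow 0$ for a decreasing family of compact neighbourhoods, by countable additivity.

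Next I would define the exceptional sets
\[
  \sigma_- := \{z\in\mathrm{int}(X): \mu(\{z\})\ge \hbar\},\qquad
  \sigma_+ := \{z\in S\setminus U_\partial: \lambda(\{z\})\ge \hbar\},
\]
\[
  \sigma_\partial := \{z\in U_\partial: \mu(\{z\}\times\Sigma)+\lambda(\{z\})\ge \hbar\},
\]
where in the last definition a point $z$ of $U_\partial$ is regarded both as a point of $\gamma\subset\partial X$, so that $\{z\}\times\Sigma$ is the corresponding fibre in $\partial X$, and as a point of $-\gamma\subset\partial S$. Each set is finite: distinct points of $\sigma_-$ resp.\ $\sigma_+$ carry pairwise disjoint $\mu$- resp.\ $\lambda$-masses $\ge\hbar$, so $\#\sigma_-,\#\sigma_+\le K/\hbar$; and distinct points of $\sigma_\partial$ carry pairwise disjoint fibres $\{z\}\times\Sigma\subset\partial X$ together with distinct points $z\in\partial S$, whence $\#\sigma_\partial\le 2K/\hbar$.

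Then I would verify the three local energy bounds. For $z\in\mathrm{int}(X)\setminus\sigma_-$, since $\overline{B_r(z)}\downarrow\{z\}$, fact (b) gives $\mu(\overline{B_r(z)})\downarrow\mu(\{z\})<\hbar$, so I may pick $r_z>0$ with $\overline{B_{r_z}(z)}$ compact inside $\mathrm{int}(X)$ and $\mu(\overline{B_{r_z}(z)})<\hbar$; then fact (a) yields $\limsup_i\mu_i(\overline{B_{r_z}(z)})<\hbar$, which gives \eqref{energy-conn} for all large $i$. The estimate \eqref{energy-map} for $z\in S\setminus(\sigma_+\cup U_\partial)$ is proved the same way with $\lambda$ in place of $\mu$; if $z$ lies on a boundary arc $\eta_j$ then $B_{r_z}(z)$ is a half-ball, which changes nothing. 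For $z\in U_\partial\setminus\sigma_\partial$, I use the product form \eqref{metric-near-bdry} of the metric near $\partial X$ (and the analogous collar of $S$ along $-\gamma$) to identify, for small $\rho$, a neighbourhood of $\{z\}\times\Sigma$ in $X$ with $D_-(\rho)\times\Sigma$ and a neighbourhood of $z$ in $S$ with $D_+(\rho)$; then $\overline{D_r(z)}\times\Sigma$ and $\overline{D_r^+(z)}$ are compact and decrease to $\{z\}\times\Sigma$ and $\{z\}$ respectively, so by (b)
\[
  \mu\big(\overline{D_r(z)}\times\Sigma\big)+\lambda\big(\overline{D_r^+(z)}\big)\ \longrightarrow\ \mu(\{z\}\times\Sigma)+\lambda(\{z\})<\hbar .
\]
Choosing $r_z$ so that the left-hand side is $<\hbar$ and applying (a) to $\mu_i$ and $\lambda_i$ gives \eqref{energy-mixed} for all large $i$.

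Everything above is routine except the boundary case, which I expect to be the main obstacle: one must confirm that \eqref{metric-near-bdry} really does make a punctured collar of $\{z\}\times\Sigma$ in $X$ together with a collar of $z$ in $S$ look like the local model, so that the half-disc regions $D_r(z)\times\Sigma\subset X$ and $D_r^+(z)\subset S$ are well defined and shrink to $\{z\}\times\Sigma$ and $\{z\}$ as claimed; once this is granted the concentration estimate is identical to the interior one. The non-compactness of $U_\partial$ when $\gamma\cong\R$ is harmless, since $\mu$ and $\lambda$ are finite measures and every set tested above is compact.
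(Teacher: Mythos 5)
Your proof is correct and follows essentially the same approach as the paper: extract a vaguely convergent subsequence of the energy measures, define the exceptional sets as points of measure concentration, and combine continuity from above with upper semicontinuity of compact masses under weak-$*$ convergence. The only cosmetic difference is that the paper integrates the curvature density along $\Sigma$ to package the mixed boundary energy as a single measure on the two-dimensional collar $U_-$, whereas you keep $\mu$ on $X$ and $\lambda$ on $S$ separate and combine them only in the definition of $\sigma_\partial$ (yielding the harmless $2K/\hbar$ bound in place of $K/\hbar$).
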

	
\begin{proof}
	The boundary of $X$ can be identified with $U_\partial \times \Sigma$ and a tubular neighborhood of it is given as $U_-\times \Sigma$ where $U_-=(-1,0]\times U_\partial$.
	Similarly a boundary component of $S$ is $U_\partial$ and we may take a regular neighborhood $U_+$ of this boundary component where $U_+=[0,1)\times U_\partial$.
	For each $i$, we may define a positive measure on $U_-$ where a continuous function $f:U_-\to \R$ with compact support integrates to 
	\[
	  \int_{U_-}f(s,\theta)\(|du|^2(-s,\theta)+\int_{\{(s,\theta)\}\times \Sigma} |F_{A_i}|^2\)\dvol
	\]
	with respect to this measure. Standard compactness theorems for measures imply that there is a subsequence of these measures convergent to a positive measure $\mu_\partial$ on $X$
	in the sense that for any continuous function $f$ with compact support, we have
	\[
	  \int_{U_-}f(s,\theta)\(|du|^2(-s,\theta)+\int_{\{(s,\theta)\}\times \Sigma} |F_{A_i}|^2\)\dvol \to \int_{U_-}f\mu_\partial
	\]
	In particular, the measure of $U_-$ with respect to $\mu_\partial$ is at most $K$, the uniform bound on \eqref{top-energy}. Let $\sigma_\partial$ be the set of points $z\in U_\partial$ such that 
	the $\mu_\partial$-measure of the ball $D_r^-(z)$ for any $r$ is at least $\hbar$.
	Since the measure of $U_-$ is at most $K$, the set $\sigma_\partial$ is finite and has at most $K/\hbar$ elements. From the definition of $\mu_\partial$ it is clear that for any point 
	$z\in U_\partial\setminus \sigma_\partial$, the inequality in \eqref{energy-mixed}
	holds for an appropriate choice of $r_z$ and large enough values of $i$. The sets $\sigma_-$ and $\sigma_+$ can be obtained similarly by firstly defining positive measures $\mu_-$ and 
	$\mu_+$ on $X$ and $S$, and then considering the points with concentrated measure density. 
\end{proof}

For $z\in U_\partial\setminus \sigma_\partial$, let $r_z$ be given as in the lemma, which we denote by $r$ for the ease of notation. Temporarily, we denote the restrictions of $A_i$ and $u_i$ to $D_r(z)\times \Sigma$ and $D_r^+(z)$ with the same notation. Theorem \ref{en-quant} implies that 
\[
   \hspace{2cm} \vert du_i(w)\vert^2 \leq \kappa \frac{\hbar}{r^2} \hspace{1cm}\text{for $w\in D_r^+(z)$}.
\]
Thus, after passing to a subsequence, $u_i$ is $C^0$ convergent to $u_0:D_r^-(z)\to \mathcal  M(\Sigma,F)$. In fact, using Lemma \ref{L21-energy} and Sobolev embedding, we may assume that for a given $p$, the subsequence is chosen such that it is convergent to $u_0$ in the $L^p_1$ norm.

We may use Theorem \ref{en-quant} and Lemma \ref{L21-energy} to obtain the bounds
\[
  |\!|\nabla_{A_i}F_{A_i}|\!|_{L^2(D^-_{r}(z)\times \Sigma)}^2\leq  \kappa \frac{\hbar}{r^2},\hspace{2cm}|\!|F_{A_i}|\!|_{L^2(\{w\}\times \Sigma)}^2\leq  \kappa \frac{\hbar}{r^2}
   \hspace{.5cm}\text{for $w\in D_r^-(z)$}.
\]
In particular, $|\!|F_{A_i}|\!|_{L^4(D^-_{r}(z)\times \Sigma)}$ is unifromly bounded using Kato's inequality and Sobolev embedding theorem. Therefore, we may apply the Uhlenbeck compactness theorem for the manifold $D^-_{r}(z)\times \Sigma$ to conclude that there are $L^4_2$ gauge transformations $g_i$ such that $g_i^*A_i$, after passing to a subsequence, is weakly convergent to $A_0$ in $L^4_1$ \cite{Uh:com,Weh:com}. In particular, the $L^4_1$ norms of the connections $g_i^*A_i$ are uniformly bounded. For large enough values of $i$, we may put the connections $g_i^*A_i$ in the chosen subsequence in the Coulomb gauge with respect to a smooth connection $A_0'$ which is close enough to $A_0$ in the $L^4_1$ norm \cite[Theorem F]{Weh:com}. For the ease of notation, we denote $g_i^*A_i$ after applying the second gauge transformation by $A_i$. The Coulomb gauge condition on $A_i$ asserts that
\begin{equation}\label{Coulomb-gauge-fix}
  d_{A_0'}^*(A_i-A_0')=0,\,\hspace{2cm}*(A_i-A_0')\vert_{U_\partial\times \Sigma}=0.
\end{equation}

We claim that $d_{A_0'}A_i$ is uniformly bounded in $L^2_1$. First note that
\begin{equation}\label{d-0-A-i}
  d_{A_0'}A_i=d_{A_0'}A_0-(A_i-A_0')\wedge (A_i-A_0')+F_{A_i}-F_{A_0'}
\end{equation}
Since $|\!|\nabla_{A_i}F_{A_i}|\!|_{L^2}$, $|\!|F_{A_i}|\!|_{L^4}$ and $|\!|A_i|\!|_{L^4}$ are all uniformly bonded, the term $F_{A_i}$ in \eqref{d-0-A-i} is uniformly bounded in $L^2_1$, too. Similarly, the term $(A_i-A_0')\wedge (A_i-A_0')$ is uniformly bounded in $L^2_1$ because there is a uniform bound on $|\!|A_i|\!|_{L^4_1}$. Thus our claim about $d_{A_0'}A_i$ follows. Sobolev embedding together with uniform boundedness of $\{d_{A_0'}A_i\}_i$ implies that for any $p<4$, the sequence $\{d_{A_0'}A_i\}_i$ is $L^p$ convergent after passing to a subsequence. This together with \eqref{Coulomb-gauge-fix} implies that $A_i$ are $L^p_1$ convergent to $A_0$ over $D^-_{r'}(z)\times \Sigma$ for $r'<r$. Now we use Theorem \ref{regularity-sequential} to show that $(A_i,u_i)$ is in fact $C^\infty$ convergent to $(A_0,u_0)$.

By applying a similar argument to each point $z$ in the complement of $\sigma_-\cup U_\partial\times \Sigma$ in $X$, we may obtain gauge transformations for the restriction of $A_i$ to an open neighborhood $D_r(z)\subset X$ such that after applying these gauge transformations and passing to a subsequence the resulting connections are $C^\infty$ convergent to an ASD connection. On the symplectic side, for any point $z$ in the complement of $\sigma_+\cup U_\partial$ in $S$, there is a disc neighborhood $D_r(z)\subset S$ such that the restriction of $u_i$ to this neighborhood is $C^\infty$ convergent to a holomorphic map from $D_r(z)$ to $\mathcal M(\Sigma,F)$. We may patch together these holomorphic maps together to define a holomorphic map $u_0:S\setminus \fS_+\to \mathcal M(\Sigma,F)$ where $\fS_+=\gamma_+\cup \gamma_\partial$. Then the maps $u_i$ are $C^\infty$ convergent on compact subspaces of $S\setminus \fS_+$ to $u_0$.

We may also define a connection $A_0$ on $X\setminus \fS_-$ where $\fS_-=\gamma_-\cup \gamma_\partial \times \Sigma$. The patching argument of  \cite[Section 4.4.2]{DK} can be used to find a subsequence of $\{A_i\}$ and a gauge transformations $g_i$ defined on $X\setminus \fS_-$ such that $g_i^*A_i$, after passing to the subsequence, is $C^\infty$ convergent to an ASD connection $A_0$ on any compact subspace of $X\setminus \fS_-$. The pair $(A_0,u_0)$ defines a solution of the mixed equation for the quintuple 
\[
   \fP':=(X\setminus \fS_-,V|_{X\setminus \fS_-},S\setminus \fS_+,\mathcal M(\Sigma,F),\mathcal L(\Sigma,F)).
\]
Since \eqref{top-energy} is bounded by $K$, we have $\mathfrak E(A_0,u_0)\leq K$. Moreover, if at least one of $\fS_+$ and $\fS_-$ is non-empty, then $\mathfrak E(A_0,u_0)\leq K-\hbar$. Applying the results of Subsection \ref{remove-sing}, removability of singularity for ASD connections \cite{Uh:remov-sing,DK} and removability of singularity for holomorphic maps \cite{MS:J-holo-symp} implies that $(A_0,u_0)$ can be extended to a solution of the mixed equation for the quintuple 
\begin{equation}
   \fP_0:=(X,V',S,\mathcal M(\Sigma,F),\mathcal L(\Sigma,F)).
\end{equation}
where $V'$ is an $\SO(3)$ bundle over $X$ whose restriction to $X\setminus \fS_-,$ is isomorphic to $V$. This completes the proof of Theorem \ref{GU-comp} in the introduction.

 \section{Fredholm property}\label{Fredholm-section}

Our goal in this section is to address Theorem \ref{Fred-mixed-op}. In fact, it would be more convenient if we work on a slightly more generalized version of the theorem. First we write an explicit formula for the mixed operator $\mathcal D_{(A,u)}$ for a smooth mixed pair $(A,u)$ associated to the cylinder quintuple $\fc_I$ in \eqref{mixed-cylinder}. For any $(\zeta,\nu)$, we have 
\begin{equation}\label{mixed-op}
	\mathcal D_{(A,u)}(\zeta,\nu):=(d_A^+\zeta,-d_A^*\zeta,\nabla_\theta \nu-J_{s,\theta}(u)\nabla_{s}\nu-(\nabla_\nu J_{s,\theta})\frac{du}{ds}).
\end{equation}
The first component is the linearization of the ASD equation $F^+(A)=0$ at the connection $A$ and the third term is the linearization of the Cauchy-Riemann equation
\[
  \frac{du}{d\theta}-J_{s,\theta}(u)\frac{du}{ds}=0.
\]
Here we use the Levi-Civita connection on $M$ defined using a Riemannian metric on $M$ (possibly the metric induced by a compatible almost complex structure) to define the covariant derivatives 
$\nabla_s$ and $\nabla_\theta$ in the $s$ and $\theta$ directions. The middle term in $\mathcal D_{(A,u)}(\zeta,\nu)$ is given by the Coulomb gauge fixing condition.

It is helpful to rewrite $\mathcal D_{(A,u)}$ in a from which makes use of the product structure of $\fc_I$. By applying a gauge transformation, we may assume that the connection $A$ on $Y\times I$ is in temporal gauge and hence is determined by its restrictions $B_\theta$ to $Y\times \{\theta\}$ for $\theta\in I$. The restriction of $B_\theta$ to $\Sigma=\partial Y$, which is a flat connection, is denoted by $\alpha_\theta$. Any element $\zeta$ of $\Omega^1(X,E)$ can be written as $\zeta=b+\varphi d\theta$. Thus we may identify $\Omega^1(X,E)$ with maps from $I$ to $\Omega^1(Y,E))\oplus \Omega^0(Y,E)$. Using this presentation of $\zeta$, we have
\begin{align*}
	d_A^*\zeta&=d_B^*b-\frac{d\varphi}{d\theta},\\
	d_A^+\zeta&=\frac{1}{2}\left[d\theta\wedge(-*_3d_Bb+\frac{db}{d\theta}-d_B\varphi)-*_3(-*_3d_Bb+\frac{db}{d\theta}-d_B\varphi)\right].
\end{align*}
An element of  $\Omega^+(X,E)$ can be also identified with a map from $I$ to $\Omega^1(Y,E)$ by sending a self-dual 2-form $\frac{1}{2}(d\theta\wedge b-*_3b)$ to $b$. Similarly, any element of $\Omega^0(X,E)$ can be identified with ${\rm Map}(I, \Omega^0(Y,E))$ in the obvious way.

To study the last component of $\mathcal D_{(A,u)}$, fix a Hermitian isomorphism of $u^*T\mathcal M(\Sigma,F)$ with the trivial bundle with fiber $(\R^{2n},J_0,\omega_0)$. Here $J_0$ and $\omega_0$ are standard complex and symplectic structures on the Euclidean space $\R^{2n}$ with $2n$ being the dimension of $M$. Note that the almost complex structure on the fiber of $u^*T\mathcal M(\Sigma,F)$ over $(s,\theta)$ is given by $J_{s,\theta}$. This reparametrization of $u^*TM$ allows us to regard $\nu$ as a map  $[0,1]\times I \to \R^{2n}$, and then the third component of \eqref{mixed-op} can be written as 
\begin{equation}\label{linear-CR}
	\frac {d\nu}{d\theta}-J_0\frac{d\nu}{ds}-S(\nu),
\end{equation}
where $S$ is a map from $[0,1]\times I$ to the space of endomorphisms of $\R^{2n}$. Moreover, for any $\theta\in I$, there is a family of Lagrangian subspaces $L_\theta\subset \R^{2n}$ and subspaces $\mathcal L_\theta$ of $L^2(\Sigma,\Lambda^1\otimes F)\oplus \R^{2n}$ such that 
\[
  *\zeta|_{\Sigma\times I}=0,\hspace{1cm}(\zeta\vert_{\Sigma\times \theta},\nu(0,\theta))\in \mathcal L_\theta,\hspace{1cm}\nu(1,\theta)\in L_\theta.
\]
Here $\mathcal L_\theta$ is a {\it canonical linearized Lagrangian correspondence from $\Omega^1(\Sigma,F)$ to $\R^{2n}$ compatible with $\alpha_\theta$}, whose definition is given below.

\begin{definition}
	Suppose $\alpha$ is a flat connection on $F$. A canonical linearized Lagrangian correspondence $\mathcal L$ from $\Omega^1(\Sigma,F)$ to $\R^{2n}$ compatible with $\alpha$ is determined by a Lagrangian subspace $V$ of $\mathcal H^1(\Sigma;\alpha)\times \R^{2n}$ with 
	respect to the symplectic form $-\omega_{\rm fl}\oplus \omega$. The space $\mathcal L$ is the subspace of $L^2(\Sigma,\Lambda^1\otimes F)\oplus \R^{2n}$ given by elements of the form
	\[
	  v+(d_{\alpha}\zeta,0)
	\]
	where $\zeta\in L^2_1(\Sigma,F)$. We write $\mathcal L_{\alpha,V}$ for this Lagrangian correspondence if we want to clarify the choices of $\alpha$ and $V$.
\end{definition}

Given $\mathcal L$ as in the above definition, the pairing of any two elements of $\mathcal L$ with respect to the symplectic form $-\Omega\oplus \omega_0$ vanishes and as a consequence of Lemma \ref{Hodge-decom}, an element of $L^2(\Sigma,\Lambda^1\otimes F)\oplus \R^{2n}$ belongs to $\mathcal L$ if its pairing with all elements of $\mathcal L$ vanish. The $L^2$ closures of the tangent spaces of a canonical Lagrangian correspondence from $\mathcal A^p(\Sigma,F)$ to $\R^{2n}$ give rise to instances of $\mathcal L$.

Motivated by the above discussion, we may slightly relax the definition of the mixed operator. Suppose $A$ is a smooth connection on $E\times I$ over $Y\times I$ such that the restriction $\alpha_\theta$ of $A$ to $\Sigma\times \{\theta\}$ is flat for any $\theta\in I$. Suppose $S$ is a smooth map from $[0,1]\times I$ to the space of endomorphisms of $\R^{2n}$. Following the same convention as above, $A$ is in temporal gauge and its restriction to $Y\times \{\theta\}$ is denoted by $B_\theta$. Similarly, we write $S_\theta$ for the restriction of $S$ to $[0,1]\times \{\theta\}$. For any $\theta\in I$, suppose $V_\theta$ is a Lagrangian subspace of $\mathcal H^1(\Sigma;\alpha_\theta)\times \R^{2n}$ with respect to the symplectic form $-\omega_{\rm fl}\oplus \omega$ and $L_\theta$ is a Lagrangian subspace of $\R^{2n}$ with respect to the symplectic form $\omega_0$. We assume that both of $V_\theta$ and $L_\theta$ depend smoothly on $\theta$. Let $\mathcal L_\theta=\mathcal L_{\alpha_\theta,V_\theta}$.

We define a differential operator $\mathcal D_{(A,S)}$ associated to $A$, $S$, $\fL=\{\mathcal L_\theta,L_\theta\}_\theta$. Fix a positive integer $k$. Similar to $E^k_{(A,u)}(I)$, let $E^k_{\fL}(I)$ be the space of pairs 
\begin{equation}\label{zeta-nu-dom-A-S}
  \zeta\in L^2_{k}(Y\times I,\Lambda^1\otimes E),\hspace{1cm} \nu \in L^2_{k}([0,1]\times I,\R^{2n}),
\end{equation}
satisfying the boundary conditions
\begin{equation}\label{matching-bdry-relaxed}
  *\zeta|_{\Sigma\times I}=0,\hspace{1cm}(\zeta\vert_{\Sigma\times \{\theta\}},\nu(0,\theta))\in \mathcal L_\theta,\hspace{1cm}\nu(1,\theta)\in L_\theta.
\end{equation}
Let $\mathcal D_{(A,S)}$ be the linear map with the domain $E^k_{\fL}(I)$ defined as
\begin{equation}\label{D-A-S}
	\mathcal D_{(A,S)}:=\frac{d}{d\theta}-\fD_{(B_\theta,S_\theta)},
\end{equation}
where 
\begin{equation}\label{DBp-theta}
  \fD_{(B_\theta,S_\theta)}(\varphi,b,\nu)=(d_{B_\theta}^*b,*d_{B_\theta}b+d_{B_\theta}\varphi,J_0\frac{d\nu}{ds}+S_\theta(\nu)).
\end{equation}
Here we use the presentation of $\zeta$ in \eqref{zeta-nu-dom-A-S} as $b+\varphi d\theta$ with $\varphi$ being a section of $E$ and $b$ being a 1-form on $Y$ with values in $E$. The target of the operator $\mathcal D_{(A,S)}$ is given as
\begin{equation}\label{target-flex-lin}
  L^2_{k-1}(Y\times I,\Lambda^1\otimes E)\oplus L^2_{k-1}([0,1]\times I,\R^{2n}).
\end{equation}
A straightforward integration by parts shows that the formal adjoint of $\mathcal D_{(A,S)}$, characterized by the analogue of \eqref{adjoint}, is equal to
\[
  \frac{d}{d\theta}+\fD_{(B_\theta,S_\theta^*)}.
\]

\begin{theorem}\label{Fredholm-flex}
	For any open interval $J$ that its closure is a compact subset of $I$, the following holds. 
	\vspace{-5pt}
	\begin{itemize}
	\item[(i)] For $k\geq 1$, suppose $(\zeta,\nu)\in E^1_{\fL}(I)$ and $\mathcal D_{(A,S)}(\zeta,\nu)$ is in $L^2_{k-1}$. Then $(\zeta,\nu)\in E^k_{\fL}(J)$. Moreover, there is a constant $C$, depending only on $(A,S)$ and $k$, such that
	\begin{equation}\label{Fed-prop-k-I-J}
		|\!|(\zeta,\nu)|\!|_{L^2_{k}(J)}\leq C\left(|\!|\mathcal D_{(A,S)}(\zeta,\nu)|\!|_{L^2_{k-1}(I)}+|\!|(\zeta,\nu)|\!|_{L^2(I)}\right).
	\end{equation}
	\item[(ii)] Suppose $(\zeta,\nu)$ is as in \eqref{target-flex-lin} for $k=1$, and there is a constant $\kappa$ such that
	\[
	   \langle (\zeta,\nu),\mathcal D_{(A,S)}(\xi,\eta)\rangle_{L^2} \leq \kappa |\!|(\xi,\eta)|\!|_{L^2(I)}
	\]
	for any smooth and compactly supported $(\xi,\eta)$ as in \eqref{zeta-nu-dom-A-S} satisfying \eqref{matching-bdry-relaxed}. Then $(\zeta,\nu)$ is in $E^1_{\fL}(J)$. Moreover, there is a constant $C$, depending only on $(A,S)$, such that
	\begin{equation}
		|\!|(\zeta,\nu)|\!|_{L^2_{1}(J)}\leq C\left(|\!|\mathcal D_{(A,S)}(\zeta,\nu)|\!|_{L^2(I)}+|\!|(\zeta,\nu)|\!|_{L^2(I)}\right).
	\end{equation}	\end{itemize}
\end{theorem}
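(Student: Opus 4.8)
\textbf{Proof plan for Theorem \ref{Fredholm-flex}.}
The strategy is to reduce everything to an interior elliptic estimate for $\mathcal D_{(A,S)}$ plus a boundary elliptic estimate near $\Sigma\times I$ and near $\{1\}\times I$, and then to bootstrap the regularity statements from these. The key structural observation is that $\mathcal D_{(A,S)}$ is a first-order operator of the form $\partial_\theta - \fD_{(B_\theta,S_\theta)}$, where $\fD_{(B_\theta,S_\theta)}$ is, on the gauge-theoretic factor, the extended (self-adjoint) odd signature operator $(\varphi,b)\mapsto (d_{B_\theta}^*b,\,*d_{B_\theta}b+d_{B_\theta}\varphi)$ on $Y$ with boundary condition $*b|_\Sigma=0$ together with the matching-Lagrangian boundary condition at $\{0\}$, and on the symplectic factor the Cauchy-Riemann-type operator $\nu\mapsto J_0\partial_s\nu + S_\theta\nu$ on $[0,1]$ with Lagrangian boundary condition at $s=1$ and the same matching condition at $s=0$. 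Both pieces are known to give elliptic boundary value problems: the $Y$-factor is the Atiyah-Patodi-Singer type operator with the $L^2$-closure-of-tangent-space boundary condition studied in \cite{SW:I-bdry,Weh:Lag-bdry-ana}, and the $[0,1]$-factor is the linearized $J$-holomorphic strip operator with Lagrangian boundary conditions; both are discussed in Section \ref{symp-lag-cor} via Lemmas \ref{Hodge-decom} and \ref{chart-complex-str}.

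First I would establish part (i) for $k=1$: the basic $L^2_1$ estimate
$\|(\zeta,\nu)\|_{L^2_1(J)}\le C(\|\mathcal D_{(A,S)}(\zeta,\nu)\|_{L^2(I)}+\|(\zeta,\nu)\|_{L^2(I)})$.
I would do this by a partition-of-unity argument: cover $\overline J\times(\text{cross-section})$ by three types of coordinate patches — interior patches of $Y\times I$ (where the operator is an honest first-order elliptic operator and the standard interior elliptic estimate applies), patches meeting $\Sigma\times I$ (where I use the boundary estimate for the extended signature operator with the $\mathcal L_\theta$ boundary condition, invoking the decomposition \eqref{L-tan-decom} and the $L^2$ Hodge splitting of Lemma \ref{Hodge-decom} to handle the non-local part of the boundary condition, exactly as in \cite{Weh:Lag-bdry-ana}), and patches near $\{1\}\times I$ on the symplectic side (where Lemma \ref{chart-complex-str} straightens $\bJ$ and $\mathcal L$ so the boundary condition becomes the standard split Lagrangian $0\oplus B_p$ and the estimate reduces to the model $\bar\partial$-operator on a half-plane with linear Lagrangian boundary conditions). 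The matching condition at $s=0$ couples the two factors, but because $\mathcal L_\theta = \mathcal L_{\alpha_\theta,V_\theta}$ contains $\mathrm{image}(d_{\alpha_\theta})$ and is $L^2$-orthogonal to $\mathrm{image}(*d_{\alpha_\theta})$, the coupling is through the finite-dimensional Lagrangian $V_\theta$ only; on that finite-dimensional summand the coupled boundary condition is again a genuine Lagrangian, hence elliptic. Summing the local estimates and absorbing commutator terms (which are lower order, controlled by $\|(\zeta,\nu)\|_{L^2(I)}$ and the fixed data $(A,S)$) gives the $k=1$ estimate on $J$.

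Then for general $k$ in part (i) I would bootstrap: differentiate the equation in the $\theta$-direction and in directions tangent to $\Sigma\times I$ and $\{1\}\times I$, apply the $k=1$ estimate to difference quotients $D_h(\zeta,\nu)$ of $(\zeta,\nu)$, and use the equation $\mathcal D_{(A,S)}(\zeta,\nu)=f\in L^2_{k-1}$ to recover the remaining (normal) derivatives algebraically from the operator itself — this is the standard elliptic regularity induction, shrinking the interval at each step from $I\supset J'\supset J$. The only subtlety is that the matching boundary condition must be preserved under tangential difference quotients; this holds because $V_\theta$ and $L_\theta$ and $\alpha_\theta$ are smooth in $\theta$, so $D_h(\zeta,\nu)$ satisfies a boundary condition differing from the original one by an $O(h)$ term with an $L^2_{k}$-controlled error, which can be corrected by a smooth ambient isotopy of the Lagrangian as in the standard treatment. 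Part (ii) is the ``weak = strong'' statement: given $(\zeta,\nu)\in L^2$ with $|\langle(\zeta,\nu),\mathcal D_{(A,S)}(\xi,\eta)\rangle|\le\kappa\|(\xi,\eta)\|_{L^2}$ for all admissible test pairs, one first recognizes that $(\zeta,\nu)$ is a weak solution of $\mathcal D_{(A,S)}^*{}^*(\zeta,\nu)=\mathcal D_{(A,S)}(\zeta,\nu)\in L^2$ with the adjoint boundary conditions automatically satisfied in the distributional sense; then one applies part (i) to the formal adjoint operator $\tfrac{d}{d\theta}+\fD_{(B_\theta,S_\theta^*)}$, whose boundary conditions \eqref{bdry-conditions} are of the same type, using a Friedrichs-mollifier argument near the boundary to promote $(\zeta,\nu)$ from $L^2$ to $L^2_1(J)$ (mollifying tangentially, estimating, and passing to the limit). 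The main obstacle I anticipate is the careful bookkeeping of the matching/boundary conditions near the corner of $[0,1]\times I$ where $s=0$ meets $\partial I$ — but since all estimates are stated on $J$ with $\overline J$ compact in $I$, the corners $\{s=0,\theta\in\partial I\}$ are harmlessly far from $J$, so in fact the only genuine analytic difficulty is the non-local matching condition at $\Sigma\times I$, which is handled exactly as in the Lagrangian-boundary ASD literature \cite{Weh:Lag-bdry-ana,SW:I-bdry} via the Hodge decomposition of Lemma \ref{Hodge-decom}.
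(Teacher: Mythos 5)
Your proposal takes a genuinely different route from the paper's. The paper (Section \ref{Fredholm-section}) views $\mathcal D_{(A,S)}=\frac{d}{d\theta}-\fD_{(B_\theta,S_\theta)}$ as a one-parameter family of unbounded self-adjoint operators $\fD_{(B_\theta,S_\theta)}$ on the fixed Hilbert space $\mathcal H$ with $\theta$-dependent domains $\mathcal W_\theta$. The analytic work is isolated into three pieces: constructing explicit bounded isomorphisms $Q_\theta:\mathcal W_0\to\mathcal W_\theta$ with controlled $L^2_k$ behavior (Proposition \ref{W-change-domain}, via the twisted Green's operators of Lemma \ref{cons-Phi-alpha}); proving self-adjointness and slice-wise elliptic regularity of $\fD_{(B,S)}$ (Lemmas \ref{3D-Fred-k=1} and \ref{3D-Fred-k=higher}); and then invoking the abstract regularity and Fredholm theory for $\frac{d}{d\theta}-D_\theta$ with varying domains from \cite{SW:I-bdry} (Theorem A.3 and Lemma A.2 there) after verifying hypotheses (W1)--(W3), (A1)--(A3). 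Higher $k$ in part (i) is then obtained by conjugating $\frac{d}{d\theta}$ with $\bQ$ and feeding the result back into the $k=1$ case together with Lemma \ref{3D-Fred-k=higher}. Your approach instead treats $\theta$ as a fourth coordinate and proposes a direct 4-dimensional partition-of-unity elliptic estimate, closer in spirit to Wehrheim's original treatment of the Lagrangian boundary problem; this avoids the abstract black box at the cost of re-deriving the boundary estimates locally.

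Two points in your sketch conceal work that the paper makes explicit, and one of them is a genuine gap. First, your remark that the boundary condition after a tangential difference quotient ``can be corrected by a smooth ambient isotopy of the Lagrangian'' is precisely what Proposition \ref{W-change-domain} constructs: an operator on $\mathcal H$, identity away from the boundary collar, intertwining the Hodge decompositions for $\alpha_\theta$ and $\alpha_{\theta+h}$ with $L^2_k$-bounds uniform in $\theta$. The finite-dimensional factor $V_\theta$ is the easy part; the infinite-dimensional summand $\mathrm{image}(d_{\alpha_\theta})$ of the matching condition is where the work lies, and it is not a local correction. Second, your treatment of part (ii) is logically misordered: you propose to ``apply part (i) to the formal adjoint'' after a Friedrichs mollification, but part (i) presupposes $(\zeta,\nu)\in E^1_{\fL}$, i.e.\ $L^2_1$ with the boundary conditions holding in the trace sense, while part (ii) starts from $L^2$ only. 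Tangential mollification does not preserve the non-local matching condition at $\Sigma\times I$ — the projection onto $\mathcal H^1(\Sigma;\alpha_\theta)\oplus\mathrm{image}(d_{\alpha_\theta})\oplus\mathrm{image}(*d_{\alpha_\theta})$ does not commute with a mollifier in $\theta$ because $\alpha_\theta$ varies — and making it do so is exactly the weak-to-strong step that Lemma \ref{3D-Fred-k=1} (for the slice) and Theorem A.3 of \cite{SW:I-bdry} (for the $\theta$-family) accomplish. Your plan could likely be completed along those lines, but as written it assumes the conclusion at the crucial step.
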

Verifying Theorem \ref{Fredholm-flex} in the case that $S$ is replaced with $S+S^*$ is sufficient for proving the theorem in the general case. In particular, we can assume that $S$ takes values in self-adjoint transformations of $\R^{2n}$, and this is the assumption that we make for the rest of the section. As we shall see in Subsection \ref{unbdd-self-adj-3D}, this assumption allows us to show that $\fD_{(B_\theta,S_\theta)}$ is an unbounded self-adjoint operator. Then we use general Fredholm property results about operators which have the form 
\begin{equation}\label{op-4D}
	\frac{d}{d\theta}-D_\theta
\end{equation}
for a family of unbounded self-adjoint operators $D_\theta$. The additional layer of difficulty here is that the domain of the operators $\fD_{(B_\theta,S_\theta)}$ depends on $\theta$. To resolve this issue, we will have a closer look at the domain of these operators in Subsection \ref{domain-lemmas} and show that the variation in the domains of these operators can be controlled in a nice way. Then we conclude Theorem \ref{Fredholm-flex} from the results of \cite{SW:I-bdry} about Fredholm property of operators of the form \eqref{op-4D} where the domains of $D_\theta$ are not constant. Our discussion above shows that Theorem \ref{Fred-mixed-op} follows from Theorem \ref{Fredholm-flex}.

\subsection{The Hilbert space $\mathcal W$}\label{domain-lemmas}

Let $\mathcal H$ be the Hilbert space given as the completion of smooth triples 
\begin{equation}\label{triple-Hilbert}
	(\varphi,b,\nu)\in \Omega^0(Y,E)\oplus \Omega^1(Y,E)\oplus \Omega^0([0,1],\R^{2n}),
\end{equation}
equipped with the $L^2$ inner product
\begin{equation}
	 \langle (\varphi_0,b_0,\nu_0),(\varphi_1,b_1,\nu_1)\rangle_{L^2}:=\int_Y\tr\left(\varphi_0\wedge *\varphi_1+b_0\wedge *b_1\right)+\int_{0}^{1}\omega_0(\nu_0(s),J_0\nu_1(s) )ds.
\end{equation}
In this subsection and the next one, we write $*$ for the 3-dimensional Hodge operator, and the Hodge operator on $\Sigma$ is denoted by $*_2$ as before. Suppose $B$ is a connection whose restriction $\alpha$ to $\Sigma$ is flat and $S$ is a map from $[0,1]$ to the space of self-adjoint linear transformations of $\R^{2n}$. In the last subsection, we introduced
\begin{equation}\label{DBp}
  \fD_{(B,S)}(\varphi,b,\nu)=(d_B^*b,*d_Bb+d_B\varphi,J_0\frac{d\nu}{ds}+S(\nu)),
\end{equation}
which can be regarded as an unbounded operator on $\mathcal H$.

We fix a domain for $\fD_{(B,S)}$ using a Lagrangian $L\subset \R^{2n}$ and a canonical linearized Lagrangian correspondence $\mathcal L$ from $\Omega^1(\Sigma,F)$ to $\R^{2n}$ compatible with $\alpha$. Thus, $\mathcal L=\mathcal L_{\alpha,V}$ for a Lagrangian subspace $V$ of $\mathcal H^1(\Sigma;\alpha)\oplus \R^{2n}$. Let $\mathcal W$ denote the $L^2_1$ completion of the space of all triples $(\varphi,b,\nu)$ as in \eqref{triple-Hilbert} such that
\begin{equation}\label{match-bdry-cond}
	*b|_{\Sigma}=0,\hspace{1cm}(b|_{\Sigma},\nu(0))\in \mathcal L,\hspace{1cm} \nu(1)\in L.
\end{equation}
Clearly, $\mathcal W$ is a dense subspace of $\mathcal H$ because any element of $\mathcal H$ can be obtained as the $L^2$ limit of a sequence of triples $(\varphi_i,b_i,\nu_i)$ as in \eqref{triple-Hilbert} such that such that $b_i$ vanishes in a neighborhood of the boundary $\Sigma$ of $Y$ and $\nu_i$ vanishes in a neighborhood of the boundary of $[0,1]$. We fix the $L^2_1$ inner product on $\mathcal W$ where the $L^2_1$ inner product on sections of $E$ and $\Lambda^1\otimes E$ are defined using the connection $B$ and the Levi-Civita connection associated to the metric on $Y$. Sobolev embedding implies that the inclusion of $\mathcal W$ into $\mathcal H$ is compact.

The Hodge decomposition in Lemma \ref{Hodge-decom} allows us to give a useful description of the Hilbert spaces $\mathcal H$ and $\mathcal W$. Fix a tubular neighborhood $(-\epsilon,0]\times \Sigma$ of the boundary of $\Sigma$. Using \eqref{fun-space-Ban}, we have the identification 
\begin{equation}\label{splitting-Sobolev}
  L^2_k((-\epsilon,0]\times \Sigma,\Lambda^1_\Sigma \otimes E)=L^2_k((-\epsilon,0],L^2(\Sigma,\Lambda^1_\Sigma \otimes F))\cap L^2((-\epsilon,0],L^2_k(\Sigma,\Lambda^1_\Sigma \otimes F)).
\end{equation}
Lemma \ref{Hodge-decom} asserts that we have the continuous splitting
\[
  L^2_{k}(\Sigma,\Lambda^1_\Sigma \otimes F)\cong \mathcal H^1(\Sigma;\alpha)\oplus L^2_{k+1}(\Sigma,F)\oplus L^2_{k+1}(\Sigma,F),
\]
for any non-negative integer $k$. Therefore, we have
\begin{equation}
  L^2_k((-\epsilon,0]\times \Sigma,\Lambda^1_\Sigma \otimes E)=L^2_k((-\epsilon,0],\mathcal H^1(\Sigma;\alpha))\oplus Z_k\oplus Z_k,
\end{equation}
where 
\begin{equation}\label{space-Z}
	Z_k=L^2_k((-\epsilon,0],L^2_{1}(\Sigma,F))\cap L^2((-\epsilon,0],L^2_{k+1}(\Sigma,F)).
\end{equation}
Given the canonical Lagrangian correspondence $\mathcal L_{\alpha,V}$ from $\Omega^1(\Sigma,F)$ to $\R^{2n}$, an element of 
\begin{equation}\label{domain-H-W}
	L^2_k((-\epsilon,0]\times \Sigma,\Lambda^1_\Sigma \otimes E)\oplus L^2_k([0,\epsilon),\R^{2n})
\end{equation}
can be written as 
\[
  v+\bJ v'+(d_\alpha \zeta+*_2 d_\alpha \zeta',0),
\]
where $v,v' \in L^2_k((-\epsilon,0],V)$, $\zeta,\zeta'\in Z_k$ and $\bJ$ is defined as in \eqref{bJ} using $J_0$. For each $s\in (-\epsilon,0]$, $v(s),v'(s)\in V$ has a component in $\mathcal H^1(\Sigma;\alpha)$, and this component for different values of $s$ gives rise to an element of the first summand of \eqref{domain-H-W}. The components of $v(s),v'(s)$ in $\R^{2n}$ define an element of $L^2_k((-\epsilon,0],\R^{2n})$, which we identify with an element of the  second summand of \eqref{domain-H-W} by precomposing with the map $s\to -s$ from $[0,\epsilon)$ to $(-\epsilon,0]$. Moreover, the exterior derivative $d_\alpha$ in the above expression is only taken in the $\Sigma$ direction and hence $d_\alpha$ maps an element of $Z_k$ to an element of \eqref{splitting-Sobolev}.

Suppose $(\varphi,b,\nu)\in \mathcal H$. We focus on the restriction of $(\varphi,b)$ to the subspace $(-\epsilon,0]\times \Sigma$ of $Y$ and the restriction of $\nu$ to the interval $[0,\epsilon)$ of $[0,1]$, and by a slight abuse of notation use the same notations to denote these restrictions. Then the $1$-form $b$ has the form
\begin{equation}\label{decomp-b-bdry}
	q+\tau ds,
\end{equation}
where $s$ denotes the coordinate on $(-\epsilon,0]$, $q\in L^2((-\epsilon,0]\times \Sigma,\Lambda^1_\Sigma \otimes E)$ and $\tau \in L^2((-\epsilon,0]\times \Sigma,E)$. Using the discussion of the previous paragraph, the pair $q$ and $b$ can be reparametrized by 
\[
  v,v'\in L^2((-\epsilon,0],V),\hspace{1cm}\zeta,\zeta'\in Z_0.
\]
Then $(\varphi,b,\nu)\in \mathcal W$ is equivalent to require that $v$, $v'$ and $\tau$ are in $L^2_1$, $\zeta$, $\zeta'$ are in $Z_1$, $\tau|_{\{0\}\times \Sigma}=0$, $v'(0)=0$, $\zeta'(0)=0$.

We shall use this discussion to construct isomorphisms between the Hilbert spaces $\mathcal W$ associated to two different choices of $(\alpha,V,L)$ that are close to each other. First we need the following lemma which allows us to identify the vector spaces $\mathcal H^1(\Sigma;\alpha)$ associated to choices of $\alpha$ that are close to each other.

\begin{lemma}\label{cons-Phi-alpha}
	Fix a flat connection $\alpha_0$ on $F$. There is a positive constant $\epsilon$ such that if $\alpha$ is another flat connection on $F$ with $|\!|\alpha-\alpha_0|\!|_{L^2_{1}}<\epsilon$, then there is an isomorphism $\Phi_\alpha:\mathcal H^1(\Sigma;\alpha_0) \to \mathcal H^1(\Sigma;\alpha)$ and a constant $C$
	such that
	\begin{equation}
		|\!|\Phi_\alpha(v)-v|\!|_{L^2_1}\leq C|\!|\alpha-\alpha_0|\!|_{L^2}|\!|v|\!|_{L^2}.
	\end{equation}
	More generally, there are positive constants $\epsilon_k$ and $C_k$ such that if $|\!|\alpha-\alpha_0|\!|_{L^2_{k}}<\epsilon_k$, then
	\begin{equation}
		|\!|\Phi_\alpha(v)-v|\!|_{L^2_k}\leq C|\!|\alpha-\alpha_0|\!|_{L^2_{k-1}}|\!|v|\!|_{L^2}.
	\end{equation}	
\end{lemma}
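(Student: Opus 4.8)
\textbf{Proof strategy for Lemma \ref{cons-Phi-alpha}.}
The plan is to build $\Phi_\alpha$ as the $L^2$-orthogonal projection from $\mathcal H^1(\Sigma;\alpha_0)$ onto $\mathcal H^1(\Sigma;\alpha)$, and then to show that for $\alpha$ close to $\alpha_0$ this projection is an isomorphism with the claimed quantitative estimate. First I would set up the two Hodge decompositions supplied by Lemma \ref{Hodge-decom}: for $q>1$ and each nonnegative integer $k$,
\[
  L^2_k(\Sigma,\Lambda^1\otimes F)=\mathcal H^1(\Sigma;\alpha_0)\oplus \image(d_{\alpha_0})\oplus \image(*d_{\alpha_0}),
\]
and the analogous splitting for $\alpha$. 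Let $\Pi_\alpha$ denote the $L^2$-orthogonal projection onto $\mathcal H^1(\Sigma;\alpha)$ (which, since harmonic forms are smooth and elliptic regularity holds, also restricts to a bounded operator on each $L^2_k$). Define $\Phi_\alpha:=\Pi_\alpha|_{\mathcal H^1(\Sigma;\alpha_0)}$.

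The heart of the argument is the estimate $\|\Phi_\alpha(v)-v\|_{L^2_1}\le C\|\alpha-\alpha_0\|_{L^2}\|v\|_{L^2}$. For $v\in\mathcal H^1(\Sigma;\alpha_0)$ we have $d_{\alpha_0}v=0=d_{\alpha_0}^*v$, so
\[
  d_\alpha v=(d_\alpha-d_{\alpha_0})v=[(\alpha-\alpha_0)\wedge v],\qquad
  d_\alpha^* v=\pm *_2[(\alpha-\alpha_0)\wedge *_2 v],
\]
which are pointwise bilinear in $\alpha-\alpha_0$ and $v$; hence $\|d_\alpha v\|_{L^2}+\|d_\alpha^* v\|_{L^2}\le C\|\alpha-\alpha_0\|_{L^\infty}\|v\|_{L^2}$, and using the Sobolev embedding $L^2_1(\Sigma)\hookrightarrow L^q$ together with Hölder this is bounded by $C\|\alpha-\alpha_0\|_{L^2_1}\|v\|_{L^2}$. (Since harmonic $v$ satisfies $\|v\|_{L^2_1}\le C\|v\|_{L^2}$ by ellipticity, one can in fact absorb the loss and land with $\|\alpha-\alpha_0\|_{L^2}$ on the right, because the product $(\alpha-\alpha_0)\wedge v$ gains integrability from $v$.) Now write $w:=v-\Phi_\alpha(v)$, which by construction lies in $\image(d_\alpha)\oplus\image(*d_\alpha)$, i.e.\ in the $L^2$-orthogonal complement of $\mathcal H^1(\Sigma;\alpha)$, and also satisfies $\Pi_\alpha w=0$. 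Applying the elliptic estimate for the twisted Laplacian $\Delta_\alpha=d_\alpha d_\alpha^*+d_\alpha^*d_\alpha$ on the complement of its kernel gives
\[
  \|w\|_{L^2_1}\le C\big(\|d_\alpha w\|_{L^2}+\|d_\alpha^* w\|_{L^2}\big)
  = C\big(\|d_\alpha v\|_{L^2}+\|d_\alpha^* v\|_{L^2}\big)\le C\|\alpha-\alpha_0\|_{L^2}\|v\|_{L^2},
\]
using $d_\alpha\Phi_\alpha(v)=0=d_\alpha^*\Phi_\alpha(v)$ in the middle step. This is exactly the first claimed inequality. The higher-regularity statement follows verbatim: one bootstraps the same identity $d_\alpha v=[(\alpha-\alpha_0)\wedge v]$ in $L^2_{k-1}$, applies the $L^2_k$ elliptic estimate for $\Delta_\alpha$, and uses the multiplication $L^2_{k-1}\times L^2_{k-1}\to L^2_{k-1}$ (valid on the surface $\Sigma$) to control the quadratic term.

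Finally, to upgrade $\Phi_\alpha$ from "small perturbation of the identity" to "isomorphism", note that $\|\Phi_\alpha(v)-v\|_{L^2}\le C\epsilon\|v\|_{L^2}$, so for $\epsilon$ small $\Phi_\alpha$ is injective with closed range; since $\dim\mathcal H^1(\Sigma;\alpha)=\dim\mathcal H^1(\Sigma;\alpha_0)=-3\chi(\Sigma)/2$ is independent of the flat connection (both equal $b_1$ of the twisted de Rham complex, constant on the connected space of flat $F$-connections), $\Phi_\alpha$ is a bijection. The main obstacle I anticipate is purely bookkeeping: keeping track of exactly which Sobolev exponent the quadratic term $[(\alpha-\alpha_0)\wedge v]$ lives in so that the right-hand side comes out with $\|\alpha-\alpha_0\|_{L^2}$ (respectively $\|\alpha-\alpha_0\|_{L^2_{k-1}}$) rather than one derivative more — this is where the gain of integrability of the harmonic representative $v$ (via $\|v\|_{L^2_1}\le C\|v\|_{L^2}$ and Sobolev on $\Sigma$) must be invoked carefully.
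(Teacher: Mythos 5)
Your route is genuinely different from the paper's, and in outline it can be made to work, but as written it has a real gap at its central step. You define $\Phi_\alpha$ as the $L^2$-orthogonal projection onto $\mathcal H^1(\Sigma;\alpha)$ and then invoke ``the elliptic estimate for $\Delta_\alpha$ on the complement of its kernel'' to bound $w=v-\Phi_\alpha(v)$. The constant in that estimate is the inverse spectral gap of $\Delta_\alpha$ acting on $1$-forms above its kernel, and the kernel $\mathcal H^1(\Sigma;\alpha)$ moves with $\alpha$; for the lemma to have content (and for its use in Proposition \ref{W-change-domain}, where $C$ must be independent of $(\alpha,V,L)$) this constant must be uniform over the $\epsilon$-ball of flat connections, and $\alpha$ there is only of Sobolev regularity, not smooth. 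You never address this uniformity, and the naive attempts to prove it (bounding $\mathrm{dist}_{L^2}(u,\mathcal H^1(\Sigma;\alpha))$ for $u\in\mathcal H^1(\Sigma;\alpha_0)$) are circular, since that distance is essentially the quantity the lemma estimates. It can be repaired, e.g.\ by a Riesz-projection/perturbation argument for the spectral projection of $\Delta_\alpha$ together with constancy of $\dim\mathcal H^1$, but that is a missing piece, not bookkeeping. The paper avoids the issue entirely: it writes $\Phi_\alpha(v)=v+d_\alpha G_\alpha(*_2[\alpha-\alpha_0,*_2v])+*_2d_\alpha G_\alpha(*_2[\alpha-\alpha_0,v])$, where $G_\alpha$ is the Green's operator of the $0$-form Laplacian $d_\alpha^*d_\alpha$, checks directly that the result is $\alpha$-harmonic, and only needs uniform invertibility of that Laplacian — which has no kernel (flat connections on $F$ are irreducible), so uniform bounds follow from a simple perturbation of $\Delta_{\alpha_0}$.

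Two smaller points. First, your mechanism for landing on $\|\alpha-\alpha_0\|_{L^2}\|v\|_{L^2}$ (and $\|\alpha-\alpha_0\|_{L^2_{k-1}}\|v\|_{L^2}$ for higher $k$) is stated as ``$\|v\|_{L^2_1}\le C\|v\|_{L^2}$ plus Sobolev,'' but on a surface $L^2_1$ does not embed in $L^\infty$, so H\"older with an $L^2$ factor on $\alpha-\alpha_0$ forces you to use $\|v\|_{L^\infty}\le C\|v\|_{L^2}$ (and $\|v\|_{C^{k-1}}\le C\|v\|_{L^2}$ for general $k$); these hold because $\alpha_0$ is smooth and $\mathcal H^1(\Sigma;\alpha_0)$ is a fixed finite-dimensional space of smooth forms, which is the asymmetric estimate you actually need rather than the symmetric multiplication $L^2_{k-1}\times L^2_{k-1}\to L^2_{k-1}$. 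Second, the dimension of $\mathcal H^1(\Sigma;\alpha)$ is $-3\chi(\Sigma)$, not $-3\chi(\Sigma)/2$; this is harmless since your argument only uses that the dimension is constant (which follows from $H^0=H^2=0$ and the index theorem), and your injectivity-plus-equal-dimension conclusion that $\Phi_\alpha$ is an isomorphism is fine once the quantitative estimate is in place.
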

\begin{proof}
	For any positive integer $k$ and any flat connection $\alpha$ on $F$, the twisted Laplace operator \[\Delta_\alpha=d_\alpha^*d_\alpha:L^2_{k+1}(\Sigma,F)\to L^2_{k-1}(\Sigma,F)\] is invertible, and we denote the inverse by $G_\alpha$. 
	It is straightforward to see that there are 
	positive constants $\epsilon_k$ and $C_k$ such that if $|\!|\alpha-\alpha_0|\!|_{L^2_{k}}<\epsilon_k$, then we have
	\begin{equation}\label{ineq-Green}
	  |\!|d_\alpha \zeta |\!|_{L^2_k}\leq C_k|\!|\zeta |\!|_{L^2_{k+1}}, \hspace{1cm}|\!|G_\alpha(\zeta)|\!|_{L^2_{k+1}}\leq C_k|\!|\zeta |\!|_{L^2_{k-1}}.
	\end{equation}

	For any $v\in \mathcal H^1(\Sigma;\alpha_0)$, define
	\begin{equation}\label{Phi-alpha-def}
	  \Phi_\alpha(v):=v+d_\alpha G_\alpha(*_2[\alpha-\alpha_0,*_2v])+*_2d_\alpha G_\alpha(*_2[\alpha-\alpha_0,v]).
	\end{equation}
	Then we have
	\begin{align*}
		d_\alpha\Phi_\alpha(v)&=d_\alpha v+d_\alpha*_2d_\alpha G_\alpha(*_2[\alpha-\alpha_0,v])\\
		&=(d_{\alpha_0} v+[\alpha-\alpha_0,v])-*_2\Delta_\alpha  G_\alpha(*_2[\alpha-\alpha_0,v])\\
		&=0,
	\end{align*}
	where in the last identity we use the assumption that $d_{\alpha_0} v=0$. Using a similar argument we have
	\begin{align*}
		d_\alpha^*\Phi_\alpha(v)&=d_\alpha^* v+\Delta_\alpha G_\alpha(*_2[\alpha-\alpha_0,*_2v])\\
		&=(d_{\alpha_0}^* v-*_2[\alpha-\alpha_0,*_2v])+(*_2[\alpha-\alpha_0,*_2v])\\
		&=0.
	\end{align*}
	In particular, we have $\Phi_\alpha(v)\in \mathcal H^1(\Sigma;\alpha)$. 
	Using \eqref{ineq-Green},  we can conclude the following inequalities where in each line we might need to increase the value of $C_k$ in compare to the previous one:
	\begin{align*}
		|\!|\Phi_\alpha(v)-v|\!|_{L^2_k}&\leq |\!|d_\alpha G_\alpha(*_2[\alpha-\alpha_0,*_2v])|\!|_{L^2_k}+|\!|*_2d_\alpha G_\alpha(*_2[\alpha-\alpha_0,v])|\!|_{L^2_k}\\
		&\leq C_k\(|\!|G_\alpha(*_2[\alpha-\alpha_0,*_2v])|\!|_{L^2_{k+1}}+|\!|G_\alpha(*_2[\alpha-\alpha_0,v])|\!|_{L^2_{k+1}}\)\\
		&\leq C_k\(|\!|*_2[\alpha-\alpha_0,*_2v]|\!|_{L^2_{k-1}}+|\!|(*_2[\alpha-\alpha_0,v]|\!|_{L^2_{k-1}}\)\\
		&\leq C_k |\!|\alpha-\alpha_0|\!|_{L^2_{k-1}}|\!|v|\!|_{L^2}.
	\end{align*}
	This, in particular, shows that after possibly decreasing the value of $\epsilon_k$, $\Phi_\alpha:\mathcal H^1(\Sigma;\alpha_0)\to \mathcal H^1(\Sigma;\alpha)$ is an isomorphism of vector spaces.
\end{proof}

Fix a triple $(\alpha_0,V_0,L_0)$, and let $\mathcal U$ be the space of all triples $(\alpha,V,L)$ such that:
\begin{enumerate}
	\item[(i)] $|\!|\alpha-\alpha_0|\!|_{L^2_{1}}$ is less than the constant $\epsilon$ provided by Lemma \ref{cons-Phi-alpha};
	\item[(ii)] $V$ has a trivial intersection with $\Phi_\alpha(\bJ_0(V_0))$ and $\bJ V$ has trivial intersection with $\Phi_\alpha(V_0)$;
	\item[(iii)] $L \cap J_0L_0=0$.
\end{enumerate}
In (ii), $\bJ_0$ and $\bJ$ are respectively the almost complex structures on $\mathcal H^1(\Sigma;\alpha_0)\oplus \R^{2n}$ and $\mathcal H^1(\Sigma;\alpha)\oplus \R^{2n}$ given as $(-J_*,J_0)$. For any $(\alpha,V,L)\in \mathcal U$, there is a Linear map $R: L_0\to J_0L_0$ such that $L$ is given by the subspace of $\R^{2n}$ consisting of $x+R(x)$ with $x\in L_0$. We define the distance between $L$ and $L_0$, denoted by $d(L,L_0)$, to be the norm of the linear map $R$. Similarly, there is a linear map $\fR: V_0 \to \bJ_0 V_0$ (resp. $\fR': \bJ_0 V_0 \to  V_0$) such that $\Phi_\alpha^{-1}(V)$ (resp. $\Phi_\alpha^{-1}(\bJ V)$) is given by the subspace of $\mathcal H^1(\Sigma;\alpha_0)\oplus \R^{2n}$ consisting of $v+\fR(v)$ (resp. $v+\fR'(v)$) with $v\in V_0$ (resp. $v\in \bJ_0V_0$). We define the distance between $V$ and $V_0$, denoted by $d(V,V_0)$, to be the sum of the norms of the linear maps $\fR$ and $\fR'$. 

\begin{prop}\label{W-change-domain}
	Suppose $\alpha_0$, $V_0$ and $L_0$ are given as above, and $\mathcal W_0$ is the Hilbert space defined using $(\alpha_0,V_0,L_0)$.
	There is a positive constant $c_1$ such that for any $(\alpha,V,L)\in \mathcal U$ with 
	\begin{equation}\label{norm-c-1}
	  |\!|\alpha-\alpha_0|\!|_{L^2_{1}}+d(V,V_0)+d(L,L_0)<c_1
	\end{equation}
	the following holds. There is an invertible bounded linear map $Q:\mathcal H\to \mathcal H$
	which maps $\mathcal W_0$ to $\mathcal W$, defined using $(\alpha,V,L)$. There is a constant $C$, independent of $(\alpha,V,L)$, such that
	\[
	  |\!|Q-{\rm Id}|\!|_{L^2}\leq C(|\!|\alpha-\alpha_0|\!|_{L^2_{1}}+d(V,V_0)+d(L,L_0))
	\]
	For any $k$, $Q$ induces an isomorphism on the space of $L^2_k$ triples in $L^2_k$. There are positive constants $c_k$ and $C_k$ such that for any $(\alpha,V,L)\in \mathcal U$ with 
	\begin{equation}\label{norm-c-1}
	  |\!|\alpha-\alpha_0|\!|_{L^2_{k}}+d(V,V_0)+d(L,L_0)<c_k
	\end{equation}
	then the operator norm of $Q-{\rm Id}$ as an operator, acting on the subspace of $\mathcal H$ given by $L^2_k$ triples, satisfies
	\begin{equation}\label{norm-c-k}
	  |\!|Q-{\rm Id}|\!|_{L^2_k}\leq C(|\!|\alpha-\alpha_0|\!|_{L^2_{k}}+d(V,V_0)+d(L,L_0)),
	\end{equation}	
	and for any $(\varphi,b,\nu)$ in the subspace of $L^2_k$ triples of $L^2_k$, we have
	\begin{equation}\label{ineq-Q-k}
		C_k^{-1}|\!|(\varphi,b,\nu)|\!|_{L^2_k} \leq |\!|Q(\varphi,b,\nu)|\!|_{L^2_k}\leq C_k|\!|(\varphi,b,\nu)|\!|_{L^2_k}.
	\end{equation}
\end{prop}

\begin{proof}
	Let $T_L:\R^{2n}\to \R^{2n}$ be the isomorphism that its restriction to $L_0$ is the orthogonal projection to $L$ and its restriction to $J_0L_0$ (the orthogonal complement of $L_0$) is orthogonal projection to $J_0L$. 
	Similarly, let $T_{\alpha,V}$
	be the isomorphism $\mathcal H^1(\Sigma;\alpha_0)\oplus \R^{2n} \to \mathcal H^1(\Sigma;\alpha)\oplus \R^{2n}$ that maps $V_0$ to $V$ by the composition of $\Phi_\alpha$ and the orthogonal projection to $V$ and maps 
	the orthogonal complement of $V_0$ in $\mathcal H^1(\Sigma;\alpha_0)\oplus \R^{2n}$ to $\bJ_0 V$ by the composition of $\Phi_\alpha$ and the orthogonal projection to $\bJ_0 V$. 
	We extend $T_{\alpha,V}$ into an isomorphism
	\begin{equation*}
		L^2(\Sigma;\Lambda^1\otimes F)\oplus \R^{2n}\to L^2(\Sigma;\Lambda^1\otimes F)\oplus \R^{2n}
	\end{equation*}
	which maps an element $z\in L^2(\Sigma;\Lambda^1\otimes F)\oplus \R^{2n}$ presented as 
	\begin{equation}\label{z-element}
	  \hspace{2cm}z=v+(d_{\alpha_0}\zeta+*_2d_{\alpha_0}\zeta',0) \hspace{1cm}v\in \mathcal H^1(\Sigma;\alpha_0)\oplus \R^{2n},
	\end{equation}
	into 
	\begin{equation}\label{def-Tav}
		T_{\alpha,L}(v)+(d_{\alpha}\zeta+*_2d_{\alpha}\zeta',0).
	\end{equation}	
	By a slight abuse of notation, we denote this map with the same notation $T_{\alpha,L}$. The key property of $T_{\alpha,V}$ is that it maps $\mathcal L_{\alpha_0,V_0}$ isomorphically onto $\mathcal L_{\alpha,V}$. 
	The operators $T_{\alpha,V}$ and $T_L$ satisfy the following operator norms with respect to the standard norms on $L^2(\Sigma;\Lambda^1\otimes F)\oplus \R^{2n}$
	and $\R^{2n}$:
	\begin{equation}\label{diff-T-I}
	  |\!|T_{\alpha,V}-{\rm Id}|\!|<C\(|\!|\alpha-\alpha_0|\!|_{L^2_{1}}+d(V,V_0)\),\hspace{1cm}|\!|T_L-{\rm Id}|\!|\leq Cd(L,L_0).
	\end{equation}
	In fact, $T_{\alpha,V}$ sends $L^2_k(\Sigma;\Lambda^1\otimes F)\oplus \R^{2n}$ into itself, and the operator norm of $T_{\alpha,V}$, as an operator acting on $L^2_k(\Sigma;\Lambda^1\otimes F)\oplus \R^{2n}$ with respect to its standard
	norm satisfies
	\begin{equation}\label{diff-T-k}
	  |\!|T_{\alpha,V}-{\rm Id}|\!|_{L^2_k}<C\(|\!|\alpha-\alpha_0|\!|_{L^2_{k}}+d(V,V_0)\).
	\end{equation}
	If $(\alpha,V,L)$ satisfies \eqref{norm-c-1} for a small enough $c_1$, then \eqref{diff-T-I} implies that for any $s\in [0,1]$, the operator $sT_{\alpha,V}+(1-s)I$ is invertible. Let
	\[
	  T_{\alpha,V}^G:L^2(\Sigma;\Lambda^1\otimes F)\oplus \R^{2n}\to L^2(\Sigma;\Lambda^1\otimes F),\hspace{1cm}T_{\alpha,V}^S:L^2(\Sigma;\Lambda^1\otimes F)\oplus \R^{2n}\to \R^{2n},
	\]
	denote the composition of $T_{\alpha,V}$ with projection maps to $L^2(\Sigma;\Lambda^1\otimes F)$ and $\R^{2n}$.

	We use the maps $T_{\alpha,V}$ and $T_{L}$ to define the desired $Q:\mathcal H\to \mathcal H$. Fix a cutoff function $\rho:[0,1]\to [0,1]$ that is equal to $1$ on $[0,\epsilon/3)$ and vanishes on $(\epsilon/2,1]$. 
	Let $(\varphi,b,\nu)\in \mathcal H$, and the restriction of $b$ to $(-\epsilon,0]\times \Sigma\subset Y$ is given as in \eqref{decomp-b-bdry}.
	Then $Q(\varphi,b,\nu)=(\varphi,c,\eta)$, where $c$ is equal to $b$ on the complement of $(-\epsilon,0]\times \Sigma$, the restriction of $c$ on $(-\epsilon,0]\times \Sigma$ is given as
	\[
	  \rho(-s) T_{\alpha,V}^G(q(s),\nu(-s))+ (1-\rho(-s))q(s)+\tau ds,
	\]
	$\eta=\nu$ on the complement of $[0,\epsilon)\cup (1-\epsilon,1]$, the restriction of $\eta$ on $[0,\epsilon)$ is given as
	\[
	  \rho(s) T_{\alpha,V}^S(q(s),\nu(-s))+(1-\rho(s))\nu(s), 
	\]
	and the restriction of $\eta$ to $(1-\epsilon,1]$ is given as
	\[
	  \rho(1-s) T_L(\nu(s))+(1-\rho(1-s))\nu(s). 
	\]
	The inequalities \eqref{diff-T-I} and \eqref{diff-T-k} can be used to verify \eqref{norm-c-1} and \eqref{norm-c-k}. The inequalities in \eqref{ineq-Q-k} is a consequence of \eqref{norm-c-k}.
\end{proof}

\subsection{The operator $\fD_{(B,S)}$}\label{unbdd-self-adj-3D}

In this subsection, we fix $B$, $S$, $\mathcal L$ and $L$ as in the previous subsection, and form the Hilbert spaces $\mathcal H$ and $\mathcal W$ and the operator $\fD_{(B,S)}$. Here we focus on the operator $ \fD_{(B,S)}$, and our goal is to show that it is self-adjoint and satisfies some regularity properties.

\begin{lemma} \label{DBp-syymetric}
	The operator $ \fD_{(B,S)}$ is symmetric. That is to say, for any $(\varphi,b,\nu), (\psi,c,\eta)\in \mathcal W$, we have
	\[
	  \langle (\varphi,b,\nu),\fD_{(B,S)}(\psi,c,\eta)\rangle_{L^2}=
	  \langle \fD_{(B,S)} (\varphi,b,\nu),(\psi,c,\eta)\rangle_{L^2}.
	\]
\end{lemma}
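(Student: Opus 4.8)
The plan is to verify the symmetry of $\fD_{(B,S)}$ by a direct integration-by-parts argument, tracking the boundary terms that arise and showing they vanish because of the boundary conditions \eqref{match-bdry-cond} defining $\mathcal W$. The computation splits into three pieces according to the three components of $\fD_{(B,S)}$ in \eqref{DBp}: the $3$-dimensional gauge-theory part acting on $(\varphi,b)$ over $Y$, and the $1$-dimensional Cauchy–Riemann part acting on $\nu$ over $[0,1]$.

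First I would treat the part over $Y$. Writing $\fD^Y_{(B)}(\varphi,b) = (d_B^*b, *d_Bb+d_B\varphi)$, I compute $\langle \fD^Y_{(B)}(\varphi,b),(\psi,c)\rangle_{L^2} - \langle (\varphi,b),\fD^Y_{(B)}(\psi,c)\rangle_{L^2}$ using the standard adjointness relations $\langle d_B^*b,\psi\rangle = \langle b,d_B\psi\rangle + (\text{bdry})$ and $\langle *d_Bb,c\rangle = \langle b,*d_Bc\rangle + (\text{bdry})$ on the $3$-manifold $Y$ with boundary $\Sigma$. The bulk terms cancel in pairs; what remains is a boundary integral over $\Sigma$ of the schematic form $\int_\Sigma \tr(\varphi\, *c - \psi\, *b) + \int_\Sigma \tr(b\wedge c)$ — more precisely a combination of $\iota_\Sigma^*(*b)$, $\iota_\Sigma^*(*c)$ paired against $\varphi|_\Sigma$, $\psi|_\Sigma$, plus a term $\int_\Sigma \tr(b|_\Sigma \wedge c|_\Sigma)$ coming from Stokes' theorem applied to $d_B(b\wedge c)$-type expressions. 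The conditions $*b|_\Sigma = 0$ and $*c|_\Sigma = 0$ from \eqref{match-bdry-cond} kill the terms involving $\varphi,\psi$ on $\Sigma$. For the surviving term $\int_\Sigma \tr(b|_\Sigma \wedge c|_\Sigma)$, I observe that this is exactly $-\Omega(b|_\Sigma, c|_\Sigma)$ up to sign; since $(b|_\Sigma,\nu(0))$ and $(c|_\Sigma,\eta(0))$ both lie in $\mathcal L = \mathcal L_{\alpha,V}$, and $\mathcal L$ is isotropic with respect to $(-\Omega)\oplus\omega_0$ (property (iii) of a canonical Lagrangian correspondence, equivalently the isotropy of $V$ built into the definition of $\mathcal L_{\alpha,V}$), we get $\Omega(b|_\Sigma,c|_\Sigma) = \omega_0(\nu(0),\eta(0))$. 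This boundary term will therefore be paired against the matching boundary term produced by the Cauchy–Riemann part.

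Next I would handle the part over $[0,1]$: with $\fD^{[0,1]}_{(S)}\nu = J_0 \frac{d\nu}{ds} + S(\nu)$ and the inner product $\int_0^1 \omega_0(\nu(s),J_0\eta(s))\,ds = \int_0^1 \langle \nu,\eta\rangle_{\R^{2n}}\,ds$, the zeroth-order term $S(\nu)$ contributes symmetrically because $S$ takes values in self-adjoint transformations (the standing assumption for the section). For the first-order term, integration by parts gives $\int_0^1 \langle J_0\nu', \eta\rangle - \langle \nu, J_0\eta'\rangle\,ds = [\langle J_0 \nu,\eta\rangle]_0^1 = \langle J_0\nu(1),\eta(1)\rangle - \langle J_0\nu(0),\eta(0)\rangle$, using $J_0^* = -J_0$ to combine the two bulk integrals. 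At $s=1$ the condition $\nu(1),\eta(1)\in L$ with $L$ Lagrangian for $\omega_0$ makes $\langle J_0\nu(1),\eta(1)\rangle = \omega_0(\nu(1),\eta(1)) = 0$. At $s=0$ we get $-\omega_0(\nu(0),\eta(0))$, which by the previous paragraph exactly cancels the leftover gauge-theoretic boundary term (with the correct sign, given how $\bJ$ and the decomposition \eqref{bJ} are set up, and after accounting for the $s\mapsto -s$ reparametrization identifying the $\Sigma$-boundary of $Y$ with the endpoint of $[0,1]$ in the definition of $\mathcal L$). Assembling the three pieces, all bulk terms cancel and all boundary terms either vanish individually or cancel against each other, proving the claimed identity.

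The main obstacle I anticipate is bookkeeping of signs and orientations: getting the Stokes boundary term on $Y$ to come out as precisely $\int_\Sigma\tr(b|_\Sigma\wedge c|_\Sigma)$ with the right sign relative to $\Omega$, and matching that sign against the $-\omega_0(\nu(0),\eta(0))$ produced at the endpoint $s=0$, requires care with the conventions \eqref{inner-prod-diff-forms}, \eqref{bJ}, the definition of $\mathcal L_{\alpha,V}$, and the reparametrization $s\mapsto -s$ implicit in how the matching condition pairs a boundary value over $Y$ with a value of $\nu$ at $0$. The conceptual content — isotropy of $\mathcal L$ plus Lagrangian-ness of $L$ plus self-adjointness of $S$ — is straightforward; the risk is purely in a mismatched sign, which one resolves by fixing all conventions once and propagating them consistently. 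I would also note that the purely formal integration by parts is justified for smooth compactly-supported-in-the-interior-enough triples, and the general case of $(\varphi,b,\nu),(\psi,c,\eta)\in\mathcal W$ follows by density together with the continuity of both sides in the $L^2_1$ topology.
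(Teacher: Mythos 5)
Your proposal is correct and follows essentially the same route as the paper's proof: integrate by parts on $Y$ and on $[0,1]$, kill the $\varphi,\psi$ boundary terms via $*b|_\Sigma=*c|_\Sigma=0$, kill the $s=1$ term via the Lagrangian condition on $L$, and cancel the remaining $\int_\Sigma\tr(b\wedge c)$ term against $\omega_0(\nu(0),\eta(0))$ using the isotropy of $\mathcal L$ (the paper's \eqref{3D-stokes-2}--\eqref{3D-Stokes-2}). Your added remarks on the standing self-adjointness assumption on $S$ and on density in $\mathcal W$ are consistent with the paper's conventions.
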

\begin{proof}
	Using Stokes theorem we have
	\begin{equation}\label{3D-stokes-2}
	  \int_{Y}\tr(d_B^*b\wedge *\psi)-\int_{Y}\tr(b\wedge *d_B \psi)=
	  -\int_{\Sigma}\tr(*b\wedge \psi),
	\end{equation}
	and
	\begin{equation}\label{3D-stokes-3}
		\int_{Y}\tr(d_B^*c\wedge *\varphi)-\int_{Y}\tr(c\wedge *d_B \varphi)=  
		-\int_{\Sigma}\tr(*c\wedge \varphi).
	\end{equation}
	Since $*b|_{\Sigma}$ and $*c|_{\Sigma}$ vanish, the above expressions are equal to 
	zero.
	Another application of Stokes theorem implies that
	\begin{equation}\label{3D-Stokes-1}
		\int_{Y}\tr(d_Bb\wedge c)-\int_{Y}\tr(b\wedge d_B c)=
		\int_\Sigma\tr(b\wedge c),
	\end{equation}
	and 
	\begin{equation}\label{3D-Stokes-2}
	  \int_0^1 \omega_0(\frac{d\nu}{ds},\eta)ds-\int_0^1 \omega_0(\frac{d\eta}{ds},\nu)ds=
	  \omega_0(\nu(1),\eta(1))-\omega_0(\nu(0),\eta(0)).
	\end{equation}
	The first term on the right hand side of the above expression vanishes because $\nu(1),\eta(1) \in L$ and the second term is equal to the negative of the right
	hand side of \eqref{3D-Stokes-1}, because $(b|_{\Sigma},\nu(0))$, $(c|_{\Sigma},\eta(0))\in \mathcal L$.
	These observations immediately yield the claim that $\fD_{(B,S)}$ is symmetric.
\end{proof}

\begin{lemma}\label{3D-Fred-k=1}
	There is a constant $C$ such that the following holds.
	Suppose $(\varphi,b,\nu)\in \mathcal H$ has the property that there exists a constant 
	$\kappa$ with
	\begin{equation}\label{weak-conv-assumption}
		\langle (\varphi,b,\nu),\fD_{(B,S)}(\psi,c,\eta)\rangle_{L^2}
		\leq \kappa\vert \!\vert(\psi,c,\eta)\vert \!\vert_{L^2}  
	\end{equation}
	for any $(\psi,c,\eta)\in \mathcal W$. Then $(\varphi,b,\nu)\in \mathcal W$ and
	\begin{equation}\label{Fredholm-3D-k=1}
		\vert \!\vert(\varphi,b,\nu)\vert \!\vert_{L^2_1}\leq C(\kappa+
		\vert \!\vert(\varphi,b,\nu)\vert \!\vert_{L^2}).
	\end{equation}
\end{lemma}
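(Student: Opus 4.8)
The hypothesis should be read as the statement that $(\varphi,b,\nu)$ lies in the domain of the adjoint of $\fD_{(B,S)}$, regarded as an unbounded operator on $\mathcal H$ with domain $\mathcal W$; the lemma is then the corresponding elliptic and boundary regularity statement, whose proof follows the pattern of the analysis of the ASD equation with Lagrangian boundary conditions in \cite{Weh:Lag-bdry-ana,Weh:Lag-bdry-ana2,SW:I-bdry}. First, since $\mathcal W$ is dense in $\mathcal H$ and, by hypothesis, the functional $(\psi,c,\eta)\mapsto\langle(\varphi,b,\nu),\fD_{(B,S)}(\psi,c,\eta)\rangle_{L^2}$ is bounded on $\mathcal W$ for the $L^2$ norm, the Riesz representation theorem yields $(g_1,g_2,g_3)\in\mathcal H$ with $\|(g_1,g_2,g_3)\|_{L^2}\le\kappa$ and
\[
\langle(\varphi,b,\nu),\fD_{(B,S)}(\psi,c,\eta)\rangle_{L^2}=\langle(g_1,g_2,g_3),(\psi,c,\eta)\rangle_{L^2}\qquad\text{for all }(\psi,c,\eta)\in\mathcal W.
\]
Testing against triples supported in the interior of $Y$, respectively of $[0,1]$, and using that $\fD_{(B,S)}$ is formally self-adjoint (recall $S=S^*$, so the boundary terms of the computation in the proof of Lemma \ref{DBp-syymetric} are not felt by interiorly supported test fields), we obtain that $(\varphi,b,\nu)$ solves $d_B^*b=g_1$, $*d_Bb+d_B\varphi=g_2$, $J_0\frac{d\nu}{ds}+S\nu=g_3$ in the distributional sense. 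It then remains to upgrade $(\varphi,b,\nu)$ from $L^2$ to $L^2_1$, to check that it satisfies the boundary conditions \eqref{match-bdry-cond}, and to prove the estimate \eqref{Fredholm-3D-k=1}.

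Next I would dispose of the easy regions. The operator $(\varphi,b)\mapsto(d_B^*b,\,d_B\varphi+*d_Bb)$ on $\Omega^0(Y,E)\oplus\Omega^1(Y,E)$ is a first-order formally self-adjoint elliptic operator -- up to zeroth order it is the extended Hessian of the Chern-Simons functional, and its square is the twisted Hodge Laplacian -- while $\nu\mapsto J_0\frac{d\nu}{ds}+S\nu$ is a first-order elliptic ODE operator. Standard interior elliptic estimates (the results of Appendix \ref{elliptic-reg-sec}) give $(\varphi,b)\in L^2_{1,\mathrm{loc}}(\mathrm{int}\,Y)$, $\nu\in L^2_{1,\mathrm{loc}}((0,1))$, with the relevant norms bounded by $C(\kappa+\|(\varphi,b,\nu)\|_{L^2})$. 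Near the endpoint $s=1$ the $\nu$-equation is decoupled from $Y$: testing the weak identity against $(\psi,c,\eta)$ with $\eta(1)\in L$ arbitrary forces $\nu(1)\in L$, and since $L$ is Lagrangian this is a well-posed problem for the linear ODE $J_0\dot\nu+S\nu=g_3$, so a weak $L^2$ solution is of class $L^2_1$ up to $s=1$ with the asserted bound (e.g.\ after reflecting $\R^{2n}=L\oplus J_0L$, or by direct integration).

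The heart of the matter is regularity at the boundary $\Sigma=\partial Y$ together with the endpoint $s=0$. Working in the product collar $(-\epsilon,0]\times\Sigma$ of Subsection \ref{domain-lemmas} and, after a gauge transformation, putting $B$ in temporal gauge there, I would write $b=q(s)+\tau(s)\,ds$ and Hodge-decompose $q(s)=h(s)+d_\alpha\zeta(s)+*_2d_\alpha\zeta'(s)$ (Lemma \ref{Hodge-decom}) with $h(s)\in\mathcal H^1(\Sigma;\alpha)$ and $\zeta,\zeta'$ in the space $Z_0$ of \eqref{space-Z}, reparametrizing $\nu$ on $[0,\epsilon)$ as in Subsection \ref{domain-lemmas}; then the conditions defining $\mathcal W$ become $\tau|_{s=0}=0$, $\zeta'|_{s=0}=0$, $(h(0),\nu(0))\in V$. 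In these variables $\fD_{(B,S)}$ splits, modulo zeroth-order terms, into (a) a first-order elliptic boundary value problem on $(-\epsilon,0]\times\Sigma$ for the non-harmonic part $(\varphi,\zeta,\zeta',\tau)$ with the boundary conditions $\tau|_{s=0}=0$, $\zeta'|_{s=0}=0$, which satisfy the Lopatinski condition; and (b) a \emph{finite-dimensional} first-order linear ODE system on $[0,\epsilon)$ for $(h(s),\nu(-s))\in\mathcal H^1(\Sigma;\alpha)\oplus\R^{2n}$ with the Lagrangian boundary condition $(h(0),\nu(0))\in V$; the two parts are coupled only through lower-order terms. For (a), difference quotients in directions tangent to $\Sigma$ -- which, unlike on the full one-form, do preserve these boundary conditions -- give tangential $L^2_1$ control, and the equations then express the $\partial_s$-derivatives through tangential derivatives and $(g_1,g_2)$, so the regularity results of Appendix \ref{elliptic-reg-sec} (Lemma \ref{elliptic-reg} along $\frac{\partial}{\partial s}$, and Lemma \ref{weak-Dirac-lemma} on the $\Sigma$-slices) apply and yield $L^2_1$ regularity, the estimate, and the vanishing of $*b|_\Sigma$ and of the $*_2d_\alpha$-component of $b|_\Sigma$. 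For (b), a weak $L^2$ solution of $\dot y=My+f$ with $f\in L^2$ on an interval is of class $L^2_1$, hence continuous, so the weak identity forces $(h(0),\nu(0))\in V$, with the obvious estimate. Finally, patching (a) and (b) with the interior and $s=1$ bounds by a partition of unity, and absorbing the lower-order coupling terms by shrinking $\epsilon$ (using the compact inclusion $\mathcal W\hookrightarrow\mathcal H$ where convenient), gives $(\varphi,b,\nu)\in\mathcal W$ together with \eqref{Fredholm-3D-k=1}.

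I expect steps (a)/(b) above -- the boundary analysis at $\Sigma$ -- to be the only real obstacle. The Lagrangian boundary condition $\mathcal L_{\alpha,V}$ is non-local on $\Sigma$ (it involves the harmonic projection) and is entangled with $\nu$, so naive tangential difference quotients on $b|_\Sigma$ do not respect it; the Hodge splitting above is exactly what disentangles it, peeling off a genuinely elliptic boundary value problem that does not see $\nu$ from a finite-dimensional ODE system that carries the whole gauge-theory/symplectic coupling inside $\mathcal H^1(\Sigma;\alpha)$. Everything else -- interior regularity, the $s=1$ endpoint, and the bookkeeping of constants -- is routine. This lemma is what subsequently allows one to identify $\mathcal W$ with the domain of $\fD_{(B,S)}$ as a self-adjoint operator on $\mathcal H$, and hence to invoke the Fredholm theory of \cite{SW:I-bdry} for operators of the form $\frac{d}{d\theta}-D_\theta$.
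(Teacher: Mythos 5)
Your proposal is genuinely different from the paper's proof at the key step, and it is worth comparing the two. You propose to Hodge-decompose $q(s)=h(s)+d_\alpha\zeta(s)+*_2d_\alpha\zeta'(s)$ on each $\Sigma$-slice and then split the boundary problem into (a) a first-order elliptic boundary value problem for the ``non-harmonic'' pieces $(\varphi,\zeta,\zeta',\tau)$ with local Dirichlet-type boundary conditions $\tau|_{s=0}=0$, $\zeta'|_{s=0}=0$, treated by tangential difference quotients, and (b) a finite-dimensional ODE for $(h,\nu)$ carrying the Lagrangian condition. The paper never performs this decomposition. Instead it proceeds component-by-component (Steps 1--4 of the proof in Subsection~\ref{unbdd-self-adj-3D}): it tests the weak hypothesis against carefully chosen families that lie in $\mathcal W$ for trivial reasons --- gauge directions $(0,d_B\xi,0)$ with $\xi\in\Gamma_\nu$ to get $\varphi$, the triples $(\xi,0,0)$ and $(0,*d_B(\gamma\,ds),0)$ with $\gamma\in\Gamma_\tau$ to get $\tau$, slicewise test one-forms $(0,\xi\,ds,0)$ to get $\nabla_\Sigma q$, and boundary-vanishing test data $(0,*_2c,J_0\eta)$ to get $\partial_sq$ and $d\nu/ds$. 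Each step reduces to an estimate for the scalar Laplacian with Dirichlet or Neumann condition (Lemmas \ref{elliptic-reg-laplace}, \ref{elliptic-reg-k=0}) or the slicewise $d_\beta\oplus *_2d_\beta$ operator (Lemma~\ref{weak-Dirac-lemma}); the Lagrangian boundary condition is only used at the very end (Step~5) to identify the boundary values.

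The trade-off is this. Your route is conceptually cleaner --- you correctly identify the non-local Lagrangian correspondence as the obstruction and ``peel it off'' into a finite-dimensional system, which mirrors the structure the paper does exploit later in Subsection~\ref{domain-lemmas} for the $4$-dimensional cylinder problem. But it asserts, rather than proves, that (a) satisfies the Lopatinski condition; that claim is not automatic (it is essentially the ellipticity result of the ASD equation with Lagrangian boundary condition along the ``gauge Lagrangian,'' the content of \cite{Weh:Lag-bdry-ana}). It also relies on the Hodge projection being compatible with the weak hypothesis and with the anisotropic Sobolev spaces $Z_k$ of \eqref{space-Z}: the tangential difference quotient of a one-form $c$ with $c|_\Sigma$ exact need not have exact restriction, so ``difference quotients preserve the boundary conditions'' is only true up to commutators with the (non-local) harmonic projector, and that bookkeeping has to be carried out. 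The paper's approach sidesteps both issues entirely: because the test triples it uses satisfy the $\mathcal W$-conditions for local, elementary reasons, the argument never has to verify Lopatinski for a first-order system and never has to commute a difference quotient past the Hodge projector. So your sketch is a valid blueprint but would require closing those two gaps to be a complete proof, whereas the paper's component-by-component reduction to Laplacian regularity is, while less transparent about where the Lagrangian lives, self-contained.
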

In other words, this lemma asserts that the domain of the adjoint of the symmetric unbounded operator $\fD_{(B,S)}$ is $\mathcal W$. Therefore, $\fD_{(B,S)}$ is a self-adjoint operator. Another immediate consequence of the above two lemmas is that for any $(\varphi,b,\nu)\in \mathcal W$, we have
\begin{equation}\label{Fredholm-3D-k=1}
	\vert \!\vert(\varphi,b,\nu)\vert \!\vert_{L^2_1}\leq C(\vert \!\vert\fD_{(B,S)}(\varphi,b,\nu)\vert \!\vert_{L^2}+\vert \!\vert(\varphi,b,\nu)\vert \!\vert_{L^2}).
\end{equation}

\begin{proof}
	Suppose $\rho_1:Y\to \R$ and $\rho_2:[0,1]\to \R$ are smooth functions 
	such that the restriction of $\rho_1$ to $\partial Y=\Sigma$ is the constant function with value $\rho_2(0)$. 	
	If \eqref{weak-conv-assumption} holds for $(\varphi,b,\nu)$, then it is also satisfied for 
	$(\rho_1\varphi,\rho_1b,\rho_2\nu)$. To see this, note that if $(\psi,c,\eta)\in \mathcal W$, then $(\rho_1\psi,\rho_1c,\rho_2\eta)\in \mathcal W$, and the difference
	\[
	  \left|\langle (\rho_1\varphi,\rho_1b,\rho_2\nu),\fD_{(B,S)}(\psi,c,\eta)\rangle_{L^2}-\langle (\varphi,b,\nu),\fD_{(B,S)}(\rho_1\psi,\rho_1c,\rho_2\eta)\rangle_{L^2}\right|
	\]
	is bounded by $C\vert \!\vert(\varphi,b,\nu)\vert \!\vert_{L^2}\vert \!\vert(\psi,c,\eta)\vert \!\vert_{L^2}$ for a suitable constant $C$, which depends only on $\rho_1$ and $\rho_2$.
	Thus a partition of unity argument allows us to divide the proposition into three cases:
	\begin{itemize}
		\item[(i)] $\nu=0$ and $(\varphi,b)$ is compactly supported in the interior of $Y$;
		\item[(ii)] $(\varphi,b)=0$ and $\nu$ is compactly supported in $(0,1]$
		\item[(iii)] $(\varphi,b)$ is compactly supported in a collar neighborhood 
		$(-\epsilon,0]\times \Sigma$ of the boundary of $Y$ and $\nu$ is compactly supported in 
		$[0,\epsilon)$. We use $s$ to denote the standard coordinate for the first factor of the collar 
		neighborhood $(-\epsilon,0]\times \Sigma$. The metric in this neighborhood has the form 
		$ds^2+g_\Sigma$.
	\end{itemize}

	The first two cases are standard and and we only need to 	address the third case. Let $b=q+\tau ds$ as in \eqref{decomp-b-bdry}.
	We assume that the connection $B$ on $(-\epsilon,0]\times \Sigma$ is in temporal gauge with respect to the coordinate $s$, and for each $s \in (-\epsilon,0]$, we write $\beta_s$ (or simply $\beta$) for the restriction of 
	the connection $B$ to $ \{s\}\times \Sigma\subset (-\epsilon,0]\times \Sigma$. 
	We prove the claim of \eqref{Fredholm-3D-k=1} in four steps. In the following, $C$ is the desired 
	constant in \eqref{Fredholm-3D-k=1}. Throughout the proof we might need to increase this constant from each line to the
	next one.

	{\bf Step 1:} {\it The term $\varphi$ is in $L^2_1$ and the constant $C$ can be chosen such that
	\begin{equation}\label{ineq-alpha0}
	  |\!|\varphi|\!|_{L^{2}_1}\leq C(\kappa+|\!|(\varphi,b)|\!|_{L^2}).
	\end{equation}}	
	Suppose $\xi$ is a smooth section of $E$ such that the normal derivative $\partial_s \xi$ restricted to the boundary $\Sigma$
	vanishes. This means that $\xi\in \Gamma_\nu(Y,E)$ in the notation of Appendix \ref{elliptic-reg-sec}.
	Then $(0,d_B\xi,0)$ belongs to $\mathcal W$ with respect to the connection $B$. 
	Applying \eqref{weak-conv-assumption} implies that the expression 
	\[
	  \langle (\varphi,b,\nu),\fD_{(B,S)}(0,d_B\xi,0)\rangle_{L^2}
	  =\langle \varphi,d_B^*d_B\xi\rangle_{L^2}+\langle b,*[F_B,\xi]\rangle_{L^2}
	\]
	is bounded by $\kappa\vert \!\vert d_B\xi \vert \!\vert_{L^2}$. In particular, we may pick $C$ such 
	that
	\begin{equation}
		\langle \varphi,d_B^*d_B\xi\rangle_{L^2}\leq 
		C(\kappa+\vert \!\vert b \vert \!\vert_{L^2})\vert \!\vert \xi \vert \!\vert_{L^2_1}
	\end{equation}
	By working in charts on $(-\epsilon,0]\times \Sigma$ and trivializing $F$ on each chart, we can bound $\langle \varphi,\Delta \xi\rangle_{L^2}$ by $C(\kappa+\vert \!\vert (\varphi,b) \vert \!\vert_{L^2})\vert \!\vert \xi \vert \!\vert_{L^2_1}$.
	Therefore, we may apply part (ii) of Lemma \ref{elliptic-reg-laplace} to obtain \eqref{ineq-alpha0}.

	{\bf Step 2:} {\it The component $\tau$ of  $b=q+\tau ds$ is in $L^2_1$ and the constant $C$ can be chosen such that
	\begin{equation}\label{ineq-phi-3D}
	  |\!|\tau|\!|_{L^{2}_1}\leq C(
	  \kappa+|\!|(\varphi,b)|\!|_{L^2}).
	\end{equation}}
	If $\xi$ is a smooth section of $E$, then $(\xi,0,0)$ defines an element of $\mathcal W$. 
	Thus \eqref{weak-conv-assumption} implies that
	\begin{equation}\label{one-dire-crit-3D}
		\langle b,d_B\xi\rangle_{L^2}\leq \kappa\vert \!\vert \xi \vert \!\vert_{L^2}.
	\end{equation}	
	Next, let $\gamma$ be a smooth section of $E$ that vanishes on $\Sigma$, that is to say 
	$\gamma \in \Gamma_\tau(Y,E)$. We also assume that the support of $\gamma$ is contained in 
	$(-\epsilon,0]\times \Sigma$. Therefore, $*d_B(\gamma ds)$ can be regarded as a 1-form on $Y$.
	Moreover, $(0,*d_B(\gamma ds),0)$ belongs to $\mathcal W$. Therefore, another application of 
	\eqref{weak-conv-assumption} gives
	\[
	  \langle (\varphi,b,\nu),\fD_{(B,S)}(0,*d_B (\gamma ds),0)\rangle_{L^2}
	  =\langle b,*d_B*d_B(\gamma ds)\rangle_{L^2}-
	  \langle \varphi,*([F_B,\gamma]\wedge ds) \rangle_{L^2}
	\]
	is bounded by $\kappa\vert \!\vert d_B\gamma \vert \!\vert_{L^2}$. Thus we can conclude that:
	\begin{equation}\label{another-dire-crit-3D}
		\langle b,d_B^*d_B(\gamma ds)\rangle_{L^2}\leq 
		C(\kappa+\vert \!\vert \varphi \vert \!\vert_{L^2})\vert \!\vert \gamma \vert \!\vert_{L^2_1}
	\end{equation}	
	after possibly enlarging the value of $C$. Inequalities \eqref{one-dire-crit-3D} and 
	\eqref{another-dire-crit-3D} are the necessary inputs to apply Lemma \ref{elliptic-reg-k=0}, where $\alpha$, $r$, $\sigma$ and $A_0$ in the statement of this lemma are $b$, $2$, $\partial_s$ and the smooth connection $B$. In particular, this 
	shows that $\tau$ is in $L^2_1$ and 
	\[
	  |\!|\tau|\!|_{L^{2}_1}=|\!|b(\partial_s)|\!|_{L^{2}_1}\leq 
	  C(\kappa+|\!|(\varphi,b)|\!|_{L^2}).
	\]
	This gives us the inequality in \eqref{ineq-phi-3D}.

	{\bf Step 3:} {\it The section $\nabla_\Sigma (q)$ of $T^*\Sigma\otimes T^*\Sigma \otimes E$ on 
	$(-\epsilon ,0] \times \Sigma$ given by the covariant derivatives of $q$ along  $\Sigma$ with respect to the connection $B$ is in $L^2$ . 
	Moreover, the constant $C$ can be chosen such that
	\begin{equation}\label{ineq-nab-sig-a-3D}
	  |\!|\nabla_\Sigma (q) |\!|_{L^2}\leq C(\kappa+|\!|(\varphi,b)|\!|_{L^2}).
	\end{equation}}
	Let $\xi$ be a smooth section of $E$ that vanishes on the boundary of $Y$ and is 
	supported in the collar neighborhood $(-\epsilon,0]\times \Sigma$. Since $(0,\xi ds,0)$ is an element 
	of $\mathcal W$, the expression
	\[
	  \langle (\varphi,b,\nu),\fD_{(B,S)}(0,\xi ds,0)\rangle_{L^2}
	  =\langle \varphi,d_B^*(\xi ds)\rangle_{L^2}+\langle b,*d_B(\xi ds) \rangle_{L^2}
	\]
	is bounded by $\kappa\vert \!\vert \xi \vert \!\vert_{L^2}$. Using Stokes' theorem and Step 1, the first term on the right hand side of the above identity is bounded by $C(\kappa+|\!|(\varphi,b)|\!|_{L^2})\vert \!\vert \xi \vert \!\vert_{L^2}$. 
	Note that the assumption on the 
	vanishing of $\xi$ on the boundary implies that there is no boundary term in the application of Stokes' theorem. In summary, we have
	\begin{equation}\label{qs-Sigma}
	  \int_{-\epsilon}^0 \langle q_s,*_2 d_{\beta_s} \xi_s \rangle_{L^2(\Sigma)} ds \leq 
	  C(\kappa+|\!|(\varphi,b)|\!|_{L^2})\vert \!\vert \xi \vert \!\vert_{L^2}.
	\end{equation}
	where $q_s$ and $\xi_s$ are restrictions of $q$ and $\xi$ to $\Sigma\times \{s\}$. In fact, the same inequality holds if we drop the assumption of the vanishing of $\xi$ on the boundary. Let $\rho:(-\infty,0]\times \R$ be a smooth function that 
	vanishes on $(-\epsilon/3,0]$ and is equal to $1$ on $(-\infty,-\epsilon/2]$. For any smooth section $\xi$ of $E$, we map apply \eqref{qs-Sigma} to $\xi_i:=\rho(is)\xi$, and by taking the limit $i \to \infty$, we obtain a similar inequality for $\xi$. 
	
	Suppose again $\xi$ is a smooth section of $E$ and follow Step 2 to show that 
	\[
	  \langle b,d_B\xi\rangle_{L^2}= \int_{-\epsilon}^0 \langle q_s,d_{\beta_s} \xi_s \rangle_{L^2(\Sigma)}+ \int_{-\epsilon}^0 \langle \tau_s,\partial_s \xi_s \rangle_{L^2(\Sigma)}
	\]
	is bounded by $\kappa\vert \!\vert \xi \vert \!\vert_{L^2}$. Integration by parts and Step 2 imply that the second term on the right hand side of the above identity is bounded by $C(\kappa+|\!|(\varphi,b)|\!|_{L^2})\vert \!\vert \xi \vert \!\vert_{L^2}$. 
	We again use the vanishing of $\xi$ on the boundary to show that there is no boundary term. Thus we obtain 
	\[
	  \int_{-\epsilon}^0 \langle q_s,d_{\beta_s} \xi_s \rangle_{L^2(\Sigma)}ds \leq 
	  C(\kappa+|\!|(\varphi,b)|\!|_{L^2})\vert \!\vert \xi \vert \!\vert_{L^2}.
	\]
	We can again drop the assumption on the vanishing of $\xi$ on the boundary as in the previous paragraph. Therefore, we can deduce from Lemma \ref{weak-Dirac-lemma} that $\nabla_\Sigma (q)$ is in $L^2$ and 
	the constant $C$ can be chosen such that \eqref{ineq-nab-sig-a-3D} holds.
	
	{\bf Step 4:} {\it 	The derivatives of $q$ and $\nu$ with respect to $s$ are in $L^2$, and the constant $C$ can be chosen such that
	\begin{equation}\label{ineq-s-theta-der}
	  |\!|\partial_s q|\!|_{L^2}+|\!|\frac{d\nu}{ds}|\!|_{L^2}\leq 
	  C(\kappa+|\!|(\varphi,b,\nu)|\!|_{L^2}).
	\end{equation}}
	Suppose $c$ is a 1-form with values in $E$ supported in $(-\epsilon,0]\times \Sigma$ which has 
	a vanishing $ds$ component and $c|_{\Sigma}=0$. We write $*_2c$ for the $1$-form on $(-\epsilon,0]\times \Sigma$ given by the Hodge star of $ds\wedge c$. Suppose also $\eta:[0,1]\to \R^{2n}$ is a smooth map supported in $[0,\epsilon)$
	such that $\eta(0)=0$. Then $(0,*_2 c,J_0\eta)\in \mathcal W$ 
	and \eqref{weak-conv-assumption} implies that
	\begin{align*}
	  \langle (\varphi,b,\nu),\fD_{(B,S)}(0,*_2 c,J_0\eta)\rangle_{L^2}
	  =&\langle \varphi,*_2d_{\beta}c \rangle_{L^2}-\langle q,\partial_sc\rangle_{L^2}-\langle \tau,d_\beta^*c\rangle_{L^2}\\
	  &+\int_0^1\langle \nu ,-\frac{d\eta}{ds}+S(J_0\eta)\rangle ds
	\end{align*}
	is bounded by $\kappa\vert \!\vert(c,\eta)\vert \!\vert_{L^2}$. Stokes' theorem, Step 1 and Step 2 imply that the first and the third term on the right hand side of the above 
	identity is bounded by $C(\kappa+|\!|(\varphi,b)|\!|_{L^2})\vert \!\vert c \vert \!\vert_{L^2}$. Therefore, we have 
	\[
	  \vert \langle q,\partial_sc\rangle_{L^2}+\int_0^1\langle \nu ,\frac{d\eta}{ds}\rangle ds\vert \leq 
	  C(\kappa+|\!|(\varphi,b,\nu)|\!|_{L^2})\vert \!\vert (c,\eta)\vert \!\vert_{L^2}.
	\]
	This shows that the derivative of $q$ and $\nu$ with respect to $s$ exist in the weak sense and the claimed inequality in \eqref{ineq-s-theta-der} holds.

	{\bf Step 5:} {\it 	$(\varphi,b,\nu)\in \mathcal W$.}\\
	Previous steps give us a control over the $L^2_1$ norm of $(\varphi,b,\nu)$. Thus we just need
	to check the boundary conditions. This is a straightforward consequence of the identities produced by
	the Stokes theorem in \eqref{3D-stokes-2}, \eqref{3D-stokes-3} and \eqref{3D-Stokes-1}. In fact, these 
	identities show that if $(\psi,c,\eta)\in \mathcal W$, then
	\[
	  \langle (\varphi,b,\nu),\fD_{(B,S)}(\psi,c,\eta)\rangle_{L^2}
	\]
	is equal to the sum of
	\begin{equation} \label{int-by-parts-1}
	  \langle \fD_{(B,S)}(\varphi,b,\nu),(\psi,c,\eta)\rangle_{L^2}
	\end{equation}
	and the boundary terms
	\begin{equation}\label{int-by-parts-2}
	   \int_{\Sigma}\tr(*c\wedge \varphi)
	   - \int_\Sigma\tr(*b\wedge \psi)+\int_\Sigma\tr(b\wedge c)
	   +\omega_0(\nu(0),\eta(0))-\omega_0(\nu(1),\eta(1)).
	\end{equation}
	The first and the last terms in \eqref{int-by-parts-2} vanish because $(\psi,c,\eta)\in \mathcal W$ and $\nu(1)=0$. Previous steps show that \eqref{int-by-parts-1} is bounded by 
	$C(\kappa+|\!|(\varphi,b,\nu)|\!|_{L^2})\vert \!\vert (\psi,c,\eta)\vert \!\vert_{L^2}$. Therefore, 
	\eqref{weak-conv-assumption} asserts that the same is true for \eqref{int-by-parts-2}. This implies that
	\begin{equation}
		*b\vert_\Sigma=0,\hspace{3cm}
	   -\int_\Sigma\tr(b\wedge \beta)=\omega_0(\nu(0),\eta), \hspace{.5cm}\forall\,\, (\beta,\eta)\in \mathcal L.
	\end{equation}	  
	These identities show that $(\varphi,b,\nu)$ satisfy the conditions in \eqref{match-bdry-cond}.
\end{proof}

\begin{remark}\label{constant-dep-k=0}
	One might ask how the constant $C$ in Lemma \ref{3D-Fred-k=1}
	depends on $(B,S)$. An examination of the proof shows that for an open neighborhood of $(B,S)$, defined using the $L^2_l$ norm for some value of $l$, we may find a constant $C$ 
	which works for all elements in this neighborhood. (In fact, we can work with $l=2$. But the precise value of $l$ shall not be important for us.)
\end{remark}

\begin{remark} \label{subsapce-W}
	It is worthwhile to observe that for the most part in the proof of Lemma \ref{3D-Fred-k=1} 
	we can work with $(\psi,c,\eta)$ inside a smaller subspace of $\mathcal W$
	(compare \cite[Lemma  3.5]{SW:I-bdry}.) 
	In Step 1, triples $(\psi,c,\eta)=(0,d_B\xi,0)$ with 
	$\xi\in \Gamma_\nu(Y,E)$ suffices for our purposes and through Steps 2-4 of the proof, we 
	need the inequality in \eqref{weak-conv-assumption} only for smooth $(\psi,c,\eta)$ such that 
	$\eta(0)=\eta(1)=0$, $*c|_\Sigma=0$ and $c\vert_\Sigma=0$. 
	It is only in the last step of the proof 
	that we use the full strength of  \eqref{weak-conv-assumption} to show that $(\varphi,b,\nu)\in \mathcal W$.
\end{remark}

The following lemma concerns the generalization of \eqref{Fredholm-3D-k=1} for higher Sobolev norms. This lemma is the counterpart of \cite[Proposition 3.1]{SW:I-bdry}.

\begin{lemma}\label{3D-Fred-k=higher}
	For any non-negative integer $k$, there is a constant $C_k$ such that if $(\varphi,b,\nu)\in \mathcal W$ and $\fD_{(B,S)} (\varphi,b,\nu)$
	has finite $L^2_k$ norm, then $(\varphi,b,\nu)$ is in $L^2_{k+1}$ and
	\[
	  \vert\!\vert (\varphi,b,\nu)\vert\!\vert_{L^2_{k+1}}\leq C_k (\vert\!\vert  \fD_{(B,S)} (\varphi,b,\nu)\vert\!\vert_{L^2_k}
	  +\vert\!\vert (\varphi,b,\nu)\vert\!\vert_{L^2}).
	\]
\end{lemma}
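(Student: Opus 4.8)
The plan is to prove Lemma~\ref{3D-Fred-k=higher} by induction on $k$, bootstrapping regularity in the three local regions (interior of $Y$, near $s=1$, near the collar $(-\epsilon,0]\times\Sigma$) exactly as in the proof of Lemma~\ref{3D-Fred-k=1}. The base case $k=0$ is precisely \eqref{Fredholm-3D-k=1}, so suppose the statement holds for $k-1$. Given $(\varphi,b,\nu)\in\mathcal W$ with $\fD_{(B,S)}(\varphi,b,\nu)\in L^2_k$, the inductive hypothesis already gives $(\varphi,b,\nu)\in L^2_k$ with the corresponding estimate, and we must upgrade to $L^2_{k+1}$. As in Lemma~\ref{3D-Fred-k=1}, multiplying by a cutoff pair $(\rho_1,\rho_2)$ with $\rho_1|_\Sigma=\rho_2(0)$ preserves both membership in $\mathcal W$ and the hypothesis (the commutator of $\fD_{(B,S)}$ with multiplication is a zeroth-order operator, now applied to an $L^2_k$ triple, so it is $L^2_k$-bounded), and a partition of unity reduces to the three model cases.

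First I would treat the interior and the $s=1$ endpoint, which are standard: in the interior, $\fD_{(B,S)}$ restricted to $(\varphi,b)$ is (up to zeroth-order terms involving $F_B$ and $S$) an elliptic first-order operator — essentially $d_B^*\oplus *d_B$ plus gradient terms — and interior elliptic regularity gives $L^2_{k}\to L^2_{k+1}$; near $s=1$ the relevant piece is the ODE operator $J_0\frac{d\nu}{ds}+S(\nu)$ with the constant Lagrangian boundary condition $\nu(1)\in L$, for which elliptic boundary regularity for the $\bar\partial$-type / first-order ODE system is classical. The heart of the matter is the collar region $(-\epsilon,0]\times\Sigma$, and there I would mirror Steps~1--4 of the proof of Lemma~\ref{3D-Fred-k=1} verbatim but with one more derivative: using the ASD-type product decomposition, $\fD_{(B,S)}$ in the collar takes the schematic form $\partial_s - (\text{operator on }\Sigma\text{ coupled to the }\R^{2n}\text{-ODE})$, so knowing $\fD_{(B,S)}(\varphi,b,\nu)\in L^2_k$ plus the inductive $L^2_k$ bound lets me (a) apply the higher-$k$ version of the elliptic estimates of Appendix~\ref{elliptic-reg-sec} (Lemma~\ref{elliptic-reg}, Lemma~\ref{elliptic-reg-laplace}, Lemma~\ref{weak-Dirac-lemma}) in the $\Sigma$-directions to control $\nabla_\Sigma$-derivatives of $\varphi$, $\tau$ and $q$ up to order $k+1$, and (b) then read off the $\partial_s$-derivatives of $q$ and $\nu$ from the equation itself, exactly as in Step~4. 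The boundary conditions \eqref{match-bdry-cond} are already known to hold since $(\varphi,b,\nu)\in\mathcal W$, so unlike Lemma~\ref{3D-Fred-k=1} there is no analogue of Step~5 to carry out; this makes the higher-$k$ induction cleaner than the base case.

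The one technical point that needs care — and which I expect to be the main obstacle — is the interface between the Sobolev scales on $(-\epsilon,0]\times\Sigma$ and the Hodge decomposition: just as in Subsection~\ref{domain-lemmas}, the natural regularity statement in the collar is the anisotropic one from \eqref{splitting-Sobolev}, $L^2_k((-\epsilon,0]\times\Sigma)=L^2_k((-\epsilon,0],L^2(\Sigma))\cap L^2((-\epsilon,0],L^2_k(\Sigma))$, and one must check that the $\Sigma$-regularity gained in the analogue of Step~3 together with the $s$-regularity from Step~4 actually combine to give honest $L^2_{k+1}$ regularity on the product. This is where one uses that $d_\alpha$ commutes appropriately with the $s$-derivative (the connection being in temporal gauge in the collar, as arranged in the proof of Lemma~\ref{3D-Fred-k=1}) and that the operator $\fD_{(B,S)}$ is, modulo lower-order terms with smooth coefficients, diagonal with respect to the splitting into $\mathcal H^1(\Sigma;\alpha)$, $\operatorname{image}(d_\alpha)$ and $\operatorname{image}(*_2 d_\alpha)$. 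Once this is set up, the estimates assemble by the same absorption argument as in Lemma~\ref{3D-Fred-k=1}, with the constant $C_k$ depending on $(B,S)$ only through finitely many of its Sobolev norms (compare Remark~\ref{constant-dep-k=0}), which is exactly the uniformity needed later when $(B,S)=(B_\theta,S_\theta)$ varies with $\theta$. I would also remark that, as in Remark~\ref{subsapce-W}, it suffices throughout to test against triples in the smaller subspace of $\mathcal W$, which is convenient when applying the appendix lemmas.
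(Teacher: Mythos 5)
Your outline is essentially correct, but it is not the route the paper takes. You propose to re-run the collar analysis of Lemma~\ref{3D-Fred-k=1} at each level $k$, using the strong form of the equation together with the higher-$k$ statements of Lemma~\ref{elliptic-reg-laplace}, Lemma~\ref{elliptic-reg} and Lemma~\ref{weak-Dirac-lemma}: first $\varphi\in L^2_{k+1}$ (Neumann-type Laplace regularity), then $\tau=b(\partial_s)\in L^2_{k+1}$ (Lemma~\ref{elliptic-reg} with $\sigma=\partial_s$, which needs $d_B\varphi\in L^2_k$, i.e.\ the previous step), then $\nabla_\Sigma q\in L^2_k$ (Lemma~\ref{weak-Dirac-lemma}, whose data $d_{\beta_s}^*q=\mu_1+\partial_s\tau$ and $*_2d_{\beta_s}q=(\mu_2)_{d s}-\partial_s\varphi$ lie in $L^2_k$ only after the first two steps), and finally the pure $\partial_s$-derivatives of $q$ and $\nu$ read off from the equation. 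This works, and in fact your anisotropy worry is not a real obstacle: Lemma~\ref{weak-Dirac-lemma} already gives the full product $L^2_k$ norm of $\nabla_\Sigma q$, so together with $\partial_s q\in L^2_k$ you get honest $L^2_{k+1}$ control without any further interface argument; likewise the $\nu$-component needs no boundary regularity at $s=1$, since $J_0\nu'+S\nu\in L^2_k$ and $\nu\in L^2_k$ give $\nu\in L^2_{k+1}$ directly. The paper argues differently, following the scheme of \cite{SW:I-bdry}: it applies the $k=1$ weak regularity statement (Lemma~\ref{3D-Fred-k=1}, in the refined form of Remark~\ref{subsapce-W}) to iterated \emph{tangential Lie derivatives} $\sL^k_{\bX}(\varphi,b,0)$, estimating the commutators $[\sL^k_{\bX},\fD_{(B,S)}]$ and killing the boundary terms using that $(b|_\Sigma,\nu(0))$ and $(d_B(\sL^k_{\bX})^*\xi,0)$ both lie in $\mathcal L$; only the final step — trading normal derivatives for tangential ones via the collar form of $*d_Bb+d_B\varphi$ and $d_B^*b$ — coincides with your step (b). What each approach buys: the Lie-derivative route funnels everything through the single $k=1$ lemma, so one never has to re-verify the hypotheses of the appendix lemmas at higher $k$, and Remark~\ref{subsapce-W} exempts the differentiated triple from satisfying the boundary conditions; your route avoids the $\sL^k_{\bX}$ commutator and boundary-term bookkeeping by using the equation strongly (closer in spirit to the nonlinear bootstrap of Section~\ref{regularity}), at the price of carefully ordering the steps $\varphi\to\tau\to q$ and checking the weak hypotheses of the appendix lemmas at every level, including the uniformity of constants needed for Remark~\ref{constant-dep-k=higher}.
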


\begin{proof}
	It is obvious from the definition that 
	\[
	  \vert\!\vert\nu\vert\!\vert_{L^2_{k+1}}\leq C_k(\vert\!\vert  \fD_{(B,S)} (\varphi,b,\nu)\vert\!\vert_{L^2_k}+\vert\!\vert\nu\vert\!\vert_{L^2}).
	\]
	We prove the corresponding claim for $(\varphi,b)$ by induction on $k$. 
	We already verified the case that $k=0$. Suppose $X_1$, $\dots$, $X_k$ are smooth vector fields on $Y$. 
	We assume that the restriction of $X_i$ to the boundary of $Y$ 
	is either tangential or is $\partial_s$. 
	To obtain the desired result, it suffices to show that for any such combination of vector fields, the inequality in \eqref{weak-conv-assumption} holds for any $(\psi,c,\eta)$
	if we replace $(\varphi,b,\nu)$ with
	\begin{equation}\label{sL-def}
	  \sL(\varphi,b,\nu):= 
	  (\sL^k_{\bX}(\varphi),\sL^k_{\bX}(b),0).
	\end{equation}
	Here $\sL^k_{\bX}$ is the composition $\sL_{X_1}\dots\sL_{X_k}$ of Lie derivatives.
	This would be a straightforward application of integration by parts if there were no boundary terms. However, the boundary terms on $Y$ and the interval $[0,1]$
	require a more careful analysis.
	
	First we consider the case that all $X_i$ have tangential restriction to the boundary of $Y$. Let $(\psi, c ,\eta)\in \mathcal W$ 
	be chosen such that $ c \vert_{\Sigma}=0$. Since $*b|_\Sigma=0$ and 
	the vector fields $X_i$ are tangential, we have $*\sL^k_\bX b\vert_\Sigma=0$. For now, we also assume that $(\varphi,b,\nu)$ is a smooth triple. By replacing $\varphi$ and $b$
	 in \eqref{3D-stokes-2}, \eqref{3D-stokes-3} and \eqref{3D-Stokes-1} with $\sL^k_\bX\varphi$ and $\sL_\bX^kb$, we have
	\begin{align*}
	  \langle\sL( \varphi,b,\nu),\fD_{(B,S)}(\psi, c,\eta)\rangle_{L^2}
	 =&\langle\fD_{(B,S)}\sL( \varphi, b,\nu),(\psi, c,\eta)\rangle_{L^2}\\
	 \leq &\vert \langle \sL\fD_{(B,S)} (\varphi,b, \nu),(\psi, c,\eta)\rangle_{L^2}\vert+ C\vert\!\vert (\varphi,b,\nu)\vert\!\vert_{L^2_{k}}\cdot
	 \vert\!\vert (\psi, c,\eta)\vert\!\vert_{L^2} \\
	 \leq &C\(\vert\!\vert\fD_{(B,S)} (\varphi,b, \nu)\vert\!\vert_{L^2_{k}}+\vert\!\vert (\varphi,b,\nu)\vert\!\vert_{L^2_{k}} \)
	 \vert\!\vert (\psi, c,\eta)\vert\!\vert_{L^2} \\
	 \leq &CC_{k-1}\(\vert\!\vert\fD_{(B,S)} (\varphi,b, \nu)\vert\!\vert_{L^2_{k}}+\vert\!\vert (\varphi,b,\nu)\vert\!\vert_{L^2} \)
	 \vert\!\vert (\psi, c,\eta)\vert\!\vert_{L^2}.
	\end{align*}
	The first inequality above is a consequence of the fact that $\sL$ and $\fD_{(B,S)}$ 
	commute up to differential operators of degree at most $k$. We can drop the smoothness assumption on
	$(\varphi,b,\nu)$ by taking a sequence of smooth triples $\{(\varphi^j,b^j,\nu^j)\}$ which are $L^2_k$ convergent to $(\varphi,b,\nu)$. Repeating the above argument gives the inequality
	\begin{align}
	  \langle\sL( \varphi^j,b^j,\nu^j),\fD_{(B,S)}(\psi, c,\eta)\rangle_{L^2}
	 \leq &\vert \langle \sL\fD_{(B,S)} (\varphi^j,b^j, \nu^j),
	 (\psi, c,\eta^j)\rangle_{L^2}\vert\nonumber\\
	 &+ C\vert\!\vert (\varphi^j,b^j,\nu^j)
	 \vert\!\vert_{L^2_{k}}\cdot\vert\!\vert (\psi, c,\eta)\vert\!\vert_{L^2} \label{inequality-j}
	\end{align}
	Since all the vector fields involved in the definition of $\sL$ are tangential, we can use integration
	by parts to move the operator of degree $k$ to the other side of the pairing without adding any boundary term:
	\[
	  \langle \sL\fD_{(B,S)} (\varphi^j,b^j,\nu^j),(\psi, c,\eta^j)\rangle_{L^2}=
	  \langle \fD_{(B,S)} (\varphi^j,b^j, \nu^j),\sL^*(\psi, c,\eta^j)\rangle_{L^2}.
	\]
	This can be used to show that we can take the limit of the inequality in \eqref{inequality-j} as $j$ goes to
	infinity to obtain the desired inequality for $(\varphi,b,\nu)$.
	
	According to Remark \ref{subsapce-W}, in order to obtain \eqref{Fredholm-3D-k=1} with $(\varphi,b,\nu)$ being replaced by 
	$\sL(\varphi,b,\nu)$, we need to control the following $L^2$-pairing where 
	$\xi\in \Gamma_\nu(Y,E)$:
	\begin{equation}\label{special-pairing}
	  \langle\sL( \varphi,b,\nu),\fD_{(B,S)}(0,d_B\xi,0)\rangle_{L^2}
	 =\langle \sL^k_\bX\varphi,d_B^*d_B\xi\rangle_{L^2}+
	 \langle \sL^k_\bX b,*[F_B,\xi]\rangle_{L^2}
	\end{equation}
	To estimate \eqref{special-pairing}, we assume that $(\varphi,b,\nu)$ is smooth. Then a similar argument as in the previous case shows that the same estimate holds for the general case.
	First consider the first term on the right hand side, which is equal to 
	$\langle d_B \sL^k_\bX \varphi,d_B\xi\rangle$ as a consequence of the Stokes theorem and 
	$\xi\in \Gamma_\nu(Y,E)$:
	\begin{align}
	  \langle d_B \sL^k_\bX \varphi,d_B\xi\rangle
	 \leq &\vert \langle \sL^k_\bX d_B\varphi,d_B\xi\rangle\vert +
	 C\vert\!\vert (\varphi,b,\nu)\vert\!\vert_{L^2_{k}}\cdot
	 \vert\!\vert d_B\xi\vert\!\vert_{L^2}\nonumber\\
	 \leq &\vert \langle d_B\varphi+*d_Bb, (\sL^k_\bX)^* d_B\xi\rangle\vert
	 +\vert \langle *d_Bb, (\sL^k_\bX )^*d_B\xi\rangle\vert
	 +C\vert\!\vert (\varphi,b,\nu)\vert\!\vert_{L^2_{k}}\cdot
	 \vert\!\vert d_B\xi\vert\!\vert_{L^2}\nonumber\\
	 \leq &\vert \langle *d_Bb, (\sL^k_\bX )^*d_B\xi\rangle\vert
	 +CC_{k-1}(\vert\!\vert\fD_{(B,S)} (\varphi,b, \nu)\vert\!\vert_{L^2_{k}}+
	 \vert\!\vert (\varphi,b,\nu)\vert\!\vert_{L^2})\cdot
	 \vert\!\vert d_B\xi\vert\!\vert_{L^2}\nonumber\\
	 \leq &\vert \langle *d_Bb, d_B(\sL^k_\bX )^*\xi\rangle\vert
	 +CC_{k-1}(\vert\!\vert\fD_{(B,S)} (\varphi,b, \nu)\vert\!\vert_{L^2_{k}}+
	 \vert\!\vert (\varphi,b,\nu)\vert\!\vert_{L^2})\cdot
	 \vert\!\vert d_B\xi\vert\!\vert_{L^2}\nonumber.
	 \end{align}
	 The first inequality is a consequence of the fact that $d_B \sL^k_\bX-\sL^k_\bX d_B $ is a differential operator of degree at most $k$. Similarly, to obtain the last inequality we observe that 
	 $(\sL^k_\bX )^*d_B-d_B(\sL^k_\bX )^*$ is a differential operator of degree at most $k$ such that each term has at most one derivative in the 
	 normal direction. Therefore, we can use integration by parts to obtain
	 \[
	   \left\vert\langle *d_Bb, \((\sL^k_\bX )^*d_B-d_B(\sL^k_\bX )^*\)\xi\rangle\right\vert
	   \leq C\vert\!\vert (\varphi,b,\nu)\vert\!\vert_{L^2_{k}}\cdot
	 \vert\!\vert d_B\xi\vert\!\vert_{L^2},
	 \]
	
	To bound the term $\langle *d_Bb, d_B(\sL^k_\bX )^*\xi\rangle$, note that we have
	\begin{align}
	  \left |\langle *d_Bb, d_B(\sL^k_\bX )^*\xi\rangle \right |
	  &= 
	  \left |\langle b, *[F_B,(\sL^k_\bX )^*\xi]\rangle-\int_{\Sigma}\tr( b\wedge d_B(\sL^k_\bX )^*\xi)\right|\label{Stokes-apply-sim}\\
	 	  &\leq C \vert\!\vert (\varphi,b,\nu)\vert\!\vert_{L^2_{k-1}}\vert\!\vert d_B\xi\vert\!\vert_{L^2}\label{integ-parts-sim}.
	 \end{align}
	 The identity in \eqref{Stokes-apply-sim} is a consequence of \eqref{3D-Stokes-1}. To obtain \eqref{integ-parts-sim}, we use integration by parts and the fact that the integral over $\Sigma$ in \eqref{Stokes-apply-sim} vanishes because 
	 $(b\vert_{\Sigma},\nu(0))$ and $(d_B(\sL^k_\bX )^*\xi,0)$ belong to $\mathcal L$. In summary, we have
	 \[
	   \langle \sL^k_\bX\varphi,d_B^*d_B\xi\rangle_{L^2}\leq 
	   CC_{k-1}(\vert\!\vert\fD_{(B,S)} (\varphi,b, \nu)\vert\!\vert_{L^2_{k}}+
	 \vert\!\vert (\varphi,b,\nu)\vert\!\vert_{L^2})\cdot
	 \vert\!\vert d_B\xi\vert\!\vert_{L^2}.
	 \]
	 It is clear that the second term on the left hand side of \eqref{special-pairing} can be also bounded by a similar term as in the right hand side of the above inequality. 
	 Therefore, we can use Lemma \ref{3D-Fred-k=1} and Remark \ref{subsapce-W} to conclude that
	 $\sL( \varphi,b,\nu)$ is in $L^2_{1}$ norm and we have
	 \begin{equation} \label{result-ineq}
	   \vert\!\vert\sL( \varphi,b,\nu)\vert\!\vert_{L^2_{1}}\leq 
	   CC_{k-1}(\vert\!\vert\fD_{(B,S)} (\varphi,b, \nu)\vert\!\vert_{L^2_{k}}+
	 \vert\!\vert (\varphi,b,\nu)\vert\!\vert_{L^2})\cdot
	 \vert\!\vert d_B\xi\vert\!\vert_{L^2}
	 \end{equation}
	 for an appropriate choice of $C$. This completes the proof in the case that all vector fields $X_i$
	 are tangential. 
	 
	Let $b$ have the form $q+\tau ds$ in a collar neighborhood $(-\epsilon,0]\times \Sigma$ of 
	the boundary of $Y$ and $\beta_s$ denotes the restrictions of $B$ to $\Sigma\times\{s\}$. Then we have
	 \begin{align*}
		*d_Bb+d_B\varphi&=\left(*_2\partial_s q-*_2d_{\beta_s}\tau+d_{\beta_s}\varphi\right)+\left(\partial_s\varphi+*_2d_{\beta_s} q\right)ds,\\
		d_B^*b&=d_{\beta_s}^*q-\partial_s\tau.
	 \end{align*}
	 Thus, these identities can be used to replace each normal derivative with components
	 of $\fD_{(B,S)} (\varphi,b, \nu)$ and tangential derivatives. Our analysis in the 
	 tangential case allows us to conclude that \eqref{result-ineq} holds in the case that some of 
	 the vector fields $X_i$ are equal to $ds$ in a neighborhood of the boundary. This completes the proof of 
	 the lemma.
\end{proof}

\begin{remark}\label{constant-dep-k=higher}
	An analogue of Remark \ref{constant-dep-k=0} applies to Lemma \ref{3D-Fred-k=higher}. 
	We may find a neighborhood of $(B,S)$, defined using an appropriate $L^2_l$ norm, such that Lemma \ref{3D-Fred-k=higher} holds for all 
	elements of this neighborhood using a universal constant $C$.	
\end{remark}

\begin{remark}\label{invertible-perturbation}
	Lemma \ref{3D-Fred-k=1} implies that $\fD_{(B,S)}:\mathcal H\to \mathcal W$ is a self-adjoint Fredholm operator because the inclusion of $\mathcal W$ in $\mathcal H$ is compact.  
	In particular, for $\lambda\in \R$, the operator $\fD_{(B,S)}-\lambda\cdot  {\rm Id}$ is invertible if and only if $\fD_{(B,S)}-\lambda\cdot  {\rm Id}$ is injective. 
	Moreover, spectral theory of self-adjoint compact operators implies that eigenvectors of $\fD_{(B,S)}$ provide a basis for $\mathcal H$, and the intersection of any finite interval with the eigenvalues of $\fD_{(B,S)}$ is finite. 
	In particular, if $\delta$ is small enough, then the operator $\fD_{(B,S)}-\delta \cdot {\rm Id}$ is invertible.
\end{remark}

\subsection{Fredholm theory on mixed cylinders}\label{Fr-mixed-cly}

Our next goal is to use the results of the previous two subsections to prove Theorem \ref{Fredholm-flex}. Another key input is given by the results of \cite{SW:I-bdry} about spectral flows of self-adjoint operators with varying domains. In fact, our proof here is inspired by the proof of Fredholm theory results in \cite{SW:I-bdry}. We assume that $I$, $J$, $A$, $S$, $\{\mathcal L_\theta,L_\theta\}_{\theta\in I}$ are given as in Theorem \ref{Fredholm-flex}. As before we denote the restriction of $A$ and $S$ to $Y\times \{\theta\}$ and $[0,1]\times \{\theta\}$ by $B_\theta$ and $S_\theta$. Let also $\alpha_\theta$ denote the restriction of $B_\theta$ to $\Sigma\times\{\theta\}$. Associated to $\alpha_\theta$, $\mathcal L_\theta$ and $\theta$, we have the Hilbert subspace $\mathcal W_\theta$ of $\mathcal H$, defined in Subsection \ref{domain-lemmas}. Then any element $(\zeta,\nu)$ in $E^k_{\fL}(I)$, the domain of $\mathcal D_{(A,u)}$, determines
\[
  \hspace{2cm} (\varphi_\theta,b_\theta,\nu_\theta)\in \mathcal W_\theta,\hspace{1cm}\theta\in I
\]
by restriction to $Y\times \{\theta\}$ and $[0,1]\times \{\theta\}$. (In the case that $k=1$, this holds for almost every value of $\theta$.) Moreover, the operator $\mathcal D_{(A,S)}$ has the form $\frac{d}{d\theta}-\fD_{(B_\theta,S_\theta)}$ as it is pointed out in \eqref{D-A-S}.

Proposition \ref{W-change-domain} implies that for $\theta_0\in I$, there is an open neighborhood of $\theta_0$ such that for any point $\theta$ in this neighborhood, there is an isomorphism $Q_\theta :\mathcal H\to \mathcal H$ mapping $\mathcal W_{\theta_0}$ to $\mathcal W_{\theta}$. To prove Theorem \ref{Fredholm-flex}, it suffices to consider the case that $I$ equals this neighborhood of $\theta_0$, and then use compactness of the closure of $J$ in $I$ to extend the result to the general case. We also assume that $\theta_0=0$, and denote $\mathcal W_{\theta_0}$ by $\mathcal W_0$. The following lemma is a consequence of Proposition \ref{W-change-domain} and the definition of the operators $Q_\theta$ given there.
\begin{lemma}\label{Q-family-C1}
	The map $\bQ:I\to B(\mathcal H)$ given by $\{Q_\theta\}_{\theta\in I}$ is smooth. 
	Furthermore, for any $k$ and any $(\zeta,\nu)\in L^2(I,\mathcal H)$, we have
	\[
	  (\zeta,\nu)\in L^2_{k}(Y\times I,\Lambda^1\otimes E)\oplus L^2_{k}([0,1]\times I,\R^{2n}) \iff \bQ(\zeta,\nu) \in L^2_{k}(Y\times I,\Lambda^1\otimes E)\oplus L^2_{k}([0,1]\times I,\R^{2n}),
	\]
	where $\bQ(\zeta,\nu) $ is defined as follows. The restriction of $\bQ(\zeta,\nu) $ to $Y\times \{\theta\}$ and $I\times \{\theta\}$ is given by the triple $Q_\theta(\varphi_\theta,b_\theta,\nu_\theta)$
	where $(\varphi_\theta,b_\theta,\nu_\theta)$ is given by the restriction of $(\zeta,\nu) $ to $Y\times \{\theta\}$ and $I\times \{\theta\}$.
	There is also a constant $C_k$ such that for any $(\zeta,\nu)$ as above, we have
	\[
	  C_k^{-1}|\!|(\zeta,\nu)|\!|_{L^2_k}\leq |\!|\bQ(\zeta,\nu)|\!|_{L^2_k}\leq C_k|\!|(\zeta,\nu)|\!|_{L^2_k}.
	\]
\end{lemma}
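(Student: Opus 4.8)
The plan is to extract everything from the explicit construction of $Q_\theta$ in the proof of Proposition \ref{W-change-domain}. Recall that $Q_\theta$ equals the identity away from the two collar regions and, inside them, is built by interpolating (against the fixed cutoff $\rho$) the operators $T_{\alpha_\theta,V_\theta}$ and $T_{L_\theta}$; in turn $T_{\alpha_\theta,V_\theta}$ is assembled from the isomorphism $\Phi_{\alpha_\theta}$ of Lemma \ref{cons-Phi-alpha}, the orthogonal projections onto $V_\theta$ and onto its $\bJ$-image, and the reassembly map $(d_{\alpha_0}\zeta+*_2d_{\alpha_0}\zeta',0)\mapsto(d_{\alpha_\theta}\zeta+*_2d_{\alpha_\theta}\zeta',0)$ of \eqref{def-Tav}, while $T_{L_\theta}$ is the analogous interpolation of orthogonal projections onto $L_\theta$ and $J_0L_\theta$. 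So the first step is to record smooth $\theta$-dependence of the data: $\theta\mapsto\alpha_\theta$, $\theta\mapsto V_\theta$, $\theta\mapsto L_\theta$ are smooth, since $A$, $S$ and $\{\mathcal L_\theta,L_\theta\}$ are smooth in the hypotheses of Theorem \ref{Fredholm-flex}, so the associated orthogonal projections are smooth in operator norm. For the remaining ingredient, $\Phi_{\alpha_\theta}$ is given by the closed formula \eqref{Phi-alpha-def} in terms of $d_{\alpha_\theta}$, $*_2$, and the Green operator $G_{\alpha_\theta}=\Delta_{\alpha_\theta}^{-1}$; since $\theta\mapsto\Delta_{\alpha_\theta}$ is a polynomial, hence smooth, family of invertible operators whose inverse is uniformly bounded on $I$ by \eqref{ineq-Green}, the map $\theta\mapsto G_{\alpha_\theta}$ is smooth by the resolvent identity, so $\theta\mapsto\Phi_{\alpha_\theta}$, and therefore $\theta\mapsto T_{\alpha_\theta,V_\theta}$ and $\theta\mapsto T_{L_\theta}$, are smooth --- both as families of operators on $L^2(\Sigma,\Lambda^1\otimes F)\oplus\R^{2n}$ and on each of its $L^2_k$-subspaces, with all $\theta$-derivatives uniformly bounded thanks to \eqref{diff-T-k}. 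Since $\rho$ is fixed, this shows $\theta\mapsto Q_\theta$ is smooth into $B(\mathcal H)$ and into the bounded operators on each $L^2_k$-triple subspace of $\mathcal H$, with uniform bounds (including the two-sided bound \eqref{ineq-Q-k}).

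Granting this, the $L^2_k$-equivalence is a fiberwise Leibniz argument using the description of the anisotropic Sobolev norm on $Y\times I$ as $\|\cdot\|_{L^2_k}\sim\sum_{j=0}^{k}\|\partial_\theta^j(\cdot)\|_{L^2(I,\,L^2_{k-j})}$ (this is \eqref{fun-space-Ban} and the material of Appendix \ref{Banach-space-valued}), with an identical description for the $[0,1]\times I$ factor. Since $Q_\theta$ contains no $\theta$-differentiation, $\partial_\theta^j(\bQ(\zeta,\nu))=\sum_{i\le j}\binom{j}{i}(\partial_\theta^iQ_\theta)\bigl(\partial_\theta^{j-i}(\zeta,\nu)\bigr)$, and by the first paragraph each $\partial_\theta^iQ_\theta$ is a bounded operator on the $L^2_{k-j}$-triple subspace of $\mathcal H$, uniformly in $\theta\in I$. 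Bounding the $L^2_{k-j}(Y)$-norm of the $\theta$-slice of each summand by a constant times the $L^2_{k-j}(Y)$-norm of the corresponding slice of $\partial_\theta^{j-i}(\zeta,\nu)$, integrating in $\theta$, and using $\|\cdot\|_{L^2_{k-j}}\le\|\cdot\|_{L^2_{k-(j-i)}}$, gives $\|\bQ(\zeta,\nu)\|_{L^2_k}\le C_k\|(\zeta,\nu)\|_{L^2_k}$, and in particular the forward implication. For the reverse implication and the lower estimate $C_k^{-1}\|(\zeta,\nu)\|_{L^2_k}\le\|\bQ(\zeta,\nu)\|_{L^2_k}$, one runs the very same argument for the family $\{Q_\theta^{-1}\}_{\theta\in I}$, which is again smooth into $B(\mathcal H)$ and into the bounded operators on each $L^2_k$-triple subspace --- inversion being smooth on the open set of invertibles, with $\|Q_\theta^{-1}-\mathrm{Id}\|_{L^2_k}$ obeying a bound of the type \eqref{norm-c-k} by Proposition \ref{W-change-domain}, uniformly on $I$.

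The only point requiring a little attention, rather than pure bookkeeping, is uniformity of all constants over $I$; but after the reduction preceding the lemma --- where $I$ is shrunk around $\theta_0=0$ --- the quantities $\|\alpha_\theta-\alpha_0\|_{L^2_k}$, $d(V_\theta,V_0)$, $d(L_\theta,L_0)$ all stay below the thresholds $c_k$ of Proposition \ref{W-change-domain}, so \eqref{norm-c-k} and \eqref{ineq-Q-k} apply with $\theta$-independent constants, and the resolvent bounds \eqref{ineq-Green} are likewise uniform there; compactness of $\bar J\subset I$ is then what patches these local statements into the form needed for Theorem \ref{Fredholm-flex}. I do not anticipate a genuine obstacle: the lemma simply repackages the construction of Proposition \ref{W-change-domain}, and its only substantive analytic input is the standard smooth dependence of $G_{\alpha_\theta}=\Delta_{\alpha_\theta}^{-1}$ on $\theta$.
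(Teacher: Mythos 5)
Your argument is correct and takes the same route the paper intends: the paper simply states that the lemma ``follows easily from Proposition \ref{W-change-domain} and the definition of the operators $Q_\theta$,'' and your proposal is precisely a faithful unpacking of that remark --- tracing smooth $\theta$-dependence through $\Phi_{\alpha_\theta}$, $T_{\alpha_\theta,V_\theta}$, $T_{L_\theta}$, using the Leibniz rule for the fiberwise operator $\bQ$ against the anisotropic $L^2_k$-norm from \eqref{fun-space-Ban}, and running the same argument on $Q_\theta^{-1}$ for the reverse direction. The uniformity point you flag is handled exactly as you say, by the shrinking of $I$ about $\theta_0$ made just before the lemma.
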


The following lemma follows easily from Proposition \ref{W-change-domain} and Lemma \ref{Q-family-C1}.

\begin{lemma}\label{D-change-basis-control}
	There is a constant $C$ such that for any $\theta \in I$, the operator \[\rD_\theta:=Q_\theta^{-1} \fD_{(B_\theta,S_\theta)}Q_\theta:\mathcal W_0\to \mathcal H\]
	satisfies
	\begin{equation}
		|\!|\rD_\theta(\varphi,b,\nu)|\!|_{L^2}+|\!|\frac{d\rD_\theta}{d\theta} (\varphi,b,\nu)|\!|_{L^2}\leq C|\!|(\varphi,b,\nu)|\!|_{L^2_1}
	\end{equation}
	for $(\varphi,b,\nu)\in \mathcal W_0$.
\end{lemma}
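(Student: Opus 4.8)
The plan is to reduce everything to two inputs: the formula for $\fD_{(B_\theta,S_\theta)}$ in \eqref{DBp}, which is a first-order differential operator with coefficients depending smoothly on $\theta$ (through $B_\theta$ and $S_\theta$), and the quantitative control on the conjugating family $\{Q_\theta\}$ provided by Proposition \ref{W-change-domain} and Lemma \ref{Q-family-C1}. Writing $\rD_\theta = Q_\theta^{-1}\fD_{(B_\theta,S_\theta)}Q_\theta$, the first bound is the easier one: since $Q_\theta$ maps $\mathcal W_0$ isomorphically onto $\mathcal W_\theta$ with $\mathcal W_0$-to-$\mathcal W_\theta$ operator norm and inverse norm bounded uniformly in $\theta$ (this is \eqref{ineq-Q-k} with $k=1$, using that $I$ has been shrunk so that \eqref{norm-c-1} holds for all $\theta\in I$), and since $\fD_{(B_\theta,S_\theta)}:\mathcal W_\theta\to\mathcal H$ has operator norm bounded uniformly in $\theta$ (the coefficients $B_\theta$, $S_\theta$ vary continuously over the compact closure of $I$, so their $C^1$, hence $L^2_2$, norms are uniformly bounded), the composite $\rD_\theta:\mathcal W_0\to\mathcal H$ has operator norm bounded by a constant $C$. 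That gives $\|\rD_\theta(\varphi,b,\nu)\|_{L^2}\le C\|(\varphi,b,\nu)\|_{L^2_1}$.

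For the derivative term, I would differentiate the product $\rD_\theta = Q_\theta^{-1}\fD_{(B_\theta,S_\theta)}Q_\theta$ using the Leibniz rule, obtaining three summands:
\[
  \frac{d\rD_\theta}{d\theta} = -Q_\theta^{-1}\Big(\frac{dQ_\theta}{d\theta}\Big)Q_\theta^{-1}\fD_{(B_\theta,S_\theta)}Q_\theta + Q_\theta^{-1}\Big(\frac{d}{d\theta}\fD_{(B_\theta,S_\theta)}\Big)Q_\theta + Q_\theta^{-1}\fD_{(B_\theta,S_\theta)}\frac{dQ_\theta}{d\theta}.
\]
Each factor here is controlled: $\|Q_\theta^{-1}\|$, $\|dQ_\theta/d\theta\|$ are uniformly bounded by Lemma \ref{Q-family-C1} (smoothness of $\bQ:I\to B(\mathcal H)$) both as operators on $\mathcal H$ and on the $L^2_1$-subspace; $\frac{d}{d\theta}\fD_{(B_\theta,S_\theta)}$ is again a first-order operator whose coefficients are $\partial_\theta B_\theta$, $\partial_\theta S_\theta$, uniformly bounded in $C^1$ over $\overline{I}$, so it too is bounded $\mathcal W_0\to\mathcal H$ uniformly. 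The one point that needs a moment's care is the third summand: there $\frac{dQ_\theta}{d\theta}$ lands in $\mathcal H$, not a priori in a domain on which $\fD_{(B_\theta,S_\theta)}$ is defined. So I would want to observe, from the explicit construction of $Q_\theta$ in the proof of Proposition \ref{W-change-domain} (it is built from the fixed $T_{\alpha_\theta,V_\theta}$, $T_{L_\theta}$ and a fixed cutoff, composed with the Hodge reparametrization), that $dQ_\theta/d\theta$ maps the $L^2_1$-subspace of $\mathcal H$ into itself with uniformly bounded operator norm — i.e. differentiating in $\theta$ does not cost a spatial derivative, because the $\theta$-dependence enters only through the finite-dimensional data $(\alpha_\theta, V_\theta, L_\theta)$ and the Green's operators $G_{\alpha_\theta}$, all of which depend smoothly on $\theta$ in the relevant operator norms by Lemma \ref{cons-Phi-alpha} and \eqref{diff-T-k}. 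Granting this, $\fD_{(B_\theta,S_\theta)}\,\frac{dQ_\theta}{d\theta}$ is bounded $\mathcal W_0\to\mathcal H$, and summing the three pieces yields the claimed inequality.

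The main obstacle, then, is this last bookkeeping: verifying that $\theta\mapsto Q_\theta$ is $C^1$ \emph{as a family of bounded operators on the $L^2_1$-subspace} and not merely on $\mathcal H$ — in other words, that the $\theta$-derivative of the conjugator is a zeroth-order (in the spatial sense) operator with uniform $L^2_1\to L^2_1$ bounds. This is essentially contained in Proposition \ref{W-change-domain} and Lemma \ref{Q-family-C1} already (they assert uniform $L^2_k$ comparability and smoothness of $\bQ$), so the proof should be short: after citing those two results for the uniform boundedness and the $C^1$-dependence of $\bQ$, expand $d\rD_\theta/d\theta$ by Leibniz, bound each of the three terms by $C\|(\varphi,b,\nu)\|_{L^2_1}$, and conclude. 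I would keep the write-up to a few lines, referencing \eqref{ineq-Q-k}, Lemma \ref{Q-family-C1}, and the explicit form \eqref{DBp} of $\fD_{(B_\theta,S_\theta)}$.
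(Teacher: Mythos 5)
Your proposal is correct and matches what the paper intends: the paper offers no written proof, asserting only that the lemma ``follows easily'' from Proposition \ref{W-change-domain} and Lemma \ref{Q-family-C1}, and your Leibniz expansion of $\frac{d}{d\theta}\bigl(Q_\theta^{-1}\fD_{(B_\theta,S_\theta)}Q_\theta\bigr)$ combined with the uniform bounds on $Q_\theta$, $Q_\theta^{-1}$, $\dot Q_\theta$ and on the zeroth-order operator $\frac{d}{d\theta}\fD_{(B_\theta,S_\theta)}$ (whose coefficients are $\partial_\theta B_\theta$, $\partial_\theta S_\theta$) is exactly the intended argument. You also rightly single out the only delicate point, namely that the summand $Q_\theta^{-1}\fD_{(B_\theta,S_\theta)}\dot Q_\theta$ needs $\dot Q_\theta$ to be bounded on the $L^2_1$-subspace, which is not literally recorded in Lemma \ref{Q-family-C1} (it states only the $L^2$ bound on $\dot Q_\theta$) but follows at once from the collar construction of $Q_\theta$ in Proposition \ref{W-change-domain}: the $\theta$-dependence enters only through $T_{\alpha_\theta,V_\theta}$ and $T_{L_\theta}$, which act slice-wise, so differentiating in $\theta$ (via Lemma \ref{cons-Phi-alpha} and the estimates of type \eqref{diff-T-k}) costs no spatial derivative.
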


In summary, we verify the following properties for the Hilbert spaces $\mathcal W_\theta$, and the operators $Q_\theta$.
\begin{itemize}
	\item[(W1)] (Proposition \ref{W-change-domain}) The inclusion map from the Hilbert space $\mathcal W_\theta$ to the Hilbert space $\mathcal H$ is compact and has a dense image.
	The bounded maps  
	$Q_\theta:\mathcal H \to \mathcal H$ defines a family of isomorphisms such that $Q_\theta(\mathcal W_0)=\mathcal W_\theta$.
	\item[(W2)] (Proposition \ref{W-change-domain} and Lemma \ref{Q-family-C1}) $\bQ:J\to B(\mathcal H)$ is $C^1$ and for any $k\geq 0$, there is a constant $C_k$ such that for any $(\varphi,b,\nu)\in \mathcal W_0$ we have
	\begin{align*}
		C_k^{-1}|\!|(\varphi,b,\nu)|\!|_{L^2_k}\leq |\!|Q_\theta(\varphi,b,\nu)|\!|_{L^2_k}\leq C_k|\!|(\varphi&,b,\nu)|\!|_{L^2_k},\\
		|\!|\frac{dQ_\theta}{d\theta}(\varphi,b,\nu)|\!|_{L^2}\leq C_0|\!|(\varphi,b,\nu)|\!|_{L^2}.
	\end{align*}
\end{itemize}
The following properties are also established for the operators  $\fD_{(B_\theta,S_\theta)}$.
\begin{itemize}
	\item[(A1)] (Lemma \ref{3D-Fred-k=1} and Remark \ref{constant-dep-k=0}) The operators $\fD_{(B_\theta,S_\theta)}:\mathcal H\to \mathcal H$ is an (unbounded) self-adjoint operator with domain $\mathcal W_\theta$, and they satisfy
			\[\vert \!\vert(\varphi,b,\nu)\vert \!\vert_{L^2_1}\leq C'_1(\vert \!\vert\fD_{(B_\theta,S_\theta)}(\varphi,b,\nu)\vert \!\vert_{L^2}+\vert \!\vert(\varphi,b,\nu)\vert \!\vert_{L^2})\]
			where the constant $C'_1$ is independent of $\theta$.
	\item[(A2)] (Lemma \ref{D-change-basis-control}) For any $(\varphi,b,\nu)\in \mathcal W_0$, we have
			\[
			  |\!|\rD_\theta(\varphi,b,\nu)|\!|_{L^2}+|\!|\frac{d\rD_\theta}{d\theta} (\varphi,b,\nu)|\!|_{L^2}\leq C'_2|\!|(\varphi,b,\nu)|\!|_{L^2_1},
			\]
			where the constant $C'_2$ is independent of $\theta$.			
\end{itemize}

Now, we turn to the proof of Theorem \ref{Fredholm-flex}. We first address the second part of the theorem. Fix
\[
  (\zeta,\nu)\in L^2(Y\times I,\Lambda^1\otimes E)\oplus L^2([0,1]\times I,\R^{2n}).
\]
Then $(\zeta,\nu)$ can be regarded as an $L^2$ map from $I$ to $\mathcal H$, and we denote the value of this map at $\theta \in I$ by $(\varphi_\theta,b_\theta,\nu_\theta)$.
Suppose for any compactly supported smooth $(\xi,\eta)\in E^1_{\fL}(I)$ the following inequality holds for a constant $\kappa$ independent of $(\xi,\eta)$:
\begin{equation}\label{weak-reg-D-A-S}
   \langle (\zeta,\nu),\mathcal D_{(A,S)}(\xi,\eta)\rangle_{L^2}\leq \kappa |\!|(\xi,\eta)|\!|_{L^2(I)}.
\end{equation}

Then (W1), (W2), (A1) and (A2) essentially imply that we may apply Theorem A.3 of \cite{SW:I-bdry} to show that $(\zeta,\nu)$ is in $E^1_{\fL}(J)$. One wrinkle is that the statement of Theorem A.3 of \cite{SW:I-bdry}, a priori, applies to the case that $I=J=\R$, and the operators $Q_\theta$ and $\fD_{(B_\theta,S_\theta)}$ satisfy the following additional assumptions.
\begin{itemize}
	\item[(W3)] There are Hilbert space isomorphisms $Q^{\pm}:\mathcal H\to \mathcal H$ such that $Q_\theta$ is convergent to $Q^\pm$ in $B(\mathcal H)$ as $\theta \to \pm \infty$.
	\item[(A3)] There are isomorphisms $\rD^\pm:\mathcal W_0\to \mathcal H$ such that $\rD_\theta$ is convergent to $\rD^\pm$ in $B(\mathcal W_0,\mathcal H)$ as $\theta \to \pm \infty$.
\end{itemize}

We may modify our setup slightly such that the conditions (W3) and (A3) are satisfied. First we replace the interval $I$ with $J$ and the interval $J$ with a smaller interval around $0$. Pick a smooth map $f: \R \to I$ that is identity in a neighborhood $K$ of the closure of $J$ in $I$ and is a constant map on the complement of $I$ in the domain. Similarly, pick $g:\R\to \R$ such that $g(\theta)=0$ if $\theta\in J$ and $g(\theta)=1$ if $\theta\in K$. For any $\theta\in \R$, define
\[
  \mathcal W_\theta':=\mathcal W_{f(\theta)}, \hspace{1cm}Q_\theta':=Q_{f(\theta)}, \hspace{1cm}\fD_{(B_\theta,S_\theta)}'=\fD_{(B_{f(\theta)},S_{f(\theta)})}-\delta g(\theta)\cdot {\rm Id},
\]
for a small positive real number $\delta$. As in Lemma \ref{Q-family-C1}, suppose also $\bQ':\R\to B(\mathcal H)$ is given by the operators $Q_\theta'$. 
Clearly the analogues of (W1), (W2), (A1) and (A2) are satisfied for these operators. Moreover, $Q_\theta'$ and $\fD_{(B_\theta,S_\theta)}'$ are constant with respect to $\theta$ once $|\theta|$ is large enough. In particular, (W3) clearly holds and Remark \ref{invertible-perturbation} implies that (A3) holds if $\delta$ is small enough. 
Suppose also 
\[
  (\zeta',\nu')\in L^2(Y\times \R,\Lambda^1\otimes E)\oplus L^2([0,1]\times \R,\R^{2n}).
\]
is given such that its restriction to $Y\times \{\theta\}$ and $I\times \{\theta\}$, denoted by $(\varphi_\theta',b_\theta',\nu_\theta')$,  is given by
\[
  (\varphi_\theta',b_\theta',\nu_\theta'):=(1-g(\theta))\cdot (\varphi_{f(\theta)},b_{f(\theta)},\nu_{f(\theta)}).
\]
As a consequence of \eqref{weak-reg-D-A-S}, we have
\[
  \int_{-\infty}^\infty \langle (\varphi_\theta',b_\theta',\nu_\theta'),(\frac{d}{d\theta}-\fD_{(B_\theta,S_\theta)}')(\psi_\theta,c_\theta,\eta_\theta)\rangle_{L^2} d\theta \leq 
 \kappa \(\int_{-\infty}^\infty|\!|(\psi_\theta,c_\theta,\eta_\theta)|\!|_{L^2}^2\)^{\frac{1}{2}}
\]
where $\{(\psi_\theta,c_\theta,\eta_\theta)\in \mathcal W_\theta' \}_{\theta\in \R}$ is a $1$-parameter family of triples such that the map $\theta \to Q_\theta'^{-1}(\psi_\theta,c_\theta,\eta_\theta)$ is an element of $L^2_1(\R,\mathcal H)\cap L^2(\R,\mathcal W_0)$. Then Theorem A.3 of \cite{SW:I-bdry} implies that $\bQ'(\zeta',\nu')$ belongs to $L^2_1(J,\mathcal H)\cap  L^2(I,\mathcal W_0)$. In particular, $(\zeta,\nu)\in E^1_{\fL}(J)$. Furthermore, (proof of) Lemma A.2 of \cite{SW:I-bdry} implies that
\begin{equation}\label{fred-prop-R}
	|\!|(\zeta',\nu')|\!|_{L^2_{1}}\leq \mathcal C\left(|\!|\mathcal D_{(A,S)}'(\zeta',\nu')|\!|_{L^2}+|\!|(\zeta',\nu')|\!|_{L^2}\right),
\end{equation}
where $\mathcal D_{(A,S)}'$ is the operator $\frac{d}{d\theta}-\fD_{(B_\theta,S_\theta)}'$ and the constant $\mathcal C$ depends continuously on $C_0$, $C_1$ in (W2), $C_1'$ in (A1) and $C_2'$ in (A2). In fact, an explicit formula for $\mathcal C$ can be found in the proof of Lemma A.2 of \cite{SW:I-bdry}. As an immediate consequence of \eqref{fred-prop-R}, we have
\begin{equation}\label{fred-prop-interval}
  |\!|(\zeta,\nu)|\!|_{E^{1}_{\fL}(J)}\leq \mathcal C' \left(|\!|\mathcal D_{(A,S)}(\zeta,\nu)|\!|_{L^2(I)}+|\!|(\zeta,\nu)|\!|_{L^2(I)}\right),
\end{equation}
where $\mathcal C'$ is determined by $\mathcal C$ and the intervals $K$ and $J$ through the choice of $g$.

\begin{remark}\label{uniform-fred-prop}
	The properties of the constants $\mathcal C$ and $\mathcal C'$ in the previous paragraph allow us to obtain an analogue of Remark \ref{constant-dep-k=0} for the operator $\mathcal D_{(A,S)}$,
	 as an extension of 
	Theorem \ref{Fredholm-flex}. To be more detailed, there are neighborhoods of $A$, $S$, $\{\mathcal L_\theta,L_\theta\}_{\theta\in I}$, defined with respect to some 
	Sobolev $L^2_l$ norm such that for any $A'$, $S'$, $\{\mathcal L_\theta',L_\theta'\}_{\theta\in I}$, the analogue of inequality \eqref{fred-prop-interval} holds with the same constant $\mathcal C'$.
\end{remark}

We prove the first part of Theorem \ref{Fredholm-flex} by induction on $k$. We already addressed the case that $k=1$. Now let $(\zeta,\nu)\in E^1_{\fL}(I)$ and $(\xi,\eta):=\mathcal D_{(A,S)}(\zeta,\nu)$ is in $L^2_{k-1}$ for $k\geq 2$. In particular, the induction hypothesis implies that $(\zeta,\nu)\in E^{k-1}_{\fL}(I)$ after shrinking the interval $I$, and we wish to show that $(\zeta,\nu)\in E^{k}_{\fL}(J)$. First we consider 
\[
  (\breve \zeta,\breve \nu):=\bQ\frac{d}{d\theta}\(\bQ^{-1}(\zeta,\nu)\).
\]
Then $(\breve \zeta,\breve \nu)\in E^{k-2}_{\fL}(I)$, and if $k\geq 3$, we have
\begin{equation}\label{DAS-breve}
  \mathcal D_{(A,S)}(\breve \zeta,\breve \nu)=\bQ\frac{d}{d\theta}\(\bQ^{-1}\mathcal D_{(A,S)}(\zeta,\nu)\)+\bP(\zeta,\nu),
\end{equation}
where $\bP$ is the commutator of $\mathcal D_{(A,S)}$ and $\bQ\circ \frac{d}{d\theta}\circ \bQ^{-1}$. In particular, the properties of $\bQ$ and the fact that the commutator of $\mathcal D_{(A,S)}$ and $ \frac{d}{d\theta}$ is a differential operator of degree $0$ imply that $\bP$ is a bounded linear map 
\[E^k_{\fL}(I)\to L^2_{k-1}(Y\times \R,\Lambda^1\otimes E)\oplus L^2_{k-1}([0,1]\times \R,\R^{2n})\] 
for any $k\geq 1$. Since $\mathcal D_{(A,S)}(\zeta,\nu)$ is in $L^2_{k-1}$, we conclude that $\mathcal D_{(A,S)}(\breve \zeta,\breve \nu)$ is in $L^2_{k-2}$. Thus, the induction hypothesis implies that $(\breve \zeta,\breve \nu)\in E^{k-1}_{\fL}(J)$. In the case that $k=2$, the right hand side of \eqref{DAS-breve} is still well-defined and is in $L^2$. We may use this to show that
\begin{equation}\label{ineq-weak-ind-step}
 \langle (\breve \zeta,\breve \nu),\mathcal D_{(A,S)}^*(\xi,\eta)\rangle_{L^2}\leq \kappa |\!|(\xi,\eta)|\!|_{L^2(I)},
\end{equation}
for any compactly supported $(\xi,\eta)\in E^{1}_{\fL}(I)$ where $\kappa$ is the $L^2$ norm of the right hand side of \eqref{DAS-breve}. 

To see this, take a sequence $\{(\zeta_i,\nu_i)\}$ of elements of $E^{2}_{\fL}(I)$ converging to $(\zeta,\nu)$ in $L^2_1$. Let $  (\breve \zeta_i,\breve \nu_i):=\bQ\frac{d}{d\theta}(\bQ^{-1}(\zeta_i,\nu_i))$, which is $L^2$ convergent to $(\breve \zeta,\breve \nu)$. Then $ \mathcal D_{(A,S)}(\breve \zeta_i,\breve \nu_i)$ is given by the analogue of \eqref{DAS-breve}, and hence we have
\begin{align*}
	\langle (\breve \zeta_i,\breve \nu_i),\mathcal D_{(A,S)}^*(\xi,\eta)\rangle
	&=\langle \mathcal D_{(A,S)}(\breve \zeta_i,\breve \nu_i),(\xi,\eta)\rangle\\
	&=\langle \bQ\frac{d}{d\theta}\(\bQ^{-1}\mathcal D_{(A,S)}(\zeta_i,\nu_i)\)+\bP(\zeta_i,\nu_i),(\xi,\eta)\rangle\\
	&=\langle \mathcal D_{(A,S)}(\zeta_i,\nu_i),(\bQ^*)^{-1}\frac{d}{d\theta}\(\bQ^*(\xi,\eta)\)\rangle_{L^2}+\langle \bP(\zeta_i,\nu_i),(\xi,\eta)\rangle.
\end{align*}
Here $\bQ^*$ is the $L^2$-adjoint of the operator $\bQ$, and we use integration by parts to obtain the last identity. By taking the limit as $i \to \infty$, we have
\begin{align*}
	\langle (\breve \zeta,\breve \nu),\mathcal D_{(A,S)}^*(\xi,\eta)\rangle
	&=\langle \mathcal D_{(A,S)}(\zeta,\nu),(\bQ^*)^{-1}\frac{d}{d\theta}\(\bQ^*(\xi,\eta)\)\rangle_{L^2}+\langle \bP(\zeta,\nu),(\xi,\eta)\rangle\\
	&=\langle \bQ\frac{d}{d\theta}\(\bQ^{-1}\mathcal D_{(A,S)}(\zeta,\nu)\)+\bP(\zeta,\nu),(\xi,\eta)\rangle_{L^2},
\end{align*}
where in the last identity we use integration by parts and the assumption that $ \mathcal D_{(A,S)}(\zeta,\nu)$ it is $L^2_1$. 
The inequality in \eqref{ineq-weak-ind-step} and the second part of Theorem \ref{Fredholm-flex} imply that $(\breve \zeta,\breve \nu)\in E^{1}_{\fL}(J)$. (Strictly speaking, we need the second part of Theorem \ref{Fredholm-flex} for the formal adjoint $\mathcal D_{(A,S)}^*$. As we explained there, Theorem \ref{Fredholm-flex} would be sufficient for this because $\mathcal D_{(A,S)}^*$ has the form required for the application of Theorem \ref{Fredholm-flex}.)

Our arguments in any of the above cases give rise to the following inequality 
\begin{align*}
	|\!|(\breve \zeta,\breve \nu)|\!|_{E^{k-1}_{\fL}(J)}&\leq C\(|\!|\bQ^{-1}\frac{d}{d\theta}\(\bQ\mathcal D_{(A,S)}(\zeta,\nu)\)|\!|_{L^2_{k-2}(I)}+|\!|\bP(\zeta,\nu)|\!|_{L^2_{k-2}(I)}+|\!|(\breve \zeta,\breve \nu)|\!|_{L^2(I)}\)\\
	&\leq C\(|\!|\mathcal D_{(A,S)}(\zeta,\nu)|\!|_{L^2_{k-1}(I)}+|\!|(\zeta,\nu)|\!|_{L^2_{k-1}(I)}+|\!|(\breve \zeta,\breve \nu)|\!|_{L^2(I)}\)\\
	&\leq C\(|\!|\mathcal D_{(A,S)}(\zeta,\nu)|\!|_{L^2_{k-1}(I)}+|\!|(\zeta,\nu)|\!|_{L^2_{k-1}(I)}\).
\end{align*} 
Thus, to complete the proof we need to show that all derivatives of $(\zeta,\nu)$ up to order $k$, that do not involve derivation with respect to $\theta$, are in $L^2$. That is to say, it suffices to show that $(\zeta,\nu)\in L^2(J,L^2_{k})$. By assumption and the above argument, $\mathcal D_{(A,S)}(\zeta,\nu)$ and $\frac{d}{d\theta}(\zeta,\nu)$ are both in $L^2_{k-1}$. Since $\mathcal D_{(A,S)}-\frac{d}{d\theta}$ maps $(\zeta,\nu)$ to a pair in $L^2_{k-1}$, we conclude that $\fD_{B_\theta,S_\theta}(\varphi_\theta,b_\theta,\nu_\theta)$ is in $L^2_{k-1}$ for almost every $\theta\in J$. Lemma \ref{3D-Fred-k=higher} implies that for these values of $\theta$, $(\varphi_\theta,b_\theta,\nu_\theta)\in L^2_k$ and we have
\[
   \vert\!\vert (\varphi_\theta,b_\theta,\nu_\theta)\vert\!\vert_{L^2_{k}}\leq C_{k-1} (\vert\!\vert  \fD_{(B,S)} (\varphi_\theta,b_\theta,\nu_\theta)\vert\!\vert_{L^2_{k-1}}
   +\vert\!\vert (\varphi_\theta,b_\theta,\nu_\theta)\vert\!\vert_{L^2})
\]
where the constant $C_{k-1}$ can be chosen to be independent of $\theta$ by Remark \ref{constant-dep-k=higher}. Therefore, we can write
\begin{align*}
	|\!|(\zeta, \nu)|\!|_{L^2(J,L^2_{k})}^2&=\int_{J}  \vert\!\vert (\varphi_\theta,b_\theta,\nu_\theta)\vert\!\vert_{L^2_{k}}^2d\theta\\
	&\leq C_{k-1}\int_{J} \vert\!\vert  \fD_{(B,S)} (\varphi_\theta,b_\theta,\nu_\theta)\vert\!\vert_{L^2_{k-1}}^2
   +\vert\!\vert (\varphi_\theta,b_\theta,\nu_\theta)\vert\!\vert_{L^2}^2 d\theta\\
	&\leq C_{k-1}\(|\!|\mathcal D_{(A,S)}(\zeta,\nu)|\!|_{L^2_{k-1}(I)}^2+|\!|(\zeta,\nu)|\!|_{L^2_{k-1}(I)}^2\).
\end{align*} 
As usual, we use the convention that the value of $C_{k-1}$ might increase from a line to the next one. This completes the proof of Theorem \ref{Fredholm-flex}.

\begin{remark}\label{uniform-fred-prop-higher}
	One can see easily from the above proof that an extension of Remark \ref{uniform-fred-prop} holds for higher Sobolev norms. That is to say, for any $k\geq 1$, there is a neighborhood of 
	$A$, $S$, $\{\mathcal L_\theta,L_\theta\}_{\theta\in I}$, defined with respect to some 
	Sobolev norm  $L^2_{l_k}$ such that for any element of this neighborhood, the analogue of \eqref{Fed-prop-k-I-J} holds with the same constant $C$.
\end{remark}

\subsection{Infinite mixed cylinders}

In this subsection, we consider the operator $\mathcal D_{(A,S)}$ in the case of an infinite cylinder, namely, $I=\R$. We simplify the setup by assuming that $A$ is the pull-back of a connection $B$ on the bundle $E$ over $Y$ and $S$ is constant in the $\R$ direction. That is to say, $S$ is the pull-back of a map from $[0,1]$ to the space of self-adjoint operators, which is denoted by the same notation. In particular, the operator $\mathcal D_{(A,S)}$ has the form
\begin{equation}\label{DAS-op-constant}
  \mathcal D_{(A,S)}=\frac{d}{d\theta}-\fD_{(B,S)}.
\end{equation}
We also fix a Lagrangian $L$ in $\R^{2n}$ and a canonical linearized Lagrangian correspondence $\mathcal L$ from $\Omega^1(\Sigma)$ to $\R^{2n}$ which is compatible with $\alpha$, the flat connection obtained from the restriction of $B$ to the boundary. Associated to $(\mathcal L, L)$, we have the Hilbert space $\mathcal W$ and we regard the operator in \eqref{DAS-op-constant} as a bounded Linear map from 
\begin{equation}\label{domain-constant-mixed}
  L^2_1(\R,\mathcal H)\cap L^2(\R,\mathcal W)
\end{equation}
to the space of $L^2$ pairs $(\zeta,\nu)$. Clearly, the space in \eqref{domain-constant-mixed} can be identified with $E^1_{\fL}(\R)$, defined using $(\mathcal L,L)$, which is regarded as a constant family with respect to $\theta$. We wish to show that the operator in \eqref{DAS-op-constant} is not just a Fredholm operator, but in fact an isomorphism at least in the case that $\fD_{(B,S)}$ is invertible. 

\begin{prop}\label{const-family-invertible}
	Suppose $L:\mathcal W\to \mathcal H$ is an invertible bounded operator.
	Then the operator 
	\[
	  \frac{d}{d\theta}-L:L^2_1(\R,\mathcal H)\cap L^2(\R,\mathcal W)\to L^2(\R,\mathcal H)
	\]
	is an isomorphism.
\end{prop}
\begin{proof}[Sketch of the proof]
	The proof is standard and we only sketch the main steps. 	See, for example, \cite{RD:SF-MI} or \cite[Chapter 3]{Don:YM-Floer} for more details. The composition of $L^{-1}$ and the the inclusion of $\mathcal W$ into $\mathcal H$ determines a compact self-adjoint operator. Thus, there is a complete eigenspace
	decomposition $\{e_i\}_i$ associated to the operator $L$ which provides an orthonormal basis for $\mathcal H$. Using this eigenspace decomposition, any element $(\zeta,\nu)$ of $L^2(\R,\mathcal H)$ can be written as
	\[
	  (\zeta,\nu)=\sum_{i}f_i(t)e_i,
	\]
	where $f_i(t)\in L^2(\R,\R)$ and
	\[
	  |\!|(\zeta,\nu)|\!|_{L^2}^2=\sum_{i}|\!|f_i(t)|\!|_{L^2}^2<\infty.
	\]	
	The norm on \eqref{domain-constant-mixed} is equivalent to
	\[
	  |\!|(\zeta,\nu)|\!|=\sqrt {\sum_{i}|\!|f_i'(t)|\!|_{L^2}^2+\lambda_i^2|\!|f_i(t)|\!|_{L^2}^2}.
	\]
	We have
	\[
	  \(\frac{d}{d\theta}-L\)(\sum_{i}f_i(t)e_i)=\sum_{i}(f_i'(t)+\lambda_if_i'(t))e_i,
	\]
	and one can write down an explicit inverse for this operator in terms of the eigenspace decomposition.
\end{proof}

\begin{remark}\label{perturbation-constant-family-invariance}
	As it is explained in Subsection \ref{unbdd-self-adj-3D}, we may assume that $\fD_{(B,S)}$ is invertible after adding a small multiple of the identity operator. Therefore Proposition \ref{const-family-invertible} is applicable to such perturbations of $\fD_{(B,S)}$. In fact, Proposition \ref{const-family-invertible} can be used in a more general setup where $L=\fD_{(B,S)}+h$ is an invertible operator for some  bounded self-adjoint operator
	$h:\mathcal H\to \mathcal H$.
	Such perturbations of $\fD_{(B,S)}$ appear in \cite{DFL:SO(3)-AF}, where we have to consider perturbations of the mixed equation. 
\end{remark}	

\appendix

\section{Elliptic regularity of bundle-valued 1-forms}\label{elliptic-reg-sec}

In this appendix, first we review some well-known results about regularity of the Laplace-Beltrrami operator. Then we consider slight variations to the case of bundle valued maps. Throughout this section, $M$ denotes a compact Riemannian manifold possibly with boundary. In this appendix, for any Riemannian manifold $M$ and differential $k$-forms $\alpha$ and $\beta$ on $M$, we slightly diverge from our notation in \eqref{inner-prod-diff-forms}, and denote the inner product of $\alpha$ and $\beta$ by
\[
 \int_M\langle \alpha,\beta \rangle.
\]
For any real number $r>1$, we also write $r^*$ for the conjugate of $r$ which satisfies
\[
  \frac{1}{r}+\frac{1}{r^*}=1.
\]
The following lemma is a standard fact about the Laplace-Beltrrami operator (see \cite[Theorems 9.14 and 9.15]{GT:elliptic}, \cite[Theorem 15.2]{ADN:elliptic} and \cite[Chapters 3 and Appendix D]{Weh:com}.)
\begin{lemma} \label{elliptic-reg-laplace}
	Let $k$ be a non-negative integer and $p>1$ be a real number. Let $u$ be an $L^p_k$ function on $M$.
	\begin{enumerate}
		\item[(i)] If $k\geq 1$, suppose there is an $L^p_{k-1}$ function $F$ on $M$ such that for 
		any smooth function $\varphi$ with $\varphi|_{\partial M}=0$, we have
		\begin{equation} \label{weak-Dirichlet-lemma}
		    \int_M \langle u, \Delta \varphi \rangle=
		    \int_M \langle F, \varphi \rangle.
		 \end{equation}
		 Then $u$ is in $L^p_{k+1}(M)$, and 
		 there is a constant $C$, independent of $u$, such that
		 \begin{equation}\label{weak-Dirichlet-conc}
		   |\!|u|\!|_{L^p_{k+1}(M)}\leq C(|\!|F|\!|_{L^p_{k-1}(M)}+|\!|u|\!|_{L^p(M)}).
		 \end{equation}
		 In the case that $k=0$, the assumption \eqref{weak-Dirichlet-lemma} has to be replaced with
		\begin{equation} \label{weak-Dirichlet-lemma-k=0}
		    |\int_M \langle u, \Delta \varphi \rangle|\leq \kappa
		    |\!|\varphi|\!|_{L^{p^*}_{1}(M)},
		 \end{equation}
		 and the conclusion \eqref{weak-Dirichlet-conc} has to be modified to:
		 \begin{equation}\label{weak-Dirichlet-conc}
		   |\!|u|\!|_{L^p_{1}(M)}\leq C(\kappa+|\!|u|\!|_{L^p(M)}).
		 \end{equation}
		 \item[(ii)] If $k\geq 1$, suppose there are functions $F$ and $G$ on $M$ such that for 
		any smooth function $\varphi$ with $\partial_\nu \varphi|_{\partial M}=0$ we have:
		\begin{equation} \label{weak-Neuman-lemma}
		    \int_M \langle u, \Delta \varphi \rangle=
		    \int_M \langle F, \varphi \rangle+\int_{\partial M} \langle G,  \varphi \rangle.
		 \end{equation}
		 If $F$ and $G$ are respectively in $L^p_{k-1}(M)$ and $L^p_{k}(M)$, then $u$ is in $L^p_{k+1}(M)$. Furthermore, 
		 there is a constant $C$, independent of $u$, such that
		 \begin{equation}\label{weak-Neuman-conc}
		   |\!|u|\!|_{L^p_{k+1}(M)}\leq C(|\!|F|\!|_{L^p_{k-1}(M)}+|\!|G|\!|_{L^p_{k}(M)}+|\!|u|\!|_{L^p(M)}).
		 \end{equation}
		 In the case that $k=0$, the assumption \eqref{weak-Neuman-lemma} has to be replaced with:
		\begin{equation} \label{weak-Neuman-lemma-k=0}
		    |\int_M \langle u, \Delta \varphi \rangle|\leq \kappa
		    |\!|\varphi|\!|_{L^{p^*}_{1}(M)},
		 \end{equation}
		 and the conclusion \eqref{weak-Neuman-conc} has to be modified to:
		 \begin{equation}\label{weak-Neuman-conc}
		   |\!|u|\!|_{L^p_{1}(M)}\leq C(\kappa+|\!|u|\!|_{L^p(M)}).
		 \end{equation}
	\end{enumerate}
\end{lemma}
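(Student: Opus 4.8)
This lemma is a special case of the $L^p$ theory of elliptic boundary value problems, and the plan is to reduce it to standard local model estimates; complete treatments may be found in \cite[Theorems 9.14 and 9.15]{GT:elliptic}, \cite[Theorem 15.2]{ADN:elliptic} and \cite[Chapter 3 and Appendix D]{Weh:com}. First I would fix a finite atlas of $M$ in which each chart is diffeomorphic either to a Euclidean ball $B\subset\R^n$ (an interior chart) or to a half-ball $B^+=B\cap\{x_n\geq 0\}$ with $\partial M$ corresponding to $\{x_n=0\}$ (a boundary chart), together with a subordinate partition of unity $\{\chi_j\}$. Multiplying the weak identity \eqref{weak-Dirichlet-lemma} (respectively \eqref{weak-Neuman-lemma}, or the $k=0$ variants) by $\chi_j$ and commuting $\chi_j$ past $\Delta$ shows that $\chi_j u$ satisfies a weak equation of the same shape, with $F$ replaced by $\chi_j F$ plus terms of the form $P(u)$ for first-order differential operators $P$ with smooth compactly supported coefficients, and with the boundary datum $G$ replaced by $\chi_j G$. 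Since $P(u)\in L^p_{k-1}$ when $u\in L^p_k$ with $k\geq 1$ (and $P(u)\in L^p\subset (L^{p^*}_1)^*$ when $k=0$), the hypotheses of the lemma are inherited by $\chi_j u$, so it suffices to establish the stated estimates on the model domains $B$ and $B^+$ and then reassemble with the partition of unity and a covering argument.

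On an interior chart the weak identity says precisely that $\Delta(\chi_j u)=\tilde F$ as distributions on $B$ with $\tilde F\in L^p_{k-1}(B)$ (or, when $k=0$, that $\Delta(\chi_j u)$ is a distribution of order $-1$ on $B$), and the Calderón--Zygmund $L^p$ estimate for the Laplacian then gives $\chi_j u\in L^p_{k+1}$ (resp.\ $L^p_1$) on a slightly smaller ball, with constant depending only on the chart. For a Dirichlet boundary chart I would remove the boundary by odd reflection: the constraint $\varphi|_{\partial M}=0$ in \eqref{weak-Dirichlet-lemma} is exactly what makes the odd extension of $\chi_j u$ satisfy $\Delta$ of it equal to the odd extension of $\tilde F$ weakly on the full ball $B$ --- one checks this by splitting an arbitrary test function on $B$ into its even and odd parts --- so the interior estimate applies to the reflected function and restricts back to $B^+$. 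For a Neumann boundary chart, testing \eqref{weak-Neuman-lemma} against interior test functions gives $\Delta u=F$ in the interior and testing against general $\varphi$ identifies the trace $\partial_\nu u=G$ on $\partial M$; the inhomogeneous Neumann estimate of \cite[Theorem 15.2]{ADN:elliptic} then yields $\|u\|_{L^p_{k+1}(B^+)}\leq C(\|F\|_{L^p_{k-1}(B^+)}+\|G\|_{W^{k-1/p,p}(\partial M)}+\|u\|_{L^p(B^+)})$, and since $k-1/p<k$ the Sobolev embedding $L^p_k(\partial M)\hookrightarrow W^{k-1/p,p}(\partial M)$ replaces $\|G\|_{W^{k-1/p,p}}$ by $\|G\|_{L^p_k(\partial M)}$ as in the statement. (Alternatively one can subtract a correction supported near $\partial M$ carrying the Neumann datum, reduce to homogeneous Neumann data, and reflect evenly.) Note that testing \eqref{weak-Neuman-lemma} against $\varphi\equiv1$ forces $\int_M F+\int_{\partial M}G=0$, so the solvability obstruction of the Neumann problem is automatically satisfied.

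The cases $k=0$ are handled by the same scheme, now reading \eqref{weak-Dirichlet-lemma-k=0} and \eqref{weak-Neuman-lemma-k=0} as the statement that $\Delta u$ extends to a bounded functional on $L^{p^*}_1$ carrying the appropriate (Dirichlet, respectively Neumann) boundary condition, i.e.\ a distribution of order $-1$; the $L^p$ theory of elliptic boundary value problems with negative-order data, obtained from the $k\geq 1$ theory by transposition, then gives $u\in L^p_1$ with the asserted bound. I expect the only genuinely delicate point to be the bookkeeping in this transposition step near $\partial M$ --- keeping straight that the boundary condition imposed on the test functions $\varphi$ is dual to the one being proved for $u$, and that the partition-of-unity commutator terms indeed land in the dual of $L^{p^*}_1$ --- but this is routine once the reflection reductions above are in place.
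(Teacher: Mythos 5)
The paper gives no proof of this lemma at all: it is stated as a standard fact with precisely the citations you invoke (\cite[Theorems 9.14 and 9.15]{GT:elliptic}, \cite[Theorem 15.2]{ADN:elliptic}, \cite{Weh:com}), so your localization--reflection--transposition sketch is essentially the same approach, namely a reduction to the standard $L^p$ elliptic boundary theory, and it is correct in outline (including the routine points you flag: choosing the cutoffs with $\partial_\nu\chi_j|_{\partial M}=0$ so that $\chi_j\varphi$ remains an admissible Neumann test function, and the trace inequality bounding the boundary datum by $\|G\|_{L^p_k}$). The one place your sketch is too quick is the Dirichlet case for $k\ge 2$: the odd extension of $F\in L^p_{k-1}$ is in general only $L^p$ across the reflecting hyperplane, so bare odd reflection plus the interior Calder\'on--Zygmund estimate yields only $L^p_2$; to reach $L^p_{k+1}$ you must either bootstrap by differentiating the localized equation tangentially (tangential derivatives preserve the Dirichlet condition) and recover normal derivatives from the equation, or simply quote the higher-order boundary estimates of the cited references, as you already do for the Neumann case.
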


We recall the following definition from Subsection \ref{regularity-thm-sub} about some functions spaces associated to the sections of a vector bundle.

\begin{definition} \label{test-func-spaces}
	Suppose $U$ is a (possibly non-compact) manifold with boundary and $E$ is a vector bundle over $U$. 
	Then the space of smooth sections of $E$ with compact support are denoted by $\Gamma_c(U,E)$. 
	The space of compactly supported sections of $E$, which vanish on the boundary of $E$, are denoted by 
	$\Gamma_\tau(U,E)$. Suppose a connection $A_0$ is fixed on $E$. 
	Then $\Gamma_\nu(U,E)$ is the space of all compactly supported sections $s$ of $E$ 
	such that the covariant derivative of $s$ in the normal directions to the boundary of $U$ vanish.
\end{definition}

The following Lemma is a slightly generalized version of \cite[Lemma A.2]{Weh:Lag-bdry-ana}.
\begin{lemma} \label{elliptic-reg}
	Let $k$ be a positive integer, and $r> 1$ be a real number.
	Let $M$ be a compact $n$-manifold with boundary and a Riemannian metric $g$,
	$U$ be an open subset of $M$, and $K$ be an open subspace of $U$ whose closure in $U$ is compact. 
	Let $E$ be an $SO(3)$-vector bundle over $M$ equipped with a smooth connection $A_0$. 
	Let $\sigma$ be a smooth vector field on $U$. Let $\Gamma_\circ(U,E)$ be one of the spaces $\Gamma_\tau(U,E)$ or $\Gamma_\nu(U,E)$, where
	$\Gamma_\nu(U,E)$ is defined using $A_0$.
	Then there is a constant $C$ such that the following holds. Let
	\[
	  f\in L^{r}_k(U,E),\hspace{.5cm}\alpha,\xi\in L^{r}_k(U,\Lambda^1(M)\otimes E),\hspace{.5cm}\zeta\in L^{r}_{k-1}(U,\Lambda^1(M)\otimes E),\hspace{.5cm}
	  \omega\in L^r_k(U,\Lambda^2(M)\otimes E),
	\]
	and for any $\phi \in \Gamma_c(U,E)$, $\psi \in \Gamma_\circ(U,E)$ we have
	\begin{equation} \label{weak-harm-1}
	  \int_M\langle \alpha, d_{A_0} \phi \rangle=\int_M \langle f, \phi\rangle, 
	\end{equation}
	\begin{equation} \label{weak-harm-2}  
	  \int_M\langle \alpha, d_{A_0}^*d_{A_0}(\psi\cdot \iota_\sigma g) \rangle=
	  \int_M\langle \omega, d_{A_0} (\psi\cdot \iota_\sigma g)\rangle+\int_M\langle \zeta, \psi\cdot \iota_\sigma g \rangle
	  +\int_{\partial M}\langle \xi, \psi\cdot \iota_\sigma g \rangle.
	\end{equation}
	Then $\alpha(\sigma)$ is an element of $L^{r}_{k+1}(K)$ and we have:
	\[
	  |\!| \alpha(\sigma)|\!|_{L^{r}_{k+1}(K)}\leq C (|\!|f|\!|_{L_k^{r}(U)}+|\!|\xi|\!|_{L_{k}^{r}(U)}
	  +|\!|\zeta|\!|_{L_{k-1}^{r}(U)}+|\!|\omega|\!|_{L_k^{r}(U)}+
	   |\!| \alpha |\!|_{L_k^{r}(U)}).
	\]
\end{lemma}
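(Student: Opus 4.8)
The plan is to reduce the lemma to the scalar elliptic regularity statements of Lemma~\ref{elliptic-reg-laplace} by testing against carefully chosen functions, exactly in the spirit of \cite[Lemma~A.2]{Weh:Lag-bdry-ana}. The output we want to control is $\varphi := \alpha(\sigma)$, the contraction of $\alpha$ with the vector field $\sigma$; equivalently $\varphi$ is the $E$-valued function obtained by pairing $\alpha$ with the one-form $\iota_\sigma g$. The first step is to derive, from the two weak identities \eqref{weak-harm-1} and \eqref{weak-harm-2}, a single weak Laplace-type identity for $\varphi$ after multiplying by a cutoff $\rho$ supported in $U$ and equal to $1$ on $K$. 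Concretely, for a test function $\psi$ (either in $\Gamma_\tau$ or $\Gamma_\nu$ depending on which boundary condition we are in), one expands $d_{A_0}^* d_{A_0}(\psi \cdot \iota_\sigma g)$ and uses the identity $d_{A_0}^* d_{A_0} = \Delta_{A_0} - (\text{zeroth order})$ on one-forms; the point is that $\langle \alpha, d_{A_0}^* d_{A_0}(\psi\cdot\iota_\sigma g)\rangle$ differs from $\langle \Delta(\rho\varphi), \psi\rangle$ (in a local trivialization) only by terms that involve at most $k$ derivatives of $\alpha$ together with derivatives of $\rho$, $\sigma$ and the connection coefficients of $A_0$, plus the terms on the right-hand side of \eqref{weak-harm-2}, plus boundary contributions.

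The second step is to track the boundary terms and confirm they match the boundary conditions required by Lemma~\ref{elliptic-reg-laplace}. In the $\Gamma_\tau$ case (test functions vanishing on $\partial M$) we are in the Dirichlet setting \eqref{weak-Dirichlet-lemma}: all boundary integrals produced by integration by parts drop because $\psi|_{\partial M}=0$, and the $\int_{\partial M}\langle \xi,\psi\cdot\iota_\sigma g\rangle$ term in \eqref{weak-harm-2} likewise vanishes. In the $\Gamma_\nu$ case (normal covariant derivative of $\psi$ vanishing on $\partial M$ with respect to $A_0$) we are in the Neumann setting \eqref{weak-Neuman-lemma}, and the surviving boundary term is exactly $\int_{\partial M}\langle G,\psi\rangle$ with $G$ built from $\xi$, from $\alpha$ restricted to $\partial M$, and from the second fundamental form of $\partial M$; since $\xi \in L^r_k(U)$ and $\alpha \in L^r_k(U)$ we get $G \in L^r_k$ after the trace, which is precisely the regularity \eqref{weak-Neuman-lemma} demands. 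The interior source $F$ is assembled from $f$ (through \eqref{weak-harm-1}, used to handle the $d_{A_0}\varphi$ piece of $d_{A_0}^*d_{A_0}$), from $\omega$, from $\zeta$, and from the commutator/cutoff terms; all of these lie in $L^r_{k-1}$, again matching the hypothesis. One then applies part (i) or part (ii) of Lemma~\ref{elliptic-reg-laplace} respectively, working in a finite coordinate cover of a neighborhood of $\overline K$ and trivializing $E$ there, to conclude $\rho\varphi \in L^r_{k+1}$ with the stated estimate.

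The third step is purely bookkeeping: patch the local estimates over the finite cover, absorb the $L^r_k(U)$ norms of $\alpha$ that appear as error terms into the right-hand side (they are already allowed there), and use that $\rho \equiv 1$ on $K$ to get $\|\alpha(\sigma)\|_{L^r_{k+1}(K)}$ bounded by the asserted combination of norms. I expect the main obstacle to be the second step, namely verifying that the boundary terms generated by commuting $\iota_\sigma$, the cutoff $\rho$, and the two integrations by parts past $d_{A_0}^* d_{A_0}$ genuinely organize into the Dirichlet/Neumann data of the right Sobolev class, rather than producing a stray term with a normal derivative of $\alpha$ on $\partial M$ that we cannot control; this is exactly the subtlety that forces the hypothesis \eqref{weak-harm-2} to be stated with $d_{A_0}^* d_{A_0}(\psi\cdot\iota_\sigma g)$ rather than with $\Delta$ directly, and one has to check that the algebra of $*$, $d_{A_0}$ and the splitting of $\Lambda^1(M)$ along $\partial M$ cooperates. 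A secondary technical point is that when $k=1$ the interior term $\zeta \in L^r_0$ only pairs well against $\psi$ in the dual sense, so for $k=1$ one invokes the $k=0$ clause of Lemma~\ref{elliptic-reg-laplace} (the $L^{p^*}_1$ estimate \eqref{weak-Dirichlet-lemma-k=0}) to treat that contribution and combines it with the higher-regularity clause for the remaining pieces; this requires a short Hölder/Sobolev interpolation argument but no new ideas.
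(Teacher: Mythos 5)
Your proposal follows essentially the same approach as the paper's proof: reduce to the scalar regularity statements of Lemma~\ref{elliptic-reg-laplace} by testing $\rho\,\alpha(\sigma)$ against $\Delta\eta$, manipulate that pairing into pieces controlled by \eqref{weak-harm-1} and \eqref{weak-harm-2}, assemble the Dirichlet (resp.\ Neumann) source data $(F,G)$, and apply part (i) (resp.\ (ii)) of that lemma. Your reading of which hypothesis is responsible for which term is correct, and your observation that the subtlety lies in organizing the boundary terms so that no uncontrolled normal derivative of $\alpha$ appears on $\partial M$ is exactly where the paper spends its effort (identities \eqref{simplify-lap}--\eqref{3rd-term}).

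Two small remarks. First, the paper's intermediate identity is derived using the Cartan formula $\mathcal L_\sigma = d\iota_\sigma + \iota_\sigma d$ and integration by parts rather than a Weitzenb\"ock-type ``$d^*d=\Delta-\text{(zeroth order)}$'' identity on one-forms as you describe; the two points of view are close in spirit, but the Cartan route is what makes the bookkeeping clean and avoids any curvature term one would have to relocate. Second, your worry about $k=1$ is unfounded: in \eqref{weak-harm-2} the term involving $\zeta$ contributes to the interior source $F$ a term built directly from $\zeta\in L^r_{k-1}$ (no further derivatives of $\zeta$ are taken), so for $k=1$ one simply has $F\in L^r_0$ and $G\in L^r_1$, which is precisely the regularity required by the $k\geq 1$ clause of Lemma~\ref{elliptic-reg-laplace}; the $k=0$ clause of that lemma and the interpolation you mention are not needed here (they appear only in the separate Lemma~\ref{elliptic-reg-k=0}, which is the $k=0$ analogue of the present statement, not a substep of its proof).
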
	

\begin{proof}	 
	Without loss of generality, we may assume that $U$ is a precompact open subset of the half-space
	\[\mathbb H^n:=\{(x_1,\dots,x_n)\in \R^n\mid x_1\geq 0\},\] 
	 $E$ is trivialized over
	$U$ and the connection $A_0$ is given by a 1-form with values in $\R^3$. 
	We will denote this 1-form with $A_0$, too. We may pick this trivialization in a way that the normal covariant derivative with respect to the connection $A_0$ agrees with the ordinary derivative.
	That is to say, $\Gamma_\nu(U,\R^3)$ defined with respect to $A_0$ is the space of all compactly supported sections $\eta$ of $\R^3$ 
	such that $\partial_\nu \eta$ vanishes along the boundary. 
	 
	Fix a function $\rho:M\to \R$ which is supported in $U$
	and is equal to $1$ on $K$. Then we show that there are compactly supported 
	maps $F$ and $G$ from $U$ to $\R^3$ such that for any $\eta\in \Gamma_\circ(U,\R^3)$ we have
	\begin{equation} \label{weak-Dirichlet}
	    \int_M \langle \rho \alpha(\sigma), \Delta \eta \rangle=
	    \int_M \langle F, \eta \rangle+\int_{\partial M} \langle G,  \eta \rangle
	 \end{equation}
	 and $F$, $G$ respectively have finite $L^{r}_{k-1}$, $L^{r}_k$ norms.

	First we claim that 
	\begin{align} 
	   \int_M \langle \rho \alpha(\sigma), \Delta \eta \rangle
	   =&-\int_M \rho \langle  \alpha,d \iota_\sigma d\eta  \rangle- \int_M\langle \alpha, d^*(\rho  \iota_\sigma g \wedge d\eta)\rangle
	    -\int_M\rho \div(\sigma) \langle \alpha, d\eta\rangle \nonumber \\
	    &-\int_M \rho \langle B_g\alpha, d\eta\rangle 
	    -\int_M\langle \iota_\sigma (d\rho\wedge \alpha), d\eta\rangle\label{simplify-lap},
	 \end{align}	
	 where $B_g$ is defined by firstly taking the Lie derivative $\mathcal L_\sigma g$ of the Riemannian metric
	 $g$ and then requiring $B_g$ to satisfy the following identity for any pair of $1$-forms $\beta$ and $\beta'$:
	 \[
	  \mathcal L_\sigma (g)(\beta,\beta')=\langle B_g \beta,\beta' \rangle
	\]
	To see \eqref{simplify-lap}, 
	we pick a sequence $\{\alpha_i\}_{i\in \N}$ of smooth 1-forms on $U$ with values 
	in $\R^3$ such that $\alpha_i$ vanishes in a neighborhood of $U\cap \partial {\mathbb H}^n$ and 
	the sequence $\{\alpha_i\}$ is $L^r$-convergent to $\alpha$. 
	Then the left hand side of \eqref{simplify-lap} is equal to 
	\begin{align} \label{simplify-lap-verif}
	\lim_{i\to \infty}\int_M \langle\rho \alpha_i(\sigma),d^*d \eta \rangle
	   =&\lim_{i\to \infty} \int_M \langle d\iota_\sigma (\rho \alpha_i),d\eta \rangle
	   =\lim_{i\to \infty}\left[ \int_M \langle \mathcal L_\sigma (\rho \alpha_i),d \eta \rangle-
	   \langle \mathcal \iota_\sigma d(\rho \alpha_i),d \eta \rangle\right ]\nonumber\\
	   =&\lim_{i\to \infty}\left[-\int_M \langle \rho \alpha_i,\mathcal L_\sigma d \eta \rangle-
	    \int_M\div(\sigma) \langle \rho \alpha_i,d \eta \rangle-\int_M\mathcal L_\sigma(g)(\rho\alpha_i, d \eta)\right. \nonumber\\
	    &\left. -\int_M\langle d\alpha_i, \rho  \iota_\sigma g \wedge d\eta\rangle
	    -\int_M\langle \iota_\sigma (d\rho\wedge \alpha_i), d\eta\rangle\right] \nonumber \\
	   =&\lim_{i\to \infty}\left[-\int_M \rho\langle \alpha_i,d \iota_\sigma d\eta \rangle-
	    \int_M \rho\div(\sigma) \langle \alpha_i, d\eta \rangle -\int_M \rho\langle B_g \alpha_i, d\eta\rangle \right.\nonumber\\
	    &\left. -\int_M\langle \alpha_i, d^*(\rho  \iota_\sigma g \wedge d\eta)\rangle
	    -\int_M\langle \iota_\sigma (d\rho\wedge \alpha_i), d\eta\rangle\right].
	 \end{align}	 
	Now by taking the limit in \eqref{simplify-lap-verif} we obtain the desired identity. 

	The assumption \eqref{weak-harm-1} can be used to rewire the first term in the right hand side of 
	\eqref{simplify-lap} as
	 \begin{align} \label{1st-term}
	    \int_M \rho\langle \alpha,d \iota_\sigma  d\eta \rangle
	   =&\int_M \langle \alpha,d_{A_0}(\rho \iota_\sigma  d\eta) \rangle-\int_M \langle \alpha,\rho[A_0,\iota_\sigma  d \eta] \rangle-
	   \int_M \langle \alpha,\mathcal (d\rho)\cdot(\iota_\sigma  d\eta) \rangle \nonumber \\
	   =&\int_M \langle \rho f-*[\rho\alpha,*A_0]-*(\alpha\wedge *d\rho), \iota_\sigma  d\eta \rangle\nonumber \\
	   =&\int_M \langle \left (\rho f-*[\rho \alpha,*A_0]-*(\alpha\wedge *d\rho)\right )\iota_\sigma g, d\eta \rangle\nonumber\\
	    =&\int_M \langle d^*\( \left (\rho f-*[\rho \alpha,*A_0]-*(\alpha\wedge *d\rho)\right )\iota_\sigma g\),\eta \rangle\nonumber\\
	    &+\int_{\partial M} \langle *_{n-1}*(\left (\rho f-*[\rho \alpha,*A_0]-*(\alpha\wedge *d\rho)\right )\iota_\sigma g), \eta \rangle,
	 \end{align}	 
	where $*_{n-1}$ in the  last line denotes the Hodge operator on $\partial M$.
	
	We rewrite the second term in the right hand side of \eqref{simplify-lap} as
	 \begin{align*}
	    \int_M\langle \alpha, d^*(\rho  \iota_\sigma  g \wedge d\eta)\rangle
	    =&\int_M\langle \alpha, d^*( \eta d(\rho  \iota_\sigma  g))\rangle-\int_M\langle \alpha, d^*d(\rho  \iota_\sigma  g \eta)\rangle \nonumber\\
	   =&\int_M\langle d \alpha, \eta d(\rho  \iota_\sigma  g) \rangle-\int_{\partial M}\langle *_{n-1}(\alpha\wedge *d(\rho  \iota_\sigma  g)), \eta\rangle\nonumber\\
	   &-\int_M\langle \alpha, d_{A_0}^*d_{A_0}(\iota_\sigma  g \rho \eta)\rangle+
	     (-1)^{n-1}\int_M \langle \alpha, *[A_0,*d(\iota_\sigma  g \rho \eta)]\rangle\nonumber\\
	    &+\int_M\langle \alpha, d^*[A_0,\iota_\sigma  g \rho \eta]\rangle+(-1)^{n-1}\int_M \langle \alpha, *[A_0,*[A_0,\iota_\sigma  g \rho \eta]]\rangle\nonumber.
	 \end{align*}	
	Therefore, we can use \eqref{weak-harm-2}, to write
	 \begin{align}\label{2nd-term}
	    \int_M\langle \alpha, d^*(\rho  \iota_\sigma  g \wedge d\eta)\rangle
	   =&\int_M\langle *(d \alpha\wedge *d(\rho  \iota_\sigma  g)), \eta\rangle-\int_{\partial M}\langle *_{n-1}(\alpha\wedge *d(\rho  \iota_\sigma  g)), \eta\rangle\nonumber\\
	   &-\int_M\langle \omega,d_{A_0}(\iota_\sigma  g \rho \eta)\rangle-\int_M\langle \zeta,\iota_\sigma  g \rho \eta\rangle
	   -\int_{\partial M}\langle \xi,\iota_\sigma  g \rho \eta\rangle
	   \nonumber\\ 
	   &+(-1)^{n-1}\int_M \langle \alpha, *[A_0,*d(\iota_\sigma  g \rho \eta)]\rangle
	    +\int_M\langle \alpha, d^*[A_0,\iota_\sigma  g \rho \eta]\rangle\nonumber\\
	    &+(-1)^{n-1}\int_M \langle \alpha, *[A_0,*[A_0,\iota_\sigma  g \rho \eta]]\rangle.
	 \end{align}	 	 
	 Finally, the last three terms of \eqref{simplify-lap} are equal to
	\begin{equation} \label{3rd-term}
		-\int_M\langle d^*\left[(B_g+\div(\sigma ))\rho \alpha-\iota_\sigma  (\alpha\wedge d\rho)\right], \eta\rangle
		-\int_{\partial M}\langle *_{n-1}*[(B_g+\div(\sigma ))\rho \alpha-\iota_\sigma  (\alpha\wedge d\rho)],\eta\rangle
	\end{equation}
	 By applying further integration by parts to the expressions in \eqref{2nd-term}, we can 
	 find $F$ and $G$ satisfying \eqref{weak-Dirichlet}, which are respectively in $L^r_{k-1}$ and $L^r_{k}$, and satisfy
	 \[
	   |\!|F|\!|_{L^{r}_{k-1}}+|\!|G|\!|_{L^{r}_k(U)}\leq C'(|\!|f|\!|_{L_k^{r}(U)}+|\!|\omega|\!|_{L_k^{r}(U)}+
	   |\!|\zeta|\!|_{L_{k-1}^{r}(U)}+|\!|\xi|\!|_{L_{k}^{r}(U)}+
	   |\!| \alpha |\!|_{L_k^{r}(U)})
	 \]	 
	 for some constant $C'$ depending only on $A_0$, $g$, $\sigma$, $U$ and $K$. 
	 Therefore, Lemma \ref{elliptic-reg-laplace} (part (i) or (ii) depending on whether $\circ=\tau$ or $\nu$) implies that
	\[
	|\!| \rho\alpha(\sigma )|\!|_{L^{r}_{k+1}(U)}\\
	  \leq C (|\!|f|\!|_{L_k^{r}(U)}+|\!|\omega|\!|_{L_k^{r}(U)}
	  +|\!|\zeta|\!|_{L_{k-1}^{r}(U)}+|\!|\xi|\!|_{L_{k}^{r}(U)}+
	   |\!| \alpha |\!|_{L_k^{r}(U)}).
	\]
	This inequality proves the desired claim.
\end{proof}

The following lemma is an extension of the previous lemma to the case that $k=0$.
\begin{lemma}\label{elliptic-reg-k=0}
	Let $r$, $M$, $K$, $U$, $\sigma $, $E$ and $A_0$ be as in Lemma \ref{elliptic-reg}. 
	Let $\circ$ be either $\tau$ and $\nu$. There is a constant $C$ such that the following holds. Let $\alpha$ be an $L^r$
	section of $\Lambda^1\otimes E$ over the open subset $U$ of $M$ such that for any $\phi\in \Gamma_c(U,E)$
	and $\psi\in \Gamma_\circ(U,E)$:
	\begin{equation} \label{weak-harm}
	 | \int_M\langle \alpha, d_{A_0} \phi \rangle|\leq C_1 |\!|\phi|\!|_{L^{r^*}(U)}, \hspace{1cm} 
	 | \int_M\langle \alpha, d_{A_0}^*d_{A_0}(\psi\cdot \iota_\sigma g) \rangle|\leq 
	  C_2 |\!|\psi|\!|_{L_1^{r^*}(U)}.
	\end{equation}
	Then $\alpha(\sigma)$ belongs to $L^{r}_1(K)$ and 
	\begin{equation}\label{ineq-assumptions}
	  |\!| \alpha(\sigma )|\!|_{L^{r}_1(K)}\leq C(C_1+C_2+|\!| \alpha |\!|_{L^r(U)}).
	\end{equation}
\end{lemma}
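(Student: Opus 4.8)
The plan is to follow the proof of Lemma \ref{elliptic-reg}, but --- since in the $k=0$ setting no term involving a derivative of $\alpha$ is admissible --- to stop the integration by parts one step earlier, keeping a single derivative on the test function and pairing it directly against $\alpha\in L^r$, and to feed the two weak estimates in \eqref{weak-harm} into exactly the two places where the proof of Lemma \ref{elliptic-reg} invoked \eqref{weak-harm-1} and \eqref{weak-harm-2}.

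First I would localize as in the proof of Lemma \ref{elliptic-reg}: assume $U\subset\mathbb H^n$, $E$ trivialized, $A_0$ a smooth $\so(3)$-valued $1$-form with the trivialization chosen so that the $A_0$-normal covariant derivative along $\partial\mathbb H^n$ agrees with the ordinary normal derivative; then fix a cutoff $\rho$ supported in $U$, equal to $1$ on $K$, and (this is used only when $\circ=\nu$) with $\partial_\nu\rho=0$ on $\partial M$. The identity \eqref{simplify-lap} from that proof is a pure integration-by-parts identity valid for every smooth $\eta$ and every $\alpha\in L^r(U)$, so I would start from it and estimate its five terms. The last three terms of \eqref{simplify-lap} are of the form $\langle(\text{bounded, zeroth order})\cdot\alpha,\ d\eta\rangle$ and are immediately bounded by $C\|\alpha\|_{L^r(U)}\,\|\eta\|_{L^{r^*}_1(U)}$.

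For the first term I would use the splitting $\rho\, d\iota_\sigma d\eta = d_{A_0}(\rho\,\iota_\sigma d\eta) - (d\rho)\,\iota_\sigma d\eta - \rho[A_0,\iota_\sigma d\eta]$ as in \eqref{1st-term}: here $\phi:=\rho\,\iota_\sigma d\eta\in\Gamma_c(U,E)$ with $\|\phi\|_{L^{r^*}}\le C\|\eta\|_{L^{r^*}_1}$, so the first inequality in \eqref{weak-harm} controls $\int_M\langle\alpha,d_{A_0}\phi\rangle$ by $C_1\|\eta\|_{L^{r^*}_1}$, while the two remaining pieces are $\le C\|\alpha\|_{L^r}\|\eta\|_{L^{r^*}_1}$. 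For the second term I would use the Leibniz identity $\rho\,\iota_\sigma g\wedge d_{A_0}\eta = d(\rho\,\iota_\sigma g)\cdot\eta - d_{A_0}\!\big((\rho\eta)\,\iota_\sigma g\big)$ to rewrite $d^*(\rho\,\iota_\sigma g\wedge d\eta)$ as $-\,d_{A_0}^*d_{A_0}\!\big((\rho\eta)\,\iota_\sigma g\big)$ plus terms which are zeroth order in $A_0$ applied to $d_{A_0}\!\big((\rho\eta)\,\iota_\sigma g\big)$ together with $d^*\!\big(d(\rho\,\iota_\sigma g)\cdot\eta\big)$; all of these last are first order in $\eta$ with smooth coefficients, hence paired against $\alpha$ they are $\le C\|\alpha\|_{L^r}\|\eta\|_{L^{r^*}_1}$, whereas $\psi:=\rho\eta$ lies in $\Gamma_\circ(U,E)$ (here the condition $\partial_\nu\rho=0$ enters when $\circ=\nu$) with $\|\psi\|_{L^{r^*}_1}\le C\|\eta\|_{L^{r^*}_1}$, so the second inequality in \eqref{weak-harm} controls $\int_M\langle\alpha,d_{A_0}^*d_{A_0}(\psi\,\iota_\sigma g)\rangle$ by $C_2\|\eta\|_{L^{r^*}_1}$. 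The essential point, and the only real departure from the $k\ge1$ case, is that none of these manipulations produces a boundary integral or a derivative of $\alpha$; this is why no boundary term $G$ (as in Lemma \ref{elliptic-reg}) appears and why Lemma \ref{elliptic-reg-laplace} with $k=0$ suffices.

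Combining the bounds gives $|\int_M\langle\rho\alpha(\sigma),\Delta\eta\rangle|\le\kappa\,\|\eta\|_{L^{r^*}_1(U)}$ for all smooth $\eta\in\Gamma_\circ(U,E)$, with $\kappa\le C(C_1+C_2+\|\alpha\|_{L^r(U)})$. Multiplying an arbitrary smooth $\varphi$ on $M$ with the appropriate homogeneous boundary condition by a function equal to $1$ near $\supp\rho$, supported in $U$, and with vanishing normal derivative on $\partial M$ when $\circ=\nu$, turns this into the hypothesis \eqref{weak-Dirichlet-lemma-k=0} (if $\circ=\tau$) or \eqref{weak-Neuman-lemma-k=0} (if $\circ=\nu$) of Lemma \ref{elliptic-reg-laplace} for $u=\rho\alpha(\sigma)$, $p=r$, $k=0$; applying part (i), resp.\ (ii), then gives $\|\rho\alpha(\sigma)\|_{L^r_1(M)}\le C(\kappa+\|\rho\alpha(\sigma)\|_{L^r(M)})$, which is \eqref{ineq-assumptions} since $\rho\equiv1$ on $K$. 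I expect no serious obstacle here; the delicate points are purely bookkeeping --- verifying that each auxiliary quantity multiplying $\eta$ is first order with bounded coefficients, and that $\rho\eta$ and $\chi\varphi$ stay in the required test-function classes.
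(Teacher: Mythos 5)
Your proposal is correct and follows essentially the same route as the paper: starting from the integration-by-parts identity \eqref{simplify-lap}, feeding the two weak bounds of \eqref{weak-harm} into the first and second terms exactly as in \eqref{1st-term-weak} and \eqref{2nd-term-weak}, bounding the remaining terms by $C\|\alpha\|_{L^r}\|\eta\|_{L^{r^*}_1}$, and concluding with the $k=0$ case of Lemma \ref{elliptic-reg-laplace} applied to $\rho\,\alpha(\sigma)$. Your explicit remarks about choosing $\rho$ with $\partial_\nu\rho=0$ when $\circ=\nu$ and about $\rho\eta$, $\rho\,\iota_\sigma d\eta$ staying in the required test classes are just the bookkeeping the paper leaves implicit.
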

\begin{proof}
	In the following $C$ is a constant independent of $\alpha$ which might increase from each line to the next one.
	As in the proof of Lemma \ref{elliptic-reg}, we can show that $\alpha$ satisfies \eqref{simplify-lap}. In particular, we have
	\begin{align} \label{simplify-lap-2}
	   |\int_M \langle \rho \alpha(\sigma ), \Delta \eta \rangle|\leq & \hspace{2mm}|\int_M \rho \langle  \alpha,d \iota_\sigma  d\eta  \rangle|+ 
	   |\int_M\langle \alpha, d^*(\rho  \iota_\sigma  g \wedge d\eta)\rangle|+
	   |\int_M\rho \div(\sigma ) \langle \alpha, d\eta\rangle| \nonumber\\
	   &+|\int_M \rho \langle B_g\alpha, d\eta\rangle|+
	    |\int_M\langle \iota_\sigma  (d(\rho)\wedge \alpha), d\eta\rangle|.
	 \end{align}
	The first term on the left hand side of the above inequality can be estimated as in \eqref{1st-term}:
	\begin{align} \label{1st-term-weak}
	    |\int_M \rho\langle \alpha,d \iota_\sigma  d\eta \rangle|
	   \leq &|\int_M \langle \alpha,d_{A_0}(\rho \iota_\sigma  d\eta) \rangle|+|\int_M \langle \alpha,\rho[A_0,\iota_\sigma  d \eta] \rangle|+
	   |\int_M \langle \alpha,\mathcal (d\rho)\cdot(\iota_\sigma  d\eta) \rangle| \nonumber \\
	   \leq &C(C_1+|\!| \alpha |\!|_{L^r(U)})|\!|\eta |\!|_{L^{r^*}_1(U)}.
	 \end{align}	
	 To. obtain the second inequality, we use the first assumption in \eqref{weak-harm}. Next, we find an upper bound for  the second term in \eqref{simplify-lap-2} using the second inequality in \eqref{weak-harm} following an argument similar to the 
	 previous lemma:
	  \begin{align}\label{2nd-term-weak}
	    |\int_M\langle \alpha, d^*(\rho  \iota_\sigma  g \wedge d\eta)\rangle|
	    \leq &|\int_M\langle \alpha, d^*( \eta d(\rho  \iota_\sigma  g))\rangle|+
	    |\int_M\langle \alpha, d_{A_0}^*d_{A_0}(\iota_\sigma  g \rho \eta)\rangle|\nonumber\\
	    &+|\int_M \langle \alpha, *[{A_0},*d(\iota_\sigma  g \rho \eta)]\rangle|+
	    |\int_M\langle \alpha, d^*[{A_0},\iota_\sigma  g \rho \eta]\rangle|\nonumber\\
	    &+|\int_M \langle \alpha, *[{A_0},*[{A_0},\iota_\sigma  g \rho \eta]]\rangle|\nonumber\\
	   \leq&C(C_2+|\!| \alpha |\!|_{L^r(U)})|\!|\eta |\!|_{L^{r^*}_1(U)}
	 \end{align}	 
	 It is straightforward to bound the remaining three terms in \eqref{simplify-lap-2} 
	 with $C|\!| \alpha |\!|_{L^r(U)}|\!|\eta |\!|_{L^{r^*}_1(U)}$. Consequently, Lemma \ref{elliptic-reg-laplace} implies that $\alpha(\sigma)$ is in $L^{r}_1(K)$ and \eqref{ineq-assumptions} holds.
\end{proof}

\begin{lemma}\label{weak-Dirac-lemma}
	Let $k$ be a non-negative integer and $r>1$ is a real number. Suppose $M$ is a Riemannian manifold possibly with boundary. Suppose $\Sigma$ is a closed surface and $F$ is an $\SO(3)$-bundle over $\Sigma$.
	Suppose $\beta=\{\beta_x\}_{x\in M}$ is a smooth family of connections on $F$ parametrized by $M$. 
	Suppose $f$ is an $L^r_k$ section of the bundle $T^*\Sigma\otimes F$ over $\Sigma\times M$. 
	If $k\geq 1$, suppose there are $L^r_{k}$ sections $\zeta_1$ and $\zeta_2$ of the pullback of $F$ over $\Sigma\times M$ such that for 
	any smooth section $\xi$ of the pullback of $F$ over $\Sigma\times M$, we have
	\begin{equation} \label{weak-Dirac-lemma-assumption}
	    \int_{M\times \Sigma} \langle f, d_{\beta} \xi \rangle=
	    \int_{M\times \Sigma} \langle \zeta_1, \xi \rangle,\hspace{1cm}
	    \int_{M\times \Sigma} \langle f, *_\Sigma d_\beta \xi \rangle=
	    \int_{M\times \Sigma} \langle \zeta_2, \xi \rangle.
	 \end{equation}
	 where $d_{\beta} \xi$ denotes the section of $T^*\Sigma\otimes F$ over $\Sigma\times M$ given by the exterior derivatives of $\xi$ in the $\Sigma$ direction with respect to the family of connections $\beta$.
	 Then $\nabla^\beta_\Sigma f$, the covariant derivative of $f$ in the $\Sigma$ direction with respect to $\beta$, is in $L^r_{k}$, and 
	 there is a constant $C$, independent of $f$, such that:
	 \begin{equation}\label{weak-Dirac-conc}
	   |\!|\nabla^{\beta}_\Sigma f|\!|_{L^r_{k}(M\times \Sigma)}\leq C(|\!|\xi_1|\!|_{L^r_{k}(M\times \Sigma)}+|\!|\xi_2|\!|_{L^r_{k}(M\times \Sigma)}+|\!|f|\!|_{L^r_{k}(M\times \Sigma)}).
	 \end{equation}
	 In the case that $k=0$, the assumption \eqref{weak-Dirac-lemma-assumption} has to be replaced with
	\begin{equation} \label{weak-Dirac-lemma-k=0}
	  |\int_{\Sigma\times X} \langle f, d_{\beta} \xi \rangle|+
	  |\int_{\Sigma\times X} \langle f,*_\Sigma d_{\beta}\xi \rangle|
	  \leq \kappa|\!|\xi|\!|_{L^{r^*}(\Sigma\times X)}.
	 \end{equation}
	 In this case, $\nabla^{\beta}_\Sigma f$ belongs to $L^r(X\times \Sigma)$ and 
	 \begin{equation}\label{weak-Dirac-conc-0}
	   |\!|\nabla^{\beta}_\Sigma f|\!|_{L^r(X\times\Sigma)}\leq C(\kappa+|\!|f|\!|_{L^r(X\times\Sigma)}).
	 \end{equation}
\end{lemma}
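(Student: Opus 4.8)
The plan is to view the two hypotheses as a single weak equation for a first--order operator that is elliptic \emph{along $\Sigma$}, to settle the case $k=0$ by a fibrewise reduction to classical elliptic regularity on the closed surface $\Sigma$, and then to bootstrap in $k$. A short computation with the Hodge star of a Riemann surface (using $*_\Sigma^2=-1$ on $1$--forms and $d_\beta^*=-*_\Sigma d_\beta *_\Sigma$ on $1$--forms) rewrites \eqref{weak-Dirac-lemma-assumption} as the pair of weak identities $d_\beta^* f=\zeta_1$ and $*_\Sigma d_\beta f=-\zeta_2$, and rewrites \eqref{weak-Dirac-lemma-k=0}, via $L^r$--$L^{r^*}$ duality, as the statement that $d_\beta^* f$ and $*_\Sigma d_\beta f$ belong to $L^r(M\times\Sigma)$ with norms at most $\kappa$. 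Thus in every case the content of the hypothesis is that the first--order operator $P_\beta$ acting fibrewise by $a\mapsto(d_\beta^* a,\,*_\Sigma d_\beta a)$ sends the $\Lambda^1\otimes F$--valued section $f$ to a pair of $F$--valued sections in $L^r_k$ (respectively $L^r$), with controlled norm. On a surface $P_{\beta_x}$ is elliptic: in a direction $\eta\in T_p^*\Sigma$ its principal symbol is, under $T_p^*\Sigma\cong\C$, complex multiplication by $\bar\eta$ on each $F$--summand, which is the $\bar\partial\oplus\partial$ splitting underlying Lemma \ref{Hodge-decom}. Hence for each $x$ one has the classical first--order elliptic estimate
\[
  |\!|\nabla^\beta_\Sigma a|\!|_{L^r(\Sigma)}\le C\big(|\!|P_{\beta_x}a|\!|_{L^r(\Sigma)}+|\!|a|\!|_{L^r(\Sigma)}\big),
\]
with $C$ independent of $x$ because $\{\beta_x\}_{x\in M}$ is a smooth family over the compact manifold $M$.

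For $k=0$ I would use Fubini. Since $f\in L^r(M\times\Sigma)$ and its weak fibre derivatives $d_\beta^* f$, $*_\Sigma d_\beta f$ lie in $L^r(M\times\Sigma)$, the Fubini theorem for weak derivatives gives, for almost every $x$ in the interior of $M$, that $f(\cdot,x)\in L^r(\Sigma)$ and that $d_{\beta_x}^* f(\cdot,x)$, $*_\Sigma d_{\beta_x} f(\cdot,x)$ are the fibre restrictions of $\zeta_1$, $-\zeta_2$. Applying the displayed estimate on $\Sigma$ to $f(\cdot,x)$, raising to the $r$--th power and integrating over $x$ produces $\nabla^\beta_\Sigma f\in L^r(M\times\Sigma)$ together with the bound \eqref{weak-Dirac-conc-0}. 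The boundary $\partial M$ plays no role here: $\Sigma$ is closed, so no boundary terms arise in the fibre, and integrating over $\mathrm{int}(M)$ is enough since $\partial M$ is a null set.

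For $k\ge1$ I would bootstrap. Fix a multi--index $\gamma$ with $|\gamma|\le k$ and a corresponding composition $\nabla^\gamma$ of covariant derivatives on $M\times\Sigma$. Testing the weak equations for $f$ against $(\nabla^\gamma)^*\xi$, where $\xi$ is smooth with compact support in $\mathrm{int}(M)\times\Sigma$, and integrating by parts --- which is boundary--free, being clean in the $\Sigma$--directions because $\Sigma$ is closed and clean in the $M$--directions because $\xi$ is interior--supported --- one checks that $\nabla^\gamma f\in L^r$ satisfies weak equations of the $k=0$ type over $\mathrm{int}(M)\times\Sigma$, now with right--hand sides $\nabla^\gamma\zeta_1$, $\nabla^\gamma\zeta_2$ plus commutator terms whose coefficients are bounded derivatives of $\beta$ and of the metric; these new right--hand sides lie in $L^r$ with norm at most $C\big(|\!|f|\!|_{L^r_k(M\times\Sigma)}+|\!|\zeta_1|\!|_{L^r_k(M\times\Sigma)}+|\!|\zeta_2|\!|_{L^r_k(M\times\Sigma)}\big)$. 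The argument of the previous paragraph uses only interior test sections, so it applies verbatim and yields $\nabla^\beta_\Sigma(\nabla^\gamma f)\in L^r$ with the corresponding bound. Commuting $\nabla^\gamma$ past $\nabla^\beta_\Sigma$ --- the commutator has order $\le|\gamma|$ and acts on $f\in L^r_k$, hence lands in $L^r$ --- gives $\nabla^\gamma(\nabla^\beta_\Sigma f)\in L^r$; since $\gamma$ was arbitrary with $|\gamma|\le k$ this is exactly $\nabla^\beta_\Sigma f\in L^r_k(M\times\Sigma)$, with the estimate \eqref{weak-Dirac-conc}.

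Rather than a single hard obstacle, the work lies in the bookkeeping: establishing the $\bar\partial\oplus\partial$ ellipticity and the attendant $L^r$ estimate on $\Sigma$ with a constant uniform over the parameter family; making the Fubini reduction of the weak equation to almost every fibre precise; and, in the inductive step, commuting $M$--direction derivatives through $d_\beta$ and $d_\beta^*$ (producing only lower--order terms with smooth coefficients) without spoiling the weak formulation, which is why one tests against sections supported in $\mathrm{int}(M)\times\Sigma$ and uses that $\Sigma$ is closed. I expect the Fubini step and the uniformity of constants to be the part most worth stating carefully.
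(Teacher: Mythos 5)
Your reformulation of the hypotheses as the first-order elliptic system $d_\beta^* f=\zeta_1$, $*_\Sigma d_\beta f=-\zeta_2$ is right, and the Fubini-fibrewise estimate for $k=0$ followed by differentiate-and-bootstrap for $k\ge 1$ does yield the conclusion. The paper, however, gives no self-contained argument: it observes the lemma is the family version of a first-order analogue of Lemma~\ref{elliptic-reg-laplace} and refers to \cite[Lemma 2.9]{Weh:Lag-bdry-ana} for a special case, saying the rest is similar. The approach invoked there (and used systematically in the paper, cf.\ Lemma~\ref{elliptic-reg-laplace-Ban}) is Banach-valued: one views $f$ as a section over $\Sigma$ with values in $L^r_k(M)$, applies first-order elliptic regularity on $\Sigma$ with Banach coefficients, and reads off the mixed Sobolev estimate from the Fubini-type identities~\eqref{fun-space-Ban}, with no bootstrap. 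Your route is more elementary and trades the Banach-valued formalism for two pieces of bookkeeping which you correctly flag: the a.e.\ fibrewise reduction of the weak equation needs a density argument over a countable family of test sections on $\Sigma$ so the exceptional set of $x\in M$ can be chosen uniformly, and the inductive step needs the check that $[d_\beta,(\nabla^\gamma)^*]$ has order $\le |\gamma|$ and can be integrated by parts back onto $f\in L^r_k$ against interior-supported test sections. Both are routine, so the argument is sound. One imprecision worth fixing: calling $d_\beta^*\oplus(*_\Sigma d_\beta)$ ``the $\bar\partial\oplus\partial$ splitting underlying Lemma~\ref{Hodge-decom}'' conflates an operator with a decomposition; what you actually use is that this operator is first-order elliptic on the closed surface $\Sigma$, and the constant in the $L^r$ estimate is $x$-uniform because the operators $P_{\beta_x}$ share a common principal symbol, only their zeroth-order coefficients vary (smoothly in $x$), and $M$ is compact in the applications.
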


Lemma \ref{weak-Dirac-lemma} can be regarded as the family version of \ref{elliptic-reg-laplace} where we also replace the degree two elliptic operator $\Delta$ with the degree one operator $d_\beta\oplus d_\beta^*$. This proposition in the case that $F$ is the trivial bundle and $\beta$ is the trivial family of connections is proved in \cite[Lemma 2.9]{Weh:Lag-bdry-ana}. Clearly, this implies the lemma for the case that $F$ is trivial and $B$ is arbitrary. The proof in the case that $F$ is non-trivial is similar.

\section{Regularity of holomorphic curves in a Banach space}\label{Banach-space-valued}

Suppose $B$ is a Banach space and $M$ is a compact Riemannian manifold. In this appendix, we are interested in maps from $M$ to $B$. For $1<p<\infty$ and any non-negative integer $k$, we can define the Sobolev norm $|\!|\cdot|\!|_{L^p_k}$ on the space of such maps in the usual way. The completion of space of smooth maps from $M$ to $B$ with respect to this Sobolev norm is  denoted by $L^p_k(M,B)$. As an example, let $B=L^p(N)$ for a compact manifold $N$. Any function in $C^\infty(M\times N)$, determines an element of $L^p(M,B)$. In fact, the space of smooth functions on $M\times N$ is dense in $L^p(M,B)$ (see \cite{W:Ban-elliptic} and \cite{Max:GU-comp}). This gives us the following identifications of Sobolev spaces:
\[
  L^p(M,L^p(N))=L^p(N,L^p(M))=L^p(M\times N).
\]
More generally, $C^\infty(M\times N)$ is dense in $L^p_k(M,L^p(\Sigma))$ for any non-negative integer $k$, and we have (see \cite{W:Ban-elliptic,Max:GU-comp}):
\begin{equation} \label{fun-space-Ban}
  L^p_k(M\times N)=L^p_k(M,L^p(N))\cap L^p_k(N,L^p(M)),\hspace{1cm} L^p_k(M, L^p(N))=L^p(N,L^p_k(M)).
\end{equation}

For the rest of this appendix, we fix $B_p$ to be a Banach space that can be identified with a closed subspace of the space $L^p(N)$ for a closed manifold $N$.  In particular, the intersection $B_q:=B_p\cap L^q(N)$ with $q>p$ determines a closed subspace of $L^q(N)$. For $q<p$, $B_q$ is the closure of $B_p$ in $L^q(N)$.
\begin{lemma}[\cite{W:Ban-elliptic} and \cite{Max:GU-comp}]\label{elliptic-reg-laplace-Ban}
	Suppose $M$ is a Riemannian manifold with boundary. Let $k$ be a non-negative integer and $p>1$ be a real number. 
	Let
	$u\in L^p_k(M,B_p)$. Then the same claims as in parts
	(i) and (ii) of Lemma \ref{elliptic-reg-laplace} hold if we assume that $F$, $G$ and $\varphi$ are $B_p$-valued.	
\end{lemma}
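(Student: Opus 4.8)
The plan is to deduce the $B_p$-valued estimates directly from the scalar ones in Lemma~\ref{elliptic-reg-laplace} by \emph{slicing} in the auxiliary manifold $N$. Since $B_p$ is a closed subspace of $L^p(N)$, the space $L^p_k(M,B_p)$ is a closed subspace of $L^p_k(M,L^p(N))$, and by the identifications \eqref{fun-space-Ban} the latter is $L^p(N,L^p_k(M))$. Thus $u$ may be regarded as a map $y\mapsto u(\cdot,y)\in L^p_k(M)$, defined for a.e.\ $y\in N$, with $\int_N\|u(\cdot,y)\|_{L^p_k(M)}^p\,dy<\infty$; likewise $F$ determines a.e.\ slices in $L^p_{k-1}(M)$ and $G$ a.e.\ slices in $L^p_k(M)$, each with $p$-integrable norm over $N$.

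First I would check that the $B_p$-valued weak equation descends to the scalar weak equation slice by slice. Testing the hypothesis against $\varphi(x,y)=\psi(x)\,\eta(y)$, where $\psi$ is a scalar test function on $M$ satisfying the relevant boundary condition (so that $\psi\eta$ satisfies it as well) and $\eta\in C^\infty(N)$, and using Fubini together with the fact that $\Delta$ differentiates only in the $M$-directions, one obtains $\int_N\eta(y)\,h_\psi(y)\,dy=0$, where $h_\psi(y)=\int_M\langle u(\cdot,y),\Delta\psi\rangle-\int_M\langle F(\cdot,y),\psi\rangle$ (with the boundary term $\int_{\partial M}\langle G(\cdot,y),\psi\rangle$ subtracted in the Neumann case). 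As $h_\psi\in L^1(N)$ this forces $h_\psi=0$ a.e.\ in $N$; running the argument over a countable family $\{\psi_j\}$ dense among admissible test functions (in $C^2$, resp.\ $L^{p^*}_1$) and discarding the union of the associated null sets, I conclude that for a.e.\ $y$ the slice $u(\cdot,y)$ satisfies the scalar weak equation of Lemma~\ref{elliptic-reg-laplace} with data $F(\cdot,y)$ and $G(\cdot,y)$, for \emph{every} admissible scalar test function.

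Next, for each such $y$ I would invoke Lemma~\ref{elliptic-reg-laplace}(i) or (ii): the slice $u(\cdot,y)$ lies in $L^p_{k+1}(M)$ and satisfies $\|u(\cdot,y)\|_{L^p_{k+1}(M)}\le C\bigl(\|F(\cdot,y)\|_{L^p_{k-1}(M)}+\|G(\cdot,y)\|_{L^p_k(M)}+\|u(\cdot,y)\|_{L^p(M)}\bigr)$, with the scalar constant $C$ depending only on $M,k,p$ and hence \emph{independent of $y$}. Raising to the $p$-th power, integrating over $N$, and using the three a.e.-slice bounds above (the requisite measurability of $y\mapsto\|u(\cdot,y)\|_{L^p_{k+1}(M)}$ follows from separability of $L^p_{k+1}(M)$, or one may extract it by weak compactness in the reflexive space $L^p(N,L^p_{k+1}(M))$, using that $u$ is an $L^p_k$-limit of smooth $B_p$-valued maps) gives $u\in L^p(N,L^p_{k+1}(M))=L^p_{k+1}(M,L^p(N))$ with the asserted estimate; since the values of $u$ remain in the closed subspace $B_p$, this is exactly $u\in L^p_{k+1}(M,B_p)$. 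The weak formulations \eqref{weak-Dirichlet-lemma-k=0} and \eqref{weak-Neuman-lemma-k=0} for $k=0$ are handled by the same scheme: the slicing now produces, for a.e.\ $y$, a scalar estimate $|\int_M\langle u(\cdot,y),\Delta\psi\rangle|\le\kappa(y)\,\|\psi\|_{L^{p^*}_1(M)}$ with $\kappa\in L^p(N)$ and $\|\kappa\|_{L^p(N)}$ bounded by the global constant, after which Lemma~\ref{elliptic-reg-laplace} and integration in $y$ conclude.

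I expect the main obstacle to be the measurable-slicing step, especially in its $k=0$ version. For $k\ge 1$ the descent is a clean Fubini-and-density argument; but for $k=0$ one must turn the single global bound into an a.e.-pointwise bound on the slices with the correct $L^p$-in-$y$ control on the constant $\kappa(y)$, which requires representing the bounded operator $\psi\mapsto\bigl(y\mapsto\int_M\langle u(\cdot,y),\Delta\psi\rangle\bigr)$ by a weakly measurable family of functionals and choosing the exhausting countable family of test functions carefully, and it is also where one must pin down the precise duality pairing used on the right-hand sides of \eqref{weak-Dirichlet-lemma-k=0} and \eqref{weak-Neuman-lemma-k=0} so that it is compatible with the $L^p(N,\,\cdot\,)$ splitting. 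Everything else reduces to the scalar Lemma~\ref{elliptic-reg-laplace} together with Fubini's theorem.
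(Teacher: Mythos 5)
Your overall strategy is the one the paper itself sketches: identify $L^p_k(M,B_p)$ with a closed subspace of $L^p(N,L^p_k(M))$ via \eqref{fun-space-Ban}, descend to the $N$-slices, apply the scalar Lemma \ref{elliptic-reg-laplace} with a constant independent of $y$, and integrate in $y$. For $k\geq 1$ your execution (product test functions, Fubini, a countable dense family of admissible $\psi$, measurability via separability) is correct, up to the same imprecision already present in the paper about which pairing and which vector-valued test functions are meant when $B_p$ is a proper closed subspace of $L^p(N)$; your Fubini step really only needs the hypothesis read as a $B_p$-valued identity against scalar test functions $\psi$, which is how it is produced in the applications.

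The genuine gap is the $k=0$ branch. Testing \eqref{weak-Dirichlet-lemma-k=0} (or \eqref{weak-Neuman-lemma-k=0}) with $\psi\otimes\eta$ and taking the supremum over $\Vert\eta\Vert_{L^{p^*}(N)}\leq 1$ gives only that the operator $T\colon\psi\mapsto h_\psi(\cdot)=\int_M\langle u(\cdot,\,\cdot\,),\Delta\psi\rangle$ is bounded from $L^{p^*}_1(M)$ to $L^p(N)$ with norm at most $\kappa$. Your claim that the slicing "produces, for a.e.\ $y$, a scalar estimate $|h_\psi(y)|\leq\kappa(y)\Vert\psi\Vert_{L^{p^*}_1(M)}$ with $\kappa\in L^p(N)$" asserts a strictly stronger, pointwise-dominated (kernel-type) property which bounded operators into $L^p(N)$ simply do not have in general: already the identity on $L^2$ admits no such domination, since for a.e.\ $y$ the supremum of $|\psi_j(y)|/\Vert\psi_j\Vert_{L^2}$ over any dense countable family is infinite, and no choice of "weakly measurable family of functionals" or careful countable test family repairs this. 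The only slicewise functionals you actually possess are $\psi\mapsto\int_M u(x,y)\Delta\psi(x)\,dx$, which H\"older bounds in terms of $\Vert\Delta\psi\Vert_{L^{p^*}(M)}$, not $\Vert\psi\Vert_{L^{p^*}_1(M)}$; obtaining the $L^{p^*}_1$-bound with an $L^p$-integrable constant is essentially equivalent to the conclusion $u(\cdot,y)\in L^p_1(M)$ for a.e.\ $y$ with $p$-integrable norm, so the step is circular. The fix is to slice the test data rather than the estimate on $u$: to bound $\nabla_M u$ in $L^p(M\times N)$ by duality, take $g\in L^{p^*}(M\times N)$, apply the scalar solution operator for the Dirichlet (resp.\ Neumann) problem in the $M$-variable for each fixed $y\in N$ to manufacture an admissible $L^{p^*}(N)$-valued test function $\varphi$ with $\Vert\varphi\Vert_{L^{p^*}_1(M,L^{p^*}(N))}\leq C\Vert g\Vert_{L^{p^*}(M\times N)}$ (the slicewise use of the solution operator is unproblematic, since its bounds integrate in $y$), and then invoke the global hypothesis once; this vector-valued form of the scalar $k=0$ duality argument is the shape of the detailed proofs in the references the paper cites, whereas the a.e.-disintegration of the hypothesis on $u$ is not available before the conclusion is known.
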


\begin{proof}[Sketch of the Proof.]	
		Without loss of generality, we can assume that $B_p=L^p(N)$. Using the identifications in \eqref{fun-space-Ban}, we can regard $u$ as an $L^p$ map
		from $N$ to the Banach space $L^p_k(M)$. Next, we can apply the properties of the Laplacian operator acting on $L^p_k(M)$ to obtain the desired conclusions.
		For more details, we refer the reader to \cite[Lemma 2.1]{W:Ban-elliptic} and \cite[Subsection 3.3]{Max:GU-comp}.
\end{proof}

The proof of the following proposition about regularity of Banach valued Cauchy-Riemann equation can be found in \cite[Theorem 1.2]{W:Ban-elliptic} and \cite[Lemmas 27 and 28]{Max:GU-comp}. In this proposition,  ${\bB}_p$ denotes the direct sum $B_p\oplus B_p$. This space admits an obvious complex structure $J_0$ given by
\begin{equation}\label{J0}
  \mathcal J_0(v_0,v_1)=(-v_1,v_0).
\end{equation}
The subspace $\mathcal L:=0\oplus B_p$ defines a completely real subspace of $\bB_{p}$ with respect to $\mathcal J_0$. 

\begin{prop}\label{Ban-val-reg}
	Suppose $U$ is a bounded open subspace of \[\bbH^2:=\{(s,\theta)\in \R^2\mid s \geq 0\},\] and $U_\partial$ denotes the intersection $\bbH^2\cap U$.
	Suppose $\mathcal J:\bB_{p}\to {\rm End}(\bB_{p},\bB_{p})$ is a smooth family of complex structures such that $\mathcal J(x)=\mathcal J_0$ for $x\in \mathcal L$. 
	For $p>2$ and $k\geq 2$, suppose $u:U\to \bB_{p}$ is an $L^p_k$ map that satisfies
	\begin{equation}\label{Ban-CR}
	  \partial_\theta u-\mathcal J(u)\partial_s u=z\in L^p_k(U,\bB_{p}),
	\end{equation}
	and the boundary condition
	\begin{equation}\label{Ban-bd}
	  u|_{U_\partial}\subset \mathcal L.
	\end{equation}	
	Then for any open subspace $K\subset U$, whose closure in $U$ is compact, the map $u$ is in 
	$L^p_{k+1}(K)$. Moreover, there is a constant $C$, depending only on $K$, such that
	\begin{equation}
		|\!|u|\!|_{L^p_{k+1}(K)}\leq C(|\!|z|\!|_{L^p_{k}(U)}+|\!|u|\!|_{L^p}(U)).		
	\end{equation}
	If $u_i:U\to \bB_{p}$ is a sequence of $L^p_k$ map that satisfies
	\begin{equation}\label{Ban-CR-seq}
	  \partial_\theta u_i-\mathcal J(u_i)\partial_s u_i=z_i\in L^p_k(U,\bB_{p}),
	\end{equation}
	such that $u_i$ and $z_i$ are respectively $L^p_k$-convergent to $u$ and $z$, then $u_i$ restricted to $K$ is 
	$L^p_{k+1}$-convergent to the restriction of $u$ to $K$. In the case that $k=1$, similar results hold if we replace 
	$L^{p}_{k+1}$ with $L^{p/2}_{k+1}$.
\end{prop}
\begin{proof}[Sketch of the proof]
	For $k\geq 2$, suppose $u$ is a map that satisfies \eqref{Ban-CR} and \eqref{Ban-bd}. 
	We apply $\partial_\theta+\mathcal J(u)\partial_s$ to
	\eqref{Ban-CR}. Then we have:
	\begin{equation}\label{Ban-CR-squared}
		 \partial_s^2 u+\partial_\theta^2 u=\mathcal J(u)\partial_s(\mathcal J(u))\partial_su+
		 \partial_\theta(\mathcal  J(u))\partial_su
		 +\partial_\theta z+\mathcal J(u)\partial_sz
	\end{equation}
	Using the assumptions $k\geq 2$, $u\in L^p_k$ and $z\in L^p_k$, we can conclude that 
	the left hand side of the above identity is an element of $L^p_{k-1}$. The maps $u$ and $z$ can be written as 
	$(u_0,u_1)$ and $(z_0,z_1)$ with respect to the decomposition of $\bB_{p}$. 
	The boundary condition \eqref{Ban-bd} implies that $u_0|_{U_\partial}=0$ and 
	$\partial_su_1|_{U_\partial}=z_0|_{U_\partial}$. Therefore, we can invoke Lemma \ref{elliptic-reg-laplace-Ban} 
	to verify the claim. To be a bit more detailed, we use the assumption $k\geq 2$ to conclude that the products of two
	$L^p_{k-1}$ functions are still in $L^p_{k-1}$. In the case that $k=1$, the products of two $L^p(U,\bB_p)$ functions 
	is in $L^{p/2}(U,\bB_p)$, which in turn is a subspace of $L^{p/2}(U,\bB_{p/2})$.
	That allows us to use the same argument to prove the claim in this case.
	The sequential versions of these claims can be also treated similarly.
\end{proof}

We need a slight improvement of Proposition \ref{Ban-val-reg} to the case $k=0$ \cite[Lemma 29]{Max:GU-comp}.

\begin{prop}\label{Ban-val-reg-k=0}
	Suppose $U$ is given as in Proposition \ref{Ban-val-reg}. Suppose $\mathcal J:\bB_{p}\to {\rm End}(\bB_{p},\bB_{p})$ is a smooth family of complex structures such that $\mathcal J(x)=\mathcal J_0$ for $x\in \mathcal L$ and for any 
	$x\in \bB_{p}$, the space $\mathcal L$ is totally real with respect to $\mathcal J(x)$, i.e., $\bB_{p}=\mathcal L\oplus \mathcal J(x)\mathcal L$.
	For $p>2$, let $u:U\to \bB_{p}$ be in $L^p_1$. Suppose $q>p$ and $u$ is also an $L^q$ map from $U$ to $\bB_{q}$. Suppose $u$ satisfies 
	\begin{equation}\label{Ban-CR-q}
	  \partial_\theta u-\mathcal J(u)\partial_s u=z\in L^q(U,\bB_{q}),
	\end{equation}
	and the boundary condition \eqref{Ban-bd}. Then $u$ is an $L^q_1$ map from $U$ to $\bB_{q}$ and 
	\begin{equation}
		|\!|u|\!|_{L^q_{1}}\leq C(|\!|z|\!|_{L^q}+|\!|u|\!|_{L^q}).		
	\end{equation}	
	Moreover, if $u_i:U\to \bB_{p}$ are $L^q_1$ solutions of 
	\begin{equation}\label{Ban-CR-iq}
	  \partial_\theta u_i-\mathcal J(u_i)\partial_s u_i=z_i\in L^q(U,\bB_{q}),
	\end{equation}
	such that $u_i$ is convergent to $u$ in $L^p_1\cap L^q$ and $z_i$ is convergent to $z$ in $L^q$, then $u_i$ is convergent to $u$ in $L^q_1$.
\end{prop}
\begin{proof}
	Given $p>2$ and any bounded domain $\Omega$ in $\R^2$ with smooth boundary, let $L^p_1(\Omega,\bB_p)_\partial$ be the space of $L^p_1$ maps $u:\Omega\to \bB_p$
	such that the restriction of $u$ to the boundary is in $\mathcal L$. Then the Cauchy-Riemann operator 
	\begin{equation}\label{CR-constant}
	  \partial_\theta -\mathcal J_0\partial_s :L^p_1(\Omega,\bB_p)_\partial \to L^p(\Omega,\bB_p)
	\end{equation}
	is a surjective bounded operator with kernel being constant maps to $\mathcal L$. This can be seen in the same way as in Lemma \ref{elliptic-reg-laplace-Ban}.
	
	Now suppose $x\in \partial U$ and $D_r(x)=B_r(x)\cap \bbH^2$ is contained in $U$. Suppose $\Omega_r$ is the region given by rounding the corners of $D_r(x)$ such that it is contained in $D_r(x)$ and it contains $D_{r/2}(x)$. Since 
	$\mathcal J(u):U \to {\rm End}(\bB_{p},\bB_{p})$ is continuous and $\mathcal J(x)=J_0$, the operator $  \partial_\theta -\mathcal J(u)\partial_s:L^p_1(\Omega_r,\bB_p)_\partial \to L^p(\Omega_r,\bB_p)$ is surjective with kernel being 
	constant maps to $\mathcal L$ if $r$ is small enough. This holds because the operator $  \partial_\theta -\mathcal J(u)\partial_s$ is a deformation of the operator in \eqref{CR-constant} by a bounded operator of small norm for small values of 
	$r$. We assume that $r$ is chosen such that the same claim holds if we replace $q$ with $p$. Now let $\rho:\Omega_r\to \R$ be a smooth bump function that vanishes on the complement of $D_{r/2}(x)$ and equals $1$ on $D_{r/3}(x)$.
	Then our assumption implies that $\rho u$ is an element of $L^p_1(\Omega_r,\bB_p)_\partial$ and 
	\[
	   \partial_\theta(\rho u) -\mathcal J(u)\partial_s(\rho u)=\rho z+\partial_\theta(\rho) u -\mathcal J(u)\partial_s(\rho) u
	\]
	is in $L^q$. Thus there is $u'\in L^q_1(\Omega,\bB_q)_\partial$ such that 
	\[
	   \partial_\theta u' -\mathcal J(u)\partial_su'=\rho z+\partial_\theta(\rho) u -\mathcal J(u)\partial_s(\rho) u.
	\]
	This implies that $u'-\rho u$ is a constant map to $\mathcal L$. In particular, the restriction of $u$ to $D_{r/3}(x)$ is in $L^q_1(\Omega,\bB_q)_\partial$. For an interior point $x$, we may apply a similar argument to show that the restriction 
	of $u$ to a neighborhood of $x$ in is $L^q_1(\Omega,\bB_q)_\partial$. The only new point that we need is that we can find an isomorphism $T:\bB_p \to \bB_p$ such that $T^{-1} \mathcal J(x) T=\mathcal J_0$. In fact, we may take $T$ to be the 
	linear map that sends $(v_0,v_1)\in B_p\oplus B_p$ to $(v_0,0)+\mathcal J(x)(v_1,0)$. Since $\mathcal L$ is totally with respect to $\mathcal J(x)$, $T$ is an isomorphism.
\end{proof}

\bibliography{references}
\bibliographystyle{hplain}
\end{document}